\newcommand{\teal}{\color{teal}}
\numberwithin{equation}{section}
\newcommand{\R}{\mathbb{R}}
\newcommand{\E}{\mathbb{E}}
\newcommand{\D}{\mathbb{D}}
\newtheorem{lem}{Lemma}[section]
\newtheorem{rmk}{Remark}[section]
\newtheorem{lemma}{Lemma}[section]
\newcommand{\be}{\begin{equation}}
\newcommand{\ee}{\end{equation}}
\def\be{\begin{equation}}
\def\ee{\end{equation}}
\def\beq{\begin{equation}}
\def\eeq{\end{equation}}
\def\diam{{\rm diam}}
\def\dist{{\rm dist}}
\def\P{{\mathbb P}}
\newtheorem{theorem}{Theorem}[section]
\newtheorem{corollary}[theorem]{Corollary}
\newtheorem{definition}[theorem]{Definition}
\newtheorem{remark}[theorem]{Remark}
\def\E{\mathbb E }
\newcommand{\IR}{\mathbb{R}}
\newcommand{\IC}{\mathbb{C}}
\newcommand{\BLS}{\mathcal{L}}
\newcommand{\fdd}{\xRightarrow[f.d.d.]{}}
\def\simleq{\; \raise0.3ex\hbox{$<$\kern-0.75em
		\raise-1.1ex\hbox{$\sim$}}\; }
\def\simgeq{\; \raise0.3ex\hbox{$>$\kern-0.75em
		\raise-1.1ex\hbox{$\sim$}}\; }
\title{Brownian Loops, Layering Fields and \\ Imaginary Gaussian Multiplicative Chaos}
\author[\bigstar \spadesuit]{Federico Camia,}
\author[\bigstar]{Alberto Gandolfi,}
\author[\clubsuit]{Giovanni Peccati,}
\author[\bigstar ]{and Tulasi Ram Reddy}
\emailAdd{ federico.camia@nyu.edu, albertogandolfi@nyu.edu, giovanni.peccati@gmail.com, tulasi.math@gmail.com}
\affiliation[\bigstar]{\it New York University Abu Dhabi, United Arab Emirates}
\affiliation[\clubsuit]{\it Luxembourg University}
\affiliation[\spadesuit]{\it VU University, Amsterdam, The Netherlands}
\begin{document}

\abstract{We study fields reminiscent of vertex operators built from the Brownian loop soup in the limit as the loop soup intensity tends to infinity. More precisely,
following Camia, Gandolfi and Kleban (\emph{Nuclear Physics B} {\bf 902}, 2016), we take a (massless or massive) Brownian
loop soup in a planar domain and assign a random sign to each loop. We then consider random fields defined by taking, at every point of the domain, the exponential
of a purely imaginary constant times the sum of the signs associated to the loops that wind around that point. For domains conformally equivalent to a disk, the sum
diverges logarithmically due to the small loops, but we show that a suitable renormalization procedure allows to define the fields in an appropriate Sobolev space. Subsequently,
we let the intensity of the loop soup tend to infinity and prove that these vertex-like fields tend to a conformally covariant random field which can be expressed as an
explicit functional of the imaginary Gaussian multiplicative chaos with covariance kernel given by the Brownian loop measure.
Besides using properties of the Brownian loop soup and the Brownian loop measure, a main tool in our analysis is an explicit Wiener-It\^{o} chaos expansion of linear
functionals of vertex-like fields. Our methods apply to other variants of the model in which, for example, Brownian loops are replaced by disks.
}

%
%

\maketitle

\section{Introduction}

\subsection{Background and motivations}

In this article we are primarily concerned with the (massless and massive) \emph{Brownian loop soup} (BLS) \cite{Lawler,Lawler-Werner,werner,camia_notes} and the \emph{Gaussian multiplicative chaos} (GMC)
\cite{GMC_berestycki,GMC_imaginary,GMC_revisit,GMC_shamov,ComplexGMC,kahane}, two objects that have attracted
intense attention recently, because of their intrinsic interest and their numerous applications and connections to other models and areas of mathematics, including Euclidean
field theory \cite{CFT,henkel}, the Schramm-Loewner Evolution (SLE) \cite{conformal_weldings, sheffield}, the statistical analysis of eigenvalues of large random matrices and of zeros of the Riemann zeta function \cite{GMC_CUE_L1,GMC_CUE_counting,GMC_CUE_char_poly,GMC_GUE,GMC_zeta}, growth models \cite{growth},
disordered systems \cite{disorder}, the study of lattice models of statistical mechanics, of scaling limits of random planar maps and of the geometry of two-dimensional quantum gravity \cite{GMC_LQG,duplantier_sheffield, LQG_Riemann, DOZZ}.

More precisely, we study a two-parameter family of random generalized fields, denoted by $V^{*}_{\beta,\lambda}$, obtained from the BLS in a way that is inspired by the construction
of \emph{vertex operators} in conformal field theory (see, e.g., \cite{CFT}), as we explain below. The conformal properties of such fields were first analyzed in \cite{camia2016conformal},
which in turn was partly motivated by \cite{FK}; here, we show that, when $\beta \to 0$ and $\lambda \to \infty$ in an appropriate way, $V^{*}_{\beta,\lambda}$ converges
to a version of the \emph{imaginary} GMC \cite{ComplexGMC,GMC_imaginary}.

The BLS, introduced in \cite{Lawler-Werner}, is closely related to SLE and to the scaling limit of interfaces of statistical mechanical models at the critical point, such as the
critical Ising model. It is a Poisson process of planar loops with intensity measure given by a multiple $\lambda\mu^{loop}$ of the \emph{Brownian loop measure} $\mu^{loop}$.
The latter, studied by Werner in \cite{werner}, is a measure on simple loops in the plane, and on Riemann surfaces, which is conformally invariant and also invariant under
restriction (i.e., the measure on a Riemann surface $S'$ that is contained in another Riemann surface $S$ is just the measure on $S$ restricted to those loops that stay in $S'$).
Werner showed \cite{werner} that, up to a multiplicative constant, there exists a unique such measure. The Brownian loop measure is related \cite{werner} to the outer
boundaries of critical percolation clusters in the scaling limit and to the outer boundaries of Brownian loops, from which it derives its name.

In \cite{camia2016conformal}, Camia, Gandolfi and Kleban initiated the study of certain conformally covariant Euclidean fields obtained from the BLS.
Roughly speaking, the fields studied in \cite{camia2016conformal} are defined as exponentials, of the form $e^{i \beta N}$, of operators $N$ that compute properties
of the Brownian loop soup, where $\beta$ is a positive constant and $\lambda>0$ is the intensity of the Brownian loop soup. The operator $N(z)$ counts either
(i) the total number of windings of all loops around a point $z$, in which case $e^{i \beta N}$ is called a \emph{winding field}, or
(ii) the algebraic sum of distinct loops that disconnect $z$ from infinity, where each loop contributes with a random sign, in which case $e^{i \beta N}$ is called
a \emph{layering field}. This second type of operator was first introduced in~\cite{FK}, as a model for an eternally inflating universe, using a scale invariant Poisson
Boolean model instead of the BLS. In this paper, we call the model introduced in~\cite{FK} the ``disk model.'' The variants that use the BLS were introduced in
\cite{camia2016conformal} as conformally covariant versions of the original disk model. Whether or not such models will eventually find applications to cosmology,
they show interesting properties of their own, particularly those based on the BLS, and their study reveals analogies with the analysis of more classical Euclidean
field theories, in the tradition of constructive field theory, and in particular of conformal field theories. These analogies provide one of the motivations for our work.
Indeed, the families of fields studied here and in \cite{camia2016conformal} provide examples of nontrivial Euclidean field theories that are amenable to a mathematically
rigorous analysis.
There are not many examples of interesting conformally covariant fields that can be constructed and analyzed rigorously. The few classical examples, such as the Ising spin field \cite{CGN2015,CGN2016}, are difficult to handle and their study often requires sophisticated machinery (e.g., SLE and CLE). From this perspective, the layering and winding fields introduced in \cite{camia2016conformal}, with their Poissonian structure and their connection to Brownian motion, can be seen as a playground to develop techniques that could prove useful in the study of other models. Both the layering and the winding field present interesting and unusual features, such as a periodic spectrum of conformal dimensions. An explicit expression for the four-point function of the layering model was recently obtained in \cite{CFGL2019}.

Another motivation for our work is a calculation in \cite{FK} suggesting that, in infinite volume, the disk model should have a non-interacting (free-field) limit as the intensity
$\lambda$ of the Poisson Boolean model tends to infinity. In this paper, we consider the disk model in the unit disk and the layering models based on the BLS and massive
BLS in any planar domain conformally equivalent to the unit disk. For all such models, we rigorously show the existence of a Gaussian limit by proving convergence of the
corresponding layering fields to versions of the imaginary GMC. However, somewhat surprisingly, the limits we obtain are not free, meaning that, although the limiting fields
can be expressed as exponentials of Gaussian fields (i.e., GMCs), the covariance of those Gaussian fields is not given by the Green's function of the Laplacian.

A third motivation is to provide a new example of the deep and very rich connection between conformal field theories and conformally invariant stochastic models such as
the BLS. In doing so, we also prove some results about the Brownian loop measure $\mu^{loop}$ which may be of independent interest (see Section~\ref{s:plf}).

We also point out that our work provides a new construction of the imaginary GMC, starting with processes that are not Gaussian, and that the techniques introduced in our paper have the potential to find applications in the analysis of asymptotic limits of other random fields. Indeed, we believe that an important contribution of the present paper is the introduction of the Wiener-It\^{o} chaos expansion as a tool to study asymptotic limits of random fields. The chaos expansion turns out to be a very useful tool in this context, effectively reducing the asymptotic analysis of the fields $V^*_{\lambda,\beta}$ to that of their one-point functions, and allowing us to obtain our main results as a consequence of a multivariate central limit theorem.

\subsection{Main results}

The fields $e^{i \beta N}$ studied here and in~\cite{camia2016conformal,FK} are inspired by the \emph{vertex operators} defined as exponentials of free fields
times an imaginary coefficient (see, e.g. \cite{CFT,henkel}), and have similar properties of conformal covariance. However, one difficulty arises immediately in defining the
fields $e^{i \beta N}$, since $N(z)$ itself is not well-defined. Indeed, because the BLS is conformally (and therefore scale-)invariant, with probability one, infinitely many
loops disconnect from infinity any given point of the plane, making $N(z)$ a.\ s. infinite; the same occurs for the other models considered in this paper. It is easy to see that
the number of loops of diameter between $\delta>0$ and $R<\infty$ that disconnect from infinity any given point of the plane diverges \emph{logarithmically} as $\delta \to 0$
or $R \to \infty$. This explains why it is natural to consider fields of the form $e^{i \beta N}$, and why these need to be defined using cutoffs in order to make the number
of loops around a point $z$ finite. In general, two types of cutoff are necessary: an ``ultraviolet'' cutoff that removes small loops, and an ``infrared'' cutoff that removes large loops.

A particularly interesting way to introduce an infrared cutoff is to use the restriction property of the Brownian loop soup (and other Poissonian models) and consider only the loops
that are contained in a finite domain $D$ (or, by conformal invariance, a domain conformally equivalent to a finite domain). An ultraviolet cutoff $\delta>0$ can be introduced by keeping only loops with diameter at least $\delta$. The resulting random fields
$V^{\delta}_{\lambda,\beta}(z) = e^{i \beta N^{\delta}_{\lambda}}(z)$ are then well defined. As $\delta \to 0$, the $n$-point correlation functions of these fields,
$\delta^{-2\Delta^{loop}_{\lambda,\beta}n} \E(V^{\delta}_{\lambda,\beta}(z_1) \ldots V^{\delta}_{\lambda,\beta}(z_n))$, normalized by an appropriate power
$2\Delta^{loop}_{\lambda,\beta}$ (the factor of 2 is a convention, in keeping with the physics literature on conformal field theory) of the cutoff $\delta$ depending on the model,
are shown \cite{camia2016conformal} to converge to real-valued functions that depend on the domain $D$ in a conformally covariant way.

The existence of the limit as $\delta \to 0$ for the normalized fields $\delta^{-2\Delta^{loop}_{\lambda,\beta}} V^{\delta}_{\lambda,\beta}$ themselves,
was obtained first in~\cite{Spin_loops_berg_camia_lis} and later in \cite{lejan} for fields of the winding type, provided that the intensity of the BLS is not too large.
The limit exists in an appropriate Sobolev space of negative index, meaning that the limiting winding fields are random generalized functions, measurable with respect to the BLS
(see Theorem~5.1 of~\cite{Spin_loops_berg_camia_lis}). It is also shown in~\cite{Spin_loops_berg_camia_lis} that the limiting fields are not Gaussian fields
(see Proposition~5.2 of~\cite{Spin_loops_berg_camia_lis}).

In this paper, following \cite{Spin_loops_berg_camia_lis}, we prove the existence of the limiting layering fields,
which we denote $V_{\lambda,\beta,D}^{*}=V_{\lambda,\beta}^{*}$, obtained from $\delta^{-2\Delta^*_{\lambda,\beta}} V^{*,\delta}_{\lambda,\beta}$ as $\delta \to 0$.
In the notation, we often suppress $D$, which is considered fixed, while $*=loop,disk,m$ will distinguish between the three different models considered in this paper,
based respectively on the BLS of Lawler and Werner \cite{Lawler-Werner}, on the disk model mentioned above, and on the \emph{massive} Brownian loop soup, in which a
killing factor is introduced in the BLS (see \cite{camia_notes} for a detailed introduction). Each model has different characteristics: for $*=loop$ the model is scale and
conformally invariant, for $*=disk$ the model is scale but not conformally invariant, for $*=m$ the model is not even scale invariant since the killing rate introduces a scale.
Despite these differences, all models can be analyzed using our methods.
Each \emph{layering field} $V_{\lambda,\beta}^{*}$ is based on a Poisson process of loops (i.e., closed planar curves)
with intensity measure $\lambda\mu^{*}$, where $\mu^{*}$ is either the Brownian loop measure $\mu^{loop}$, the disk measure $d\mu^{disk}=\frac{dz dr}{r^3}$, where
$dz$ denotes the two-dimensional Lebesgue measure and $r$ is the radius of a random disk, or the \emph{massive} Brownian loop measure
\be
d\mu^m(\gamma) = e^{-\int_0^{t_{\gamma}} m^2(\gamma(t))dt} d\mu^{loop}(\gamma),
\ee
where $\gamma$ denotes a planar loop and the ``mass function'' $m: {\mathbb C} \to {\mathbb R}^{+}$ introduces a killing of Brownian loops according to their length.
As shown in Section~\ref{ss:poissononepoint} below, the divergence of the massive Brownian loop measure $\mu^{m}$ is the same as that of $\mu^{loop}$. This follows
from the fact that the divergence of both $\mu^{m}$ and $\mu^{loop}$ is due to small loops, whose measure is not affected much by a bounded mass function $m$.
As a consequence, the measure $\hat\mu$ defined by 
\begin{equation} \label{eq:muhat}
d\hat\mu(\gamma) = \Big( 1 - e^{-\int_0^{t_{\gamma}} m^2(\gamma(t))dt} \Big) d\mu^{loop}(\gamma)
\end{equation}
is finite over bounded domains.

For $\mu^{loop}$, $\mu^{disk}$ and $\mu^m$, we show that, as $\lambda \to \infty$ and $\beta \to 0$ in such a way that $\lambda\beta^2$ tends to some parameter
$\xi^2>0$, $V_{\lambda,\beta}^{*}$ converges to a ``tilted'' imaginary GMC \cite{ComplexGMC,GMC_imaginary}, as explained below.
Assuming that $X : D \to {\mathbb C}$ is a generalized Gaussian process with covariance kernel
\be \label{eq:covkernel}
K(z,w) := \E (X(z)X(w)) = \eta \log^{+}\frac{1}{|z-w|} + g(z,w),
\ee
where $\eta>0$ and $g$ is a bounded continuous function over $D \times D$, in this article, we say that a random generalized function $M_{\xi,D}$ is an
\emph{imaginary Gaussian multiplicative chaos} in $D \subset {\mathbb C}$ with parameter $\xi>0$, written
\be \label{eq:gmc}
M_{\xi,D} = e^{i \xi X},
\ee
if there exists a sequence $\{ X_{\delta} \}_{\delta}$ of Gaussian fields converging to $X$ such that the functions
\be
M_{\xi,D}^{\delta} (z) = e^{i \xi X_{\delta}(z) + \frac{\xi^2}{2} \E (X^2_{\delta}(z))},
\ee
are well defined and converge, as $\delta \to 0$, to $M_{\xi,D}$, where convergence can be understood in probability in some Sobolev space $\mathcal{H}^{-\alpha}(D)$
of negative index.

\begin{theorem}[Existence of the limit as a GMC] \label{MainTheorem}
Take $\lambda>0$ and $\beta \in [0,2\pi)$, let $*=loop, m$ and let $D \subsetneq {\mathbb C}$ be any domain conformally equivalent to the unit disk $\D$.
For each $z \in D$, let $d_z = \text{dist}(z,\partial D)$.
Then, if $D$ is a bounded simply connected domain with a $C^1$ smooth boundary, as $\lambda \to \infty$ and $\beta \to 0$ in such a way that
$\lambda\beta^2 \to \xi^2 < 5$, for every $\alpha>3/2$, $V^{*}_{\lambda, \beta}$ has a limit $W^{*}_{\xi,D}$ in the sense of convergence in distribution in the Sobolev space $\mathcal{H}^{-\alpha}(D)$ with the topology induced by the Sobolev norm. For more general domains conformally equivalent
to $\D$, the limit is in the sense of convergence of finite dimensional distributions.
Moreover,
\be \label{eq:rn-derivative}
\frac{dW^{*}_{\xi,D}}{dM_{\xi,D}^{*}}(z) = e^{-\frac{\xi^2}{2}\Theta^{*}_{D}(z)},
\ee
where
\begin{eqnarray*}
\Theta^{loop}_{D}(z) & = & \frac{1}{5} \log d_{z} + \mu^{loop}(\gamma: \diam(\gamma) \ge d_z, \gamma \text{ stays in $D$ and disconnects } z \text{ from } \partial D), \\
\Theta^{m}_{D}(z) & = & \Theta^{loop}_{D}(z) - \hat\mu(\gamma: \gamma \text{ stays in $D$ and disconnects } z \text{ from } \partial D)
\end{eqnarray*}
and $M^{*}_{\xi,D}$ is an imaginary Gaussian multiplicative chaos in $D$ with parameter $\xi$ and covariance kernel
$K^{*}_D(z,w) = \mu^{*}(\gamma: \gamma \text{ is in $D$ and disconnects } z,w \text{ from } \partial D)$ for every $z,w \in D$.
The same conclusions hold for $*=disk$ with $D=\D$, $\xi^2<1/\pi$ and
\be \nonumber
\Theta^{disk}_{D}(z) = \pi \log d_{z} + \mu^{disk}(\gamma: \diam(\gamma) \ge d_z, \gamma \text{ stays in $D$ and disconnects } z \text{ from } \partial D).
\ee
\end{theorem}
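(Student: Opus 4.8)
The plan is to study everything through the Wiener--It\^o (Poisson) chaos expansion of the linear functionals $\langle V^{*,\delta}_{\lambda,\beta},f\rangle=\int_D V^{*,\delta}_{\lambda,\beta}(z)f(z)\,dz$, $f$ a test function, and then to pass to the two limits $\delta\to0$ and $(\lambda,\beta)\to(\infty,0)$ term by term. Realising the loop soup together with the i.i.d.\ signs as a Poisson random measure $N$ on loop--space with intensity $\nu=\lambda\,\mu^{*}\otimes\tfrac12(\delta_{+1}+\delta_{-1})$ (restricted to loops $\gamma\subset D$ of diameter $\ge\delta$) and using that $V^{*,\delta}_{\lambda,\beta}(z)=\prod_{\gamma}e^{\,i\beta\sigma_{\gamma}\mathbf{1}[\gamma\text{ disconnects }z\text{ from }\partial D]}$ is the exponential of a Poisson integral, one gets $\langle V^{*,\delta}_{\lambda,\beta},f\rangle=\sum_{j\ge0}I_j$ with $I_j=\int c_j\,d\widehat N^{\otimes j}$ ($\widehat N=N-\nu$ compensated) and symmetric kernel $c_j(\gamma_1,\sigma_1,\dots,\gamma_j,\sigma_j)=\tfrac1{j!}\,\delta^{-2\Delta^{*}_{\lambda,\beta}}\prod_{\ell=1}^{j}(e^{i\beta\sigma_\ell}-1)\int_D f(z)\,e^{\lambda(\cos\beta-1)F^{*}_{D}(z,\delta)}\prod_{\ell}\mathbf{1}[\gamma_\ell\text{ disconnects }z]\,dz$, where $F^{*}_{D}(z,\delta)=\mu^{*}(\gamma\colon\gamma\subset D,\ \gamma\text{ disconnects }z\text{ from }\partial D,\ \diam(\gamma)\ge\delta)$. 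The only inputs about the loop measure that I need are those established in Section~\ref{s:plf}: the logarithmic divergence $F^{*}_{D}(z,\delta)=-\eta\log\delta+\Theta^{*}_{D}(z)+o(1)$ as $\delta\to0$, with $\eta=\tfrac15$ for $*=loop,m$ and $\eta=\pi$ for $*=disk$; the finiteness of $K^{*}_{D}(z,w)$ for $z\ne w$, together with $K^{*}_{D}(z,w)=\eta\log^{+}\tfrac1{|z-w|}+g^{*}(z,w)$ near the diagonal where moreover $g^{*}(z,w)=\Theta^{*}_{D}(z)+O(1)$, and the boundary asymptotics $\Theta^{*}_{D}(z)=\eta\log d_z+O(1)$ as $d_z\to0$; and the finiteness of $\hat\mu$ over bounded domains (needed for $*=m$).

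First, for fixed $\lambda,\beta$ I would establish existence of the $\delta\to0$ limit $V^{*}_{\lambda,\beta}$, following the scheme of \cite{Spin_loops_berg_camia_lis}. Using $|e^{i\beta\sigma_\ell}-1|^{2}=2-2\cos\beta$, the identity $\int\prod_\ell\mathbf{1}[\gamma_\ell\text{ disc.}\ z]\,\mathbf{1}[\gamma_\ell\text{ disc.}\ w]\,(\mu^{*})^{\otimes j}(d\gamma)=K^{*}_{D}(z,w)^{j}$, the isometry for multiple Poisson integrals and the asymptotics of $F^{*}_{D}$, one obtains that $\E|I_j|^{2}$ tends, as $\delta\to0$, to $\tfrac{(2\lambda(1-\cos\beta))^{j}}{j!}\int_{D^{2}}f(z)\overline{f(w)}\,e^{-\lambda(1-\cos\beta)(\Theta^{*}_{D}(z)+\Theta^{*}_{D}(w))}\,K^{*}_{D}(z,w)^{j}\,dz\,dw$. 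Summing the Taylor series produces $\sum_j\sup_{\delta}\E|I_j|^{2}\le\int_{D^{2}}|f(z)f(w)|\,e^{-\lambda(1-\cos\beta)(\Theta^{*}_{D}(z)+\Theta^{*}_{D}(w))+2\lambda(1-\cos\beta)K^{*}_{D}(z,w)}\,dz\,dw$; bounding $|f(z)f(w)|\le\tfrac12(|f(z)|^{2}+|f(w)|^{2})$ and using $K^{*}_{D}(z,w)=\eta\log^{+}\tfrac1{|z-w|}+\Theta^{*}_{D}(z)+O(1)$ near the diagonal, this is $\le C\int_D|f(z)|^{2}\big(1+e^{-2\lambda(1-\cos\beta)\Theta^{*}_{D}(z)}\big)\,dz$, which is finite precisely because $e^{-\lambda(1-\cos\beta)\Theta^{*}_{D}}\in L^{2}(D)$, i.e.\ $2\lambda(1-\cos\beta)\eta<1$ (recall $\Theta^{*}_{D}(z)=\eta\log d_z+O(1)$ near $\partial D$); along the sequence $2\lambda(1-\cos\beta)\eta\to\xi^{2}\eta$, and $\xi^{2}\eta<1$ is exactly the hypothesis $\xi^{2}<5$ (resp.\ $\xi^{2}<1/\pi$), so the condition holds eventually. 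This yields $L^{2}(\P)$--convergence of $\langle V^{*,\delta}_{\lambda,\beta},f\rangle$; pairing instead with a Dirichlet eigenbasis of $D$ and using the $C^{1}$ boundary to control the eigenfunctions up to $\partial D$, the same estimates give $\sup_{\delta}\E\|\delta^{-2\Delta^{*}_{\lambda,\beta}}V^{*,\delta}_{\lambda,\beta}\|^{2}_{\mathcal H^{-\alpha}(D)}<\infty$ for $\alpha>3/2$, hence existence of $V^{*}_{\lambda,\beta}$ in $\mathcal H^{-\alpha}(D)$.

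Next I would run $(\lambda,\beta)\to(\infty,0)$ through the chaos expansion of $\langle V^{*}_{\lambda,\beta},f\rangle$. Since $e^{i\beta\sigma}-1=i\beta\sigma+O(\beta^{2})$, $(\beta\sqrt\lambda)^{j}\to\xi^{j}$ and $e^{-\lambda(1-\cos\beta)\Theta^{*}_{D}}\to e^{-\frac{\xi^{2}}{2}\Theta^{*}_{D}}$, the $j$-th (rescaled) Poisson chaos should converge in law to the $j$-th Wiener--It\^o chaos whose squared $L^{2}$--norm is $\tfrac{\xi^{2j}}{j!}\int_{D^{2}}f(z)\overline{f(w)}\,e^{-\frac{\xi^{2}}{2}(\Theta^{*}_{D}(z)+\Theta^{*}_{D}(w))}K^{*}_{D}(z,w)^{j}\,dz\,dw$, that is, to the $j$-th Wiener--It\^o chaos of $\langle W^{*}_{\xi,D},f\rangle$ where $W^{*}_{\xi,D}=e^{-\frac{\xi^{2}}{2}\Theta^{*}_{D}}M^{*}_{\xi,D}$ and $M^{*}_{\xi,D}$ is the imaginary GMC with covariance kernel $K^{*}_{D}$; in particular the one--point function converges to $e^{-\frac{\xi^{2}}{2}\Theta^{*}_{D}(z)}$, which is \eqref{eq:rn-derivative}. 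The convergence of each such Poisson multiple integral to a Gaussian one in the high--intensity regime is classical, and I would obtain it jointly over $j=1,\dots,k$ from a multivariate fourth--moment / Malliavin--Stein central limit theorem, checking that the relevant contraction norms of the kernels vanish. To upgrade term--by--term convergence to convergence of $\langle V^{*}_{\lambda,\beta},f\rangle$---jointly for finitely many test functions, hence convergence of finite--dimensional distributions in $\mathcal H^{-\alpha}(D)$---I would control the tail $\sum_{j>k}\sup_{\lambda}\E|I_j|^{2}$ by the same double integral as above, uniformly along the sequence. Finally, for bounded simply connected $C^{1}$ domains, the uniform bounds on $\E\|\cdot\|^{2}_{\mathcal H^{-\alpha}}$ yield tightness in $\mathcal H^{-\alpha}(D)$, upgrading finite--dimensional to distributional convergence there; the disk model ($D=\D$, $\eta=\pi$) and the massive model (with the finite correction $-\hat\mu(\gamma\colon\gamma\subset D,\ \gamma\text{ disconnects }z)$ in $\Theta^{m}_{D}$) are treated identically, using scale invariance of $\mu^{disk}$ and finiteness of $\hat\mu$ in place of conformal invariance of $\mu^{loop}$.

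I expect the main obstacle to be the uniform control of the chaos tail: bounding $\sum_j\sup\|c_j\|^{2}$ by a convergent series \emph{uniformly} in $\delta$ and along the sequence $(\lambda,\beta)$, which forces one to understand simultaneously the combinatorics of $\mu^{*}$ on configurations of loops disconnecting two points (producing the kernels $K^{*}_{D}(z,w)^{j}$ and their logarithmic growth on the diagonal) and the polynomial blow--up of the tilting density $e^{-\frac{\xi^{2}}{2}\Theta^{*}_{D}(z)}$ near $\partial D$; it is the requirement that this density lie in $L^{2}(D)$ that pins the admissible range at $\xi^{2}\eta<1$ and, together with the tightness argument, accounts for the restrictions $\alpha>3/2$ and the $C^{1}$ boundary. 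A secondary technical point is the legitimacy of exchanging each of the limits $\delta\to0$ and $(\lambda,\beta)\to(\infty,0)$ with the infinite chaos sum, and of handling the measure space of loops of diameter $\ge\delta$ as it grows when $\delta\to0$; and in the massive case one must additionally check, via \eqref{eq:muhat}, that the finite correction $\hat\mu$ does not affect these divergences.
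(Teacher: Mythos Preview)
Your proposal is essentially correct and follows the same global architecture as the paper: Wiener--It\^o chaos expansion of $\langle V^{*}_{\lambda,\beta},f\rangle$, term-by-term passage to the limit $\lambda\to\infty,\beta\to0$, uniform control of the chaos tail, and tightness in $\mathcal H^{-\alpha}$ for $C^{1}$ domains (with conformal transport for general domains). The identification \eqref{eq:rn-derivative} via the limit of the one-point function is also the paper's route.

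Two points where your execution diverges from the paper are worth noting. First, the paper separates the $\delta\to0$ limit from the chaos expansion: existence of $V^{*}_{\lambda,\beta}$ in $\mathcal H^{-\alpha}$ is obtained directly from two-point estimates (their general Theorem~\ref{convergence}), and only afterwards is the chaos expansion of the limit computed. Your intertwining of the two limits is legitimate but forces you to control the chaos tail uniformly in $\delta$ as well, which is more bookkeeping. Second, for the Poisson-to-Gaussian convergence of each fixed chaos, you propose a fourth-moment / Malliavin--Stein CLT with vanishing contractions; the paper instead approximates the kernels $f^{\varphi}_{q,\lambda,\beta}$ and $w^{\varphi}_{q,\xi}$ by \emph{simple} kernels and invokes only the elementary multivariate CLT $\lambda^{-1/2}\widehat N_\lambda(B)\Rightarrow G(B)$ (an argument going back to Surgailis). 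Your approach would work but is heavier machinery; the paper's is more elementary and bypasses contraction computations entirely.

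One caution: your explanation of the threshold $\xi^{2}<5$ as coming from $e^{-\lambda(1-\cos\beta)\Theta^{*}_{D}}\in L^{2}(D)$ is not quite how it arises in the paper, and your reduction of the double integral to $\int_D|f|^{2}(1+e^{-2\lambda(1-\cos\beta)\Theta^{*}_{D}})$ drops the diagonal singularity $|z-w|^{-2\lambda(1-\cos\beta)\eta}$ too quickly. In the paper the restriction $\xi^{2}<5$ enters through the integrability of $|z-w|^{-\lambda\beta^{2}/5}$ against the boundary weights $d_z^{-2\Delta}d_w^{-2\Delta}$ via their Lemma~\ref{finitenessunitdisk} (needed for tightness, where the test functions are eigenfunctions and not compactly supported); the existence results alone only require $\xi^{2}<10$. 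This does not invalidate your strategy, but you should locate the threshold correctly when carrying out the estimates.
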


\begin{proof}
The convergence in the sense of finite dimensional distributions is an immediate consequence of Theorems \ref{ExistenceGaussianLayeringField} and \ref{p:cil} below.
For bounded simply connected domains with a smooth $C^1$ boundary, combining the convergence in the sense of finite dimensional distributions with the results of
Section \ref{sec:tightness-uniqueness} gives convergence in distribution. For general domains, the theorem follows from Theorem \ref{thm:conformal_covariance}.
\end{proof}

\bigskip

\begin{remark}
{\rm \begin{enumerate}
\item[]{\empty}
\item The action of $W^{*}_{\xi,D}$ on $\varphi \in C^{\infty}_0(D)$ will be denoted by
\be
W^{*}_{\xi,D}(\varphi) = \int_D W^{*}_{\xi,D}(z) \, \varphi(z) \, dz,
\ee
and similarly for other (generalized) fields.
With this notation, Equation \eqref{eq:rn-derivative} means that, for any $\varphi \in C^{\infty}_0(D)$,
\be \label{eq:rn-derivative-expanded}
W^{*}_{\xi,D}(\varphi) = \int_D M_{\xi,D}^{*}(z) \, e^{-\frac{\xi^2}{2}\Theta^{*}_{D}(z)} \, \varphi(z) \, dz. 
\ee
\item We note that \eqref{eq:rn-derivative} is deterministic, so all the randomness of the field $W^{*}_{\xi,D}$ is captured by the imaginary GMC $M^{*}_{\xi,D}$.
\item Because of the definition of imaginary GMC, the theorem implicitly implies that
$\mu^{*}(\gamma: \gamma \text{ is in $D$ and disconnects } z,w \text{ from } \partial D)$ can be
written as $\eta \log^{+}\frac{1}{|z-w|} + g(z,w)$ for some $\eta$ and some bounded continuous function $g$. This
is proved in Lemma \ref{l:cont} below.
\item For $*=disk$, Theorem \ref{MainTheorem} is stated for the unit disk $\D$ for simplicity, but its conclusions can be easily extended to general bounded simply connected domains
with a $C^1$ boundary. The same considerations apply to Theorems \ref{ExistenceLayeringField} and \ref{ExistenceGaussianLayeringField} below, as one can easily deduce from
Theorems \ref{ExistenceLayeringFieldBoundedDomains} and \ref{ExistenceGaussianLayeringFieldBoundedDomains} in the appendix.
\item Although Theorem~\ref{ExistenceGaussianLayeringField} below guarantees the existence of $W^*_{\xi,D}$
for $\xi^2 \in (0,10)$ for $*=loop,m$ (respectively, $\xi^2 \in (0,2/\pi)$ for $*=disk$), we can prove Theorem \ref{MainTheorem} only for $\xi^2 \in (0,5)$ for $*=loop,m$ (respectively,
$\xi^2 \in (0,\pi)$ for $*=disk$). We do not know if this is an artifact of our methods or if the convergence result does not hold for $\xi^2 \geq 5$ (respectively, $\xi^2 \geq 1/\pi$).
\end{enumerate}
}
\end{remark}

For $*=loop$, and taking $D$ to be the upper half plane $\mathbb H$, an explicit formula for
$\mu^{loop}(\gamma: \gamma \text{ is in $\mathbb H$ and disconnects } z,w \text{ from } \partial{\mathbb H})$
was predicted by Gamsa and Cardy \cite{GaCa06} using conformal field theory methods. The correct formula, which coincides with the Gamsa-Cardy prediction up to an
additive term, was obtained rigorously in \cite{HaWaZi19}. Given the conformal invariance of $\mu^{loop}$, from the formula for $D={\mathbb H}$, one can in principle obtain
a formula for $K^{loop}_D$ for any domain conformally equivalent to $\mathbb H$. In the case of the unit disk $\D$, one has the following formula (see \cite{HaWaZi19}):
\begin{eqnarray}
K^{loop}_{\D}(z,w) & = & \mu^{loop}(\gamma: \gamma \text{ is in $\D$ and disconnects } z,w \text{ from } \partial \D) \notag \\
& = & -\frac{1}{10} \Big[ \log\tilde\sigma + (1-\tilde\sigma) {_3}F_2\Big( 1, \frac{4}{3}, 1; \frac{5}{3}, 2; 1-\tilde\sigma \Big) \Big],
\end{eqnarray}
where ${_3}F_2$ is the hypergeometric function and $\tilde\sigma=\tilde\sigma(z,w)=\frac{|z-w|^2}{|1-zw|^2}$.

The limiting fields $W_{\xi,D}^*$ are conformally covariant in the sense of the result below.
\begin{theorem}[Conformal covariance] \label{TheoremConfomalCovariance}
Let $f: {\mathbb D} \to D$ be a conformal map from the unit disc $\mathbb D$ to a domain $D$ conformally equivalent to $\D$.
Let $m: \D \to {\mathbb R}^+$ be a mass function on $\D$ and define a mass function $\tilde m$ on $D$ by setting $\tilde m(f(w)) = |f'(w)|^{-1} m (w)$ for each $z=f(w) \in D$.
Then, with the notation of the previous theorem, we have that, for every $\varphi \in C^{\infty}_{0}(D)$,
\be
W^{loop}_{\xi,D}(\varphi) = \int_D W^{loop}_{\xi,D}(z) \varphi(w) dz \stackrel{d}{=} \int_{\D} |f'(w)|^{2 - \xi^2/10} W^{loop}_{\xi,{\D}}(w) \varphi \circ f(w) dw,
\ee
and
\be
W^{\tilde m}_{\xi,D}(\varphi) = \int_D W^{\tilde m}_{\xi,D}(z) \varphi(w) dz \stackrel{d}{=} \int_{\D} |f'(w)|^{2 - \xi^2/10} W^{m}_{\xi,{\D}}(w) \varphi \circ f(w) dw,
\ee
where $\stackrel{d}{=}$ stands for equality in distribution. Moreover, if $f$ is a M\"obius transformation of the unit disk into itself, then
\be
W^{disk}_{\xi,D}(\varphi) = \int_{D} W^{disk}_{\xi,D}(z) \varphi(w) dz \stackrel{d}{=} \int_{\D} |f'(w)|^{2 - \pi\xi^2/2} W^{ disk }_{\xi,{\D}}(w) \varphi \circ f(w) dw.
\ee
\end{theorem}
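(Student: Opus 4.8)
The plan is to obtain the statement from the conformal covariance of the fields $V^{*}_{\lambda,\beta,D}$ at \emph{fixed} $\lambda>0$, $\beta\in[0,2\pi)$, and then let $\lambda\to\infty$, $\beta\to 0$ with $\lambda\beta^{2}\to\xi^{2}$. The first step is to record that, for each fixed $(\lambda,\beta)$, the $\delta\to 0$ limit field $V^{*}_{\lambda,\beta,D}$ of Theorem~\ref{ExistenceLayeringField} satisfies $\int_{D} V^{*}_{\lambda,\beta,D}(z)\,\varphi(z)\,dz \stackrel{d}{=} \int_{\D} |f'(w)|^{\,2-2\Delta^{*}_{\lambda,\beta}}\, V^{*}_{\lambda,\beta,\D}(w)\,\varphi\circ f(w)\,dw$, where $\Delta^{*}_{\lambda,\beta}$ is the conformal dimension of \cite{camia2016conformal}. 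At the level of the renormalized correlation functions $\E\big[\prod_{i} V^{*,\delta}_{\lambda,\beta}(z_{i})\prod_{j}\overline{V^{*,\delta}_{\lambda,\beta}(y_{j})}\big]$ (possibly involving complex conjugates) this is exactly the conformal covariance proved as $\delta\to 0$ in \cite{camia2016conformal}; since $|V^{*,\delta}_{\lambda,\beta}|\equiv 1$, these correlation functions determine the joint law of $V^{*}_{\lambda,\beta,D}$ through its characteristic functional, so the covariance of the correlation functions lifts to the above distributional identity.

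The second step is to pass this identity through the double limit. By Theorem~\ref{MainTheorem}, $V^{*}_{\lambda,\beta,D}(\varphi)\to W^{*}_{\xi,D}(\varphi)$ in distribution (jointly over finitely many test functions; in the sense of finite-dimensional distributions for general domains, where Theorem~\ref{thm:conformal_covariance} is available), and likewise on $\D$. From the explicit formula for $\Delta^{loop}_{\lambda,\beta}$ in \cite{camia2016conformal} together with $1-\cos\beta\sim\beta^{2}/2$ one gets $2\Delta^{loop}_{\lambda,\beta}\to\xi^{2}/10$, $2\Delta^{m}_{\lambda,\beta}=2\Delta^{loop}_{\lambda,\beta}\to\xi^{2}/10$ (the massless and massive $\delta$-renormalizations coincide, since $\mu^{m}$ and $\mu^{loop}$ have the same ultraviolet divergence), and, by the analogous formula for the disk model, $2\Delta^{disk}_{\lambda,\beta}\to\pi\xi^{2}/2$. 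Hence the test functions $\psi_{\lambda,\beta}:=|f'|^{\,2-2\Delta^{*}_{\lambda,\beta}}(\varphi\circ f)$ converge in $C^{\infty}_{0}(\D)$ to $|f'|^{\,2-\xi^{2}/10}(\varphi\circ f)$ — here one uses that $f$ and $f'$ are smooth on $\D$ and that $f'$ is bounded away from $0$ on the compact set $f^{-1}(\mathrm{supp}\,\varphi)\Subset\D$. Combining distributional convergence of the fields with this convergence of test functions (a Slutsky-type argument: the difference $V^{*}_{\lambda,\beta,\D}(\psi_{\lambda,\beta}-\psi_{\infty})$ is controlled by the $\mathcal{H}^{-\alpha}$-tightness of Section~\ref{sec:tightness-uniqueness} times $\|\psi_{\lambda,\beta}-\psi_{\infty}\|_{\mathcal{H}^{\alpha}}\to 0$), the fixed-$(\lambda,\beta)$ identity passes to the limit and yields the asserted relations for $W^{loop}_{\xi,D}$, $W^{m}_{\xi,D}$ and, for M\"obius $f$, $W^{disk}_{\xi,D}$.

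The three models differ only in the symmetry group of the underlying intensity measure, which enters the first step. For $*=loop$ one uses conformal invariance of $\mu^{loop}$ \cite{werner}. For $*=m$, the definition $\tilde m(f(w))=|f'(w)|^{-1}m(w)$ is precisely the one for which the massive loop measure on $D$ is the $f$-pushforward of the massive loop measure on $\D$: a Brownian loop conformally transported to $D$ acquires the time change $dt=|f'|^{2}\,ds$, so $\int_{0}^{t_{\gamma}}\tilde m^{2}(\gamma(t))\,dt=\int_{0}^{t_{\gamma'}}m^{2}(\gamma'(s))\,ds$ and the killing weights are preserved (cf.\ \cite{camia_notes}); steps one and two then go through verbatim, and since the killing introduces no extra Jacobian the weight is again $2-\xi^{2}/10$. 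For $*=disk$ only M\"obius covariance is available, because $d\mu^{disk}=\tfrac{dz\,dr}{r^{3}}$ is, up to a constant, the unique measure invariant under the M\"obius group acting on the space of round disks but is not conformally invariant; this is why that case is restricted to M\"obius $f$ with $D=\D$ and the weight is $2-\pi\xi^{2}/2$.

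I expect the main obstacle to be the first step — upgrading the known conformal covariance of the renormalized correlation functions to a statement about the random generalized field $V^{*}_{\lambda,\beta,D}$, uniformly over all domains conformally equivalent to $\D$ and including the massive model — together with the bookkeeping in the second step, where the exponent $2-2\Delta^{*}_{\lambda,\beta}$ itself depends on $(\lambda,\beta)$. An alternative route argues directly from the GMC representation \eqref{eq:rn-derivative}: conformal invariance of $\mu^{loop}$ gives $K^{loop}_{\D}=K^{loop}_{D}\circ(f\times f)$, whence (using mollification-independence of imaginary GMC in this range, cf.\ \cite{GMC_imaginary,ComplexGMC}) the imaginary GMC $M^{loop}_{\xi,D}$ transforms with only the change-of-variables Jacobian $|f'|^{2}$; the weight $2-\xi^{2}/10$ then follows provided one establishes the \emph{deterministic} identity $\Theta^{loop}_{D}(f(w))=\Theta^{loop}_{\D}(w)+\tfrac{1}{5}\log|f'(w)|$. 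This identity is forced by the theorem (and, conversely, drops out of the first route once \eqref{eq:rn-derivative} is known), but a \emph{direct} proof of it — reconciling the non-conformal diameter cutoff $d_{z}=\dist(z,\partial D)$ appearing in $\Theta^{*}_{D}$ with the conformal invariance of $\mu^{loop}$ — is delicate, so I would favour the limit-based argument.
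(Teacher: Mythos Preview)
Your approach differs from the paper's. The paper does not pass through the Poisson-to-Gaussian limit at all: Theorem~\ref{TheoremConfomalCovariance} is obtained as an immediate corollary of Theorem~\ref{thm:conformal_covariance} in the appendix, where the conformal covariance of $W^{*}_{\xi,D}$ is proved \emph{directly} at the cutoff level, by exactly the same argument as for $V^{*}_{\lambda,\beta,D}$ but with the Gaussian measure $G$ in place of $N_\lambda$. Concretely, conformal invariance of $\mu^{loop}$ is used to compare the sets $A_{\delta,D}(z)$ and $A_{s\delta,\D}(w)$ (with $s=1/|f'(w)|$), their $\mu^{loop}$-measures differ by a term vanishing as $\delta\to 0$ (Lemma~4.2 of \cite{camia2016conformal}), and one then lets $\delta\to 0$ on both sides. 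So the ``main obstacle'' you anticipate --- lifting covariance of the correlation functions to covariance of the random field --- is precisely what the appendix carries out, and once that machinery is in place for $V^{*}$ it applies verbatim to $W^{*}$: no $\lambda\to\infty$ detour is needed.

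Your route has a genuine circularity problem in the paper's logical structure. You invoke Theorem~\ref{MainTheorem} to obtain $V^{*}_{\lambda,\beta,D}(\varphi)\to W^{*}_{\xi,D}(\varphi)$ on a general domain $D$, but the proof of that convergence (via Theorems~\ref{ExistenceGaussianLayeringField} and~\ref{p:cil}) already relies on Theorem~\ref{thm:conformal_covariance}, which contains the statement you are trying to prove. Stripped of that input, your argument only shows that $V^{*}_{\lambda,\beta,D}(\varphi)$ converges in law to $W^{*}_{\xi,\D}(\psi_\infty)$; identifying this limit with $W^{*}_{\xi,D}(\varphi)$ --- defined as the $\delta\to 0$ limit of the cutoff Gaussian field on $D$ --- requires a separate argument, and that argument is exactly the direct cutoff computation the paper performs. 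A secondary cost is that your route is confined to the range $\xi^2<5$ where $V^{*}\to W^{*}$ is established (Theorem~\ref{p:cil}), whereas the paper's direct proof gives conformal covariance on the full existence range $\Delta^{*}_\xi<1/2$.
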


\begin{proof}
The theorem follows immediately from Theorem \ref{thm:conformal_covariance} and Remark \ref{remark:conf_inv} in the appendix.
\end{proof}

The result above can be expressed in the following way:
\be \label{eq:conformal_covariance_W}
W^{loop}_{\xi,D}(z) dz \stackrel{d}{=} |f'(w)|^{2 - \xi^2/10} W^{loop}_{\xi,\D}(w) dw,
\ee
\be \label{eq:conformal_covariance_Wm}
W^{\tilde m}_{\xi,D}(z) dz \stackrel{d}{=} |f'(w)|^{2 - \xi^2/10} W^{m}_{\xi,\D}(w) dw
\ee
and
\be \label{eq:conformal_covariance_Wdisk}
W^{disk}_{\xi,\D}(z) dz \stackrel{d}{=} |f'(w)|^{2 - \pi\xi^2/2} W^{disk}_{\xi,\D}(w) dw.
\ee
In keeping with the physics literature on conformal field theory, the powers to which $|f'(w)|$ is raised can be written as $2 - 2\Delta^{loop}_{\xi}$
and $2 - 2\Delta^{disk}_{\xi}$, where $\Delta^{loop}_{\xi}=\xi^2/20$ and $\Delta^{disk}_{\xi}=\pi\xi^2/4$ can be interpreted as the
\emph{conformal/scaling dimension} of the fields $W^*_{\xi,D}$.

\subsection{Relations to the existing literature}

\noindent {\it Brownian loop soup.} The use of random paths in the analysis of Euclidean field theories and statistical mechanical models was initiated by Symanzik,
who introduced a representation of the $\phi^4$ Euclidean field as a ``gas'' of weakly interacting random paths in his seminal work on Euclidean quantum field
theory~\cite{symanzik}. The approach was later developed by various authors, most notably 
Brydges, 
Fr\"ohlich, 
Spencer and 
Sokal~\cite{bfsp,bfso}, and 
Aizenman~\cite{aizenman} (see~\cite{ffs-book} for a comprehensive account).
The probabilistic analysis of Brownian and random walk paths and associated local times was carried out by Dynkin~\cite{dynkin1,dynkin2}.
More recently, ``gases'' or ``soups'' (i.e., Poissonian ensembles) of Brownian and random walk loops have been extensively studied in connection with SLE and the Gaussian free field
(see, e.g., \cite{werner1,Lawler-Werner,werner2,lejan1,lejan2,sznitman-notes,camia_notes}).

The Brownian loop soup \cite{Lawler-Werner} is, roughly speaking, a Poisson point process with intensity measure $\lambda\mu$,
where $\lambda$ is a positive constant and the Brownian loop measure $\mu$ \cite{werner} is uniquely determined (up to a multiplicative constant) by its \emph{conformal restriction property},
a combination of conformal invariance and the property that the measure in a subdomain is the original measure restricted to loops that stay in that subdomain). A realization of the loop soup
consists of a countable collection of loops. Given a bounded domain $D$, there is an infinite number of loops that stay in $D$; however, the number of loops in $D$ of diameter at least
$\varepsilon>0$ is finite. A consequence of conformal invariance is scale invariance: if $\cal A$ is a realization of the Brownian loop soup and each loop is scaled in space by
$1/N$ and in time by $1/N^2$, the resulting configuration also has the distribution of the Brownian loop soup.


The Brownian loop soup exhibits a connectivity phase transition in the parameter $\lambda>0$ that
multiplies the intensity measure. When $\lambda \leq 1/2$, the loop soup in $D$ is composed of disjoint
clusters of loops \cite{werner1,werner2,sw} (where a cluster is a maximal collection of loops that intersect
each other). When $\lambda>1/2$, there is a unique cluster~\cite{werner1,werner2,sw} and the set of
points not surrounded by a loop is totally disconnected (see~\cite{erik_federico}). Furthermore, when
$\lambda \leq 1/2$, the outer boundaries of the loop soup clusters are distributed like Conformal Loop
Ensembles (CLE$_{\kappa}$) \cite{werner1,sheffield-cle,sw} with $8/3 < \kappa \leq 4$. The latter are
conjectured to describe the scaling limit of cluster boundaries in various critical models of statistical mechanics,
such as the critical Potts models for $q \in (1,4]$.

More precisely, if $8/3 < \kappa \leq 4$, then $0 < \frac{(3 \kappa -8)(6 - \kappa)}{4 \kappa} \leq 1/2$
and the collection of all outer boundaries of the clusters of the Brownian loop soup with
intensity parameter $\lambda = \frac{(3 \kappa -8)(6 - \kappa)}{4 \kappa}$ is distributed
like $\text{CLE}_{\kappa}$ \cite{sw}. For example, the continuum scaling limit
of the collection of all macroscopic boundaries of critical Ising spin clusters is conjectured
to correspond to $\text{CLE}_3$ and to a Brownian loop soup with $\lambda=1/4$.

\medskip

\noindent {\it Gaussian Multiplicative Chaos.} The theory of Gaussian multiplicative chaos was initiated by Kahane \cite{kahane} in an attempt to define rigorously
a random measure of the form $e^{\alpha\phi(x)}\sigma(dx)$ where $\alpha>0$ is a real parameter, $\phi$ is a log-correlated, centered Gaussian field on a
domain $D$ and $\sigma$ is an independent measure on $D$. The theory has been developed and expanded extensively in recent years by various authors
(see, e.g. \cite{GMC_revisit, GMC_LQG, GMC_shamov, ComplexGMC, GMC_berestycki}) and has applications that range from mathematical finance to the study
of turbulent flows (Kahane's work was motivated by a desire to make rigorous a model of energy dissipation in turbulent flows developed by Mandelbrot \cite{mandelbrot}).
A case that has recently attracted particular attention is that of dimension two when $\phi$ is the massless Gaussian free field (GFF) (i.e., the Gaussian field whose
covariance is given by the Green's function associated with the Laplace operator), called the Euclidean (bosonic) massless free field in quantum field theory. In this case,
the random measure is called Liouville Quantum Gravity and was first discussed by Polyakov \cite{Polyakov} and others \cite{BPZ_JSP, BPZ_NP}. The Complex
Gaussian Multiplicative Chaos (CGMC) \cite{ComplexGMC} corresponds to taking a complex parameter $\alpha$. In this case, $e^{\alpha\phi}\sigma(dx)$ is no
longer a measure but can still be defined as a random field. The special case in which $\alpha$ is purely imaginary has been studied in \cite{GMC_imaginary},
where it is shown to be related to the scaling limit of the spin field of the XOR-Ising model and to the sine-Gordon field. Moreover, as pointed out in \cite{GMC_imaginary}, 
at least at a formal level, the imaginary GCM appears to play a central role in the study of so-called imaginary geometry (see \cite{MS16}).

\bigskip

\noindent{\bf Notation.} In what follows, and for the rest of the paper, every random object is defined on a common probability space $(\Omega, \mathscr{G}, \mathbb{P})$. The expectation with respect to $\mathbb{P}$ is denoted by $\mathbb{E}(\cdot)$ or by the brackets $\langle \cdot \rangle$, according to notational convenience. Whenever required by the context, we will write $\langle \cdot \rangle_D$ instead of $\langle \cdot \rangle$, in order to indicate that some underlying random field is indexed by a given domain $D$. In several parts of the paper, we will deal with three specific random models based on the {\it Brownian loop soup}, the {\it disk model} and the {\it massive Brownian loop soup} on a given domain $D$;
to emphasize the dependence on the law of the underlying model, we will sometimes denote expectation with the symbols $\langle \cdot \rangle^{loop}_D$, $\langle \cdot \rangle^{disk}_D$ and $\langle \cdot \rangle^{m}_D$ ($m$ stands for `massive'), respectively, or with the symbol $\langle \cdot \rangle^*_D$ where $*$ stands for either $loop$, $disk$ or $m$. In the notation for the random fields we discuss in this paper, we will also sometimes include the domain $D$, but only when we want to emphasize the dependence of the fields on the domain; most of the time the domain $D$ can be considered fixed and will be suppressed from the notation. By an abuse of notation, we will sometimes treat objects that are not defined pointwise as if they were. For example, we will write
$\langle V^{*}_{\lambda,\beta}(z) \rangle$ for $\lim_{\delta \to 0} \langle \delta^{-2\Delta^{loop}_{\lambda,\beta}} V^{\delta}_{\lambda,\beta}(z) \rangle$ even though
$V^{*}_{\lambda,\beta}$ is not defined pointwise.
The lowercase letter $\omega$ always denotes generic elements of the set $\Omega$.
We will use $| \cdot |$ to denote Euclidean distance.

\bigskip

\noindent {\bf Organization of the paper.}
In the next section we give some preliminaries on Poisson layering fields and the Brownian loop soup. In Section \ref{construction}, following \cite{Spin_loops_berg_camia_lis},
we prove a convergence result for a general class of fields, which contains the fields studied in this paper. In Section \ref{s:plf}, we apply the result of the previous section to the
Poisson and Gaussian layering fields that are the main focus of this paper. We also characterize the Gaussian layering fields as ``tilted'' imaginary GMC. In Section \ref{sec:chaos},
we show that the fields constructed in the previous section admit a Wiener-It\^o chaos expansion. In Section \ref{s:convtoGMC}, using the Wiener-It\^o chaos expansion presented
in the previous section, we prove that the Poisson layering fields converge to the Gaussian layering fields in the appropriate limit of the parameters $\beta \to 0$ and $\lambda \to \infty$.
The Appendix contains various technical results, including some background on Sobolev spaces (Appendix \ref{a:sobolev}) and several of the proofs of the results of
Section \ref{ss:poissononepoint} (see Appendix \ref{ss:proofponepoint}). Appendix \ref{sec:one-point-functions} contains a useful representation for the one-point functions of Poisson
and Gaussian layering fields, while in Appendix \ref{sec:conf-inv}, we prove the existence and conformal covariance of Poisson and Gaussian layering fields for general domains.

\bigskip

\noindent {\bf Acknowledgments.} Part of this paper was written while Giovanni Peccati was visiting the Division of Science of New York University at Abu Dhabi, in February 2018. This author wishes to heartily thank Federico Camia and Alberto Gandolfi for their kind hospitality and support. Giovanni Peccati is also supported by the FNR grant FoRGES (R-AGR-3376-10) at Luxembourg University. Federico Camia thanks Wei Wu for interesting discussions on the GMC and related topics. The authors thank the associate editor and four anonymous referees for their constructive remarks and useful comments and suggestions, in particular for a question that led to an improvement of Theorem \ref{MainTheorem} and for providing a shorter proof of Lemma \ref{finitenessunitdisk}.

\section{Preliminaries on Poisson layering fields and the Brownian loop soup} \label{ss:layering_field}

In order to motivate the reader -- and to fix ideas -- we will now provide a succinct presentation of a class of {\it Poisson layering fields} associated with the Brownian loop soup. Understanding the limiting behavior of such fields -- that are special elements of the general class introduced in Section \ref{construction} -- is one of the main motivations of the present paper. The layering model underlying the definition of such fields first appeared in~\cite{camia2016conformal}, where properties of its $n$-point correlation functions were studied. As explained in the introduction, such layering model  is defined by first imposing a cutoff and then removing it via a limiting procedure, but only the existence of the limit for the $n$-point functions is proved in~\cite{camia2016conformal}. The existence of a layering \emph{field}, i.e., a random generalized function whose $n$-point functions are those derived in~\cite{camia2016conformal}, will follow from Theorem~\ref{convergence} below. We observe that the Poisson layering fields discussed in this section have to be distinguished from the {\it Gaussian layering fields} introduced in Section~\ref{ss:GaussianLayering}.
	
\subsection{Loops} \label{sec:loops}
	

In this section (and for the rest of the paper) we identify
	$\mathbb C$ and ${\mathbb R}^2$, and we call any connected open subset of $\mathbb C$ a \emph{domain}.
	A \emph{(rooted) loop} $\gamma$ of time length $t_{\gamma}$ is a continuous map $\gamma : [0,t_{\gamma}] \to \IC$ with $\gamma(0)=\gamma(t_\gamma)$.
	Given a domain $D \subset \IC$, a conformal map $f : D \to \IC$, and a loop $\gamma \subset D$ (that is, such that the image of $\gamma$ is contained in $D$),
	we define $f \circ \gamma$ to be the loop $f (\gamma)$ with time parametrization given by the Brownian scaling $f \circ \gamma (s) = f(\gamma(t))$, where $s$ is defined by the relation
	\begin{equation}
	\quad s= s(t) = \int_0^{t}  | f'(\gamma(u))|^2 du,
	\end{equation}
	and $t_{f\circ \gamma} = s(t_{\gamma})$.
	As a special case, consider the mapping $\Phi_{a,b}(w)=aw+b$, $a \neq 0$: then, $\Phi_{a,b} \circ \gamma$ is the loop $\gamma$ scaled by $|a|$, rotated around the origin by $\arg a$ and
	shifted by~$b$, with time parametrization $s(t)= |a|^2t$, and  length $t_{\Phi_{a,b}\circ \gamma} = |a|^2 t_{\gamma} $. 
	An \emph{unrooted loop} is an equivalence class of loops under the equivalence relation $\gamma \sim \theta_r \gamma $ for every $r \in \IR$, where $\theta_r \gamma(s) = \gamma(s+r \ \text{mod} \ t_{\gamma})$.
We denote by $M$ the class of unrooted loops in $\R^2$ and, for a given domain $D$, we write $M_D$ to indicate the class of unrooted loops in $D$. As explained in \cite[Section 4.1]{Lawler-Werner}, the class $M$ can be made into a metric space, and the corresponding Borel $\sigma$-field is denoted by $\mathcal{M}$. For a given domain $D$, the trace $\sigma$-field of $\mathcal{M}$ associated with $M_D$ is written $\mathcal{M}_D$ (note that $\mathcal{M}_D$ is a $\sigma$-field of subsets of $M_D$). 
For our purposes, we can work with any $\sigma$-field $\mathcal{M}$ (resp. $\mathcal{M}_D$) that contains all sets of the form
\begin{eqnarray} \label{measurability}
\{ \gamma \in M: \delta \leq \diam(\gamma) \leq R, \gamma \text{ disconnects } z \text{ from } \infty \},
\end{eqnarray}	
where $\delta, R\geq 0$ and $z \in \IC$, and
\begin{eqnarray} \label{measurability-bis}
\{ \gamma \in M: \gamma \not \subseteq U, \gamma \subseteq V, \gamma \text{ disconnects } z \text{ from } \infty),
\end{eqnarray}	
where $U \subseteq V$ are two Borel subsets of $\R^2$ (resp. a Borel subset of $D$) and $z \in U$.
Any measure $\mu$ on $(M, \mathcal{M})$ or $(M_D, \mathcal{M}_D)$ is called a {\it measure on unrooted loops}.

\subsection{The Brownian loop measure and the Brownian loop soup}\label{ss:loops}
We use the symbol $\mu^{br}$ to denote the \emph{complex Brownian bridge} measure, i.e., the probability measure on loops rooted at~$0$ and of time length $1$, induced by the process $B_t=W_t-tW_1$, ${t\in[0,1]}$, where $W_t$ is a standard complex Brownian motion initialized at $0$. For $z\in \IC$ and $t>0$, we write $\mu^{br}_{z,t}$ to indicate the complex Brownian bridge measure on loops rooted at $z$ of time length $t$, i.e., the measure
	\begin{equation}
	\mu^{br}_{z,t} = \mu^{br} \circ \Phi_{\sqrt{t},z}^{-1}.
	\end{equation}
	The \emph{Brownian loop measure} is a $\sigma$-finite measure on loops given by
	\begin{equation} \label{eq:BLMeasure}
	\mu^{loop} = \int_{\IC} \int_0^{\infty} \frac{1}{2\pi t^2} \, \mu^{br}_{z,t} \, dt \, dz,
	\end{equation}
where $dz$ denotes the Lebesgue measure on $\IC$ which, for our purposes and as already mentioned, can be identified with $\R^2$.
This measure is clearly translation invariant and it is easy to check that it is scale invariant, meaning that $\mu^{loop} = \mu^{loop} \circ \Phi_{a,b}$ for any $a>0$ and $b\in\IC$.
Since $\mu^{loop}$ inherits rotational invariance from the complex Brownian motion, we actually have that $\mu^{loop} = \mu^{loop} \circ \Phi_{a,b}$ for any $a,b\in\IC$, $a\neq 0$.
To recover the full \emph{conformal invariance} of Brownian motion one has to regard the push-forward measure of $\mu^{loop}$ on the space $(M, \mathcal{M})$ of \emph{unrooted loops} defined above (as it is customary, and by a slight abuse of notation, we continue to use the symbol $\mu^{loop}$ for such a push-forward measure). If $D$ is a domain, then by $\mu^{loop}_D$ we denote the measure $\mu^{loop}$ restricted to $(M_D, \mathcal{M}_D)$. Let $D,D'$ be two simply connected domains, and let $f: D\to D'$ be a conformal equivalence. The full conformal invariance of $\mu^{loop}$ is expressed by the fact that $\mu^{loop}_{D'} \circ f =\mu^{loop}_D$. A proof of this fact can be found in~\cite{Lawler-Werner,Lawler}.
	
The \emph{Brownian loop soup} $\BLS_D=\BLS_{D,\lambda}$ with intensity parameter $\lambda >0$ is a Poissonian collection of unrooted loops with intensity measure $\lambda \mu^{loop}_D$. We write $\BLS=\BLS_{\IC}$. The Brownian loop soup inherits all invariance properties of the Brownian loop measure. In particular, for $D, D'$ and $f$ as above, one has that $\BLS_{D'}$ has the same distribution as the image of $\BLS_{D}$ under $f$.

\subsection{Poisson layering fields}	
	
Following \cite{camia2016conformal}, we now define the layering model arising from the Brownian loop soup.
Let $\bar{\gamma}$ be the \emph{hull} of the loop $\gamma$, i.e., the complement of the unique unbounded connected component of the complement of $\gamma$.
Regarding $\gamma$ as a subset of $\IC$, we say that $\gamma$ \emph{covers} $z$ if $z \in \bar{\gamma}$. We restrict our attention to bounded domains $D$ and declare
each loop in $\BLS_{D}$ to be of type $1$ or type $2$ with equal probability, independently of all other loops. For any $z \in D$, the scale invariance of the BLS implies that
the number of loops of each type covering $z \in D$ is almost surely infinite. Because of this, we introduce an ultraviolet cutoff $\delta>0$
on the size of the loops and let
\begin{equation}
\BLS^{\delta}_D =\{ \gamma \in \BLS_D:  \diam(\gamma) > \delta \}.
\end{equation}
	
Similarly, we let $\mu^{\delta}_D$ denote the measure $\mu_D$ restricted to loops of diameter larger than $\delta$. We note that $\mu^{\delta}_D$ is a \emph{finite} measure
when $D$ is bounded and $\delta>0$. This implies that, almost surely, each point $z$ is covered by only finitely many loops from $\BLS^{\delta}_D$, so that the number
$N^{\delta}_j(z)$ of loops of type $j=1,2$ covering $z \in D$ is almost surely finite. We can now define
\begin{align} \label{eq:winddef}
{N_{D}^{\delta}(z) } := N^{\delta}_1(z) - N^{\delta}_2(z), \qquad \forall z \in D.  
\end{align}
Note that the loops which do not cover $z$ do not contribute to $N^{\delta}_D(z)$, and therefore $N^{\delta}_D(z)$ is finite almost surely.

The \emph{layering field with cutoff} $\delta>0$ and parameters $\lambda, \beta >0$ is defined by
\begin{equation}
V^{\delta}(z) =V^{\delta}_{\lambda, \beta}(z) ={ e^{i \beta N_{D}^{\delta}(z)}}.
\end{equation}
The correlation functions of such random fields were explicitly studied in \cite[Theorem 4.1]{camia2016conformal} in the limit $\delta \to 0$.
There, it was proved that the one-point function $\langle V^{\delta}(z) \rangle_D = \E(V^{\delta}(z))$ decays like $\delta^{2\Delta}$, where
\begin{equation}
\Delta = \Delta^{loop}_{\lambda,\beta} =\frac{\lambda}{10}(1-\cos\beta).
\end{equation}

It is therefore natural to study the ``renormalized'' field $ \delta^{-2\Delta}V_{\lambda,\beta}^{\delta}$ as $\delta \to 0$.
Observe however that, as $\delta \to 0$ and in view of the equality $|\delta^{-2\Delta} V_{\lambda,\beta}^{\delta}(z) | = \delta^{-2\Delta}$, the field $ \delta^{-2\Delta}V_{\lambda,\beta}^{\delta}$ does not converge as a function on $D$.  As a consequence, in order to deduce well-defined convergence results, one has to regard $ \delta^{-2\Delta}V_{\lambda,\beta}^{\delta}$ as an element of a topological space larger than any classical function space. This is usually achieved by regarding $ \delta^{-2\Delta}V_{\lambda,\beta}^{\delta}$ as a random generalized function, i.e., a random continuous functional on some appropriately chosen space of test functions where the action of $ \delta^{-2\Delta}V_{\lambda,\beta}^{\delta}$ on a test function $\varphi$ is given by
\begin{equation}
\delta^{-2\Delta}V_{\lambda,\beta}^{\delta}(\varphi) =\delta^{-2\Delta} \int_D  V_{\lambda,\beta}^{\delta}(z) \varphi(z) dz.
\end{equation}
Such a strategy was successfully implemented in \cite[Section 5]{Spin_loops_berg_camia_lis} for winding fields. The aim of the next section is to establish a class of general convergence results, extending the conclusions \cite{Spin_loops_berg_camia_lis} to a large class of layering models.

\section{A general class of fields} \label{construction}

In this chapter we set up a general framework for dealing simultaneously with random fields which are exponentials of Poisson or Gaussian random measures
having a $\sigma$-finite control with infinite mass.

\subsection{Random measures}\label{ss:concepts}

Let $(A, \mathscr{A})$ be a measurable space, and let $\nu$ be a $\sigma$-finite positive measure on $(A, \mathscr{A})$. We will assume that $\nu(A) = +\infty$, and also that $\nu$ has no atoms (at the cost of some technicalities, one can remove such an assumption with essentially no bearings on our results).


\medskip

For every $\lambda>0$, we write 
\begin{eqnarray} \label{Poisson}
N_\lambda = \{ N_\lambda(B) : B\in \mathscr{A}\}
\end{eqnarray}
to indicate a {\it Poisson measure} on $(A, \mathscr{A})$ with intensity $\lambda \nu(\cdot)$. Remember that the distribution of $N_\lambda$ is completely characterized by the following two properties: (a) for every collection $B_1,...,B_n\in \mathscr{A}$ of pairwise disjoint sets, one has that the random variables $N_\lambda(B_1),..., N_\lambda(B_n)$ are stochastically independent, and (b) for every $B\in \mathscr{A}$ the random variable $N_\lambda(B)$ is distributed according to a Poisson law with parameter $\lambda\nu(B)$; {here, we adopt the standard convention that a Poisson random variable with parameter 0 (resp. $\infty$) equals 0 (resp. $\infty$) with probability one}. It is easily checked that, for $\P$-almost every $\omega$, the mapping $B\mapsto N_\lambda(B)(\omega)$ defines an integer-valued measure on $(A, \mathscr{A})$ such that, for every $a\in A$, the value of $N_\lambda(\{a\})(\omega)$ is either 0 or 1 (this last property comes from the fact that $\nu$ has no atoms). It is also convenient to introduce the quantity
	$$
	\widehat{N}_\lambda (B):= N_\lambda(B) - { \lambda} \nu(B), \quad \mbox{for every $B$ such that $\nu(B)<\infty$}.
	$$
	The mapping $B\mapsto \widehat{N}_\lambda (B)$ is called a {\it compensated Poisson measure}.

\medskip

We will also denote by 
\begin{eqnarray} \label{Gauss}
G = \{G(B) : \nu(B)<\infty\}
\end{eqnarray}
{a {\it Gaussian measure with control} $\nu$ on $(A, \mathscr{A})$. This means that $G$ is a centered Gaussian family, indexed by the elements $B$ of $\mathscr{A}$
	such that $\nu(B)<\infty$, with covariance $\E[G(B)G(C)] = \nu(B\cap C)$.}

\medskip 

 Background material on Poisson and Gaussian measures can be found in \cite{Last, Last_Penrose_PTRF, Last_Penrose_lectures, Gio_ivan, Gio_murad_taqqu}. We record the following facts (see e.g. \cite[Chapter 5]{Gio_murad_taqqu}): 
	
	\begin{itemize}
		
		\item[--] if $h\geq 0$ is measurable, then the integral $\int_A h\, dN_\lambda := N_\lambda(h)$ is $\P$-a.s. well-defined (since $N_\lambda$ is $\P$-a.s. a measure on $(A, \mathscr{A})$);
		
		\item[--] if $h\in L^1(\nu)$, then the integral $\int_A h\, dN_\lambda= N_\lambda(h) := N_\lambda(h_+) -N_\lambda(h_-) $ is $\P$-a.s. well-defined, and verifies the fundamental relation $\E[N_\lambda(h)] =\lambda \int_A h\, d\nu$;
		
		\item[--] If $h\in L^2(\nu)$, then the two stochastic integrals 
		\be
		\int_A h(x) G(dx) =: G(h)\mbox{  and  }  \int_A h(x) \widehat{N}_\lambda (dx) =: \widehat{N}_\lambda(h)
		\ee
		are well defined as limits in $L^2(\P)$ of finite sums of the type $\sum_k b_k  Z(B_k)$, where $b_k\in\R$, $B_k\in \mathscr{A}$ and $Z = G, \widehat{N}_\lambda$. In this case, one has also the crucial isometric relations
		\be
		\E[G(h)G(f)] = \frac1\lambda \E[ \widehat{N}_\lambda(h) \widehat{N}_\lambda(f)] = \int_A f(x)h(x) \nu(dx),
		\ee
		for every $f,h\in L^2(\nu)$.
	\end{itemize}



\subsection{Kernels}\label{ss:generalkernels}

\medskip

{\it We keep the notation from the previous subsection and, for the rest of Section \ref{construction}, we let $D$ denote a domain of $\IC$, which we identify with ${\mathbb R}^2$. }

\bigskip

\noindent We write $B_{w,r}$ to indicate the open disk of radius $r>0$
centered at $w\in \R^2$. Two {distinguished} points in $\R^2$ are the origin, denoted by ${\bf 0}$,
and the point $(0,1)$ denoted by ${\bf 1}$. For $z,w \in D$, we define
\begin{eqnarray} 
d_z&=& d(z, {\partial} D) \label{distance1}\\
d_{z,w}&=& \min(d(z, { \partial} D),d(z, w)) \label{distance2},
\end{eqnarray}
where $\partial D$ stands for the boundary of $D$, and $d$ is the Euclidean distance. We also indicate the diameter of a subset $U$ of $D$ by $\diam(U)$.

\medskip

In what follows, we consider collections of jointly measurable kernels of the type 
	\begin{equation}\label{e:hkernels}
	{\bf h} :=  \big\{(z,x) \mapsto h^\delta(z,x) = h_z^\delta(x) : \delta \in (0, \delta_0], \,\, (z,x) \in D\times A \big\},
	\end{equation}
	where  $\delta_0>0$ is a fixed parameter (possibly infinite, in which case $\delta \in (0, \delta_0]$ has to be read as $\delta>0$). The objects appearing in \eqref{e:hkernels} have the property of {\it  locally exploding in the $\delta\to 0$ limit}, that is: for every $z\in D$,
$x \mapsto h_z^\delta(x) \in L^1(\nu)\cap  L^2(\nu)$ for $\delta >0$, and, as $\delta \to 0$, the kernel $h_z^\delta$ converges $\nu$-almost everywhere to some $h_z$ such that  $\nu(| h_z | ) = \nu(| h_z |^2 ) = \infty$, where we adopted the standard notation $\nu(g) := \int_A  g(x) \nu(dx)$.

\medskip

\begin{definition}[Small parameter families] \label{def:spf}
{\rm Given an exploding collection of kernels ${\bf h} $ as in \eqref{e:hkernels}, a {\it small parameter family of $\nu$ controlled complex random fields} associated with ${\bf h}$
	is defined to be a collection of random objects of the type 
	\begin{equation}\label{e:v} 
	\mathbb{U} = \mathbb{U}({\bf h}) = \{U^\delta : \delta\in (0, \delta_0]\}, 
	\end{equation}
	where
\begin{equation}\label{e:spf}
 U^\delta = \{ U^{\delta}(z)=e^{i \zeta  X(h_z^\delta)} : z \in D\},
\end{equation} 
$\zeta \in \mathbb R$, and $X$ is either a Gaussian measure $G$ with control $\nu$, or a Poisson measure $N_\lambda$ with intensity $\lambda\nu$, for some $\lambda>0$.
It will be always implicitly assumed that the kernels in ${\bf h}$ are such that the mapping $(\omega, z) \mapsto U^\delta(z)(\omega)$ is jointly measurable (such an assumption is needed e.g. to apply Fubini-type arguments).
}
\end{definition}

\begin{remark}{\rm In the case of the layering fields introduced in Section~\ref{ss:layering_field}, $A$ is the space of {\it marked} loops, where each loop $\gamma$ appears in
	two copies, each with a {\it mark} $\epsilon=1$ or $-1$. In this context, letting
	$x = (\gamma, \epsilon) \in A$ and $A_{\delta}(z) = \{ \gamma : z \in  \bar\gamma, \diam(\gamma)>\delta \}$,
	$\nu$ is $\frac12 \mu^{loop}$ times the uniform measure on $\{-1,1\}$ and $h_z^{\delta}(x) = \epsilon {\bf 1}_{A_{\delta}(z)}(\gamma)$.
	As a consequence, $X(h_z^\delta)=N_{\lambda}(h_z^\delta)$ is the difference of two independent Poisson r.v.'s with intensity equal to half of the $\mu^{loop}$-mass
	of the loops surrounding $z$ that are contained in $D$ and have radius larger than $\delta$. See Section \ref{s:plf} for more details.}
	\end{remark}

The ``explosive'' nature of the kernels ${\bf h}$ of interest implies that $X(h_z^\delta)$ is not well defined for $\delta=0$ and that some care is needed when removing the cutoff $\delta$. As $\delta \to 0$, the existence of the limit of the fields $U^{\delta}$, when normalized by an appropriate power of the cutoff $\delta$, is discussed in the next section.

\subsection{A general existence result} \label{ss:existence}

In this section we identify general conditions for the existence of the limiting fields of a small parameter family of $\nu$ controlled complex random fields,
 as the {cutoff} fields are renormalized and the {cutoff} is removed. 

The next theorem is a generalization of Theorem~5.1 of~\cite{Spin_loops_berg_camia_lis}. Our formulation of Theorem \ref{convergence} has been devised in order to facilitate the connection with the formalism adopted in \cite{Spin_loops_berg_camia_lis}. For the convenience of the reader, definitions and basic facts about negative Sobolev spaces are gathered together in Section \ref{a:sobolev}.

\begin{theorem} \label{convergence}
Let $D$ be a bounded simply connected domain with a $C^1$ boundary and
let $\mathbb{U} = \{U^\delta(z) :\delta\in (0,\delta_0], z\in D\}$ be a small parameter family of $\nu$ controlled complex
random fields, as in \eqref{e:hkernels}--\eqref{e:spf}, such that
\begin{equation} \label{c:1}
U^\delta \in L^2(D,dz) \text{ for all } \delta \in (0,\delta_0] \, \mbox{ $\mathbb{P}$-a.s.}
\end{equation}
Assume that there exist $\Delta \in (0,1/2)$ and a real-valued nonnegative function $\phi$ with domain $D$ such that, for every $z, w \in D$ with $z \neq w$,
\begin{equation}
\lim_{\delta,\delta' \to 0} (\delta \delta')^{-2\Delta} \langle U^\delta(z)\overline{U^{\delta'}(w)}\rangle = \phi(z,w). \label{two_point_function_exists}
\end{equation}
Moreover, suppose that there exist a constant $c(D)<\infty$ and a real-valued nonnegative function
\begin{align*}
\mathbb{R}^+\times D\times D &\ni (\delta, z, w) \mapsto \tau_{z,w}^\delta 
\end{align*}
such that, for $0 < \delta' \leq \delta$,
\begin{equation}
0 \leq \langle U^\delta(z)\overline{U^{\delta'}(w)}\rangle \leq e^{-(\tau_{z,w}^\delta+\tau_{w,z}^{\delta'})} \label{two_point_function}
\end{equation}
with
\be
e^{-\tau^{\delta}_{z,w}} \leq c(D) \left(\frac{ d_{z,w}}{\delta}\right)^{-2 \Delta} \text{ for all }  \delta<d_{z,w}. \label{2.3}
\ee
with $d_{z,w}$ as in \eqref{distance2}.
Then, for every $\alpha >\frac{3}{2}$, the cutoff field $z\mapsto \delta^{-2\Delta} U^\delta(z)$ converges as $\delta \rightarrow 0$ in
second mean in the Sobolev space $\mathcal{H}^{-\alpha}$, in the sense that there exists a $\mathcal{H}^{-\alpha}$-valued random distribution $U$,
measurable with respect to {the $\sigma$-field generated by $X$ (as appearing in \eqref{e:spf})} and such that
\begin{equation} \label{2.4bis}
\lim\limits_{\delta \rightarrow 0}\langle \|\delta^{-2\Delta}U^\delta-U\|^2_{\mathcal{H}^{-\alpha}}\rangle = 0.
\end{equation}
\end{theorem}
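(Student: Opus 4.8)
The plan is to show that the net $\{\delta^{-2\Delta}U^\delta\}_\delta$ is Cauchy in $L^2(\P;\mathcal H^{-\alpha})$, and to identify the limit via its action on test functions. First I would recall the standard fact (collected in Section~\ref{a:sobolev}) that $\mathcal H^{-\alpha}(D)$ admits a Hilbert basis realization in which, for any sufficiently smooth random distribution $F\in L^2(D,dz)$, one has $\langle\|F\|^2_{\mathcal H^{-\alpha}}\rangle = \int_D\int_D \langle F(z)\overline{F(w)}\rangle\, G_\alpha(z,w)\,dz\,dw$ for a suitable (integrable, symmetric, nonnegative) kernel $G_\alpha$ built from the Bessel-type weights; more precisely, writing $F$ against an orthonormal basis $\{e_k\}$ of $\mathcal H^{\alpha}$ gives $\|F\|^2_{\mathcal H^{-\alpha}}=\sum_k |F(e_k)|^2$, and $\langle\|F\|^2_{\mathcal H^{-\alpha}}\rangle = \sum_k \langle|F(e_k)|^2\rangle$. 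The hypothesis \eqref{c:1} guarantees each $\delta^{-2\Delta}U^\delta$ is a genuine element of $L^2(D,dz)\subset \mathcal H^{-\alpha}$ a.s., so these manipulations are legitimate.

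Next I would write the key difference estimate. For $0<\delta'\le\delta$,
\[
\langle\|\delta^{-2\Delta}U^\delta - (\delta')^{-2\Delta}U^{\delta'}\|^2_{\mathcal H^{-\alpha}}\rangle
= \int_D\int_D \Big[(\delta\delta)^{-2\Delta}\langle U^\delta(z)\overline{U^\delta(w)}\rangle
- 2(\delta\delta')^{-2\Delta}\mathrm{Re}\,\langle U^\delta(z)\overline{U^{\delta'}(w)}\rangle
+ (\delta'\delta')^{-2\Delta}\langle U^{\delta'}(z)\overline{U^{\delta'}(w)}\rangle\Big] G_\alpha(z,w)\,dz\,dw.
\]
By \eqref{two_point_function_exists} the integrand converges pointwise (for $z\neq w$, hence $dz\,dw$-a.e.) to $\phi(z,w) - 2\mathrm{Re}\,\phi(z,w) + \phi(z,w) = 0$ as $\delta,\delta'\to 0$. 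The whole strategy then reduces to producing a dominating function so that dominated convergence applies. Here is where \eqref{two_point_function}--\eqref{2.3} enter: for $\delta<d_{z,w}$ the bound $(\delta\delta')^{-2\Delta}|\langle U^\delta(z)\overline{U^{\delta'}(w)}\rangle|\le c(D)^2 d_{z,w}^{-4\Delta}\cdot(\delta\delta')^{-2\Delta}\cdot(\delta/d_{z,w})^{2\Delta}(\delta'/d_{z,w})^{2\Delta}$, wait — more carefully, $(\delta\delta')^{-2\Delta}e^{-(\tau^\delta_{z,w}+\tau^{\delta'}_{w,z})}\le c(D)^2 (d_{z,w})^{-4\Delta}$, which is independent of $\delta,\delta'$. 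For the regime $\delta\ge d_{z,w}$ one bounds the correlation trivially by $1$ in modulus (since $|U^\delta(z)|=1$ pointwise for Poisson $X$, and for Gaussian $X$ after the convention that $U^\delta=e^{i\zeta X(h^\delta_z)}$ with the appropriate normalization — in either case the cutoff two-point function is bounded; I would instead carry the bound $(\delta\delta')^{-2\Delta}|\langle\cdots\rangle|\le c(D)^2 d_{z,w}^{-4\Delta}$ uniformly by noting that in the complementary region one may still invoke \eqref{2.3} with $\delta$ replaced by $d_{z,w}$, or handle it by the crude estimate below). The upshot is a dominating function of the form $C(D)\, d_{z,w}^{-4\Delta}\,G_\alpha(z,w)$, and since $d_{z,w}\le |z-w|$ near the diagonal, integrability over $D\times D$ requires $\int_D\int_D |z-w|^{-4\Delta}G_\alpha(z,w)\,dz\,dw<\infty$. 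With $4\Delta<2$ (from $\Delta<1/2$) and $G_\alpha$ having at worst a logarithmic/mild singularity controlled by $\alpha>3/2$ — this is precisely the Sobolev-embedding threshold that forces $\alpha>3/2$ — the double integral converges; this is the routine Sobolev-norm computation I would relegate to Section~\ref{a:sobolev}.

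Having established the Cauchy property, $L^2(\P;\mathcal H^{-\alpha})$ completeness yields a limit $U$. To see $U$ is measurable with respect to the $\sigma$-field generated by $X$, note each $\delta^{-2\Delta}U^\delta$ is (by construction \eqref{e:spf}) a measurable functional of $X$, and $L^2$-limits of such are too (passing to an a.s.-convergent subsequence). Finally, to pin down $U$ as a bona fide random distribution and verify \eqref{2.4bis} with this $U$, I would test against $\varphi\in C^\infty_0(D)$: $\delta^{-2\Delta}U^\delta(\varphi)=\delta^{-2\Delta}\int_D U^\delta(z)\varphi(z)\,dz$ is Cauchy in $L^2(\P)$ by the same two-point estimates (now integrating against $\varphi(z)\overline{\varphi(w)}$ rather than $G_\alpha$, which is even easier since $\varphi$ is bounded), and its limit agrees with $U(\varphi)$ by continuity of the pairing on $\mathcal H^{-\alpha}$.

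\textbf{Main obstacle.} The delicate point is the uniform domination of the cross term $(\delta\delta')^{-2\Delta}\mathrm{Re}\,\langle U^\delta(z)\overline{U^{\delta'}(w)}\rangle$ \emph{including the region $\delta\ge d_{z,w}$}, where \eqref{2.3} is not directly available; one must either argue that the contribution of the shrinking set $\{(z,w): |z-w|\lesssim\delta\}$ vanishes separately (using $|U^\delta|\le 1$ and $4\Delta<2$ so that $\delta^{-4\Delta}\cdot|\{|z-w|\le\delta\}|\to 0$ after integrating the mild kernel $G_\alpha$), or extend the bound \eqref{2.3} to the diagonal regime by monotonicity of $\tau^\delta_{z,w}$ in $\delta$. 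Reconciling these two regimes into a single integrable dominating function — and checking that the resulting diagonal singularity $|z-w|^{-4\Delta}$ is integrable against the Sobolev kernel precisely when $\alpha>3/2$ — is the crux of the argument; everything else is bookkeeping with Poisson/Gaussian moment formulas and the isometry relations recorded in Section~\ref{ss:concepts}.
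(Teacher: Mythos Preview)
Your overall strategy---showing the rescaled fields form a Cauchy sequence in $L^2(\P;\mathcal H^{-\alpha})$ via dominated convergence applied to the two-point function---is the paper's strategy. However, there is a concrete gap in your domination argument.

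The dominating function coming out of \eqref{two_point_function}--\eqref{2.3} is (up to constants) $(d_{z,w})^{-2\Delta}(d_{w,z})^{-2\Delta}$; note the asymmetry, since \eqref{2.3} applied to $\tau^{\delta'}_{w,z}$ produces $d_{w,z}^{-2\Delta}$, not another factor of $d_{z,w}^{-2\Delta}$. Recall $d_{z,w}=\min(d(z,\partial D),|z-w|)$, so this function has \emph{two} singularities: the diagonal $z=w$ and the boundary $\partial D$. Your reduction ``since $d_{z,w}\le|z-w|$ \ldots\ integrability requires $\int|z-w|^{-4\Delta}G_\alpha<\infty$'' goes the wrong way: $d_{z,w}\le|z-w|$ gives $d_{z,w}^{-2\Delta}\ge|z-w|^{-2\Delta}$, so controlling $\int|z-w|^{-4\Delta}$ does \emph{not} control $\int(d_{z,w}d_{w,z})^{-2\Delta}$. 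The boundary singularity is genuine and is exactly where the $C^1$-boundary hypothesis enters. The paper handles it by conformally mapping $D$ to $\mathbb D$, invoking Koebe's quarter theorem to compare $d_{z,w}$ on $D$ with $d^{\mathbb D}_{z,w}$ on the disk, and using that $f'$ extends continuously to $\overline{\mathbb D}$ (hence $\|f'\|_\infty<\infty$) to control the Jacobian; on $\mathbb D$ one then checks directly that $\int_{\mathbb D^2}(d^{\mathbb D}_{z,w}d^{\mathbb D}_{w,z})^{-2\Delta}<\infty$ for $\Delta<1/2$.

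Separately, the paper does not introduce a kernel $G_\alpha$ but instead uses the eigenfunction bound $\|f_i\|_{L^\infty}\le c\lambda_i^{1/4}$ to factor the estimate as $\big(\int_{D^2}|\langle\cdots\rangle|\,dz\,dw\big)\cdot\sum_i c^2\lambda_i^{-(\alpha-1/2)}$, with the second factor finite by Weyl's law precisely when $\alpha>3/2$. Your kernel formulation is equivalent in spirit, but justifying that $G_\alpha$ is bounded would require the same two ingredients. Finally, what you flag as the ``main obstacle'' (the regime $\delta\ge d_{z,w}$) is in fact the easy part: since $\tau^\delta_{z,w}\ge 0$, one has $\delta^{-2\Delta}e^{-\tau^\delta_{z,w}}\le\delta^{-2\Delta}\le d_{z,w}^{-2\Delta}$ directly. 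The real obstacle is the boundary integrability you overlooked.
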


\begin{proof}
The proof is obtained  by extending the techniques used in the proofs of Theorem 5.1 and Proposition 5.3 of \cite{Spin_loops_berg_camia_lis}.
For any $z\neq w$, the limit $ \lim\limits_{\delta,\delta' \rightarrow 0}(\delta\delta')^{-2\Delta}\langle U^\delta(z)\overline{U^{\delta'}(w)}\rangle$ exists pointwise
by \eqref{two_point_function_exists}.
We will now show that convergence in $L^1(D\times D)$ also holds, by virtue of the Bounded Convergence Theorem. For this purpose, note that if $\delta \geq d_{z,w}$ we have
$
\delta^{-2\Delta}e^{-\tau^\delta_{z,w}} \leq d_{z,w}^{-2\Delta} \label{large_delta}
$  
as $\tau_{z,w}^\delta \geq 0$; then,
for any $z,w \in D$ {and any $\delta, \delta'$}, from \eqref{two_point_function}, \eqref{2.3}, \eqref{large_delta}, we infer that
\begin{eqnarray}
(\delta\delta')^{-2\Delta}\langle U^\delta(z)\overline{U^{\delta'}(w)}\rangle & \leq & (\delta\delta')^{-2\Delta}e^{-(\tau_{z,w}^\delta+\tau_{w,z}^{\delta'})}\\
& \leq & (c(D)\vee 1)^2(d_{z,w}d_{w,z})^{-2\Delta}.
\end{eqnarray}
Hence, in order to prove $L^{1}(D \times D)$ convergence, it suffices to show that
\begin{equation} \label{convergent_integral}
\int_{D}\int_{D}(d_{z,w}d_{w,z})^{-2\Delta} dz dw < \infty.
\end{equation}
Denote by $\mathbb{D}$ the open unit disk. Let $f:\mathbb{D} \rightarrow D$ be a conformal equivalence map and $d_{z,w}^\mathbb{D}=d(z,\partial \mathbb{D}\cup\{w\})=|z-w|\wedge(1-|z|)$. From the  Koebe quarter theorem  \cite[Theorem 1.3]{pommerenke}, $d_{z,w}^{\mathbb{D}}|f'(z)|\leq4d_{f(z),f(w)}$. Note that, since $D$ has a $C^1$ smooth boundary, $f'$ has a continuous extension on to $\overline{\mathbb{D}}$ and hence $\|f'\|_{L^\infty(\mathbb{D})}<\infty$, see  \cite[Theorem 3.5]{pommerenke}. By the corresponding change of variables, the integral in \eqref{convergent_integral} is equal to
\begin{align}
\int_{\mathbb{D}}\int_{\mathbb{D}}(d_{f(z),f(w)}d_{f(w),f(z)})^{-2\Delta}|f'(z)f'(w)|^2 dz dw \\
\leq 4^{4\Delta}\|f'\|^{4-4\Delta}_{L^\infty(\mathbb{D})}\int_{\mathbb{D}}\int_{\mathbb{D}}(d_{z,w}^{\mathbb{D}}d_{w,z}^{\mathbb{D}})^{-2\Delta}dzdw
\end{align} 
with the inequality holding as $\Delta < 1/2$.
Note that $(d^\mathbb{D}_{z,w})^{-2\Delta}\leq |z-w|^{-2\Delta}+(1-|z|)^{-2\Delta}$ and  whenever $\Delta<\frac{1}{2}$ we have
\begin{align}
\int_{\mathbb{D}}|z-w|^{-4\Delta} dz \leq \int_{|z-w|\leq 2} |z-w|^{-4\Delta} dz = \int_{0}^{2\pi}\int_{0}^{2}r^{1-4\Delta} dr d\theta <\infty.
\end{align}  
Then, a straightforward calculation (or Lemma \ref{finitenessunitdisk} in the appendix) shows that
\begin{equation}
\int_{\mathbb{D}}\int_{\mathbb{D}}(d_{z,w}^{\mathbb{D}}d_{w,z}^{\mathbb{D}})^{-2\Delta} dz dw<\infty
\end{equation}
whenever $\Delta<\frac{1}{2}$, from which \eqref{convergent_integral} follows.
 
We now show that $\delta^{-2\Delta}U^\delta$ is a Cauchy sequence {in the Banach space $\mathbb{S} := \mathbb{S}(\alpha, D)$ of $\mathcal{H}^{-\alpha}$-valued square-integrable random elements that are measurable with respect to the $\sigma$-field generated by $X$}. Notice that, {for every $z$, the random variable} $\delta^{-2\Delta}U^\delta{ (z)}$ is {square-integrable}, as it is bounded. Let $\{f_i\}$ denote an eigenbasis of the Laplacian on $D$ {with Dirichlet boundary conditions}, and let $\lambda_1\leq \lambda_2\leq \cdots \to \infty$ be the corresponding sequence of eigenvalues. Then, writing $\delta^{-2\Delta}U^\delta =: \tilde{U}^\delta$ to simplify the notation,

\begin{align} \label{Caychy_seq}
&	\langle\|\tilde{U}^\delta- \tilde{U}^{\delta'}\|_{\mathcal{H}^{-\alpha}}^2\rangle_D \nonumber \\
	&= \sum\limits_{i}\frac{1}{\lambda_i^\alpha}\Big\langle\Big|\int_{D}\big(\tilde{U}^\delta(z)-\tilde{U}^{\delta'}(z)\big)\overline{f_i(z)} dz \Big|^2\Big\rangle_D,\\
	&= \sum\limits_{i}\frac{1}{\lambda_i^\alpha} \Big\langle\int_{D}\int_{D}(\tilde{U}^\delta(z)-\tilde{U}^{\delta'}(z))\overline{f_i(z)(\tilde{U}^\delta(w)-\tilde{U}^{\delta'}(w))} f_i(w) dz dw \Big\rangle_D \\
&	\leq \left(\int_{D}\int_{D}\big|\big\langle(\tilde{U}^\delta(z)-\tilde{U}^{\delta'}(z))\overline{(\tilde{U}^\delta(w)-\tilde{U}^{\delta'}(w))} \big\rangle_D \big| dz dw \right)\sum_{i}\frac{c^2}{\lambda_i^{\alpha-\frac{1}{2}}}. 
\end{align}
Note that the last inequality follows from the uniform bound on the norm $\|f_i\|_{L^\infty(D)}\leq c\lambda_i^{\frac{1}{4}}$, see Theorem~1 of \cite{grieser}.
The sum in the last expression is finite whenever $\alpha>\frac{3}{2}$. This is seen by using Weyl's law \cite{weyl}, which says that the number of eigenvalues $\lambda_i$,
that are less than $\ell$ is proportional to $\ell$ with an error of $o(\ell)$. From the $L^1(D\times D)$ convergence of
$(\delta\delta')^{-2\Delta}\langle U^\delta(z)\overline{U^{\delta'}(w)}\rangle$ and, by inspection of the form of the integrand in \eqref{Caychy_seq}, one deduces that
\be
\langle\|\tilde{U}^\delta- \tilde{U}^{\delta'}\|_{\mathcal{H}^{-\alpha}}^2\rangle_D \to 0
\ee
as $\delta,\delta' \to 0$. This implies that $\delta^{-2\Delta}U^\delta$ is a Cauchy sequence in $\mathbb{S}(\alpha, D)$.
From the completeness of {$\mathbb{S}$}, there is $U \in \mathbb{S}(\alpha, D)$ such that
$\langle\| \delta^{-2\Delta}U^\delta - U \|^2_{\mathcal{H}^{-\alpha}}\rangle_D\rightarrow 0$ as $\delta \rightarrow 0$.
\end{proof}


\begin{remark}
{\rm \begin{enumerate}

\item[]{\empty}

\item Theorem \ref{convergence} can be used in order to study the convergence of Poisson winding models, similarly to \cite[Section 5]{Spin_loops_berg_camia_lis}. In the next section, we will use such a result to extend the analysis of \cite{Spin_loops_berg_camia_lis} to the case of Poisson layering models. 

\item As shown in \cite{Spin_loops_berg_camia_lis}, the conclusion of Theorem \ref{convergence} does not hold for the winding model when $\Delta \geq 1/2$, so the condition
$\Delta \in (0,1/2)$ cannot be removed from the theorem without further assumptions.

\end{enumerate}
}
\end{remark}

\section{Layering fields} \label{s:plf}

{\it Throughout Section \ref{s:plf}, we let $D \subset \IC$ denote a domain conformally equivalent to the unit disk $\D$. We also recall the definition of the measurable spaces of loops $(M, \mathcal{M})$ and $(M_D, \mathcal{M}_D)$ given in Section \ref{sec:loops}, and use the convention that, if $\mu^*$ is a measure on $(M, \mathcal{M})$, then $\mu^*_D$ indicates the restriction of $\mu^*$ to $(M_D, \mathcal{M}_D)$.}

\medskip

The aim of the present section is to define a class of layering fields -- containing the objects discussed in Section \ref{ss:layering_field} as special cases -- to which the content of Theorem \ref{convergence} directly applies. In order not to disrupt the flow of our discussion, proofs are deferred to Section \ref{ss:proofponepoint}.

\subsection{Three measures on unrooted loops}\label{ss:curvemeasures}

%
%
%

The three examples of a measure $\mu^*$ on the space $(M, \mathcal{M})$ (of unrooted loops) that are more relevant for the present paper are:
\begin{itemize}
\item[(I)] the {\bf Brownian loop measure}, in which $\mu^*=\mu^{loop}$, as defined
in \cite{Lawler-Werner} and summarized in Section \ref{ss:layering_field} (in this case, $\mu^{loop}$ has full conformal invariance); 
\item[(II)] the {\bf scale invariant disk distribution}, see \cite{FK,erik_federico},
in which $\mu^*=\mu^{disk}$ is defined as follows: consider the set
\be
\mathscr D=\{B: B = B_{y,r}, 
\text{ for some } y \in \mathbb R^2 \text{ and } r \in \mathbb R^+ \};
\ee
then $\mu^{disk}$ is the image under the embedding $B \mapsto \partial B$
(one can think of this embedding as a mapping from $\mathscr D$ to the space obtained from $M$ by identifying two loops if one can be obtained from the other by changing its orientation),
of the measure $ dy \times \frac{dr}{r^3}$ on
(the Borel $\sigma$-algebra of) $\mathbb R^2 \times \mathbb R^+$;
$\mu^{disk}$ is invariant under scaling and rigid motions, but not 
under the full conformal group of transformations;
\item[(III)] the {\bf massive Brownian loop measure}, see \cite[Section 2.3]{camia_notes}: here, a 
bounded mass function $m: \mathbb R^2 \rightarrow
\mathbb R^+, m \leq \overline m,$ is considered, and the corresponding measure $\mu^*=  \mu^m$ is such that
\be
d\mu^m(\gamma) = e^{-R_m(\gamma)} d\mu^{loop}(\gamma)
\ee
where $R_m(\gamma)=\int_0^{t_{\gamma}} m^2(\gamma(t))dt$  (where $\gamma$ is any element of the equivalence class corresponding to a given unrooted loop and $t_{\gamma}$ denotes the time length of $\gamma$).
Observe that there is another representation for the massive Brownian loop measure,
obtained by taking one realization of the BLS, 
 then a mean one exponential random variable  $T_{\gamma}$
for each loop, and, finally, removing the loops for which 
$\int_0^{t_{\gamma}} m^2(\gamma(t))dt > T_{\gamma}$
(see \cite[Proposition 2.9]{camia_notes}).
The massive Brownian loop measure $\mu^m$
is not scale invariant; on the other hand, the mass of the loops
with diameter bounded from below and which wind
around a point is finite, even when one does consider the full measure $\mu^m$ (without restricting it to $M_D$).


\end{itemize}

%
%

We will typically deal with some particular {sets} of loops, 
so we introduce some ad hoc notation, adapted  from \cite{camia2016conformal}.
Each {set of interest is} identified by the symbol $A$, {decorated with appropriate} indices; {such sets
 are all measurable since our reference $\sigma$-field contains sets of the type \eqref{measurability}.} {The (possibly decorated) symbol $\alpha^*$ is used below to build expressions useful for denoting the measure $\mu^*$ of a given set; we stress that, even if the measure $\mu^*$ appearing below is a generic measure on loops, in this paper the star $*$ has to be mainly regarded as a placeholder for either one of the three symbols, $loop$, $disk$ and $m$. Here is our notation:} 
\begin{itemize}
\item[-] for two real values $\delta$ and $R$ with $0\leq  \delta<R$, and for $z \in \R^2$, we let
	\begin{eqnarray} \label{alpha1}
	\alpha^*_{\delta , R}(z):= \mu^{*}(A_{\delta , R}(z)) := \mu^{*}(\gamma: z \in \overline \gamma, \delta \leq \diam(\gamma) \leq R) 
	\end{eqnarray}
	(note that, in the previous display and similarly for the forthcoming definitions, the last equality implicitly defines the set $A_{\delta , R}(z)$);
\item [-]
for a real number $\delta \geq 0 $, $z\in D$ and a set $V \subseteq D$, we let
\begin{eqnarray} \label{alpha2}
\alpha^*_{\delta , V}(z):= \mu^{*}(A_{\delta , V}(z) )&:=&  \mu^{*}(\gamma: z \in \overline \gamma, \delta \leq \diam(\gamma), \gamma \subseteq V)\\&=&  \mu_D^{*}(\gamma: z \in \overline \gamma, \delta \leq \diam(\gamma), \gamma \subseteq V)\notag
\end{eqnarray}
(it will be always be clear from the context whether an expression of the type $\alpha^*_{\delta , Q}$ refers to a real number $Q>0$ or to a set $Q\subseteq D$);

\item[-] for real numbers $0 \leq \delta' < \delta $, points $z,w\in D$ and a set $V \subseteq D$, we let
\begin{eqnarray} \label{alpha3}
\alpha^*_{\delta' , \delta, V}(z,w)&:=&\mu^{*}(A_{\delta' , \delta, V}(z,w))
= \mu_D^{*}(A_{\delta' , \delta, V}(z,w)) \\
&:=&\mu^{*}(\gamma: \delta' \leq \diam(\gamma) \leq \delta, z, w \in \overline \gamma, \gamma \subseteq V);
\nonumber
\end{eqnarray}
\item[-] for two sets $U, V$ such that $U \subseteq V \subseteq D$ and for $z\in D$ we let 
\begin{eqnarray} \label{alpha4}
\alpha^*_{U,V}(z):= \mu^{*}(A_{U,V}(z)) =  \mu_D^{*}(A_{U,V}(z)):= \mu^{*}(\gamma: z \in \overline \gamma, \gamma \not \subseteq U, \gamma \subseteq V);
\end{eqnarray}
\item[-]  for two points $z, t \in D$ and $\delta>0$
\begin{eqnarray} \label{alpha6}
\alpha^*_{D}(z,t) &:=&\mu^{*}(A_{D}(z,t)):= \mu^{*}(\gamma: z,t \in \overline \gamma, \gamma \subseteq D),\\
\alpha^*_{\delta, D}(z,t) &:=&\mu^{*}(A_{\delta, D}(z,t)):= \mu^{*}(\gamma: z,t \in \overline \gamma, \delta \leq \diam(\gamma),  \gamma \subseteq D),\label{e:floris}
\end{eqnarray}
and
\begin{eqnarray} \label{alpha7}
	\alpha^*_{\delta, D}(z|t):=\mu^{*}(A_{\delta, D}(z|t)) := \mu^{*}(\gamma: z \in \overline \gamma, t \not \in  \overline \gamma, \delta \leq \diam(\gamma), \gamma \subseteq D).
\end{eqnarray}
\item[-] for a set $V$  we write
\begin{eqnarray} \label{alpha9}
\alpha^*_{\neg V}(z) := \mu^{*}(A_{\neg V}(z))=
 \mu^{*}(\gamma: z \in \overline \gamma, \gamma \not \subseteq  V)
\end{eqnarray}
 and
\begin{eqnarray} \label{alpha10}
\alpha^*_{\neg V}(z|w) =  \mu^{*}(A_{\neg V}(z|w) ):=
 \mu^{*}(\gamma: z \in \overline \gamma, w \notin \overline \gamma, \gamma \not \subseteq  V)
\end{eqnarray}
so that, for instance,
$\alpha^*_{\neg B_{{\bf 0},1}}({\bf 0}|{\bf 1}) $
is the measure of the set of loops not restricted to the disk of radius $1$ centered at the origin,
containing the origin but not the point $(0,1)$.
\end{itemize}

\medskip

\begin{remark}{\rm \begin{enumerate}

\item[]{\empty}

\item As already observed, in the three cases $\mu^{*} = \mu^{loop}, \mu^{disk}, \mu^{m}$, using the notation \eqref{alpha2}, we have
\begin{eqnarray} \label{alpha11}
\alpha_{0,D}^*(z)= \mu_D^{*}(\gamma: z \in \overline \gamma,  \gamma \subseteq D) = \infty,  \text{ for all } z \in D,
\end{eqnarray}
that is: for the Brownian loop measure, the disk distribution and the massive Brownian loop measure, $\mu^*$ charges with infinite mass the set of loops in $D$ covering any fixed point $z\in D$. In general, the behaviour of the function $\alpha^{*}_{\delta,R}(z)$ for small $\delta$'s is needed to identify the conformal dimensions of the fields of interest.

\item To further illustrate the use of the notation, let us recall that the following equalities are shown in \cite[Lemma A.1]{camia2016conformal}:
\begin{eqnarray} \label{alpha5}
	\alpha^{loop}_{B_{z,\delta},B_{z,R}}(z) = \alpha^{loop}_{\delta,R}(z) = \frac{1}{5} \log \frac{R}{\delta}.
\end{eqnarray}
Both equalities are used in the present paper. A straightforward adaptation of \cite[Lemma A.1]{camia2016conformal} yields the {next} equality (see also \cite[Section 3.1]{FK}):
\begin{eqnarray} \label{alpha8}
	\alpha^{disk}_{B_{z,\delta},B_{z,R}}(z) = \alpha^{disk}_{\delta,R}(z) = \pi \log \frac{R}{\delta}.
\end{eqnarray}

\end{enumerate}
}
\end{remark}

\medskip

\subsection{Some useful one- and two-point computations}\label{ss:poissononepoint}

In this section we discuss the divergence of the measures we are interested in. In particular, we show that the divergence of the massive Brownian loop measure
$\mu^{m}$ is the same as that of $\mu^{loop}$. This is a consequence of the fact that the divergence of both $\mu^{m}$ and $\mu^{loop}$ is due to small loops,
whose measure is not much affected by a bounded mass function $m$.
Some of the results in this section can be expressed in terms of the measure $\hat\mu$ defined by 
\begin{equation} \label{e:muhat}
d\hat\mu(\gamma) = \big( 1 - e^{-R_m(\gamma)} \big) d\mu^{loop}(\gamma).
\end{equation}
Note that, if $A$ is such that $\mu^{loop}(A)<\infty$, then $\hat\mu(A)=\mu^{loop}(A)-\mu^m(A)$.
In order to make our discussion more streamlined, the proofs of the results of this section are gathered together in Section \ref{ss:proofponepoint}.

\medskip


\begin{lemma}\label{MassiveOnePointFunctionEstimate}
Given a bounded mass function $m \leq \overline m \in \mathbb R$, we have that 
for $z \in D$, $0<\delta<R$, and some $c=c(z, \overline m, D)>0$,
\begin{eqnarray} 
\alpha^{loop}_{\delta, R}(z) \geq \alpha^{m}_{\delta, R}(z)
& \geq & \alpha^{loop}_{\delta, R}(z) - 2R\Big( \frac{\overline m^2}{5} \log{2} + 1 \Big) \label{MassiveAlphaEstimate1} \\
\lim_{\delta \to 0} \Big( \alpha^{loop}_{\delta,R}(z) -\alpha^{m}_{\delta,R}(z) \Big) & = & \hat\mu(\gamma: z \in \bar\gamma, \diam(\gamma) \leq R)
\nonumber \\
& \leq & 2R \Big( \frac{\overline m^2}{5} \log{2} + 1 \Big) < \infty
\label{MassiveAlphaEstimate2} \\
\lim_{\delta \to 0} \Big( \alpha^{loop}_{\delta,D}(z) -\alpha^{m}_{\delta,D}(z) \Big) & = & \hat\mu(\gamma: z \in \bar\gamma, \gamma \subset D) \leq c < \infty. \label{MassiveAlphaEstimate3}
\end{eqnarray}
\end{lemma}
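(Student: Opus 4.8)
\emph{Plan.}
The approach is to reduce the whole statement to a single quantitative bound on $\hat\mu$, and the starting point is the identity $\mu^{loop}=\mu^{m}+\hat\mu$ as measures on $(M,\mathcal{M})$, which holds because $d\mu^{m}(\gamma)=e^{-R_m(\gamma)}\,d\mu^{loop}(\gamma)$ and $d\hat\mu(\gamma)=(1-e^{-R_m(\gamma)})\,d\mu^{loop}(\gamma)$. Hence on any set $A$ with $\mu^{loop}(A)<\infty$ one may subtract and write $\hat\mu(A)=\mu^{loop}(A)-\mu^{m}(A)$. Applying this with $A=A_{\delta,R}(z)$, which by \eqref{alpha5} has $\mu^{loop}$-mass $\tfrac15\log(R/\delta)<\infty$, yields $\alpha^{loop}_{\delta,R}(z)-\alpha^{m}_{\delta,R}(z)=\hat\mu(A_{\delta,R}(z))\ge 0$, which is the left inequality in \eqref{MassiveAlphaEstimate1}; and since $A_{\delta,R}(z)\uparrow A_{0,R}(z):=\{\gamma:z\in\bar\gamma,\ \diam(\gamma)\le R\}$ as $\delta\downarrow 0$, monotone convergence gives the equality claimed in \eqref{MassiveAlphaEstimate2}. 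Everything then comes down to the estimate
\begin{equation*}
\hat\mu\big(\gamma:z\in\bar\gamma,\ \diam(\gamma)\le R\big)\ \le\ 2R\Big(\tfrac{\overline m^{2}}{5}\log 2+1\Big),
\end{equation*}
because the right inequality in \eqref{MassiveAlphaEstimate1} follows from $\alpha^{m}_{\delta,R}(z)=\alpha^{loop}_{\delta,R}(z)-\hat\mu(A_{\delta,R}(z))\ge\alpha^{loop}_{\delta,R}(z)-\hat\mu(A_{0,R}(z))$, and the displayed bound is precisely the one stated in \eqref{MassiveAlphaEstimate2}.

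To prove this estimate I would use $1-e^{-R_m(\gamma)}\le\min\!\big(1,\overline m^{2}t_\gamma\big)$, which follows from $R_m(\gamma)=\int_0^{t_\gamma}m^2(\gamma(s))\,ds\le\overline m^{2}t_\gamma$, and then decompose dyadically in the diameter: for $k\ge 0$ set $S_k=\{\gamma:z\in\bar\gamma,\ 2^{-k-1}R<\diam(\gamma)\le 2^{-k}R\}$, so that the set in question equals $\bigsqcup_{k\ge0}S_k$ up to a $\mu^{loop}$-null set of degenerate loops and, by \eqref{alpha5}, $\mu^{loop}(S_k)=\alpha^{loop}_{2^{-k-1}R,\,2^{-k}R}(z)=\tfrac15\log 2$. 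Inside $S_k$ I would split according to the time length. On $\{t_\gamma\le 2^{-k}R\}$ one has $\min(1,\overline m^{2}t_\gamma)\le\overline m^{2}2^{-k}R$, so that part contributes at most $\overline m^{2}2^{-k}R\cdot\tfrac15\log 2$. On $\{t_\gamma>2^{-k}R\}$ I would use only $1-e^{-R_m}\le 1$ and estimate the $\mu^{loop}$-mass of this set through the rooted representation \eqref{eq:BLMeasure}: a loop with $z\in\bar\gamma$ and $\diam(\gamma)\le\rho$ lies inside $\overline{B_{z,\rho}}$ (its hull is contained in the convex hull of $\gamma$, which has diameter $\diam(\gamma)$), so its root ranges over a set of area $\le\pi\rho^2$; with $\rho=2^{-k}R$ and the trivial bound $\mu^{br}_{y,t}(\,\cdot\,)\le 1$ one gets $\pi(2^{-k}R)^2\int_{2^{-k}R}^{\infty}\tfrac{dt}{2\pi t^2}=\tfrac12\,2^{-k}R$. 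Adding the two parts, the $k$-th shell contributes at most $2^{-k}R\big(\tfrac{\overline m^{2}}{5}\log 2+\tfrac12\big)$, and summing the geometric series $\sum_{k\ge0}2^{-k}=2$ gives the desired bound (in fact with $1$ improved to $\tfrac12$).

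The same mechanism gives \eqref{MassiveAlphaEstimate3}: for $D$ bounded, $A_{\delta,D}(z):=\{\gamma:z\in\bar\gamma,\ \diam(\gamma)\ge\delta,\ \gamma\subseteq D\}$ has $\mu^{loop}$-mass $\le\tfrac15\log(\diam(D)/\delta)<\infty$, so $\alpha^{loop}_{\delta,D}(z)-\alpha^{m}_{\delta,D}(z)=\hat\mu(A_{\delta,D}(z))$; letting $\delta\downarrow 0$ and using monotone convergence, the limit equals $\hat\mu(\gamma:z\in\bar\gamma,\ \gamma\subseteq D)\le\hat\mu(\gamma:z\in\bar\gamma,\ \diam(\gamma)\le\diam(D))\le 2\,\diam(D)\big(\tfrac{\overline m^{2}}{5}\log 2+1\big)=:c<\infty$. (For an unbounded $D$ conformally equivalent to $\mathbb D$ the same scheme works once one notes, using the conformal invariance of $\mu^{loop}$, that loops staying in $D$, covering $z$ and of diameter at least $\delta$ have finite $\mu^{loop}$-mass; this is what forces $c$ to depend on $z$.) The one genuinely delicate point is the estimate in the previous paragraph, and specifically the treatment of loops with small spatial diameter but long time length: the bound $1-e^{-R_m}\le\overline m^{2}t_\gamma$ is useless uniformly over such loops (the corresponding integral over all loops covering $z$ diverges), whereas the crude bound $1-e^{-R_m}\le 1$ is not summable over the dyadic diameter scales. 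Balancing these by splitting each shell $S_k$ at a time threshold comparable to the diameter scale $2^{-k}R$, so that the per-scale contribution decays like $2^{-k}$, is the crux of the argument.
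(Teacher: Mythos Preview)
Your proposal is correct and follows essentially the same route as the paper: dyadic decomposition in the diameter, splitting each shell according to whether the time length $t_\gamma$ exceeds the diameter scale, bounding the short-time part via $1-e^{-R_m(\gamma)}\le \overline m^{2}t_\gamma$ and the long-time part via the rooted representation \eqref{eq:BLMeasure}. The paper isolates your long-time estimate as a separate lemma (Lemma~\ref{concentration_Brownian}) and works with $\mu^{loop}-\mu^{m}$ rather than $\hat\mu$ directly, but the substance is identical; your formulation in terms of $\hat\mu$ and monotone convergence is in fact slightly cleaner, and your bound for \eqref{MassiveAlphaEstimate3} in the bounded-domain case yields a constant independent of $z$.
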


For later purposes, it is useful to introduce the following notation:
$\hat\alpha_{0,R}(z) := \hat\mu(\gamma: z \in \bar\gamma, \diam(\gamma) \leq R)$
and $\hat\alpha_{0,D}(z) := \hat\mu(\gamma: z \in \bar\gamma, \gamma \subset D)$, where the measure $\hat\mu$ has been defined in \eqref{e:muhat}.

We also introduce the concept of \emph{thinness} of a loop soup, taken from \cite{NW11} (see Lemma 2 there). We say that $\mu^*$ is \emph{thin}
if there exists $0<R_0<\infty$ such that $\mu^*(\gamma: \gamma \cap \D \neq \emptyset, \diam(\gamma) \geq R_0) < \infty$. We say that a loop soup with
intensity measure $\lambda\mu^*$ is \emph{thin} if $\mu^*$ is thin.
An equivalent formulation, the one we will use in this paper, is that $\lim_{R \to \infty} \mu^*(\gamma: \gamma \cap \D \neq \emptyset, \diam(\gamma) \geq R) = 0$.
The thinness of the BLS is proved in \cite{NW11}. The thinness of the disk model and the massive BLSs can be proved easily, as shown below.
\begin{lemma}\label{lemma:disk-thin}
The measures $\mu^{disk}$ and $\mu^m$ are thin.
\end{lemma}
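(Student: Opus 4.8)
The plan is to verify the thinness criterion $\lim_{R\to\infty}\mu^*(\gamma:\gamma\cap\mathbb D\neq\emptyset,\ \diam(\gamma)\geq R)=0$ directly for each of the two measures, treating the disk model first and then reducing the massive case to the already-known thinness of $\mu^{loop}$.

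\medskip

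\noindent\textbf{The disk model.} First I would observe that a disk $B_{y,r}$ with $\partial B_{y,r}$ having diameter $2r\geq R$ (so $r\geq R/2$) can intersect $\mathbb D$ only if $|y|\leq 1+r$. Hence, using $d\mu^{disk}=dy\,\frac{dr}{r^3}$,
\[
\mu^{disk}(\gamma:\gamma\cap\mathbb D\neq\emptyset,\ \diam(\gamma)\geq R)
\leq \int_{R/2}^{\infty}\frac{1}{r^3}\Big(\int_{|y|\leq 1+r}dy\Big)\,dr
=\int_{R/2}^{\infty}\frac{\pi(1+r)^2}{r^3}\,dr .
\]
The integrand is $O(1/r)$ as $r\to\infty$... which would make the integral \emph{divergent}, so I must be more careful: a boundary circle $\partial B_{y,r}$ intersects $\mathbb D$ (viewed as a curve, i.e.\ the circle itself meets the unit disk) only if the circle passes within distance $1$ of the origin, i.e.\ $\big||y|-r\big|\leq 1$. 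This confines $y$ to an annulus of inner radius $r-1$ and outer radius $r+1$, of area $\pi\big((r+1)^2-(r-1)^2\big)=4\pi r$. Therefore
\[
\mu^{disk}(\gamma:\gamma\cap\mathbb D\neq\emptyset,\ \diam(\gamma)\geq R)
\leq \int_{R/2}^{\infty}\frac{4\pi r}{r^3}\,dr
=\int_{R/2}^{\infty}\frac{4\pi}{r^2}\,dr
=\frac{8\pi}{R}\ \xrightarrow[R\to\infty]{}\ 0 .
\]
This settles thinness of $\mu^{disk}$; it also shows $R_0$ can be taken to be any positive number.

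\medskip

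\noindent\textbf{The massive model.} Here the key point is the pointwise domination $d\mu^m(\gamma)=e^{-R_m(\gamma)}\,d\mu^{loop}(\gamma)\leq d\mu^{loop}(\gamma)$, since $R_m(\gamma)=\int_0^{t_\gamma}m^2(\gamma(t))\,dt\geq 0$. Consequently, for every $R>0$,
\[
\mu^m(\gamma:\gamma\cap\mathbb D\neq\emptyset,\ \diam(\gamma)\geq R)
\leq \mu^{loop}(\gamma:\gamma\cap\mathbb D\neq\emptyset,\ \diam(\gamma)\geq R),
\]
and the right-hand side tends to $0$ as $R\to\infty$ by the thinness of the Brownian loop soup, proved in~\cite{NW11}. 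Hence $\mu^m$ is thin as well. (The same argument of course shows $\hat\mu$ is thin, but that is not needed here.)

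\medskip

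\noindent\textbf{Main obstacle.} The analysis is essentially elementary once the correct geometric constraint is identified; the only place where care is genuinely required is the disk-model estimate, where one must use the curve-intersection condition $\big||y|-r\big|\leq 1$ (giving an annulus of area $O(r)$) rather than the naive containment condition $|y|\leq 1+r$ (area $O(r^2)$), the latter producing a non-integrable bound. For the massive case there is no real obstacle: it is an immediate monotonicity comparison with $\mu^{loop}$, whose thinness is already available.
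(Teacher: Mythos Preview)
Your proof is correct. The massive case is identical to the paper's: both simply dominate $\mu^m$ by $\mu^{loop}$ and cite \cite{NW11}.

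For the disk model, your argument differs from the paper's. You verify the stated definition of thinness directly: a large circle $\partial B_{y,r}$ meets $\mathbb D$ only if $\big||y|-r\big|\le 1$, which confines $y$ to an annulus of area $4\pi r$, yielding the bound $\int_{R/2}^\infty 4\pi r\,r^{-3}\,dr=8\pi/R$. The paper instead computes the related quantity $\mu^{disk}\big(\mathbf{0}\in\bar\gamma,\ \mathbf{1}\notin\bar\gamma,\ \diam(\gamma)>R\big)$ via an explicit lune-area formula and then appeals to translation and rotation invariance. Your route is more elementary and addresses the lemma exactly as stated; the paper's computation is tailored to the specific quantity actually used downstream (e.g.\ in the proof of Theorem~\ref{t:2p1}, where one needs $\alpha^*_{\neg B_{\mathbf 0,1}}(\mathbf 0\,|\,\mathbf 1)<\infty$). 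Both are valid, and your self-correction --- replacing the naive constraint $|y|\le 1+r$ by the annulus constraint --- is exactly the point where the argument acquires content.
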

\begin{proof}
For $\mu^{disk}$, using elementary geometric and trigonometric considerations, we have that
\begin{eqnarray}
&& \lim_{R \to \infty} \mu^{disk}_{\mathbb R^2} \Big( {\bf 0} \in \bar\gamma, {\bf 1} \notin \bar\gamma, \diam(\gamma)>R \Big) \\
&& \quad = \lim_{R \to \infty} \int_{R}^{\infty} \Big( 2 r^2 \sin^{-1}(1/2r) + \sqrt{r^2-1/4} \Big) \frac{dr}{r^3} = 0.
\end{eqnarray}
By translation and rotation invariance, this implies that the disk model is thin.
For the massive BLS, we trivially have that
\be \notag
\lim_{R \to \infty} \mu^m(\gamma: \gamma \cap \D \neq \emptyset, \diam(\gamma) \geq R) \leq
\lim_{R \to \infty} \mu^{loop}(\gamma: \gamma \cap \D \neq \emptyset, \diam(\gamma) \geq R) = 0,
\ee
which concludes the proof.
\end{proof}

Next, we focus on two-point estimates. We will show that in the case of a thin loop soup, if $z_1$ and $z_2$ are close to each other, then one can relate
the divergence of $\alpha^*_D(z_1, z_2)$, as $z_2 \to z_1$, to that of $\alpha^*_{\delta,R}(z_1)$ as $\delta \to 0$, as demonstrated by the next theorem.

\begin{theorem}\label{t:2p1}
If $*=loop$ or $disk$, then for each $z_1 \in D$ the following limit exists and is finite, and it defines a continuous function of $z_1 \in D$:
\begin{eqnarray} \label{ThScaleInvTwoPoint}
\lim\limits_{z_2 \to z_1} ( \alpha^*_D(z_1, z_2) - \alpha^*_{|z_1-z_2|,d_{z_1}}(z_1) )
=:\Psi^*(z_1,D).
\end{eqnarray}
\end{theorem}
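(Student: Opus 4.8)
The plan is to split the loop set appearing in $\alpha^*_D(z_1,z_2)$ according to diameter, compare each piece with the corresponding piece of $\alpha^*_{|z_1-z_2|,d_{z_1}}(z_1)$, and exploit the fact that a loop of small diameter that covers $z_1$ is forced to cover $z_2$ as well when $z_2$ is close enough to $z_1$. Concretely, write $r = |z_1 - z_2|$ and decompose the loops covering both $z_1$ and $z_2$ and contained in $D$ into three families: (i) loops of diameter at least $r$ (equivalently $\diam(\gamma) \ge r$, which is automatic since a loop covering two points at distance $r$ has diameter $\ge r$) but at most $d_{z_1}$; (ii) loops of diameter between $d_{z_1}$ and some fixed $R_0$; and (iii) loops of diameter at least $R_0$. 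The difference $\alpha^*_D(z_1,z_2) - \alpha^*_{r,d_{z_1}}(z_1)$ will be analyzed family by family.

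First I would handle the ``small loop'' regime. A loop $\gamma$ with $z_1 \in \bar\gamma$ and $\diam(\gamma) \le d_{z_1}$ is automatically contained in the disk $B_{z_1, d_{z_1}} \subseteq D$, so the constraint $\gamma \subseteq D$ is vacuous for such loops and $\alpha^*_{r,d_{z_1}}(z_1) = \mu^*(\gamma: z_1 \in \bar\gamma,\ r \le \diam(\gamma) \le d_{z_1})$. Now compare with loops covering both $z_1$ and $z_2$: every loop covering $z_1$ and $z_2$ has $\diam(\gamma) \ge r$, and conversely the set of loops covering $z_1$ but \emph{not} $z_2$ with $\diam(\gamma) \le d_{z_1}$ has $\mu^*$-measure that tends to $0$ as $z_2 \to z_1$ — this is exactly the kind of continuity statement that follows from the explosion being only logarithmic plus the fact that, for fixed lower cutoff on the diameter, the relevant measure is finite and varies continuously (one can quantify this: for $\mu^{loop}$ use \eqref{alpha5} together with an estimate on the measure of loops separating two nearby points, and for $\mu^{disk}$ use the explicit density $dy\,dr/r^3$). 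Thus, up to an error $o(1)$ as $z_2 \to z_1$, the contribution of family (i) to $\alpha^*_D(z_1,z_2)$ cancels $\alpha^*_{r,d_{z_1}}(z_1)$, and what remains is the $\mu^*$-measure of loops with $\diam(\gamma) \ge d_{z_1}$ that cover both $z_1$ and $z_2$ and stay in $D$, minus a vanishing correction.

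Next, for the ``large loop'' regime, i.e. families (ii) and (iii), the cutoff diameter $d_{z_1}$ is bounded below, so the measure of loops covering $z_1$ (contained in $D$ or not) with $\diam(\gamma)$ in a compact range is finite; thinness (Lemma~\ref{lemma:disk-thin}, and the known thinness of the BLS) ensures that the tail $\mu^*(\gamma: \gamma \cap \D \ne \emptyset, \diam(\gamma) \ge R_0)$ can be made arbitrarily small by taking $R_0$ large, uniformly enough to control family (iii). It then remains to prove that, on the finite-measure set of loops of bounded diameter that cover $z_1$ and stay in $D$, the indicator $\mathbf{1}\{z_2 \in \bar\gamma\}$ converges $\mu^*$-a.e. to $\mathbf{1}\{z_1 \in \bar\gamma\}$ as $z_2 \to z_1$ — which holds for $\mu^*$-a.e.\ loop because the boundary $\partial \bar\gamma$ has zero ``thickness'' in the relevant sense (for the disk model this is immediate; for $\mu^{loop}$ one uses that the set of loops whose hull has $z_1$ exactly on its boundary is $\mu^{loop}$-null). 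Dominated convergence on this finite-measure set then gives that the limit $\lim_{z_2 \to z_1}\big(\alpha^*_D(z_1,z_2) - \alpha^*_{r,d_{z_1}}(z_1)\big)$ exists and equals $\mu^*(\gamma: z_1 \in \bar\gamma,\ \diam(\gamma) \ge d_{z_1},\ \gamma \subseteq D) =: \Psi^*(z_1,D)$, which is finite by thinness. Finiteness of $\Psi^*(z_1,D)$ and its continuity in $z_1$ follow from the same ingredients: the map $z_1 \mapsto \mu^*(\gamma: z_1 \in \bar\gamma,\ \diam(\gamma)\ge d_{z_1},\ \gamma\subseteq D)$ is continuous because $d_{z_1}$ is continuous in $z_1$ (Lipschitz, in fact), the relevant loop sets vary continuously, and the total mass is finite and bounded on compacts by thinness; I would make this precise again via dominated convergence, splitting off a large-diameter tail controlled by thinness and handling the bounded-diameter part with a.e.\ convergence of indicators.

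\textbf{Main obstacle.} The delicate point — and the one I would spend the most care on — is the quantitative statement that the $\mu^*$-measure of \emph{small} loops (diameter $\le d_{z_1}$) covering $z_1$ but not $z_2$ vanishes as $z_2 \to z_1$, since this is what makes the logarithmically divergent pieces cancel exactly rather than leave behind a divergent remainder. For the disk model one has an explicit formula (as in the computation inside the proof of Lemma~\ref{lemma:disk-thin}), so this is a direct estimate; for the Brownian loop measure $\mu^{loop}$ it requires a genuine bound on the measure of Brownian loops that separate two points at distance $r$ while having diameter comparable to $r$, which I expect to extract from the scaling relation \eqref{alpha5} together with a restriction/conformal-invariance argument, or from known Brownian loop measure estimates. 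The other, more routine, obstacle is bookkeeping: keeping track of which loop-constraints ($\gamma \subseteq D$ vs.\ $\gamma$ covering $z_2$) are active in which diameter regime, so that the cancellation is clean. Everything else is dominated convergence on finite-measure sets, using thinness for the tails.
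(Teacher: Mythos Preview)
Your decomposition strategy is natural, but the central claim --- that the $\mu^*$-measure of loops of diameter at most $d_{z_1}$ covering $z_1$ but not $z_2$ tends to $0$ as $z_2\to z_1$ --- is false, and this is precisely the point you flagged as the ``main obstacle.'' Write $r=|z_1-z_2|$. The relevant quantity (after matching against $\alpha^*_{r,d_{z_1}}(z_1)$) is
\[
\mu^*\big(\gamma:\ z_1\in\bar\gamma,\ z_2\notin\bar\gamma,\ r\le\diam(\gamma)\le d_{z_1}\big).
\]
By translation, rotation and scale invariance (rescale by $1/r$), this equals
\[
\mu^*\big(\gamma:\ \mathbf{0}\in\bar\gamma,\ \mathbf{1}\notin\bar\gamma,\ 1\le\diam(\gamma)\le d_{z_1}/r\big),
\]
which as $r\to0$ \emph{increases} to the strictly positive constant $\mu^*(\gamma: \mathbf{0}\in\bar\gamma,\ \mathbf{1}\notin\bar\gamma,\ \diam(\gamma)\ge 1)$ --- finite by thinness, but certainly not zero. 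Intuitively: after rescaling so the two points sit at unit distance, you are asking for the measure of loops separating $\mathbf{0}$ from $\mathbf{1}$, and there is no mechanism making this vanish. Consequently your proposed identification $\Psi^*(z_1,D)=\alpha^*_{d_{z_1},D}(z_1)$ is off by exactly this universal constant.

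The paper's argument handles this by a different decomposition: it first uses scale invariance to rewrite $\alpha^*_{\rho,d_{z_1}}(z_1)$ as $\mu^*(\gamma: z_1\in\bar\gamma,\ \gamma\not\subset B_{z_1,\rho},\ \gamma\subset B_{z_1,d_{z_1}})$, and then manipulates $\alpha^*_D(z_1,z_2)$ so that the difference becomes
\[
\mu^*\big(\gamma: z_1\in\bar\gamma,\ \gamma\not\subset B_{z_1,d_{z_1}},\ \gamma\subset D\big)\ -\ \mu^*\big(\gamma: z_1\in\bar\gamma,\ \gamma\not\subset B_{z_1,\rho},\ \gamma\subset D,\ z_2\notin\bar\gamma\big).
\]
The second term is then analyzed by the same rescaling, and its limit is identified as the constant $\alpha^*_{\neg B_{\mathbf{0},1}}(\mathbf{0}|\mathbf{1})$, yielding $\Psi^*(z_1,D)=\mu^*(\gamma: z_1\in\bar\gamma,\ \gamma\not\subset B_{z_1,d_{z_1}},\ \gamma\subset D)-\alpha^*_{\neg B_{\mathbf{0},1}}(\mathbf{0}|\mathbf{1})$. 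Your large-loop and continuity arguments are in the right spirit, but the proof cannot be completed without correctly accounting for this nonvanishing constant.
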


\begin{lemma}\label{l:brexit}
For $z \in D$, let $\Psi^m(z,D) := \Psi^{loop}(z,D) - \hat\mu(\gamma: z \in \bar\gamma, \gamma \subset D)$. Then, for each $z_1 \in D$,
the following limit exists:
\begin{eqnarray} \label{ThScaleInvTwoPointMassive}
\lim\limits_{z_2 \to z_1} ( \alpha^m_D(z_1, z_2) - \alpha^{loop}_{|z_1-z_2|,d_{z_1}}(z_1) )
=:\Psi^m(z_1,D).
\end{eqnarray}
Moreover, $\Psi^m(z,D)$ is continuous in $z \in D$.
\end{lemma}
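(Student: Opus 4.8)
The plan is to reduce the existence of the limit to Theorem~\ref{t:2p1} and Lemma~\ref{MassiveOnePointFunctionEstimate} via a short algebraic manipulation, and then to establish the continuity statement through a dominated-convergence argument that treats small and large loops separately.

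For the limit \eqref{ThScaleInvTwoPointMassive}, fix $z_1\in D$ and write, for $z_2\neq z_1$ with $|z_1-z_2|<d_{z_1}$,
\begin{equation*}
\alpha^m_D(z_1,z_2)-\alpha^{loop}_{|z_1-z_2|,d_{z_1}}(z_1)
=\Big(\alpha^{loop}_D(z_1,z_2)-\alpha^{loop}_{|z_1-z_2|,d_{z_1}}(z_1)\Big)
-\Big(\alpha^{loop}_D(z_1,z_2)-\alpha^m_D(z_1,z_2)\Big).
\end{equation*}
By Theorem~\ref{t:2p1}, the first parenthesis tends to $\Psi^{loop}(z_1,D)$ as $z_2\to z_1$; that theorem, together with \eqref{alpha5}, also ensures that $\alpha^{loop}_D(z_1,z_2)<\infty$ when $z_1\neq z_2$ (loops in $D$ disconnecting two distinct points from $\partial D$ have diameter bounded below, hence their mass is finite by conformal invariance and the finiteness of $\mu^{loop}$ on bounded domains once loops of small diameter are removed). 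Since $\mu^{loop}(A_D(z_1,z_2))<\infty$, the remark following \eqref{e:muhat} gives $\alpha^{loop}_D(z_1,z_2)-\alpha^m_D(z_1,z_2)=\hat\mu(A_D(z_1,z_2))$. As $A_D(z_1,z_2)\subseteq A_{0,D}(z_1)$ and, for $\mu^{loop}$-a.e.\ (hence $\hat\mu$-a.e.) loop $\gamma$, the point $z_1$ lies either in the interior or in the exterior of the hull $\bar\gamma$ (loops passing through a fixed point have $\mu^{loop}$-measure zero), one has $\mathbf 1_{A_D(z_1,z_2)}(\gamma)\to\mathbf 1_{A_{0,D}(z_1)}(\gamma)$ as $z_2\to z_1$. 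Dominated convergence, with dominating function $\mathbf 1_{A_{0,D}(z_1)}\in L^1(\hat\mu)$ (integrability by \eqref{MassiveAlphaEstimate3}), then yields $\hat\mu(A_D(z_1,z_2))\to\hat\alpha_{0,D}(z_1)$. Hence the limit in \eqref{ThScaleInvTwoPointMassive} exists and equals $\Psi^{loop}(z_1,D)-\hat\alpha_{0,D}(z_1)=\Psi^m(z_1,D)$.

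For continuity, since $\Psi^{loop}(\cdot,D)$ is continuous by Theorem~\ref{t:2p1}, it suffices to prove that $z\mapsto\hat\alpha_{0,D}(z)=\hat\mu(A_{0,D}(z))$ is continuous. A direct dominated-convergence argument fails here: for $z'$ ranging over a small disk $B_{z,r}$ the natural pointwise bound would be $\mathbf 1\{\bar\gamma\cap B_{z,r}\neq\emptyset,\ \gamma\subseteq D\}$, which is not $\hat\mu$-integrable because arbitrarily small loops sitting inside $B_{z,r}$ carry positive $\hat\mu$-mass whose total is infinite. Instead, for $r$ small enough that $2r<d_{z'}$ for all $z'$ near $z$, I split $\hat\alpha_{0,D}(z')=\hat\alpha_{0,2r}(z')+g_r(z')$, where $\hat\alpha_{0,2r}(z')=\hat\mu(\gamma:z'\in\bar\gamma,\ \diam(\gamma)\leq 2r)$ and $g_r(z')=\hat\mu(\gamma:z'\in\bar\gamma,\ \diam(\gamma)>2r,\ \gamma\subseteq D)$. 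By \eqref{MassiveAlphaEstimate2}, $\hat\alpha_{0,2r}(z')\leq 4r\big(\tfrac{\overline m^2}{5}\log 2+1\big)$ \emph{uniformly in $z'$}, so this contribution is negligible as $r\to 0$. For fixed $r$, the indicator $\mathbf 1\{z'\in\bar\gamma\}$ again converges $\hat\mu$-a.e.\ to $\mathbf 1\{z\in\bar\gamma\}$ as $z'\to z$, and for $z'\in B_{z,r}$ it is dominated by $\mathbf 1\{\bar\gamma\cap B_{z,r}\neq\emptyset,\ \diam(\gamma)>2r,\ \gamma\subseteq D\}$; using $\{\bar\gamma\cap B_{z,r}\neq\emptyset\}\subseteq\{z\in\bar\gamma\}\cup\{\gamma\cap B_{z,r}\neq\emptyset\}$, the $\hat\mu$-mass of this dominating set is at most $\hat\alpha_{0,D}(z)+\mu^{loop}(\gamma:\gamma\cap B_{z,r}\neq\emptyset,\ \diam(\gamma)>2r)$, which is finite: the first term by \eqref{MassiveAlphaEstimate3}, and the second by splitting into the loops with $2r<\diam(\gamma)\leq R_0$ (which, meeting $B_{z,r}$, lie in a fixed bounded disk, where $\mu^{loop}$ restricted to loops of diameter at least $2r$ is finite) and those with $\diam(\gamma)>R_0$ (finite by the thinness of the Brownian loop soup, after rescaling $B_{z,r}$ to $\D$). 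Dominated convergence gives $g_r(z')\to g_r(z)$ for each fixed $r$, and an $\varepsilon/3$ argument (first choose $r$ small, then $z'$ close to $z$) completes the proof of continuity of $\hat\alpha_{0,D}$, hence of $\Psi^m(\cdot,D)$.

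The main obstacle is exactly this last step: the naive dominated-convergence proof of the continuity of $\hat\alpha_{0,D}$ breaks down because $\hat\mu$ charges with infinite mass the loops contained in any disk (the mass function does not suppress the small loops enough), so one cannot dominate uniformly over base points in a single step. The remedy is to peel off the small loops and bound their contribution uniformly in the base point by means of \eqref{MassiveAlphaEstimate2} — which is precisely the content of Lemma~\ref{MassiveOnePointFunctionEstimate} — and then handle the large loops, for which a genuinely $\hat\mu$-integrable dominating function exists, by ordinary dominated convergence; the integrability of that dominating function rests in turn on the thinness of the loop soup and on the finiteness of $\mu^{loop}$ on bounded domains once a lower diameter cutoff is imposed.
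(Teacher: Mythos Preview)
Your proof is correct and follows essentially the same route as the paper. For the limit \eqref{ThScaleInvTwoPointMassive} you use the identical decomposition $(\alpha^{loop}_D-\alpha^{loop}_{|z_1-z_2|,d_{z_1}})-(\alpha^{loop}_D-\alpha^m_D)$, invoking Theorem~\ref{t:2p1} for the first term and showing $\hat\mu(A_D(z_1,z_2))\to\hat\alpha_{0,D}(z_1)$ for the second; the paper does this via an increasing-set $\liminf/\limsup$ argument while you use dominated convergence with $\mathbf 1_{A_{0,D}(z_1)}\in L^1(\hat\mu)$, which is cleaner and equivalent. For continuity of $\hat\alpha_{0,D}$, both arguments split off the small loops (controlled uniformly via \eqref{MassiveAlphaEstimate2}--\eqref{MassiveAlphaEstimate3}) and handle the remainder with thinness; the only real difference is that the paper writes $\hat\alpha_{0,D}(z_1)-\hat\alpha_{0,D}(z_2)=\hat\alpha_{0,D}(z_1|z_2)-\hat\alpha_{0,D}(z_2|z_1)$ and bounds each symmetric-difference term directly, whereas you keep the full $\hat\alpha_{0,D}(z')$ and run dominated convergence on the large-loop part $g_r$ with a dominating function built from $\{\bar\gamma\cap B_{z,r}\neq\emptyset\}\subseteq\{z\in\bar\gamma\}\cup\{\gamma\cap B_{z,r}\neq\emptyset\}$. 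Both work; the paper's version is a bit shorter because it avoids constructing and integrating an explicit dominating function.
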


We also record the following continuity property, holding in the special case of the (massive) Brownian loop and disk measures; it will be exploited in Section \ref{ss:GaussianLayering}.

\begin{lemma}\label{l:cont}
Let the framework of the present section prevail, and define in particular $\Psi^{loop}, \Psi^{disk}$ and $\Psi^m$ according to \eqref{ThScaleInvTwoPoint} and \eqref{ThScaleInvTwoPointMassive}. For $* = loop, m$ and $z_1,z_2 \in D$ such that $z_1 \neq z_2$, let 
\begin{eqnarray}
g^{*}(z_1,z_2)&:=& \alpha_D^*(z_1,z_2)- \frac15 \log^+{\frac{1}{|z_1-z_2|}}, \\
g^{*}(z_1,z_1) &:=&  \frac{1}{5} \log d_{z_1} +\Psi^{*}(z_1, D), \\
g^{disk}(z_1,z_2) &:=& \alpha_D^{disk}(z_1,z_2)- \pi \log^+{\frac{1}{|z_1-z_2|}}, \\
g^{disk}(z_1,z_1) &:=&  { \pi} \log d_{z_1} +\Psi^{disk}(z_1, D).
 \end{eqnarray}
Then, the three mappings $g^{loop},\, g^{disk},\,  g^m : D\times D \rightarrow \R$ are continuous.
\end{lemma}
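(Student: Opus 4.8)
The plan is to establish continuity of the three maps $g^{loop}, g^{disk}, g^m$ by treating the behaviour away from the diagonal and on the diagonal separately, and then showing the two pieces glue together continuously.

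\medskip

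\emph{Step 1: continuity off the diagonal.} First I would argue that, for $z_1 \neq z_2$, the quantity $\alpha_D^*(z_1,z_2) = \mu^*(\gamma : z_1, z_2 \in \bar\gamma, \gamma \subseteq D)$ depends continuously on $(z_1,z_2)$ in the open set $\{(z_1,z_2) \in D\times D : z_1 \neq z_2\}$. The point here is that, once $z_1$ and $z_2$ are distinct, only loops of diameter bounded below by (a continuous function of) $|z_1-z_2|$ can cover both points, and by thinness (Lemma \ref{lemma:disk-thin}, and the thinness of the BLS from \cite{NW11}) the measure of loops covering a fixed point and of diameter in a compact range $[\delta,R]$ is finite; combining this with the fact that the sets $A_D(z_1,z_2)$ vary continuously as $z_1,z_2$ move (a small perturbation of an endpoint only changes the set of qualifying loops by a $\mu^*$-null collection, those passing exactly through a boundary of the relevant region), one gets continuity of $\alpha^*_D(\cdot,\cdot)$ there. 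Since $z \mapsto \log^+\frac{1}{|z_1-z_2|}$ is continuous for $z_1 \neq z_2$, it follows that $g^{loop}, g^{disk}, g^m$ are continuous on the off-diagonal.

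\medskip

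\emph{Step 2: continuity of the diagonal values.} Next I would observe that $g^*(z_1,z_1) = \frac{1}{5}\log d_{z_1} + \Psi^*(z_1,D)$ (resp.\ with $\pi$ and $\Psi^{disk}$ for the disk case) is continuous in $z_1$: the map $z_1 \mapsto d_{z_1} = d(z_1,\partial D)$ is continuous (indeed Lipschitz), and $\Psi^{loop}, \Psi^{disk}$ are continuous by Theorem \ref{t:2p1}, while $\Psi^m$ is continuous by Lemma \ref{l:brexit}. Hence each $g^*$ restricted to the diagonal is continuous.

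\medskip

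\emph{Step 3: gluing — the main obstacle.} The real work is to show joint continuity at diagonal points, i.e.\ that as $(z_1,z_2) \to (z_0,z_0)$ (with $z_1 \neq z_2$ along the way) we have $g^*(z_1,z_2) \to g^*(z_0,z_0)$. Using $\alpha^{loop}_{|z_1-z_2|,d_{z_1}}(z_1) = \frac{1}{5}\log\frac{d_{z_1}}{|z_1-z_2|}$ from \eqref{alpha5} (and the analogue \eqref{alpha8} for the disk), one can rewrite, for $|z_1-z_2|$ small enough that $|z_1-z_2| < d_{z_1}$ and $\log^+\frac{1}{|z_1-z_2|} = \log\frac{1}{|z_1-z_2|}$,
\begin{equation*}
g^{loop}(z_1,z_2) = \big(\alpha_D^{loop}(z_1,z_2) - \alpha^{loop}_{|z_1-z_2|,d_{z_1}}(z_1)\big) + \tfrac{1}{5}\log d_{z_1}.
\end{equation*}
The parenthesis converges, as $z_2 \to z_1$, to $\Psi^{loop}(z_1,D)$ by Theorem \ref{t:2p1}; the issue is that I need this convergence to be \emph{locally uniform} in $z_1$ near $z_0$ (not just pointwise), so that I may combine it with the continuity of $z_1 \mapsto \Psi^{loop}(z_1,D)$ and $z_1 \mapsto \frac{1}{5}\log d_{z_1}$ to conclude. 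I would extract such local uniformity by revisiting the proof of Theorem \ref{t:2p1}: the difference $\alpha_D^*(z_1,z_2) - \alpha^*_{|z_1-z_2|,d_{z_1}}(z_1)$ can be split into (i) the measure of loops covering $z_1$ but not $z_2$ that are \emph{not} contained in $B_{z_1,d_{z_1}}$, minus the measure of those contained in $D$ but not in $B_{z_1,d_{z_1}}$ and covering both — these are measures of sets of loops of diameter bounded away from $0$, uniformly for $z_1$ in a compact neighbourhood of $z_0$, hence controlled by thinness and depend continuously on $(z_1,z_2)$ including in the limit $z_2\to z_1$; and (ii) an error term coming from small loops covering exactly one of $z_1,z_2$, which one bounds by $C|z_1-z_2|\cdot(\text{const})$ using the same small-loop estimates as in the proof of Theorem \ref{t:2p1} (the Brownian-bridge density estimates underlying \eqref{alpha5}), uniformly in $z_1$ near $z_0$. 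Feeding this uniform control back gives $\lim_{(z_1,z_2)\to(z_0,z_0)} g^{loop}(z_1,z_2) = \Psi^{loop}(z_0,D) + \frac15\log d_{z_0} = g^{loop}(z_0,z_0)$, as required. The disk case is identical with \eqref{alpha8} in place of \eqref{alpha5}, and the massive case follows the same way, using Lemma \ref{l:brexit} in place of Theorem \ref{t:2p1} together with the fact that $\hat\alpha_{0,D}(z) = \hat\mu(\gamma : z\in\bar\gamma,\gamma\subset D)$ is continuous in $z$ (which is part of Lemma \ref{MassiveOnePointFunctionEstimate} / Lemma \ref{l:brexit}), noting $g^m(z_1,z_2) = \alpha^m_D(z_1,z_2) - \frac15\log^+\frac{1}{|z_1-z_2|}$ and that $\alpha^m_D$ differs from $\alpha^{loop}_D$ by the $\hat\mu$-measure of a set varying continuously with the points.

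\medskip

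The main obstacle, then, is upgrading the pointwise limits in Theorem \ref{t:2p1} and Lemma \ref{l:brexit} to local uniformity near the diagonal; everything else is a matter of assembling continuous pieces and invoking thinness to keep the relevant loop measures finite and continuous.
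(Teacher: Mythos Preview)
Your overall strategy matches the paper's proof: establish off-diagonal continuity of $\alpha_D^*$, continuity of the diagonal values via Theorem \ref{t:2p1} and Lemma \ref{l:brexit}, and then glue by rewriting $g^*(z_1,z_2)$ as $\big(\alpha_D^*(z_1,z_2)-\alpha^*_{|z_1-z_2|,d_{z_1}}(z_1)\big)+\tfrac15\log d_{z_1}$ (resp.\ with $\pi$) and passing to the limit. Your identification of the ``main obstacle'' in Step~3 --- that one needs the convergence in Theorem \ref{t:2p1} to be locally uniform in the base point --- is exactly the subtlety; the paper handles it the same way you propose, namely by re-reading the proof of Theorem \ref{t:2p1}, where after scaling all the error terms are controlled by quantities like $\mu^*(\mathbf{0}\in\bar\gamma,\mathbf{1}\notin\bar\gamma,\gamma\not\subset B_{\mathbf{0},d_{z_1}/\rho})$, which go to $0$ uniformly for $z_1$ in a compact set since $d_{z_1}$ is bounded below there.

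There is, however, a flaw in your Step~1 justification. The symmetric difference between $A_D(z_1,z_2)$ and $A_D(w_1,w_2)$ is \emph{not} $\mu^*$-null: the set of loops covering $z_2$ but not $w_2$ (say) has strictly positive measure for any $w_2\neq z_2$. What is true is that this measure \emph{tends to zero} as $w_2\to z_2$, and the mechanism is not an abstract continuity-of-measure argument but the same scaling-plus-thinness trick as in Step~3. The paper makes this explicit: it expands $|\alpha_D^*(z_1,z_2)-\alpha_D^*(w_1,w_2)|$ into six terms of the type $\mu^*(z_1,z_2\in\bar\gamma,\, w_2\notin\bar\gamma,\,\gamma\subset D)$, bounds each by something like $\mu^*(z_2\in\bar\gamma,\, w_2\notin\bar\gamma,\,\gamma\not\subset B_{z_2,|z_2-z_1|})$, and then uses translation/rotation/scale invariance to rewrite this as $\mu^*\big(\mathbf{0}\in\bar\gamma,\,\mathbf{1}\notin\bar\gamma,\,\gamma\not\subset B_{\mathbf{0},|z_2-z_1|/|z_2-w_2|}\big)$, which vanishes as $w_2\to z_2$ by thinness. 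Replacing your ``null symmetric difference'' line with this argument fixes Step~1, and the rest of your proof then coincides with the paper's.
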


We now focus on kernels defined on loop spaces.

\subsection{Kernels on unrooted loops and Poisson layering models }\label{ss:PoissonLayering}
We will specify the exact form of the kernels $ {\bf h}$ appearing in \eqref{e:hkernels} that is needed in order to obtain a collection of general Poisson layering fields.
We shall consider exclusively kernels yielding a mechanism for assigning a random sign to each loop; although centered random variables more general than a random
symmetric sign could be used, we stick to this case for simplicity and in view of \cite{camia2016conformal,FK}. This completes the definition of the truncated fields
and brings us back to the abstract setting of Section \ref{construction}. We consider the following elements:
\begin{itemize}
	\item[(i)]  the set $A = M_D \times {\mathbb R}$;
	the elements of $A$ are generally denoted by $x = (\gamma, \epsilon)$;
	\item[(ii)]  $\mathscr{A} = \mathcal{ M}_D \otimes \mathscr B$, that is, $\mathscr{A}$ is the product
	$\sigma$-algebra of $\mathcal{ M}_D$ and  $ \mathscr B$,
	where 
	$\mathscr B$ is the Borel $\sigma$-field of $\mathbb R$;
	\item[(iii)] the measure $\nu(d\gamma, d\epsilon)=\mu_D^{*}(d\gamma) \times \frac12 (\delta_1+\delta_{-1})(d\epsilon)$, where $\delta_a$ denotes the Dirac mass at $a$ (see also \cite{camia2016conformal}), {and $\mu_D^*$ is a measure on unrooted loops contained in $D$}, { verifying $\mu_D^{*}(\gamma\subset D: z \in \overline \gamma) = \infty$};
	\item[(iv)] a {\it locally exploding} collection of kernels $ {\bf h}$ of the type  \eqref{e:hkernels}, where
	 $h_z^{\delta}(x) = \epsilon {\bf 1}_{A_{\delta, D}(z)}(\gamma)$, with $x = (\gamma,\epsilon) \in A$
		and $z \in D$, and the definition of the set $A_{\delta, D}(z)$ is given in \eqref{alpha2}. We observe in particular that, for every $z\in D$ and for $\nu$-almost every $x = (\gamma, \epsilon) \in A$, 
		\begin{equation}\label{e:accadelta}
		h^\delta_z(x) \longrightarrow h_z(x) := \epsilon {\bf 1}_{A_{0, D}(z)}(\gamma), \quad \delta\to 0. 
		\end{equation}
		
		\end{itemize}
\vspace{-0.5cm}
For notational convenience, we will sometimes write
\begin{equation}\label{e:rsign}
S(d\epsilon) := \frac12 (\delta_1+\delta_{-1})(d\epsilon)
\end{equation}
to indicate the law of a symmetric random sign.
\begin{definition}[Poisson layering model]\label{d:plm} {\rm A {\it Poisson layering model} over $D$ with parameters $\lambda>0$ and $\beta \in [0,2\pi)$
is a small parameter family $\mathbb{V} = \mathbb{V}_{\lambda,\beta}$ of $\nu$ controlled complex exponential random fields, as in Definition \ref{def:spf} with $\zeta$ replaced by $\beta$ and with $X = N_{\lambda}$ a Poisson measure with control $\lambda\nu$. More explicitly,
\be
V^{\delta}(z)= V^{\delta}_{\lambda,\beta}(z) = \exp(i \beta N_{\lambda}(h_z^\delta)), \quad z \in D, \label{e:vlb}
\ee
where the measure $\nu$ and the kernel ${\bf h}$ are as in items (iii) and (iv) above.}
\end{definition}

\begin{remark}\label{r:muffat} {\rm 
\begin{enumerate}

\item[]{\empty}

\item For Poisson layering models, the choice of replacing the letter $\zeta$ with $\beta$ in~\eqref{e:spf} is made in order to facilitate the connection with \cite{camia2016conformal}.

\item A more precise terminology than the one used in Definition \ref{d:plm} would make reference to the measure $\mu_D^*$; this would lead to expressions such as  ``$\mu_D^*$-  (or $\mu_D^{loop}$-, $\mu_D^{disk}$-, $\mu_D^m$-) Poisson layering models and fields,'' but we prefer the terminology introduced above for the sake of simplicity. In the next section, we will use the same kernels as here, but take $X=G$, where $G$ is a Gaussian measure with control $\mu_D^*$; we will call the resulting fields \emph{Gaussian} layering fields.
\end{enumerate}}
\end{remark}

\subsection{Convergence of renormalized Poisson layering fields in negative Sobolev spaces}\label{ss:conv_layering_fields}

We can finally present the main result of the section, which states that the Poisson layering models $V_{\lambda,\beta}^{\delta}$ with intensity measure 
$\lambda\nu = \lambda  \mu^*_D\otimes {S} $, where we have used \eqref{e:rsign}, and $*=loop, m$ or $disk$ and $\beta \in [0,2\pi)$,
when renormalized with an appropriate scaling factor, converge in second mean in any Sobolev space ${\mathcal H}^{-\alpha}$ with $\alpha>3/2$.
The proof, which is a direct application of Theorem \ref{convergence}, requires a certain amount of control on the functions $\alpha^*_{\delta, R}(z)$,
in order to verify the condition of Theorem \ref{convergence}. This is where the restriction of the general framework of Definition \ref{d:plm} to the measures
$\mu^{*}$ with $*=loop, m, disk$ plays a crucial role.

\begin{theorem} \label{ExistenceLayeringField}
Fix $\lambda>0$ and $\beta \in [0,2\pi)$ and let $*=loop, m$ or $disk$.
Let $D$ be the unit disk $\D$ when $*=disk$ or any domain conformally equivalent to $\D$ when $*=loop,m$.
Let $\mathbb{V}_{\lambda, \beta}$ be a Poisson layering model over $D$,
with parameters $\lambda, \beta$ and intensity measure $\lambda\nu = \lambda  \mu^*_D\otimes {S}$, as in Definition \ref{d:plm}.
Let $\Delta^{loop}_{\lambda, \beta} = \Delta^{m}_{\lambda, \beta} = \frac{\lambda}{10}(1-\cos\beta)$ and
$\Delta^{disk}_{\lambda, \beta} = \lambda \frac{\pi}{2}(1-\cos\beta)$. If $\Delta^*_{\lambda, \beta} <1/2$  and $\alpha>3/2$, as $\delta\to 0$ the renormalized random field on $D$
\begin{equation}\label{e:crrf}
z \mapsto \delta^{-2\Delta^*_{\lambda, \beta} } V_{ \lambda, \beta}^\delta(z) =\delta^{-2\Delta^*_{\lambda, \beta} }  \exp\{ i\beta N_\lambda(h_z^\delta)\}, \quad z\in D,
\end{equation}
(regarded as a random generalized function) converges in second mean in the Sobolev space $\mathcal{H}^{-\alpha}$, in the sense of \eqref{2.4bis}, to a random generalized function $V^*_{\lambda, \beta}$, which we call a \emph{Poisson layering field}.
\end{theorem}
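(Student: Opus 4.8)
The plan is to recognize $\mathbb{V}_{\lambda,\beta}$ as a small parameter family to which Theorem~\ref{convergence} applies with $\Delta=\Delta^*_{\lambda,\beta}$: the hypothesis $\Delta^*_{\lambda,\beta}<1/2$, together with $\beta\in(0,2\pi)$ (which forces $1-\cos\beta>0$ and hence $\Delta^*_{\lambda,\beta}>0$), is exactly the admissibility requirement $\Delta\in(0,1/2)$, while $\beta=0$ is trivial since then $V^\delta\equiv1$. Condition~\eqref{c:1} is immediate because $|V^\delta_{\lambda,\beta}(z)|\equiv1$ and $D$ is bounded. The only substantial work is the analysis of the two-point function. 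Since $V^\delta(z)\overline{V^{\delta'}(w)}=\exp\{i\beta N_\lambda(h_z^\delta-h_w^{\delta'})\}$ and $h_z^\delta(x)=\epsilon\,{\bf 1}_{A_{\delta,D}(z)}(\gamma)$ for $x=(\gamma,\epsilon)$, I would apply the Campbell (L\'evy--Khintchine) formula for Poisson integrals and then average over the symmetric sign $\epsilon$, which turns $e^{i\beta\epsilon a}-1$ into $\cos(\beta a)-1$ — equal to $\cos\beta-1$ precisely on the set of loops where exactly one of the two indicators equals $1$. This gives the closed form
\be
\langle V^\delta(z)\,\overline{V^{\delta'}(w)}\rangle_D \;=\; \exp\Big\{-\lambda(1-\cos\beta)\,\mu^*_D\big(A_{\delta,D}(z)\,\triangle\,A_{\delta',D}(w)\big)\Big\}\;\ge\;0 .
\ee

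With this in hand I would verify the upper bound~\eqref{two_point_function}--\eqref{2.3} by setting $\tau^\delta_{z,w}:=\lambda(1-\cos\beta)\,\alpha^*_{\delta,D}(z|w)$, with $\alpha^*_{\delta,D}(z|w)$ as in~\eqref{alpha7}. Since every loop in $D$ covering $z$ but not $w$ lies in $A_{\delta,D}(z)\setminus A_{\delta',D}(w)$, one has $\mu^*_D\big(A_{\delta,D}(z)\triangle A_{\delta',D}(w)\big)\ge \alpha^*_{\delta,D}(z|w)+\alpha^*_{\delta',D}(w|z)$, which is~\eqref{two_point_function}. For~\eqref{2.3} one bounds $\alpha^*_{\delta,D}(z|w)$ from below by the $\mu^*$-mass of loops covering $z$ and contained in $B_{z,r}$ with $r<d_{z,w}$ (such loops automatically avoid $w$ and stay in $D$); letting $r\uparrow d_{z,w}$ and invoking the exact evaluations~\eqref{alpha5} and~\eqref{alpha8} — which give $\tfrac15\log(d_{z,w}/\delta)$, resp. $\pi\log(d_{z,w}/\delta)$ — and, in the massive case, the comparison~\eqref{MassiveAlphaEstimate1}, one obtains $\alpha^*_{\delta,D}(z|w)\ge \tfrac{2\Delta^*_{\lambda,\beta}}{\lambda(1-\cos\beta)}\log(d_{z,w}/\delta)-C(D)$, so~\eqref{2.3} holds with $c(D)=e^{\lambda(1-\cos\beta)C(D)}<\infty$ (and $C(D)=0$ for $*=loop,disk$).

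To get the renormalized limit~\eqref{two_point_function_exists}, I would observe that if $\delta,\delta'<|z-w|$ then every loop covering both $z$ and $w$ has diameter $\ge|z-w|$ and is unaffected by the cutoffs, so $\mu^*_D\big(A_{\delta,D}(z)\triangle A_{\delta',D}(w)\big)=\alpha^*_{\delta,D}(z)+\alpha^*_{\delta',D}(w)-2\,\alpha^*_D(z,w)$, where $\alpha^*_D(z,w)<\infty$ for $z\ne w$ (it is dominated by the mass of loops covering $z$ of diameter between $|z-w|$ and $\diam(D)$, and for $*=m$ finiteness also follows from Lemma~\ref{l:brexit}). Consequently $(\delta\delta')^{-2\Delta^*_{\lambda,\beta}}\langle V^\delta(z)\overline{V^{\delta'}(w)}\rangle_D$ factorizes, for small $\delta,\delta'$, as a $z$-factor times a $w$-factor times $e^{2\lambda(1-\cos\beta)\alpha^*_D(z,w)}$, and the decisive input is the sharp one-point asymptotics $\alpha^*_{\delta,D}(z)=c_*\log(1/\delta)+\rho^*(z)+o(1)$ as $\delta\to0$, with $c_{loop}=c_m=\tfrac15$, $c_{disk}=\pi$ and $\rho^*$ real-valued; this follows from~\eqref{alpha5}/\eqref{alpha8} by removing, at the cost of a finite ($\delta$-independent for $\delta<d_z$) correction, the restriction $\gamma\subseteq D$, and from~\eqref{MassiveAlphaEstimate1}--\eqref{MassiveAlphaEstimate3} in the massive case. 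Since $\lambda(1-\cos\beta)\,c_*=2\Delta^*_{\lambda,\beta}$, each factor $\delta^{-2\Delta^*_{\lambda,\beta}}e^{-\lambda(1-\cos\beta)\alpha^*_{\delta,D}(z)}$ converges to $e^{-\lambda(1-\cos\beta)\rho^*(z)}$, so~\eqref{two_point_function_exists} holds with $\phi(z,w)=e^{-\lambda(1-\cos\beta)(\rho^*(z)+\rho^*(w)-2\alpha^*_D(z,w))}\ge0$. Theorem~\ref{convergence} then yields the claim when $D$ is a bounded $C^1$ domain, in particular for $D=\D$ (which covers the disk model); for a general domain conformally equivalent to $\D$ with $*=loop,m$, one transports the result along a conformal equivalence using the conformal invariance of $\mu^{loop}$ (and the transformation rule for the mass function), as carried out in the appendix (cf.\ Theorem~\ref{ExistenceLayeringFieldBoundedDomains}).

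I expect the main obstacle to be precisely the sharp one-point asymptotics $\alpha^*_{\delta,D}(z)=c_*\log(1/\delta)+O(1)$, with the correct constant $c_*$ and a genuine finite limit for the renormalized quantity: everything else reduces to the Campbell formula and monotonicity of $\mu^*$. This is exactly why the general framework of Definition~\ref{d:plm} must here be specialized to $*=loop,disk,m$, since only for these three measures are the explicit identities~\eqref{alpha5}, \eqref{alpha8} and the comparison estimates of Lemma~\ref{MassiveOnePointFunctionEstimate} available.
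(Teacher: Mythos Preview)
Your proposal is correct and follows essentially the same route as the paper: reduce to Theorem~\ref{convergence} by computing the two-point function via the Poisson characteristic functional, take $\tau^\delta_{z,w}=\lambda(1-\cos\beta)\,\alpha^*_{\delta,D}(z|w)$, and feed in the exact one-point evaluations \eqref{alpha5}, \eqref{alpha8} together with Lemma~\ref{MassiveOnePointFunctionEstimate} for $*=m$; then extend from bounded $C^1$ domains to general conformally-$\D$ domains via conformal invariance (Theorem~\ref{thm:conformal_covariance}). The only cosmetic difference is that you package the two-point function as $\exp\{-\lambda(1-\cos\beta)\,\mu^*_D(A_{\delta,D}(z)\triangle A_{\delta',D}(w))\}$ and then use the identity $\mu^*_D(A\triangle B)=\mu^*_D(A)+\mu^*_D(B)-2\mu^*_D(A\cap B)$ for small $\delta,\delta'$, whereas the paper writes out the equivalent three-term decomposition $\alpha^*_{\delta,D}(z|w)+\alpha^*_{\delta',D}(w|z)+\alpha^*_{\delta',\delta,D}(z,w)$ directly and then splits $\alpha^*_{\delta,D}(z|w)=\alpha^*_{\delta,d_{z,w}}(z)+\alpha^*_{d_{z,w},D}(z|w)$.
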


\begin{proof} 
If $D$ is a bounded simply connected domain with a $C^1$ boundary, the result corresponds to Theorem \ref{ExistenceLayeringFieldBoundedDomains}.
For more general domains, it follows from Theorem \ref{thm:conformal_covariance}.
\end{proof}

\begin{remark}{\rm The quantity $\Delta^{*}_{\lambda,\beta}$ is called the \emph{conformal dimension} of the field $V^*_{\lambda, \beta}$.
}
\end{remark}

\subsection{Gaussian layering fields} \label{ss:GaussianLayering}

We consider the elements $A = M_D\times \R$, $\mathscr{A} = \mathcal{M}_D\otimes \mathscr{B}$, $\nu = \mu^*_D\otimes S$, and ${\bf h}$ introduced at item (i)--(iv) of Section \ref{ss:PoissonLayering} (recall notation \eqref{e:rsign}), and also consider a Gaussian measure $G$ on $(A, \mathscr{A})$ with control $\nu$, as defined in \eqref{Gauss}. 

\begin{remark} {\rm In the discussion to follow, we will be only interested in the properties of the Gaussian measure
\begin{equation}\label{e:gmrk}
B \mapsto G(f_B),
\end{equation}
where $B\in \mathcal{M}_D$ is such that $\mu^*_D(B)<\infty$ and, for $x = (\gamma, \epsilon)$, $f_B(x) := \epsilon {\bf 1}_B(\gamma)$. A direct covariance computation shows that the Gaussian measure defined in \eqref{e:gmrk} has the same distribution as
\be
B\mapsto G_0(B), \quad \mu^*_D(B)<\infty,
\ee
where $G_0$ is a Gaussian measure on $(M_D, \mathcal{M}_D)$ with control $\mu^*_D$, in such a way that the use of the measure $\nu$ on the product space $M_D\times \R$ is somehow redundant. Our choice of using the measure $\nu$ for expressing the law of the underlying Gaussian integrators is motivated by computational convenience --- see e.g. Section \ref{s:convtoGMC}.}
\end{remark}


With this notation, according to the discussion following \eqref{Gauss} one has that, for each $z \in D$, $G(h_z^\delta)$ is a centered Gaussian random variable
with variance $\alpha^{*}_{\delta,D}(z)$, and $\{ G(h_z^\delta) \}_{z \in D}$ is a Gaussian field with covariance kernel $K_{\delta}^{*}(z,w) = \alpha^{*}_{\delta,D}(z,w)$.
We also introduce the generalized Gaussian field ${\mathcal G}^{*} = \{ G(h^{0}_{z}) \}_{z \in D}$ with covariance kernel $K^{*}(z,w) = \alpha^{*}_{0,D}(z,w)$. The field
$G^{*}$ can be realized as a random element of the Sobolev space $\mathcal{H}^{-\alpha}(D)$, for some $\alpha>0$, 
whose law is given by the centered Gaussian distribution determined by the covariance formula
\be
\text{Cov}({\mathcal G}^{*}(f),{\mathcal G}^{*}(g)) = \int_{D} \int_{D} f(z) \alpha^{*}_{0,D}(z,w) g(w) dz dw,
\ee
for all test functions $f,g$ in the space $\mathcal{H}^{\alpha}(D)$ dual to $\mathcal{H}^{-\alpha}(D)$.
We can also interpret the Gaussian field $\{ G(h_z^\delta) \}_{z \in D}$ as a random element of $\mathcal{H}^{-\alpha}(D)$ by defining
${\mathcal G}^{\delta}(f) := \int_D G(h^{\delta}_{z}) f(z) dz$ for each $f \in \mathcal{H}^{\alpha}(D)$.
We can then observe that, using Fubini's theorem and the fact that $\lim_{\delta \to 0} \alpha^{*}_{\delta,D}(z,w) = \alpha^{*}_{0,D}(z,w)$
for every $z,w \in D$,
\begin{eqnarray}
\text{Cov}({\mathcal G}^{\delta}(f),{\mathcal G}^{\delta}(g)) & = & \E \Big( \int_{D} \int_{D} G(h^{\delta}(z)) G(h^{\delta}(w)) f(z) g(z) dz dw \Big) \nonumber \\
& = & \int_{D} \int_{D} f(z) \alpha^{*}_{\delta,D}(z,w) g(w) dz dw \nonumber \\
& \stackrel{\delta \to 0}{\longrightarrow} & 
\text{Cov}({\mathcal G}^{*}(f),{\mathcal G}^{*}(g)). \label{eq:cov-conv}
\end{eqnarray}
By an argument analogous to that in the proof of Theorem \ref{convergence}, this implies that the field ${\mathcal G}^{\delta}$ converges to ${\mathcal G}^{*}$,
as $\delta \to 0$, in second mean in the Sobolev space $\mathcal{H}^{-\alpha}(D)$ for every $\alpha>3/2$.

Next, we introduce the main object of interest in the present subsection.

\begin{definition}[Gaussian layering model]\label{d:glm} {\rm A {\it Gaussian layering model} over $D$ with parameter $\xi \in \R\backslash \{0\}$ is a small parameter family $\mathbb{W}_{\xi} = \mathbb{W}_{\xi,D}$ of $\nu$ controlled complex exponential random fields, as in Definition \ref{def:spf} with $\zeta$ replaced by $\xi$ and with $X =G$ a Gaussian measure with control $\nu$. More explicitly,
\be
W^{\delta}_{\xi}(z) = W^{\delta}_{\xi,D}(z) = \exp(i \xi G(h_z^\delta)), \quad z \in D, \label{e:vxi}
\ee
where the measure $\nu$ and the kernel ${\bf h}$ are as in items (iii) and (iv) above.}
\end{definition}

The definition above is interesting mainly because, as $\delta \to 0$, $W_{\xi}^{\delta}$ converges to a (deterministically) ``tilted''
imaginary Gaussian multiplicative chaos \cite{ComplexGMC,GMC_imaginary}, as defined in the introduction just before Theorem \ref{MainTheorem}.
%
%
\begin{theorem} \label{ExistenceGaussianLayeringField}
Fix $\xi>0$ and let $*=loop, m$ or $disk$.
Let $D$ be the unit disk $\D$ when $*=disk$ or any domain conformally equivalent to $\D$ when $*=loop,m$.
Let $\mathbb{W}_{\xi,D}$ be a Gaussian layering model over $D$, with parameter $\xi$ and intensity measure $\mu^*_D$, as in Definition \ref{d:glm}.
Let $\Delta_{\xi}^{loop} = \Delta_{\xi}^{m} = \xi^2/20$ and $\Delta_{\xi}^{disk} = \pi\xi^2/4$.
If $\Delta_{\xi}^*<1/2$  and $\alpha>3/2$, as $\delta\to 0$, the random field on $D$
\be
z\mapsto\delta^{-2\Delta_\xi^*} W_{\xi,D}^\delta(z) :=\delta^{-2\Delta_\xi^*}  e^{i \xi G(h_z^\delta)}
\ee
(regarded as a random distribution) converges in square mean in the negative Sobolev space $\mathcal{H}^{-\alpha}$, in the sense of \eqref{2.4bis},
to a random distribution $W^{*}_{\xi,D}$, which we call a \emph{Gaussian layering field}. Moreover,
\be \label{eq:rdderivative}
\frac{dW^{*}_{\xi,D}}{dM_{\xi,D}^{*}}(z) = e^{-\frac{\xi^2}{2}\Theta^{*}_{D}(z)},
\ee
where $\Theta^{loop}_{D}(z) = \frac{1}{5} \log d_{z} + \alpha^{loop}_{d_{z},D}(z)$,
$\Theta^{m}_{D}(z) = \Theta^{loop}_{D}(z) - \hat\alpha_{0,D}(z)$,
$\Theta^{disk}_{D}(z) = \pi \log d_{z} + \alpha^{disk}_{d_{z},D}(z)$, 
and $M^{*}_{\xi,D}$ is an imaginary Gaussian multiplicative chaos in $D$ with parameter $\xi$ and covariance kernel $K^{*}_D(z,w) = \alpha^{*}_{D}(z,w)$.
\end{theorem}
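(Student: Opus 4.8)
The plan is to obtain the convergence statement directly from the general existence result, Theorem~\ref{convergence}, applied to the small parameter family $\mathbb{W}_{\xi,D}$ with $\Delta=\Delta^*_\xi$, and then to identify the limit $W^*_{\xi,D}$ with a deterministically tilted imaginary GMC by comparing, at the level of their deterministic prefactors, the renormalized cutoff fields $\delta^{-2\Delta^*_\xi}W^\delta_{\xi,D}$ with the Wick-renormalized exponentials $M^{*,\delta}_{\xi,D}$. Since Theorem~\ref{convergence} is stated for bounded, simply connected domains with $C^1$ boundary, I would first note that it is enough to treat that case: for general $D$ conformally equivalent to $\D$ (and $*=loop,m$) the conclusion is transported by the conformal-covariance results of the appendix (Theorem~\ref{thm:conformal_covariance}), and for $*=disk$ one always takes $D=\D$.

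\medskip

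\noindent\emph{Two-point function and hypotheses of Theorem~\ref{convergence}.} For $z\in D$ and $\delta>0$ the variable $G(h_z^\delta)$ is centered Gaussian with $\mathrm{Var}\,G(h_z^\delta)=\alpha^*_{\delta,D}(z)$ and $\mathrm{Cov}(G(h_z^\delta),G(h_w^{\delta'}))=\alpha^*_{\delta\vee\delta',D}(z,w)$, directly from the form of the kernels $\mathbf h$ and the control $\nu=\mu^*_D\otimes S$; hence $G(h_z^\delta)-G(h_w^{\delta'})$ is centered Gaussian and
\begin{equation*}
\big\langle W^\delta_{\xi,D}(z)\,\overline{W^{\delta'}_{\xi,D}(w)}\big\rangle=\exp\!\Big(-\tfrac{\xi^2}{2}\big(\alpha^*_{\delta,D}(z)+\alpha^*_{\delta',D}(w)-2\alpha^*_{\delta\vee\delta',D}(z,w)\big)\Big),
\end{equation*}
which is real, nonnegative and at most $1$; together with $|W^\delta_{\xi,D}|\equiv 1$ and boundedness of $D$ this gives \eqref{c:1}. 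Splitting the loops covering $z$ at the scale $d_z$ (a loop covering $z$ of diameter $<d_z$ has $\diam(\bar\gamma)=\diam(\gamma)<d_z$ and so stays in $B_{z,d_z}\subset D$) and using \eqref{alpha5} for $*=loop$, \eqref{alpha8} for $*=disk$, and Lemma~\ref{MassiveOnePointFunctionEstimate} for $*=m$, one gets, for $\delta<d_z$, the identities $\alpha^{loop}_{\delta,D}(z)=\tfrac15\log\tfrac1\delta+\Theta^{loop}_D(z)$, $\alpha^{disk}_{\delta,D}(z)=\pi\log\tfrac1\delta+\Theta^{disk}_D(z)$ and $\alpha^{m}_{\delta,D}(z)=\tfrac15\log\tfrac1\delta+\Theta^{m}_D(z)+r_\delta(z)$ with $r_\delta(z):=\hat\alpha_{0,D}(z)-\hat\mu(\gamma\colon z\in\bar\gamma,\ \diam(\gamma)\ge\delta,\ \gamma\subset D)\downarrow 0$. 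Since $\alpha^*_{\delta\vee\delta',D}(z,w)\uparrow\alpha^*_D(z,w)<\infty$ as $\delta,\delta'\to0$ for $z\ne w$ (finiteness because such loops have diameter $\ge|z-w|$; continuity of the limiting quantities is Theorem~\ref{t:2p1}, Lemma~\ref{l:brexit} and Lemma~\ref{l:cont}), these identities turn the display above into \eqref{two_point_function_exists} with $\Delta=\Delta^*_\xi\in(0,1/2)$ and $\phi(z,w)=\exp\!\big(-\tfrac{\xi^2}{2}(\Theta^*_D(z)+\Theta^*_D(w))+\xi^2\alpha^*_D(z,w)\big)$. For \eqref{two_point_function}, write the exponent for $\delta'\le\delta$ as $\tfrac{\xi^2}{2}\alpha^*_{\delta,D}(z|w)+\tfrac{\xi^2}{2}\big(\alpha^*_{\delta',D}(w)-\alpha^*_{\delta,D}(z,w)\big)$: both summands are nonnegative, and the second dominates $\tfrac{\xi^2}{2}\alpha^*_{\delta',D}(w|z)$ because $\alpha^*_{\delta,D}(z,w)\le\alpha^*_{\delta',D}(z,w)$; so \eqref{two_point_function} holds with $\tau^\delta_{z,w}:=\tfrac{\xi^2}{2}\alpha^*_{\delta,D}(z|w)$. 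Finally, for $\delta<d_{z,w}$ every loop covering $z$ of diameter in $[\delta,d_{z,w})$ automatically stays in $D$ and misses $w$, so $\alpha^*_{\delta,D}(z|w)\ge\alpha^*_{\delta,d_{z,w}}(z)\ge 2\Delta^*_\xi\log(d_{z,w}/\delta)-\log c(D)$ for some $c(D)<\infty$ (with $c(D)=1$ for $*=loop,disk$), by \eqref{alpha5}, \eqref{alpha8} and Lemma~\ref{MassiveOnePointFunctionEstimate}; this is \eqref{2.3}. Theorem~\ref{convergence} then gives convergence in square mean in $\mathcal{H}^{-\alpha}$, for every $\alpha>3/2$, to a limit $W^*_{\xi,D}$.

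\medskip

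\noindent\emph{Identification as a tilted imaginary GMC.} I would use, as the approximating Gaussian sequence defining the imaginary GMC, the fields $X_\delta:=\{G(h^\delta_z)\}_{z\in D}$ themselves: these are centered Gaussian with covariance $\alpha^*_{\delta,D}(z,w)$ and, by the computation \eqref{eq:cov-conv} already carried out, converge to the generalized field with covariance $K^*_D(z,w)=\alpha^*_D(z,w)$, which by Lemma~\ref{l:cont} has the form $\eta\log^+\tfrac1{|z-w|}+g(z,w)$ with $g$ bounded continuous ($\eta=\tfrac15$ for $*=loop,m$; $\eta=\pi$ for $*=disk$) demanded in the definition of imaginary GMC. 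The corresponding renormalized exponentials are $M^{*,\delta}_{\xi,D}(z)=e^{i\xi G(h^\delta_z)+\frac{\xi^2}{2}\alpha^*_{\delta,D}(z)}=e^{\frac{\xi^2}{2}\alpha^*_{\delta,D}(z)}W^\delta_{\xi,D}(z)$; their two-point function $e^{\xi^2\alpha^*_{\delta\vee\delta',D}(z,w)}$ increases to $e^{\xi^2\alpha^*_D(z,w)}$, integrable on $D\times D$ precisely when $\eta\xi^2<2$, i.e. $\Delta^*_\xi<1/2$, so that the Cauchy-sequence argument of the proof of Theorem~\ref{convergence} gives $M^{*,\delta}_{\xi,D}\to M^*_{\xi,D}$ in $\mathcal{H}^{-\alpha}$; by construction $M^*_{\xi,D}$ is an imaginary GMC in $D$ with parameter $\xi$ and covariance $K^*_D$. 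Combining the one-point identities above with the definition of $M^{*,\delta}_{\xi,D}$ yields the deterministic identity, valid on $\{z:d_z>\delta\}$,
\begin{equation*}
\delta^{-2\Delta^*_\xi}W^\delta_{\xi,D}(z)=e^{-\frac{\xi^2}{2}(\Theta^*_D(z)+r_\delta(z))}\,M^{*,\delta}_{\xi,D}(z),
\end{equation*}
with $r_\delta\equiv0$ for $*=loop,disk$ and $r_\delta\downarrow0$ for $*=m$. Testing against $\varphi\in C^\infty_0(D)$, whose support lies in $\{d_z\ge\varepsilon_0\}$ for some $\varepsilon_0>0$, and letting $\delta\to0$ with $\delta<\varepsilon_0$, the left side tends to $W^*_{\xi,D}(\varphi)$ in $L^2(\P)$ and the right side to $\int_D M^*_{\xi,D}(z)\,e^{-\frac{\xi^2}{2}\Theta^*_D(z)}\varphi(z)\,dz$ (using that $\Theta^*_D$ is continuous and bounded on the support of $\varphi$ by Lemma~\ref{l:cont}, and $r_\delta\to0$ uniformly there); this is \eqref{eq:rdderivative}.

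\medskip

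\noindent\emph{Main obstacle.} The delicate part is the identification near $\partial D$: the clean one-point identities hold only for $\delta<d_z$, the tilt $\Theta^*_D$ is a priori merely continuous on $D$ (not a Sobolev multiplier), and in the massive case the error $r_\delta$ must be controlled; this forces one to read \eqref{eq:rdderivative} only against compactly supported test functions and to extract from Lemmas~\ref{MassiveOnePointFunctionEstimate} and~\ref{l:cont} exactly the model-specific input needed to do so. A more routine but not automatic point is treating $\mu^{loop},\mu^{disk},\mu^m$ on an equal footing when checking \eqref{two_point_function}--\eqref{2.3}, which is where the explicit formulae \eqref{alpha5}, \eqref{alpha8} and the comparison estimate of Lemma~\ref{MassiveOnePointFunctionEstimate} are essential; and the passage from $C^1$ domains to general domains conformally equivalent to $\D$ relies on the appendix's conformal-covariance results rather than being immediate.
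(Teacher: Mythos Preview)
Your verification of the hypotheses of Theorem~\ref{convergence} is correct and essentially the same as the paper's (in fact your choice $\tau^\delta_{z,w}=\tfrac{\xi^2}{2}\alpha^*_{\delta,D}(z|w)$, mirroring the Poisson case, is the right one; the paper's appendix proof states $\tau^\delta_{z,w}=\tfrac{\xi^2}{2}\alpha^*_{\delta,D}(z)$ after a display with a sign slip, but your version is what the argument actually requires). The reduction to bounded $C^1$ domains via Theorem~\ref{thm:conformal_covariance} is also exactly what the paper does.

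Where you diverge is in the identification of the limit as a tilted imaginary GMC. The paper does not build $M^*_{\xi,D}$ from scratch: it invokes Theorem~1.1 of Junnila--Saksman--Webb \cite{GMC_imaginary}, and most of the work in the appendix proof of Theorem~\ref{ExistenceGaussianLayeringFieldBoundedDomains} consists in verifying that $X^\delta:=\sqrt{5}\,G(h^\delta_\cdot)$ (for $*=loop$; analogously for $disk,m$) is a \emph{standard approximation} of the limiting log-correlated field in the sense of \cite[Definition~2.7]{GMC_imaginary}, i.e.\ checking their conditions (i)--(iii) via the functions $F_K$ and $H$ of \eqref{functionF}--\eqref{functionG}. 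This buys a direct connection to the established GMC literature (and, incidentally, convergence of $M^{*,\delta}_{\xi,D}$ in $\mathcal H^{-\alpha}$ for any $\alpha>1$ rather than $\alpha>3/2$). Your route instead recycles the Cauchy-in-$\mathcal H^{-\alpha}$ argument of Theorem~\ref{convergence} on the Wick-renormalized field $M^{*,\delta}_{\xi,D}$, using that its two-point function $e^{\xi^2\alpha^*_{\delta\vee\delta',D}(z,w)}$ is monotone and dominated by the integrable limit $e^{\xi^2\alpha^*_D(z,w)}$ when $\Delta^*_\xi<1/2$; under the paper's own (flexible) definition of imaginary GMC given before Theorem~\ref{MainTheorem}, the resulting limit qualifies. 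This is more self-contained and avoids the external reference entirely. The final identification via the deterministic prefactor identity is the same in both approaches; your handling of the massive error term $r_\delta$ (uniform on compacta by the bound $\hat\alpha_{0,\delta}(z)\le 2\delta(\tfrac{\bar m^2}{5}\log 2+1)$ extracted from Lemma~\ref{MassiveOnePointFunctionEstimate}) is correct. One small point to make explicit in your write-up: the function $e^{-\frac{\xi^2}{2}\Theta^*_D}\varphi$ is only continuous with compact support, not smooth, so rather than pairing $M^*_{\xi,D}$ with it as an $\mathcal H^\alpha_0$ test function you should read the limit directly at the level of the $L^2(\mathbb P)$ Cauchy computation for $\int_D M^{*,\delta}_{\xi,D}(z)\,e^{-\frac{\xi^2}{2}\Theta^*_D(z)}\varphi(z)\,dz$, which your two-point estimate already controls.
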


\begin{proof}
If $D$ is a bounded simply connected domain with a $C^1$ boundary, the result corresponds to Theorem \ref{ExistenceGaussianLayeringFieldBoundedDomains}.
For more general domains, it follows from Theorem \ref{thm:conformal_covariance}.
\end{proof}

\section{Chaos expansions} \label{sec:chaos}

In this section we show that the action of the fields constructed in Sections \ref{ss:conv_layering_fields} and \ref{ss:GaussianLayering} on smooth test functions admits a {\it Wiener-It\^o chaos expansion}.
%
%
To accomplish this task, we first briefly recall the theory of Wiener-It\^o chaos expansions of general Poisson and Gaussian measures, and then present the main results of this section.
A detailed discussion of Wiener chaos can be found in \cite{Last} (Poisson case), \cite{Gio_ivan} (Gaussian case) and \cite{Gio_murad_taqqu} (both Gaussian and Poisson). 

In the next section, we will show that each term of the Wiener-It\^o chaos expansion of the Poisson layering field converges to the corresponding term of the Wiener-It\^o chaos
expansion of the Gaussian layering field, which implies convergence of the Poisson layering field to the Gaussian layering field in the sense of finite dimensional distributions.

\subsection{Chaos expansion for the Poisson layering field}\label{ss:chaospoisson}

{Until further notice, we will now consider the general situation of a Poisson random measure $N_\lambda$, defined on a measurable space $(A, \mathscr A)$, with control $\lambda \nu$, where $\lambda>0$ and $\nu$ is $\sigma$-finite and non-atomic --- see Section \ref{ss:concepts}. We also denote by $G$ a Gaussian measure on $(A, \mathscr{A})$ with control $\nu$.

One of the pivotal points of our analysis (allowing us to effectively connect Poisson and Gaussian structures) will be the following elementary consequence of the multivariate Central Limit Theorem (CLT): as $\lambda\to \infty$, the standardized compensated random measure $\lambda^{- 1/2}\widehat{N}_\lambda$ converges to $G$ in the following sense: for every $d\geq 1$ and every $B_1,...,B_d\in \mathscr{A}$ such that $\nu(B_i)<\infty$ ($i=1,...,d$),
\begin{equation}\label{e:clt}
\left(\lambda^{- 1/2}\widehat{N}_\lambda(B_1),..., \lambda^{- 1/2}\widehat{N}_\lambda(B_d)\right) \Longrightarrow (G(B_1), ..., G(B_d)),
\end{equation}
where $\Longrightarrow$ indicates convergence in law. }

The classical Wiener-It\^o representation property of $N_\lambda$ (see e.g. \cite{Last, Last_Penrose_lectures, Gio_murad_taqqu}) states that, for every random variable $Y\in L^2(\sigma(N_\lambda))$, there exists a unique sequence of ($\nu^q$-almost everywhere) symmetric kernels $\{f_q : q=1,2,...\}$, such that $f_q\in L^2(A^q, \mathscr{A}^q,\nu^q) = L^2(\nu^q)$ and
\begin{equation}\label{e:chaos}
Y = \E[Y]+\sum_{q=1}^\infty I^{N_\lambda}_q(f_q),
\end{equation}
where the series converges in $L^2(\mathbb{P})$ and 
\be
I_q^{N_\lambda}(f_q) := \int_{A^q} f(x_1,...,x_q){\bf 1}_{\{x_k\neq x_l, \, \forall k\neq l\}}  \, \widehat{N}_\lambda(dx_1)\cdots  \widehat{N}_\lambda(dx_q)
\ee
stands for the multiple Wiener-It\^o integral of order $q$ of $f_q$ with respect to $ \widehat{N}_\lambda$.
For every $f\in L^2(\nu)$, we write $ I_1^{N_\lambda} = \int_A f(x)  \widehat{N}_\lambda(dx) := \widehat{N}_\lambda(f)$. One should observe that chaotic decompositions are usually stated and proved in the case of real-valued random variables $Y$; however, the result extends immediately to the case of a square-integrable random variable with the form $Y_{\mathbb{C}} = Y_R+ i Y_I$ (with $i = \sqrt{-1}$), by separately applying \eqref{e:chaos} to $Y = Y_R$ and $Y = Y_I$. Note that if $f = f_R+ i f_I$ is a complex-valued element of  $L^2(\nu^q)$, then the integral $I_q(f)$ is simply defined as $I_q(f) = I_q(f_R) + i I_q(f_I)$. All general results discussed below apply both to the real and complex case, so that we will only distinguish the two instances when there is some risk of confusion.
One crucial relation we are going to exploit in the sequel is the following {\it isometric property} of multiple stochastic integrals (see e.g. \cite[Proposition 5.5.3]{Gio_murad_taqqu}): if $f\in L^2(\nu^q)$ and $g\in L^2(\nu^p)$ are two symmetric kernels, with $p,q\geq 1$, then
\be\label{e:isom}
\E\left[I^{N_\lambda}_q(f) \overline{ I^{N_\lambda}_p(g)}\right] =  \delta_{pq} \, q! \lambda^q \int_{A^q} f(x_1,...x_q) \overline{ g(x_1,...,x_q)}\, \nu(dx_1)\cdots\nu(dx_q), 
\ee
where $\delta_{pq}$ is Kronecker's symbol.


We will now explain how to explicitly compute the decomposition \eqref{e:chaos} for a given random variable $Y$. For every $x\in A$, the {\it add-one cost operator} at $x$, written $D_x$, is defined as follows: for every $\sigma(N_\lambda)$-measurable random variable $Y = Y(N_\lambda)$
\be
D_xY := Y(N_\lambda + \delta_x ) - Y(N_\lambda),
\ee
where $\delta_x$ stands for the Dirac mass at $x$. As explained in \cite{Last_Penrose_PTRF, Last_Penrose_lectures}, the mapping on $A\times \Omega$ given by $(x,\omega)\mapsto D_xY(N_\lambda(\omega))$ is always jointly measurable. For every $q\geq 2$, one then recursively defines the operators 
\be
D^q_{x_1,...,x_q}Y := D_{x_1}[D_{x_2,...,x_q}^{q-1} Y], \quad x_1,...,x_q\in A
\ee
(that also automatically define jointly measurable mappings -- see \cite{Last_Penrose_lectures}), where we set $D=D^1$. 

In the discussion to follow, we will extensively apply a formula by Last and Penrose \cite[Theorem 1.3]{Last_Penrose_PTRF} stating that, if $Y$ is a square-integrable functional of $N_\lambda$, then automatically, for every $q\geq 1$, the mapping on $A^q$
$$
(x_1,..., x_q) \mapsto \E[D^q_{x_1,...,x_q} Y]
$$
is well-defined, symmetric and square-integrable with respect to $\nu^q$ and moreover, for every integer $q\geq 1$, one can choose the symmetric kernel $f_q$ in \eqref{e:chaos} as 
\begin{equation}\label{e:lastpenrose}
f_q(x_1,...,x_q) := \frac{1}{q!} \E[D^q_{x_1,...,x_q} Y].
\end{equation}

We will also make use  of the elementary identities contained in the next statement (the proof is by recursion).

\begin{lem}\label{l:chain}
Let
\be
Y = Y(N_\lambda) := \exp\left\{ i \beta N_\lambda(h)\right\}, 
\ee
where $h\in L^1(\nu)$ is real-valued. Then, for every $q\geq 1$,
\begin{eqnarray}
\label{e:chain1} D^q_{x_1,...,x_q} Y(N_\lambda) &= &Y(N_\lambda) \times D^q_{x_1,...,x_q} Y(\emptyset)\\
\label{e:chain2} &=& Y(N_\lambda) \times \prod_{k=1}^q \left[ \exp\{ i \beta h(x_k)\} -1 \right] \\
\label{e:chain3} &=& Y(N_\lambda)\times (\exp\{ i \beta h\} -1)^{\otimes q} (x_1,...,x_q),
\end{eqnarray}
where $\emptyset$ in \eqref{e:chain1} indicates the point measure with empty support, that is, the zero measure.
\end{lem}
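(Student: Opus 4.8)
The plan is to establish the three displayed identities by induction on $q$, with the base case $q=1$ reducing to a one-line computation. First I would record that, since $h\in L^1(\nu)$, the integral $N_\lambda(h)$ is $\mathbb{P}$-a.s.\ well defined and finite, so $Y(N_\lambda)=\exp\{i\beta N_\lambda(h)\}$ is a bounded complex random variable; moreover, adding Dirac masses acts additively on $h$, namely $(N_\lambda+\delta_{x_1}+\cdots+\delta_{x_j})(h)=N_\lambda(h)+\sum_{k=1}^{j}h(x_k)$. Applying the definition of the add-one cost operator then gives
\begin{equation*}
D_xY(N_\lambda)=\exp\{i\beta(N_\lambda(h)+h(x))\}-\exp\{i\beta N_\lambda(h)\}=Y(N_\lambda)\,\big(\exp\{i\beta h(x)\}-1\big),
\end{equation*}
which is \eqref{e:chain2} for $q=1$; taking $N_\lambda$ to be the zero measure (so that $Y(\emptyset)=1$) yields \eqref{e:chain1} for $q=1$, and \eqref{e:chain3} is merely the definition of the first tensor power.

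For the inductive step I would assume that \eqref{e:chain2} holds at order $q-1$ \emph{for an arbitrary integer-valued argument measure} $N$, i.e.\ $D^{q-1}_{x_2,\ldots,x_q}Y(N)=Y(N)\prod_{k=2}^{q}\big(\exp\{i\beta h(x_k)\}-1\big)$; phrasing the hypothesis this way is essential, since the recursion $D^{q}_{x_1,\ldots,x_q}=D_{x_1}\circ D^{q-1}_{x_2,\ldots,x_q}$ evaluates the inner operator both at $N_\lambda$ and at $N_\lambda+\delta_{x_1}$. Then
\begin{equation*}
D^{q}_{x_1,\ldots,x_q}Y(N_\lambda)=D^{q-1}_{x_2,\ldots,x_q}Y(N_\lambda+\delta_{x_1})-D^{q-1}_{x_2,\ldots,x_q}Y(N_\lambda)=\big(Y(N_\lambda+\delta_{x_1})-Y(N_\lambda)\big)\prod_{k=2}^{q}\big(\exp\{i\beta h(x_k)\}-1\big),
\end{equation*}
and substituting the $q=1$ identity for $Y(N_\lambda+\delta_{x_1})-Y(N_\lambda)=D_{x_1}Y(N_\lambda)$ extends the product to $k=1,\ldots,q$, which proves \eqref{e:chain2}. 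Specializing to the zero measure gives \eqref{e:chain1}, and \eqref{e:chain3} follows by unraveling the notation $(\exp\{i\beta h\}-1)^{\otimes q}(x_1,\ldots,x_q)=\prod_{k=1}^{q}\big(\exp\{i\beta h(x_k)\}-1\big)$.

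There is no genuine obstacle here: the content is the elementary identity $\exp\{i\beta(a+b)\}-\exp\{i\beta a\}=\exp\{i\beta a\}\big(\exp\{i\beta b\}-1\big)$ iterated $q$ times. The only points deserving a word of care are (i) formulating the induction hypothesis for a generic argument measure so that the recursion closes cleanly, and (ii) recalling, from \cite{Last_Penrose_PTRF,Last_Penrose_lectures}, the joint measurability of $(x,\omega)\mapsto D_xY(N_\lambda(\omega))$ and of its higher-order iterates — needed not for the algebra above but so that the kernels produced via \eqref{e:lastpenrose} are well defined as elements of $L^2(\nu^q)$.
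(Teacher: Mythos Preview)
Your proof is correct and follows exactly the approach the paper indicates (the paper simply writes ``the proof is by recursion'' without further detail). Your care in stating the inductive hypothesis for a generic argument measure is the right way to make the recursion close, and the remaining identities \eqref{e:chain1} and \eqref{e:chain3} are indeed immediate specializations and notational rewritings of \eqref{e:chain2}.
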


\begin{rmk}\label{r:1}{\rm
It follows from Lemma \ref{l:chain}, relation \eqref{e:lastpenrose} and Kintchine's formula (see, e.g., \cite[Proposition 5.3.5 and Example 5.3.6 (ii)]{Gio_murad_taqqu}) that
\begin{eqnarray}\label{e:1}
e^{ i \beta \hat N_\lambda(h)} &=& e^{\lambda\int_A(e^{i \beta h(x)} - 1 - i\beta h(x) )  \nu(dx)}
\left( 1+ \sum_{q=1}^\infty \frac{1}{q!}  I_q( (\exp\{ i \beta h\} -1)^{\otimes q} ) \right)
\end{eqnarray}
Note that, for the models studied in this paper, $\int_A h\, d\nu = 0$, so trivially $\hat N_\lambda(h) = {N}_\lambda(h)$.}
\end{rmk}
%
%
%

We are now ready to present and prove one of the main results of this section.
\begin{theorem}\label{p:chaos1} 
Let $V^{*}_{\lambda,\beta} \in {\mathcal H}^{-\alpha}$ be as in Theorem \ref{ExistenceLayeringField}, and recall the notation \eqref{e:accadelta}. Then, for every $\varphi \in C^{\infty}_{0}(D)$, $V^{*}_{\lambda,\beta}(\varphi)$ admits a Wiener-It\^o chaos expansion \eqref{e:chaos} with kernels $f_q = f^{\varphi}_{q} = f^{\varphi}_{q,\lambda,\beta} \in L^2(\nu^q)$ given by
\be \label{chaos_exp1}
f^{\varphi}_{q} (x_1,...,x_q) = \frac{1}{q!}\int_D \varphi(z) \langle V^*_{\lambda, \beta}(z) \rangle \left[ e^{i\beta h_z }-1\right]^{\otimes q}(x_1,...,x_q) \, dz, \quad q\geq 1.
\ee
\end{theorem}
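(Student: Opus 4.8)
The plan is to compute the Wiener--It\^o kernels of $V^*_{\lambda,\beta}(\varphi)$ by first identifying the kernels of the cutoff field $V^\delta_{\lambda,\beta}(\varphi)$ and then passing to the limit $\delta\to 0$ along the convergence already established in Theorem \ref{ExistenceLayeringField}. Fix $\varphi\in C^\infty_0(D)$ and $\delta>0$, and set $V^\delta_{\lambda,\beta}(\varphi) = \int_D V^\delta_{\lambda,\beta}(z)\,\varphi(z)\,dz$. Since $|V^\delta_{\lambda,\beta}(z)|=1$ and $D$ is bounded, this is a bounded (hence square-integrable) complex functional of $N_\lambda$, so it admits a chaos expansion. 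To get the kernels, I would apply the Last--Penrose formula \eqref{e:lastpenrose} together with Lemma \ref{l:chain}: by linearity of the add-one cost operator $D^q$ and Fubini (justified by boundedness of the integrand on the bounded domain $D$), $D^q_{x_1,\dots,x_q} V^\delta_{\lambda,\beta}(\varphi) = \int_D \varphi(z)\, D^q_{x_1,\dots,x_q} V^\delta_{\lambda,\beta}(z)\,dz$, and \eqref{e:chain3} gives $D^q_{x_1,\dots,x_q} V^\delta_{\lambda,\beta}(z) = V^\delta_{\lambda,\beta}(z)\,(e^{i\beta h_z^\delta}-1)^{\otimes q}(x_1,\dots,x_q)$. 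Taking expectations and dividing by $q!$ yields
\be \label{e:deltakernel}
f^{\varphi,\delta}_q(x_1,\dots,x_q) = \frac{1}{q!}\int_D \varphi(z)\,\langle V^\delta_{\lambda,\beta}(z)\rangle\,(e^{i\beta h_z^\delta}-1)^{\otimes q}(x_1,\dots,x_q)\,dz,
\ee
where I use that the factor $(e^{i\beta h_z^\delta}-1)^{\otimes q}$ is deterministic so the expectation only acts on $V^\delta_{\lambda,\beta}(z)$.

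Next I would renormalize: writing $\tilde V^\delta = \delta^{-2\Delta^*_{\lambda,\beta}}V^\delta_{\lambda,\beta}$, the kernels of $\tilde V^\delta(\varphi)$ are $\delta^{-2\Delta^*_{\lambda,\beta}} f^{\varphi,\delta}_q$, and by \eqref{e:isom} combined with the isometric (Parseval) identity for the chaos expansion, the $\mathcal H^{-\alpha}$-convergence $\tilde V^\delta(\varphi)\to V^*_{\lambda,\beta}(\varphi)$ in $L^2(\mathbb P)$ (more precisely, testing the $\mathcal H^{-\alpha}$-valued convergence of Theorem \ref{ExistenceLayeringField} against a fixed smooth $\varphi$, or simply re-running the $L^1(D\times D)$ argument of Theorem \ref{convergence} directly for the scalar random variables $\tilde V^\delta(\varphi)$) forces, for each $q$, convergence of $\delta^{-2\Delta^*_{\lambda,\beta}} f^{\varphi,\delta}_q$ in $L^2(\nu^q)$ to the $q$-th kernel of the limit. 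It then remains to identify this $L^2(\nu^q)$-limit as the right-hand side of \eqref{chaos_exp1}. Here one uses that $h_z^\delta\to h_z$ $\nu$-a.e. by \eqref{e:accadelta}, that $\delta^{-2\Delta^*_{\lambda,\beta}}\langle V^\delta_{\lambda,\beta}(z)\rangle \to \langle V^*_{\lambda,\beta}(z)\rangle$ for every $z\in D$ (this is exactly the one-point function convergence from \cite{camia2016conformal}, the defining normalization of $\Delta^*_{\lambda,\beta}$), and dominated convergence on $D\times A^q$: the integrand is bounded in modulus by $|\varphi(z)|\cdot\delta^{-2\Delta^*_{\lambda,\beta}}\langle|V^\delta_{\lambda,\beta}(z)|\rangle\cdot 2^q\,\mathbf 1_{A_{0,D}(z)}(\gamma_1)\cdots$ type expressions whose $L^2(\nu^q\otimes dz)$-norms are controlled uniformly in $\delta$ by the same two-point estimates \eqref{two_point_function}--\eqref{2.3} that underlie Theorem \ref{convergence}.

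The main obstacle I anticipate is the uniform integrability / domination step needed to move the $\delta\to 0$ limit inside the integrals in \eqref{e:deltakernel} and to upgrade pointwise convergence of kernels to $L^2(\nu^q)$-convergence: the kernels $e^{i\beta h_z^\delta}-1$ are supported on $\{\gamma: z\in\bar\gamma,\ \diam(\gamma)>\delta\}$, whose $\nu$-mass diverges as $\delta\to 0$, so the factor $\delta^{-2\Delta^*_{\lambda,\beta}}$ is genuinely needed to compensate the small-loop divergence and one must track this cancellation carefully. The cleanest route is probably not to prove $L^2$-convergence of each kernel in isolation, but to invoke the $L^2(\mathbb P)$-convergence $\tilde V^\delta(\varphi)\to V^*_{\lambda,\beta}(\varphi)$ already in hand (a consequence of Theorem \ref{ExistenceLayeringField} together with continuity of the evaluation $U\mapsto U(\varphi)$ from $\mathcal H^{-\alpha}$ to $\mathbb C$ for $\varphi\in C^\infty_0(D)\subset\mathcal H^{\alpha}(D)$), which by the Parseval identity \eqref{e:isom} for chaos expansions immediately gives $\delta^{-2\Delta^*_{\lambda,\beta}}f^{\varphi,\delta}_q \to f^\varphi_q$ in $L^2(\nu^q)$ for every $q$ with $\sum_q q!\lambda^q\|f^\varphi_q\|^2_{L^2(\nu^q)}<\infty$; then only the pointwise identification of the limit via \eqref{e:accadelta} and the one-point asymptotics is left, which is routine. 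Finally one checks $f^\varphi_q\in L^2(\nu^q)$ directly from \eqref{chaos_exp1} using $|e^{i\beta h_z}-1|\le 2\,\mathbf 1_{A_{0,D}(z)}$, boundedness of $\varphi$ and $\langle V^*_{\lambda,\beta}(z)\rangle$, and the finiteness of $\int_D\int_D \alpha^*_D(z,w)^{q}\cdots$-type quantities that follows from the continuity statements in Lemma \ref{l:cont}.
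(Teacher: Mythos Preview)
Your proposal is correct and follows essentially the same route as the paper: obtain $L^2(\mathbb{P})$-convergence of $\tilde V^\delta(\varphi)$ from the $\mathcal{H}^{-\alpha}$-convergence of Theorem~\ref{ExistenceLayeringField}, use the chaos isometry to deduce $L^2(\nu^q)$-convergence of the kernels, compute the cutoff kernels via Lemma~\ref{l:chain} and Fubini, and then identify the pointwise limit via \eqref{e:accadelta} and the one-point asymptotics. The only sharpening in the paper is that the domination for the $dz$-integral is done with the explicit one-point bound $\delta^{-2\Delta^*_{\lambda,\beta}}\langle V^\delta_{\lambda,\beta}(z)\rangle \le C^*\, d_z^{-2\Delta^*_{\lambda,\beta}}$ (obtained from \eqref{alpha5}, \eqref{alpha8}, and Lemma~\ref{MassiveOnePointFunctionEstimate}) rather than two-point estimates; note in particular that $\langle V^*_{\lambda,\beta}(z)\rangle$ is not bounded on $D$ but only on the compact support of $\varphi$, which is all that is needed.
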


\begin{proof}
Convergence of $\delta^{-2 \Delta^{*}_{\lambda,\beta}} V^{\delta}_{\lambda,\beta}$ to $V^{*}_{\lambda,\beta}$ in second mean in the Sobolev space ${\mathcal H}^{-\alpha}$
as $\delta \to 0$ implies that, for every $\varphi \in C^{\infty}_{0}$, $\delta^{-2 \Delta^{*}_{\lambda,\beta}} V^{\delta}_{\lambda,\beta}(\varphi)$ converges to
$V^{*}_{\lambda,\beta}(\varphi)$ in $L^2(\P)$:
\be
\big\langle | \delta^{-2 \Delta^{*}_{\lambda,\beta}} V^{\delta}_{\lambda,\beta}(\varphi) - V^{*}_{\lambda,\beta}(\varphi) |^2 \big\rangle \notag
\leq \big\langle || \delta^{-2 \Delta^{*}_{\lambda,\beta}} V^{\delta}_{\lambda,\beta} - V^{*}_{\lambda,\beta} ||^2_{{\mathcal H}^{-\alpha}} \big\rangle ||\varphi||^2_{{\mathcal H}^{\alpha}_{0}} \stackrel{\delta \to 0}{\longrightarrow} 0,
\ee
where we have used the fact that $V^{\delta}_{\lambda,\beta}$ and $V^{*}_{\lambda,\beta}$ are continuous linear functional from ${\mathcal H}^{\alpha}_0$ to $\mathbb R$,
combined with elementary properties of norms. 
The random variable $\delta^{-2 \Delta^{*}_{\lambda,\beta}} V^{\delta}_{\lambda,\beta}(\varphi)$ is in $L^2(\sigma(N_{\lambda}))$ and therefore admits a chaos expansion.
Moreover, because of the $L^2$ convergence above, the same is true for $V^{*}_{\lambda,\beta}(\varphi)$. In addition, it follows that the kernels $f^{\delta,\varphi}_{q}$ of the
chaos expansion of $\delta^{-2 \Delta^{*}_{\lambda,\beta}} V^{\delta}_{\lambda,\beta}(\varphi)$ converge in $L^2(\nu^q)$ to the kernels $f^{\varphi}_q$ of the chaos expansion
of $V^{*}_{\lambda,\beta}(\varphi)$. We will now show that this last statement implies \eqref{chaos_exp1}.

To obtain this, we note that, using Lemma \ref{l:chain} and Fubini's theorem, the kernels of the chaos expansion of
$\delta^{-2 \Delta^{*}_{\lambda,\beta}} V^{\delta}_{\lambda,\beta}(\varphi)$ can be written as
\begin{eqnarray}
f^{\delta,\varphi}_{q} (x_1,...,x_q) & = & \frac{1}{q!} \delta^{-2\Delta^*_{\lambda,\beta}} \E \Big( D^q_{x_1,\ldots,x_q} \int_D e^{i \beta N_{\lambda}(h^{\delta}_z)} \varphi(z) \, dz \Big) \\
& = & \frac{1}{q!} \int_D \varphi(z) \langle \delta^{-2\Delta^*_{\lambda,\beta}} e^{i \beta N_{\lambda}(h^{\delta}_z)} \rangle \prod_{k=1}^q (e^{i \beta h^{\delta}_z(x_k)} - 1) \, dz \\
& = & \frac{1}{q!} \int_D F^{\delta,\varphi}_{q;x_1,\ldots,x_q}(z) \, dz,
\end{eqnarray}
where
\begin{eqnarray}
F^{\delta,\varphi}_{q;x_1,\ldots,x_q}(z) & := & \varphi(z) \delta^{-2\Delta^*_{\lambda,\beta}} \langle e^{i \beta N_{\lambda}(h^{\delta}_z)} \rangle \Pi_{k=1}^q (e^{i \beta h^{\delta}_z(x_k)} - 1) \\
& \stackrel{\delta \to 0}{\longrightarrow} & \varphi(z) \langle V^*_{\lambda, \beta}(z) \rangle \left[ e^{i\beta h_z }-1\right]^{\otimes q}(x_1,...,x_q)
\end{eqnarray}
pointwise for all $z \in D$.
In addition, we will show below the following upper bound:
\be \label{eq:integrability}
|F^{\delta,\varphi}_{q;x_1,\ldots,x_q}(z)| \leq C^* 2^q |\varphi(z)| d_{z}^{-2\Delta^*_{\lambda,\beta}},
\ee
where $C^*=1$ for $* = loop, disk$ and $C^* = e^{2\lambda(1-\cos\beta)\hat\mu(\gamma : z \in \gamma, \gamma \subset D)}$ for $*=m$.
Before proving \eqref{eq:integrability}, we show how it implies \eqref{chaos_exp1}. Since $C^*$ does not depend on $z$ and $\varphi$ is in $C^{\infty}_0$,
the right hand side of \eqref{eq:integrability} is integrable on $D$ for $2\Delta^*_{\lambda,\beta} < 1$. Hence,
\be
F^{\delta,\varphi}_{q;x_1,\ldots,x_q}(z) \stackrel{\delta \to 0}{\longrightarrow}
\varphi(z) \langle V^*_{\lambda, \beta}(z) \rangle \left[ e^{i\beta h_z }-1\right]^{\otimes q}(x_1,...,x_q) \quad \text{ in } L^1(D),
\ee
which implies that 
\be
f^{\delta,\varphi}_q(x_1,\dots,x_q) \stackrel{\delta \to 0}{\longrightarrow} \frac{1}{q!}\int_D \varphi(z) \langle V^*_{\lambda, \beta}(z) \rangle \left[ e^{i\beta h_z }-1\right]^{\otimes q}(x_1,...,x_q) \, dz
\ee
for every $(x_1,\ldots,x_q) \in {\teal A}^q$. Since $f^{\delta,\varphi}_q$ converges in $L^2(\nu^q)$ to $f^{\varphi}_q$, as $\delta \to 0$, and $\nu^q$ is $\sigma$-finite,
the considerations above imply that \eqref{chaos_exp1} holds.

In order to conclude the proof, we need to prove the upper bound \eqref{eq:integrability}. To do this, using Equation~(4.2) of \cite{camia2016conformal}, we write
\be \label{eq:CGK}
\big\langle e^{i \beta N_{\lambda}(h^{\delta}_z)} \big\rangle^*_D = e^{-\lambda \alpha^*_{\delta,D}(z)(1-\cos\beta)}.
\ee
For $*=loop,disk$, using \eqref{alpha5} and \eqref{alpha8}, respectively, we then have that
\begin{eqnarray}
\delta^{-2\Delta^*_{\lambda,\beta}} \big\langle e^{i \beta N_{\lambda}(h^{\delta}_z)} \big\rangle^*_D & = & \delta^{-2\Delta^*_{\lambda,\beta}}
e^{-\lambda \big( \alpha^*_{\delta,d_{z}}(z) + \alpha^*_{d_{z},D}(z) \big) (1-\cos\beta)} \\
& = & d_{z}^{-2\Delta^*_{\lambda,\beta}} e^{-\lambda \alpha^*_{d_{z},D}(z) (1-\cos\beta)} \label{eq:one_point_function_loop} \\
& \leq & d_{z}^{-2\Delta^*_{\lambda,\beta}}, \label{eq:integrability_loop_disk}
\end{eqnarray}
which suffices to obtain \eqref{eq:integrability}.

For $*=m$, by an application of Lemma \ref{MassiveOnePointFunctionEstimate}, using \eqref{MassiveAlphaEstimate2}, we have that, for $\delta$ sufficiently small,
\be
\alpha^m_{\delta,D}(z) \geq \alpha^{loop}_{\delta,D}(z) - 2 \hat\mu(\gamma : z \in \bar\gamma, \gamma \subset D).
\ee
Hence, using the fact that $\Delta^m_{\lambda,\beta}=\Delta^{loop}_{\lambda,\beta}$ and the upper bound \eqref{eq:integrability_loop_disk} for $*=loop$, we have that
\begin{eqnarray}
\delta^{-2\Delta^m_{\lambda,\beta}} \big\langle e^{i \beta N_{\lambda}(h^{\delta}_z)} \big\rangle^m_D & = & \delta^{-2\Delta^m_{\lambda,\beta}}
e^{-\lambda \alpha^m_{\delta,D}(z) (1-\cos\beta)} \\
& \leq & \delta^{-2\Delta^{loop}_{\lambda,\beta}} e^{-\lambda \alpha^{loop}_{\delta,D}(z) (1-\cos\beta)} e^{2 \lambda (1-cos\beta) \hat\mu(\gamma : z \in \bar\gamma, \gamma \subset D)} \\
& \leq & d_{z}^{-2\Delta^m_{\lambda,\beta}} e^{2 \lambda (1-\cos\beta) \hat\mu(\gamma : z \in \bar\gamma, \gamma \subset D)}, \label{eq:bound_massive_one_point_function}
\end{eqnarray}
as desired.
\end{proof}

\subsection{Chaos expansion for the Gaussian layering field}\label{ss:gcg}

As before, in what follows $G$ is a Gaussian measure on $(A, \mathscr A)$, with $\sigma$-finite and non-atomic control $\nu$. Similarly to what we explained in the previous section, for every $F\in L^2(\sigma(G))$ (real or complex), there exists a unique sequence of symmetric kernels $\{w_q : q=1,2,...\}$, such that $w_q\in L^2(\nu^q)$ and
\begin{equation}\label{e:Gaussianchaos}
F = \E[F]+\sum_{q=1}^\infty I^G_q(w_q),
\end{equation}
where the series converges in $L^2(\mathbb{P})$ and 
\be
 I_q^G(w_q) := \int_{A^q} w(x_1,...,x_q) G(dx_1)\cdots  G(dx_q)
\ee
is a multiple Wiener-It\^o integral of order $q$ of $w_q$ with respect to $G$.
For every $w\in L^2$, we write as before $ I^G_1(w) = \int_A w(x)  G(dx) := G(w)$. See, e.g., \cite{Gio_ivan, Gio_murad_taqqu} for a detailed discussion of these classical objects and results. We will also use the following consequence of \cite[Proposition 5.5.3]{Gio_murad_taqqu}): if $f\in L^2(\nu^q)$ and $g\in L^2(\nu^p)$ are two symmetric kernels, with $p,q\geq 1$, then
\be\label{e:isom2}
\E\left[I^{G}_q(f) \overline{ I^{G}_p(g)}\right] =  \delta_{pq} q! \int_{A^q} f(x_1,...x_q) \overline{ g(x_1,...,x_q)}\, \nu(dx_1)\cdots\nu(dx_q), 
\ee
where $\delta_{pq}$ is Kronecker's symbol.

In order to carry out our analysis, we will make use some Gaussian ersatz of formula \eqref{e:lastpenrose} and Lemma \ref{l:chain}. The needed result corresponds to the celebrated {\it Stroock's formula} for smooth Gaussian functionals (first proved in \cite{stroock_chaos}). Since the general statement of Stroock's formula requires a fair amount of preliminary technical considerations, we content ourselves with one of its corollaries, stated in the lemma below. This yields a Gaussian counterpart to the content of Lemma \ref{l:chain}.

\begin{lem}\label{l:chain2}  For every real-valued $h\in L^2(\nu)$ and every real $\xi$, one has that the chaotic decomposition \eqref{e:Gaussianchaos} of $F =e^{ i \xi G(h)}$ is given by
\begin{eqnarray}\label{e:k}
e^{ i \xi G(h)} \!=\!  \exp\left\{ -\frac{\xi^2}{2}\int_A h(x)^2\, \nu(dx)\right\}
\left(1 + \sum_{q=1}^\infty \frac{ i^q \xi^q}{q!} I^G_q(h^{\otimes q} (x_1,...,x_q)) \right)\!\!.
\end{eqnarray}\end{lem}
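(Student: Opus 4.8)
The plan is to deduce \eqref{e:k} directly from \emph{Stroock's formula} \cite{stroock_chaos}, which identifies the $q$-th chaotic kernel of a sufficiently smooth Gaussian functional with the expectation of its iterated Malliavin derivative. Write $D$ for the Malliavin derivative operator attached to the isonormal Gaussian process $\{G(g):g\in L^2(\nu)\}$ (the Gaussian analogue of the add-one cost operator of Section \ref{ss:chaospoisson}), and set $F := e^{i\xi G(h)}$. First I would observe that $F$ is infinitely Malliavin differentiable: since $D_x G(h)=h(x)$ and $h$ is deterministic, the chain rule and an immediate induction on $q$ give
\begin{equation*}
D^q_{x_1,\dots,x_q} F \;=\; (i\xi)^q\, h(x_1)\cdots h(x_q)\, F \;=\; (i\xi)^q\, h^{\otimes q}(x_1,\dots,x_q)\, F ,
\end{equation*}
and, because $|F|\equiv 1$, one has $\E\big[\|D^q F\|_{L^2(\nu^q)}^2\big] = \xi^{2q}\|h\|_{L^2(\nu)}^{2q}<\infty$ for every $q$, so $F$ lies in the domain where Stroock's formula applies.

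Granting this, the $q$-th kernel in the expansion \eqref{e:Gaussianchaos} of $F$ is $w_q = \tfrac1{q!}\E[D^q F]$. Taking expectations in the display above and using that $\E[F]=\E[e^{i\xi G(h)}]=\exp\{-\tfrac{\xi^2}{2}\int_A h^2\,d\nu\}$ --- the characteristic function of the centered Gaussian variable $G(h)$, whose variance is $\int_A h^2\,d\nu$ by the isometry recalled in Section \ref{ss:concepts} --- I obtain
\begin{equation*}
w_q(x_1,\dots,x_q) \;=\; \frac{(i\xi)^q}{q!}\,\exp\Big\{-\tfrac{\xi^2}{2}\int_A h^2\,d\nu\Big\}\, h^{\otimes q}(x_1,\dots,x_q).
\end{equation*}
Substituting into \eqref{e:Gaussianchaos} and factoring out the deterministic constant yields exactly \eqref{e:k}. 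The $L^2(\mathbb P)$-convergence of the resulting series comes for free from Stroock's formula, but can also be seen directly: by the isometry \eqref{e:isom2}, $\E[|I_q^G(h^{\otimes q})|^2]=q!\,\|h\|_{L^2(\nu)}^{2q}$, whence $\sum_{q\ge1}\frac{\xi^{2q}}{(q!)^2}\E[|I_q^G(h^{\otimes q})|^2]=\sum_{q\ge1}\frac{(\xi^2\|h\|_{L^2(\nu)}^2)^q}{q!}<\infty$.

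Two points require a little care. First, $F$ is complex-valued, so strictly one applies Stroock's formula separately to $\mathrm{Re}\,F=\cos(\xi G(h))$ and $\mathrm{Im}\,F=\sin(\xi G(h))$ --- both bounded and Malliavin-smooth --- and recombines, in keeping with the convention adopted after \eqref{e:chaos}. Second, if one prefers to avoid the general Stroock machinery, the lemma also follows from a one-dimensional computation: writing $G(h)=\sigma Z$ with $Z$ standard Gaussian and $\sigma^2=\int_A h^2\,d\nu$, and invoking the classical identification $I_q^G(h^{\otimes q})=\sigma^q H_q(Z)$ with $H_q$ the $q$-th Hermite polynomial, \eqref{e:k} becomes the Hermite generating-function identity $e^{tZ-t^2/2}=\sum_{q\ge0}\frac{t^q}{q!}H_q(Z)$ evaluated at $t=i\xi\sigma$; this holds for real $t$ classically and extends to all $t\in\mathbb C$ since both sides are $L^2(\mathbb P)$-valued entire functions of $t$ (convergence uniform on compacts because $\E[H_q(Z)H_p(Z)]=q!\,\delta_{qp}$).

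I do not anticipate any genuine analytic difficulty: the only real issue is the bookkeeping needed to make the complex-valued version of Stroock's formula (or of the analytic-continuation argument) precise and to justify the $L^2$-convergence of the chaotic series --- which is why the Poisson counterpart, Lemma \ref{l:chain}, is obtained simply by recursion. See \cite{Gio_ivan,Gio_murad_taqqu} for the background on Malliavin calculus and Wiener chaos used throughout.
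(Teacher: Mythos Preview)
Your proof is correct and follows exactly the route the paper indicates: the paper does not prove Lemma~\ref{l:chain2} at all, merely stating it as a corollary of Stroock's formula \cite{stroock_chaos}, and your argument supplies precisely the Malliavin-derivative computation (plus the alternative Hermite-generating-function route) that fills in those omitted details.
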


\medskip

\begin{remark} {\rm {There exists a direct connection between Equation \eqref{e:1} and Equation \eqref{e:k} that we shall implicitly exploit in the discussion to follow. To see this,} take $h\in L^1(\nu)\cap L^3(\nu)$ (and therefore $h\in L^2(\nu)$) such that $\nu(h) = 0$. Then, it is easily seen that, if $\lambda\to \infty$ and $\beta\to 0$ in such a way that $\lambda\beta^2
\to \xi^2$, one has that
\begin{equation}\label{e:utile1}
e^{\lambda\int_A(e^{i \beta h(x)} - 1)  \nu(dx)}
\to  \exp\left\{ -\frac{\xi^2}{2}\int_A h(x)^2\, \nu(dx)\right\}
\end{equation}
and, for every fixed $x_1,...,x_q\in A$,
\begin{equation}\label{e:utile2}
\lambda^{\frac q2} (\exp\{ i \beta h\} -1)^{\otimes q} (x_1,...,x_q) \to i^q \xi^q  h^{\otimes q} (x_1,...,x_q).
\end{equation}
}
\end{remark}

The following statement is the Gaussian analog of Theorem \ref{p:chaos1}. The proof is very similar to that of of Theorem \ref{p:chaos1}, but there are some differences, so we spell it out
for completeness and for the reader's convenience.

\begin{theorem}\label{p:chaos2}
Let $W^{*}_{\xi} \in {\mathcal H}^{-\alpha}$ be as in Theorem \ref{ExistenceGaussianLayeringField}. Then, for every $\varphi \in C^{\infty}_{0}(D)$, $V^{*}_{\xi}(\varphi)$
admits a Wiener-It\^o chaos expansion \eqref{e:Gaussianchaos} with kernels $w_q = w^{\varphi}_{q} = w^{\varphi}_{q,\xi} \in L^2(\nu^q)$ given by
\be \label{chaos_exp2}
w^{\varphi}_{q} (x_1,...,x_q) = \frac{i^q  \xi^q} {q!}\int_D \varphi(z) \langle W^*_{\xi}(z) \rangle  h_z ^{\otimes q}(x_1,...,x_q) \, dz, \quad q\geq 1.
\ee
\end{theorem}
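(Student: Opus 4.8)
The plan is to mirror, almost verbatim, the proof of Theorem~\ref{p:chaos1}, replacing the Poisson chain rule of Lemma~\ref{l:chain} by its Gaussian counterpart in Lemma~\ref{l:chain2} (Stroock's formula), and replacing the Poisson one-point function \eqref{eq:CGK} by the Gaussian one $\langle W^\delta_{\xi,D}(z)\rangle = e^{-\frac{\xi^2}{2}\alpha^*_{\delta,D}(z)}$. First I would invoke Theorem~\ref{ExistenceGaussianLayeringField}: the renormalized cutoff field $\delta^{-2\Delta^*_\xi}W^\delta_{\xi,D}$ converges to $W^*_{\xi,D}$ in second mean in $\mathcal{H}^{-\alpha}$ for every $\alpha>3/2$; since for fixed $\varphi\in C^\infty_0(D)$ the pairing with $\varphi$ is a continuous linear functional on $\mathcal{H}^{\alpha}_0$, this upgrades (exactly as in the first display of the proof of Theorem~\ref{p:chaos1}) to convergence of $\delta^{-2\Delta^*_\xi}W^\delta_{\xi,D}(\varphi)$ to $W^*_{\xi,D}(\varphi)$ in $L^2(\mathbb{P})$. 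Both variables lie in $L^2(\sigma(G))$, hence admit chaos expansions of the form \eqref{e:Gaussianchaos}; by the isometry \eqref{e:isom2} and the orthogonality of distinct Wiener chaoses, $L^2(\mathbb{P})$-convergence of the variables forces the $q$-th kernel $w^{\delta,\varphi}_q$ of $\delta^{-2\Delta^*_\xi}W^\delta_{\xi,D}(\varphi)$ to converge in $L^2(\nu^q)$ to the $q$-th kernel $w^\varphi_q$ of $W^*_{\xi,D}(\varphi)$. It thus suffices to identify this limit with the right-hand side of \eqref{chaos_exp2}.

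Next I would compute $w^{\delta,\varphi}_q$ explicitly. Applying Lemma~\ref{l:chain2} with $h=h^\delta_z$ and noting that $\int_A (h^\delta_z)^2\,d\nu = \alpha^*_{\delta,D}(z)$ is precisely the variance of $G(h^\delta_z)$, the $q$-th chaotic kernel of $e^{i\xi G(h^\delta_z)}$ equals $\frac{i^q\xi^q}{q!}\,e^{-\frac{\xi^2}{2}\alpha^*_{\delta,D}(z)}\,(h^\delta_z)^{\otimes q}$, and $e^{-\frac{\xi^2}{2}\alpha^*_{\delta,D}(z)} = \langle W^\delta_{\xi,D}(z)\rangle$. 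A stochastic-Fubini argument (the Gaussian analogue of the one used for $V^\delta_{\lambda,\beta}(\varphi)$, justified by the $L^2$-continuity of $I^G_q$ and Riemann-sum approximation of $\int_D e^{i\xi G(h^\delta_z)}\varphi(z)\,dz$) then gives, with the notation $F^{\delta,\varphi}_{q;x_1,\dots,x_q}(z):=\varphi(z)\,\delta^{-2\Delta^*_\xi}\langle W^\delta_{\xi,D}(z)\rangle\,(h^\delta_z)^{\otimes q}(x_1,\dots,x_q)$, the formula $w^{\delta,\varphi}_q(x_1,\dots,x_q)=\frac{i^q\xi^q}{q!}\int_D F^{\delta,\varphi}_{q;x_1,\dots,x_q}(z)\,dz$. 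By \eqref{e:accadelta} one has $h^\delta_z\to h_z$ pointwise $\nu$-a.e., and by the one-point asymptotics of Theorem~\ref{ExistenceGaussianLayeringField}, namely $\delta^{-2\Delta^*_\xi}\langle W^\delta_{\xi,D}(z)\rangle\to e^{-\frac{\xi^2}{2}\Theta^*_D(z)}=\langle W^*_{\xi,D}(z)\rangle$, so $F^{\delta,\varphi}_{q;x_1,\dots,x_q}(z)\to\varphi(z)\langle W^*_{\xi,D}(z)\rangle h_z^{\otimes q}(x_1,\dots,x_q)$ pointwise in $z$ for each fixed $(x_1,\dots,x_q)$.

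The remaining work is a dominated-convergence step in $z$, entirely analogous to the bound \eqref{eq:integrability} in the proof of Theorem~\ref{p:chaos1}. Since $h^\delta_z(x)=\epsilon\mathbf{1}_{A_{\delta,D}(z)}(\gamma)$ with $\epsilon\in\{-1,1\}$, we have $|(h^\delta_z)^{\otimes q}|\le 1$, hence $|F^{\delta,\varphi}_{q;x_1,\dots,x_q}(z)|\le|\varphi(z)|\,\delta^{-2\Delta^*_\xi}e^{-\frac{\xi^2}{2}\alpha^*_{\delta,D}(z)}$. For $*=loop,disk$, splitting $\alpha^*_{\delta,D}(z)=\alpha^*_{\delta,d_z}(z)+\alpha^*_{d_z,D}(z)$ and using \eqref{alpha5}, \eqref{alpha8} together with $\frac{\xi^2}{2}\cdot\frac15=2\Delta^{loop}_\xi$ (resp. $\frac{\xi^2}{2}\pi=2\Delta^{disk}_\xi$) gives $\delta^{-2\Delta^*_\xi}e^{-\frac{\xi^2}{2}\alpha^*_{\delta,D}(z)}=d_z^{-2\Delta^*_\xi}e^{-\frac{\xi^2}{2}\alpha^*_{d_z,D}(z)}\le d_z^{-2\Delta^*_\xi}$; for $*=m$, Lemma~\ref{MassiveOnePointFunctionEstimate} (via \eqref{MassiveAlphaEstimate2}) gives, for $\delta$ small, $\alpha^m_{\delta,D}(z)\ge\alpha^{loop}_{\delta,D}(z)-2\hat\alpha_{0,D}(z)$, whence $\delta^{-2\Delta^m_\xi}e^{-\frac{\xi^2}{2}\alpha^m_{\delta,D}(z)}\le d_z^{-2\Delta^m_\xi}e^{\xi^2\hat\alpha_{0,D}(z)}$ with $\hat\alpha_{0,D}$ continuous, hence bounded on the compact set $\mathrm{supp}\,\varphi$. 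In all three cases the dominating function is of the form $C\,|\varphi(z)|\,d_z^{-2\Delta^*_\xi}$ with $C$ independent of $z$ on $\mathrm{supp}\,\varphi$, which is integrable on $D$ since $2\Delta^*_\xi<1$. Dominated convergence then yields $F^{\delta,\varphi}_{q;x_1,\dots,x_q}\to\varphi\,\langle W^*_{\xi,D}\rangle\,h^{\otimes q}_{\,\cdot}(x_1,\dots,x_q)$ in $L^1(D)$, so $w^{\delta,\varphi}_q\to\frac{i^q\xi^q}{q!}\int_D\varphi(z)\langle W^*_{\xi,D}(z)\rangle h_z^{\otimes q}(x_1,\dots,x_q)\,dz$ pointwise in $(x_1,\dots,x_q)\in A^q$; since $w^{\delta,\varphi}_q\to w^\varphi_q$ in $L^2(\nu^q)$ and $\nu^q$ is $\sigma$-finite, the two limits agree $\nu^q$-a.e., establishing \eqref{chaos_exp2}.

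The only genuinely delicate points I anticipate are: (i) the stochastic-Fubini identification of the $q$-th chaos kernel of $\int_D e^{i\xi G(h^\delta_z)}\varphi(z)\,dz$ with $\int_D$ of the individual $q$-th kernels — this is the Gaussian mirror of the corresponding manipulation in Theorem~\ref{p:chaos1} and follows from $L^2$-continuity of $I^G_q$ plus a routine approximation; and (ii) obtaining the domination uniformly in $z\in\mathrm{supp}\,\varphi$ in the massive case, which relies on the finiteness and continuity of $\hat\alpha_{0,D}$ from Lemma~\ref{MassiveOnePointFunctionEstimate}. Everything else is a direct transcription of the proof of Theorem~\ref{p:chaos1} with $e^{i\beta h}-1$ replaced by $i\xi h$.
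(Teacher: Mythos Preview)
Your proposal is correct and follows essentially the same approach as the paper: $L^2(\P)$ convergence from $\mathcal{H}^{-\alpha}$ convergence, explicit identification of the cutoff kernels $w^{\delta,\varphi}_q$ via Lemma~\ref{l:chain2} and a stochastic Fubini argument, pointwise convergence of the integrand in $z$, a uniform domination of the form $C^*|\varphi(z)|d_z^{-2\Delta^*_\xi}$ (obtained by comparing the Gaussian one-point function $e^{-\frac{\xi^2}{2}\alpha^*_{\delta,D}(z)}$ with \eqref{eq:CGK} and reusing the bounds leading to \eqref{eq:integrability}), and the final matching of the pointwise limit with the $L^2(\nu^q)$ limit. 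The only cosmetic difference is that for $*=m$ you justify boundedness of the extra factor via continuity of $\hat\alpha_{0,D}$ on $\mathrm{supp}\,\varphi$, whereas the paper simply invokes the same constant $C^*$ as in \eqref{eq:integrability}; both are fine.
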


\begin{proof}
Convergence of $\delta^{-2 \Delta^{*}_{\xi}} W^{\delta}_{\xi}$ to $W^{*}_{\xi}$ in second mean in the Sobolev space ${\mathcal H}^{-\alpha}$ as $\delta \to 0$
implies that, for every $\varphi \in C^{\infty}_{0}$, $\delta^{-2 \Delta^{*}_{\xi}} W^{\delta}_{\xi}(\varphi)$ converges to $W^{*}_{\xi}(\varphi)$ in $L^2(\P)$:
\be
\big\langle | \delta^{-2 \Delta^{*}_{\xi}} W^{\delta}_{\xi}(\varphi) - W^{*}_{\xi}(\varphi) |^2 \big\rangle \notag
\leq \big\langle || \delta^{-2 \Delta^{*}_{\xi}} W^{\delta}_{\xi} - W^{*}_{\xi} ||^2_{{\mathcal H}^{-\alpha}} \big\rangle ||\varphi||^2_{{\mathcal H}^{\alpha}_{0}} \stackrel{\delta \to 0}{\longrightarrow} 0,
\ee
where we have used the fact that $W^{\delta}_{\xi}$ and $W^{*}_{\xi}$ are continuous linear functionals from ${\mathcal H}^{\alpha}_0$ to $\mathbb R$,
combined with elementary properties of norms. 
The random variable $\delta^{-2 \Delta^{*}_{\xi}} W^{\delta}_{\xi}(\varphi)$ is in $L^2(\sigma(G))$ and therefore admit a chaos expansions.
Moreover, because of the $L^2$ convergence above, the same is true for $W^{*}_{\xi}(\varphi)$. In addition, it follows that the kernels $w^{\delta,\varphi}_{q}$ of the
chaos expansion of $\delta^{-2 \Delta^{*}_{\xi}} W^{\delta}_{\xi}(\varphi)$ converge in $L^2(\nu^q)$ to the kernels $w^{\varphi}_q$ of the chaos expansion
of $W^{*}_{\xi}(\varphi)$. We will now show that this last statement implies \eqref{chaos_exp2}.

Using Lemma \ref{l:chain2} and a stochastic Fubini theorem to interchange stochastic and deterministic integration, we can write
\begin{eqnarray}
\delta^{-2\Delta^*_{\xi}} W^{\delta}_{\xi}(\varphi) & = & \delta^{-2\Delta^*_{\xi}} \int_D \varphi(z) e^{i \xi G(h^{\delta}_z)} \, dz = \delta^{-2\Delta^*_{\xi}} \int_D \varphi(z) e^{-\frac{\xi^2}{2} \int_A (h^{\delta}_z(x))^2 \nu(dx)} \, dz \nonumber \\
& + & \delta^{-2\Delta^*_{\xi}} \sum_{q=1}^{\infty} \frac{i^q \xi^q}{q!} \int_D \varphi(z) e^{-\frac{\xi^2}{2} \int_A (h^{\delta}_z(x))^2 \nu(dx)} I^G_q ( (h^{\delta}_z)^{\otimes q} )  \, dz \nonumber \\
& = & \delta^{-2\Delta^*_{\xi}} \int_D \varphi(z) e^{-\frac{\xi^2}{2} \int_A (h^{\delta}_z(x))^2 \nu(dx)} \, dz \notag \\
& + & \delta^{-2\Delta^*_{\xi}} \sum_{q=1}^{\infty} I^G_q \Big( \frac{i^q \xi^q}{q!} \int_D \varphi(z) e^{-\frac{\xi^2}{2} \int_A (h^{\delta}_z(x))^2 \nu(dx)} (h^{\delta}_z)^{\otimes q} ) \, dz \Big).
\end{eqnarray}
Comparing the last expression with \eqref{e:Gaussianchaos}, we see that the kernels of the chaos expansion of $\delta^{-2 \Delta^{*}_{\xi}} W^{\delta}_{\xi}(\varphi)$ can be written as
\begin{eqnarray}
w^{\delta,\varphi}_{q} (x_1,...,x_q) & = & \delta^{-2\Delta^*_{\xi}} \frac{i^q \xi^q}{q!} \int_D \varphi(z) e^{-\frac{\xi^2}{2} \int_A (h^{\delta}_z(x))^2 \nu(dx)} (h^{\delta}_z)^{\otimes q} (x_1,\ldots,x_q) \, dz \nonumber \\
& = & \delta^{-2\Delta^*_{\xi}} \frac{i^q \xi^q}{q!} \int_D W^{\delta,\varphi}_{q;x_1,\ldots,x_q}(z) \, dz,
\end{eqnarray}
where
\be
W^{\delta,\varphi}_{q;x_1,\ldots,x_q}(z) := \varphi(z) \delta^{-2\Delta^*_{\xi}} e^{-\frac{\xi^2}{2} \int_A (h^{\delta}_z(x))^2 \nu(dx)} (h^{\delta}_z)^{\otimes q} (x_1,\ldots,x_q)
\ee
Using the equality (see, e.g., \cite[Example 5.3.6 (i)]{Gio_murad_taqqu})
\be \label{eq:one_point_function_Gauss_1}
e^{-\frac{\xi^2}{2} \int_A (h^{\delta}_z(x))^2 \nu(dx)} = \big\langle W^{\delta}_{\xi}(z) \big\rangle,
\ee
we have that
\begin{eqnarray}
W^{\delta,\varphi}_{q} (x_1,...,x_q) & = & \varphi(z) \delta^{-2\Delta^*_{\xi}} \big\langle W^{\delta}_{\xi}(z) \big\rangle \prod_{k=1}^q h^{\delta}_z(x_k) \notag \\
& \stackrel{\delta \to 0}{\longrightarrow} & \varphi(z) \big\langle W^{*}_{\xi}(z) \big\rangle \prod_{k=1}^q h_z(x_k)
\end{eqnarray}
pointwise for all $z \in D$. Moreover, using \eqref{eq:one_point_function_Gauss_1} and the definition of $h^{\delta}_z$ (see Section \ref{ss:GaussianLayering}), we see that
\be \label{eq:one_point_function_Gauss_2}
\big\langle W^{\delta}_{\xi}(z) \big\rangle= e^{-\frac{\xi^2}{2} \int_A (h^{\delta}_z(x))^2 \nu(dx)} = e^{-\frac{\xi^2}{2} \alpha^*_{\delta,D}(z)}.
\ee
Comparing this with \eqref{eq:CGK}, and using the arguments following that equation, we obtain the uniform bound
\be
|W^{\delta,\varphi}_{q;x_1,\ldots,x_q}(z)| \leq C^* |\varphi(z)| d_{z}^{-2\Delta^*_{\xi}},
\ee
where $C^*=1$ for $* = loop, disk$ and $C^* = e^{2\lambda(1-\cos\beta)\hat\mu(\gamma : z \in \gamma, \gamma \subset D)}$ for $*=m$,
as in Equation \eqref{eq:integrability}. Since $C^*$ does not depend on $z$ and $\varphi$ is in $C^{\infty}_0$,
\be
W^{\delta,\varphi}_{q;x_1,\ldots,x_q}(z) \stackrel{\delta \to 0}{\longrightarrow}
\varphi(z) \langle W^*_{\xi}(z) \rangle h_z^{\otimes q}(x_1,...,x_q) \quad \text{ in } L^1(D),
\ee
which implies that 
\be
w^{\delta,\varphi}_q(x_1,\dots,x_q) \stackrel{\delta \to 0}{\longrightarrow} \frac{i^q \xi^q}{q!}\int_D \varphi(z) \langle W^*_{\xi}(z) \rangle h_z^{\otimes q}(x_1,...,x_q) \, dz
\ee
for every $(x_1,\ldots,x_q) \in A^q$. Since $w^{\delta,\varphi}_q$ converges in $L^2(\nu^q)$ to $w^{\varphi}_q$, as $\delta \to 0$, and $\nu^q$ is $\sigma$-finite,
the considerations above imply that \eqref{chaos_exp2} holds.
\end{proof}

\section{Convergence of layering fields to complex Gaussian multiplicative chaos}\label{s:convtoGMC}

In this section, using the Wiener-It\^o chaos expansion presented in the previous section, we prove that the Poisson layering fields constructed in Theorem \ref{ExistenceLayeringField}
converge to the Gaussian layering fields constructed in Theorem \ref{ExistenceGaussianLayeringField} in the appropriate limit of the parameters $\beta \to 0$ and $\lambda \to \infty$.
The kind of converge we will be dealing with is considerably weaker than the one described in \eqref{2.4bis} (and encountered several times in the previous sections). Obtaining convergence
in a stronger sense (e.g., as in \eqref{2.4bis}) seems to be a challenging problem, and we leave it open for future research.

\begin{definition}[\bf Finite dimensional distributions]\label{d:fdd} {\rm  Fix a simply connected bounded domain $D$, and consider a family of random distributions $\{V, V_n : n\geq 1\}$ with values in the negative Sobolev space $\mathcal{H}^{ -\alpha}$, for some $\alpha>0$. We say that $V_n$ {\it converges to $V$ in the sense of finite dimensional distributions}, written
\be
V_n\fdd V ,
\ee
if, for every $\varphi\in C_0^\infty(D)$, one has that $V_n(\varphi)$ converges in distribution (as a complex-valued random variable) to $V(\varphi)$. By linearity and the Cramer-Wold device, if $V_n\fdd V$, then 
\be
(V_n(\varphi_1),...,(V_n(\varphi_m))  \Longrightarrow (V(\varphi_1),..., V(\varphi_m)),
\ee
for every finite vector of test functions $(\varphi_1,...,\varphi_m)$, where $\Longrightarrow$ stands for convergence in distribution of random vectors with values in $\mathbb{C}^m$.
}
\end{definition}

We are now ready to state and prove our main convergence result. Note that, although Theorem~\ref{ExistenceGaussianLayeringField} guarantees the existence of $W^*_{\xi}$
for $\xi^2 \in (0,10)$ for $*=loop,m$ (respectively, $\xi^2 \in (0,2/\pi)$ for $*=disk$), we can prove the result below only for $\xi^2 \in (0,5)$ for $*=loop,m$ (respectively,
$\xi^2 \in (0,\pi)$ for $*=disk$). We do not know if this is an artifact of our methods or if the convergence result does not hold for $\xi^2 \geq 5$ (respectively, $\xi^2 \geq 1/\pi$).

\begin{theorem}\label{p:cil}
Take $\lambda>0$ and $\beta \in [0,2\pi)$, let $*=loop, m$ and let $D \subsetneq {\mathbb C}$ be any domain conformally equivalent to the unit disk $\D$.
Let $V^*_{\lambda,\beta} \in {\mathcal H}^{-\alpha}$ be a Poisson layering field in $D$ as in Theorem~\ref{ExistenceLayeringField}
and $W^*_{\xi} \in {\mathcal H}^{-\alpha}$ be a Gaussian layering field in $D$ as in Theorem~\ref{ExistenceGaussianLayeringField}.
If $\Delta^*_{\xi} = \xi^2/20 \in (0,1/4)$ (that is, $\xi^2 \in (0 , 5)$), then
$V^*_{\lambda,\beta} \fdd W^*_{\xi}$ as $\lambda \to \infty$ and $\beta \to 0$ with $\lambda\beta^2 \to \xi^2$.
The same conclusion holds for $*=disk$ with $D=\D$ and $\xi^2 \in (0,1/\pi)$.
\end{theorem}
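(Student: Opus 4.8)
By the Cram\'er--Wold device (Definition~\ref{d:fdd}) and linearity, it suffices to prove that $V^{*}_{\lambda,\beta}(\varphi)\Longrightarrow W^{*}_{\xi}(\varphi)$ for a single fixed $\varphi\in C^{\infty}_{0}(D)$, in the regime $\lambda\to\infty$, $\beta\to0$, $\lambda\beta^{2}\to\xi^{2}$. The plan is to compare the Wiener--It\^o chaos expansions of the two sides, made explicit in Theorems~\ref{p:chaos1} and~\ref{p:chaos2}: writing $f^{\varphi}_{q}$ for the Poisson kernels in~\eqref{chaos_exp1} and $w^{\varphi}_{q}$ for the Gaussian kernels in~\eqref{chaos_exp2}, I would establish (i) that the rescaled kernels $\lambda^{q/2}f^{\varphi}_{q}$ converge to $w^{\varphi}_{q}$ in $L^{2}(\nu^{q})$ for each $q$, together with a uniform-in-$(\lambda,\beta)$ control of the chaos tails; (ii) a Poisson-to-Gaussian limit theorem giving, for every fixed $Q$, the joint convergence in law of the first $Q$ Poisson chaoses of $V^{*}_{\lambda,\beta}(\varphi)$ to the first $Q$ Gaussian chaoses of $W^{*}_{\xi}(\varphi)$; and (iii) that the contribution of the chaoses of order $>Q$ is uniformly negligible. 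Combined via the standard approximation criterion for convergence in distribution (Billingsley's Theorem~3.2), these give the claim. The degree-zero terms are handled separately: $\langle V^{*}_{\lambda,\beta}(\varphi)\rangle=\int_{D}\varphi(z)\langle V^{*}_{\lambda,\beta}(z)\rangle\,dz\to\langle W^{*}_{\xi}(\varphi)\rangle$ by dominated convergence, since by~\eqref{eq:one_point_function_loop} (and~\eqref{eq:bound_massive_one_point_function} for $*=m$) one has $\langle V^{*}_{\lambda,\beta}(z)\rangle\le C^{*}\,d_{z}^{-2\Delta^{*}_{\lambda,\beta}}$ while $\langle V^{*}_{\lambda,\beta}(z)\rangle\to\langle W^{*}_{\xi}(z)\rangle$ pointwise, using $\lambda(1-\cos\beta)\to\xi^{2}/2$ and $\Delta^{*}_{\lambda,\beta}\to\Delta^{*}_{\xi}$.

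For step (i), I would exploit the product structure $h_{z}(\gamma,\epsilon)=\epsilon\,{\bf 1}_{A_{0,D}(z)}(\gamma)$, which gives $|e^{i\beta h_{z}(x)}-1|=2|\sin(\beta/2)|\,{\bf 1}_{A_{0,D}(z)}(\gamma)$ and hence $\lambda^{q/2}\prod_{k}|e^{i\beta h_{z}(x_{k})}-1|=\bigl(2\lambda(1-\cos\beta)\bigr)^{q/2}\prod_{k}{\bf 1}_{A_{0,D}(z)}(\gamma_{k})$, a quantity whose prefactor stays bounded (it tends to $\xi^{q}$). Combining this with the one-point bound, $\int_{A}{\bf 1}_{A_{0,D}(z_{1})\cap A_{0,D}(z_{2})}\,d\nu=\alpha^{*}_{D}(z_{1},z_{2})$, and Fubini, one obtains
\[
q!\,\bigl\|\lambda^{q/2}f^{\varphi}_{q}\bigr\|^{2}_{L^{2}(\nu^{q})}=\frac{1}{q!}\int_{D}\int_{D}\varphi(z_{1})\overline{\varphi(z_{2})}\,\langle V^{*}_{\lambda,\beta}(z_{1})\rangle\,\overline{\langle V^{*}_{\lambda,\beta}(z_{2})\rangle}\,\bigl(2\lambda(1-\cos\beta)\,\alpha^{*}_{D}(z_{1},z_{2})\bigr)^{q}\,dz_{1}\,dz_{2},
\]
so that $\sum_{q\ge1}q!\,\|\lambda^{q/2}f^{\varphi}_{q}\|^{2}=\int_{D}\int_{D}\varphi\overline{\varphi}\,\langle V^{*}\rangle\overline{\langle V^{*}\rangle}\bigl(e^{2\lambda(1-\cos\beta)\alpha^{*}_{D}(z_{1},z_{2})}-1\bigr)\,dz_{1}\,dz_{2}$. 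Since $\varphi\in C^{\infty}_{0}(D)$ the factors $\langle V^{*}_{\lambda,\beta}(z_{i})\rangle$ are bounded on $\mathrm{supp}\,\varphi$ uniformly for $(\lambda,\beta)$ near the limit, and by Lemma~\ref{l:cont} (with~\eqref{alpha5}, \eqref{alpha8}) one has $\alpha^{*}_{D}(z_{1},z_{2})\le\eta\log^{+}\tfrac{1}{|z_{1}-z_{2}|}+O(1)$, with $\eta=1/5$ for $*=loop,m$ (resp.\ $\eta=\pi$ for $*=disk$); hence the integrand is dominated by a constant times $|z_{1}-z_{2}|^{-(\xi^{2}+o(1))\eta}$, integrable over $\mathrm{supp}\,\varphi\times\mathrm{supp}\,\varphi$ because the hypothesis $\xi^{2}<5$ (resp.\ $\xi^{2}<1/\pi$) comfortably ensures $\xi^{2}\eta<2$ --- this is the point at which the stated range of $\xi$ is used (cf.\ the discussion preceding the statement, where it is noted that the range may not be optimal). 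The same $(\lambda,\beta)$-independent bound is then a dominating function for the pointwise convergence $\lambda^{q/2}f^{\varphi}_{q}\to w^{\varphi}_{q}$ --- which follows from~\eqref{e:utile1}--\eqref{e:utile2} together with $\langle V^{*}_{\lambda,\beta}(z)\rangle\to\langle W^{*}_{\xi}(z)\rangle$ --- yielding $L^{2}(\nu^{q})$ convergence of the kernels, and, by the analogous identity on the Gaussian side, $\sum_{q}q!\,\|w^{\varphi}_{q}\|^{2}<\infty$.

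For step (ii), I would sidestep contraction estimates and argue by approximation. Fix $Q$ and $\varepsilon>0$; for each $q\le Q$ choose a symmetric simple kernel $s_{q}$, supported on products of pairwise disjoint sets $B_{1},\dots,B_{N}\in\mathscr{A}$ of finite $\nu$-measure, with $\|s_{q}-w^{\varphi}_{q}\|_{L^{2}(\nu^{q})}<\varepsilon$. On such kernels, $\lambda^{-q/2}I^{N_{\lambda}}_{q}(s_{q})$ and $I^{G}_{q}(s_{q})$ are one and the same fixed polynomial evaluated at $\bigl(\lambda^{-1/2}\widehat{N}_{\lambda}(B_{1}),\dots,\lambda^{-1/2}\widehat{N}_{\lambda}(B_{N})\bigr)$, respectively $\bigl(G(B_{1}),\dots,G(B_{N})\bigr)$; so by the elementary multivariate CLT~\eqref{e:clt} and the continuous mapping theorem, $\bigl(\lambda^{-q/2}I^{N_{\lambda}}_{q}(s_{q})\bigr)_{q\le Q}\Longrightarrow\bigl(I^{G}_{q}(s_{q})\bigr)_{q\le Q}$. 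On the other hand, for $(\lambda,\beta)$ near the limit, $\mathbb{E}\bigl|\lambda^{-q/2}I^{N_{\lambda}}_{q}(\lambda^{q/2}f^{\varphi}_{q}-s_{q})\bigr|^{2}=q!\,\|\lambda^{q/2}f^{\varphi}_{q}-s_{q}\|^{2}\le q!\,(2\varepsilon)^{2}$ by~\eqref{e:isom} and step (i), and $\mathbb{E}|I^{G}_{q}(w^{\varphi}_{q}-s_{q})|^{2}=q!\,\varepsilon^{2}$ by~\eqref{e:isom2}; sending $\varepsilon\to0$ and invoking Billingsley's criterion gives $\bigl(I^{N_{\lambda}}_{q}(f^{\varphi}_{q})\bigr)_{q\le Q}\Longrightarrow\bigl(I^{G}_{q}(w^{\varphi}_{q})\bigr)_{q\le Q}$, hence $\langle V^{*}_{\lambda,\beta}(\varphi)\rangle+\sum_{q=1}^{Q}I^{N_{\lambda}}_{q}(f^{\varphi}_{q})\Longrightarrow\langle W^{*}_{\xi}(\varphi)\rangle+\sum_{q=1}^{Q}I^{G}_{q}(w^{\varphi}_{q})$.

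Finally, step (iii): from the expression for $\sum_{q>Q}q!\,\|\lambda^{q/2}f^{\varphi}_{q}\|^{2}$ in step (i), bounding $2\lambda(1-\cos\beta)\le\xi^{2}+o(1)$ and applying dominated convergence (the dominating integrand being the $(\lambda,\beta)$-independent $\varphi\overline{\varphi}\,\langle V^{*}\rangle\overline{\langle V^{*}\rangle}\,e^{(\xi^{2}+\varepsilon)\alpha^{*}_{D}}$, integrable by the same estimate), one gets $\lim_{Q\to\infty}\limsup_{(\lambda,\beta)}\mathbb{E}\bigl|\sum_{q>Q}I^{N_{\lambda}}_{q}(f^{\varphi}_{q})\bigr|^{2}=0$, and likewise $\sum_{q>Q}q!\,\|w^{\varphi}_{q}\|^{2}\to0$; a second application of Billingsley's criterion, now in the truncation parameter $Q$, combines this with step (ii) to yield $V^{*}_{\lambda,\beta}(\varphi)\Longrightarrow W^{*}_{\xi}(\varphi)$, which completes the proof (the case $*=disk$ on $D=\D$ being identical, with $\eta=\pi$). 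The hard part, where essentially all the work lies, is the bookkeeping of the two nested limits --- the finite-order transfer (ii) together with the uniform control of the chaos tails (i), (iii) --- and in particular recognising $2\lambda(1-\cos\beta)\to\xi^{2}$ as the coefficient multiplying $\alpha^{*}_{D}(z_{1},z_{2})$ in the relevant exponentials and reducing the resulting integrals to a convergent form of the type met in~\eqref{convergent_integral}.
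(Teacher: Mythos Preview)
Your proposal is correct and follows essentially the same strategy as the paper: compare the Wiener--It\^o expansions of $V^{*}_{\lambda,\beta}(\varphi)$ and $W^{*}_{\xi}(\varphi)$, prove $L^{2}(\nu^{q})$-convergence of the rescaled kernels together with a uniform tail bound, and deduce the finite-order Poisson-to-Gaussian transfer by approximating with simple kernels and invoking the multivariate CLT~\eqref{e:clt}, exactly as the paper does (its conditions 1--4 are your degree-zero step and steps (i)--(iii), and its use of a $1$-Lipschitz test function is your ``Billingsley criterion''). The only notable difference is cosmetic: the paper first reduces to $D=\mathbb{D}$ via Theorem~\ref{thm:conformal_covariance} and then controls the double integral through Lemma~\ref{finitenessunitdisk} (carrying the boundary factors $d_{z}^{-2\Delta}$), whereas you work directly on $D$ and exploit the compact support of $\varphi$ to bound the one-point functions outright, which slightly streamlines the integrability argument.
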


\begin{proof}
We only focus on $*=loop,m$ since the proof for $*=disk$ is completely analogous.
Thanks to Theorem~\ref{thm:conformal_covariance}, it suffices to consider fields defined on the unit disk $\D$.
Accordingly, for the rest of the proof we restrict our attention to the case $D=\D$. We will subdivide the proof into two parts. For each $\varphi \in C^{\infty}_{0}(\D)$, let
\be
V^*_{\lambda,\beta}(\varphi) = \langle V^*_{\lambda,\beta}(\varphi) \rangle + \sum_{q=1}^{\infty} I^{N_{\lambda}}_q(f^{\varphi}_{q,\lambda,\beta})
\ee
and
\be
W^*_{\xi}(\varphi) = \langle W^*_{\xi}(\varphi) \rangle + \sum_{q=1}^{\infty} I^G_q(w^{\varphi}_{q,\xi})
\ee
be the Wiener-It\^o chaos expansions of $V^*_{\lambda,\beta}(\varphi)$ and $W^*_{\xi}(\varphi)$, respectively, as in Theorems~\ref{p:chaos1} and~\ref{p:chaos2}.
In the second part of the proof, we will show that, for every $\varphi \in C^{\infty}_{0}(\D)$, the following conditions hold.
\begin{enumerate}
\item As $\lambda\to \infty$ and $ \beta \to 0$ with $\lambda\beta^2\to \xi^2$,
\be \label{eq:convergence-mean}
\langle V^*_{\lambda,\beta}(\varphi) \rangle \to \langle W^*_{\xi}(\varphi) \rangle.
\ee
\item One has that
\begin{equation}\label{e:sum}
\sum_{q=1}^\infty q! \| w_{q,\xi}^{\varphi}\|^2_{L^2(\nu^q)} <\infty.
\end{equation}
\item As $\lambda\to \infty$ and $ \beta \to 0$ with $\lambda\beta^2\to \xi^2 \in (0,5)$,
\begin{equation}\label{e:l2}
\| \lambda^{\frac q2}  f^{\varphi}_{q,\lambda,\beta} - w^{\varphi}_{q,\xi}\|^2_{L^2(\nu^q)} \rightarrow 0, \quad \forall q\geq 1.
\end{equation} 
\item The following asymptotic relation holds:

\begin{equation}\label{e:resto}
\lim_{N \to \infty} \limsup_{\lambda, \beta} 
\sum_{q=N+1}^\infty q! \| { \lambda^{\frac q2}} f^{\varphi}_{q,\lambda,\beta}\|^2_{L^2(\nu^q)} = 0,
\end{equation}
where the $ \limsup$ is for $\lambda\to \infty$ and $ \beta \to 0$, with $\lambda\beta^2\to \xi^2 \in (0 , 5)$.
\end{enumerate}
Before showing that conditions 1-4 above hold, we now prove how they imply that, for every $\varphi \in C^{\infty}_{0}(\D)$,
\be \label{claim}
V^*_{\lambda,\beta}(\varphi) \Longrightarrow W^*_{\xi}(\varphi) \text{ as } \lambda \to \infty \text{ and } \beta \to 0 \text{ with } \lambda\beta^2 \to \xi^2 \in (0,5).
\ee
The proof of this claim consists in a generalization of the arguments used in \cite[p. 337-339]{S}.
For every $q\geq 1$, we say that an element $g_q$ of $L^2(\nu^q)$ is \emph{simple} if $g_q$ is symmetric and has the form
\be
(x_1,...,x_q) \mapsto g_q(x_1,...,x_q) = \sum_{\ell=1}^R c(\ell)\, {\bf 1}_{A^{\ell}_1}\cdots {\bf 1}_{A^{\ell}_q},
\ee
where $R$ is a finite integer, $c(\ell)$ are constants and, for fixed $\ell$, the $A^{\ell}_j$'s are pairwise disjoint sets with finite $\nu$-measure.
Note that, if $Z$ is either one of the random measures $G$ or ${N}_\lambda$, then $I^Z_q(g_q)$ is the homogeneous polynomial of degree $q$ given by
\be
I^Z_q(g_q) =  \sum_{\ell=1}^R c(\ell)\, Z(A^{\ell}_1 ) \cdots Z(A^{\ell}_q).
\ee
We now observe that, in view of \eqref{e:sum} and \eqref{e:resto}, for every $\varepsilon >0$ there exist $N_0\geq 1$ and $\lambda_0, \beta_0>0$, such that: (a)
\be \label{e:wsmall}
\sum_{q=N_0+1}^\infty q! \| w_{q,\xi}^{\varphi}\|^2_{L^2(\nu^q)} \leq \varepsilon,
\ee
(b) for every $\lambda\geq \lambda_0$ and $\beta \leq \beta_0$,
\be\label{e:zuzz}
\sum_{q=N_0+1}^\infty q! \| {\lambda^{\frac q2}} f_{q,\beta, \lambda}^{\varphi} \|^2_{L^2(\nu^q)}\leq \varepsilon,
\ee
and (c) as a consequence of \eqref{e:l2},
\begin{equation}\label{e:zza}
\sum_{q=1}^{N_0}  q! \| {\lambda^{\frac q2}} f_{q,\beta, \lambda}^{\varphi} -w_{q,\xi}^{\varphi}  \|^2_{L^2(\nu^q)}\leq \varepsilon.
\end{equation}
The key point is now that, by a standard density argument (see e.g. \cite[Lemma 5.5.2]{Gio_murad_taqqu}), for $N_0$ as above, there exist simple kernels $g_q\in L^2(\nu^q)$, $q=1,...,N_0$, such that
\be
\sum_{q=1}^{N_0} q! \| g_q - w_{q,\xi}^{\varphi}\|^2_{L^2(\nu^q)}\leq \varepsilon,
\ee
and consequently, for $\lambda\geq \lambda_0$ and $\beta \leq \beta_0$,
\begin{eqnarray} \label{eq:closeness}
&& \sum_{q=1}^{N_0} q! \| {\lambda^{\frac q2}} f_{q,\beta, \lambda}^{\varphi} - g_q\|^2_{L^2(\nu^q)} +  \sum_{q=1}^{N_0} q! \| g_q - w_{q,\xi}^{\varphi}\|^2_{L^2(\nu^q)} \notag \\
&& \leq 2 \sum_{q=1}^{N_0}  q! \| {\lambda^{\frac q2}} f_{q,\beta,\lambda}^{\varphi} -w_{q,\xi}^{\varphi}  \|^2_{L^2(\nu^q)} +3 \sum_{q=1}^{N_0} q! \| g_q - w_{q,\xi}^{\varphi}\|^2_{L^2(\nu^q)}\leq 5\varepsilon.
\end{eqnarray}
Now consider a bounded $1$-Lipschitz mapping $\psi : \mathbb{C}\to \R$. We have that, for fixed $\varepsilon$ and $N_0$ and for $\lambda\geq \lambda_0$ and $\beta \leq \beta_0$ as above,
\begin{eqnarray*}
 && | \E[\psi ( V^*_{\lambda, \beta}(\varphi) )  ] - \E[\psi ( W^*_{\xi}(\varphi) ) ] | \leq  (2+2\sqrt{5}) \sqrt{\varepsilon} \\
 &&\quad + \left| \E\left[\psi \left(  \langle V^*_{\lambda, \beta}(\varphi) \rangle + \sum_{q=1}^{N_0} I^{N_{\lambda}}_q(\lambda^{-q/2} g_q) \right)  \right] - \E\left[\psi \left( \langle W^*_{\xi}(\varphi) \rangle + \sum_{q=1}^{N_0} I_q^G(g_q) \right) \right]  \right| .
\end{eqnarray*}
The previous bound is obtained by applying twice the triangle inequality, and then by exploiting the estimate (deduced by virtue of the Lipschitz nature of $\psi$ and of the isometric property \eqref{e:isom})
\begin{eqnarray*}
&& \left| \E[\psi ( V^*_{\lambda, \beta}(\varphi) )  ] - \E\left[\psi \left(  \langle V^*_{\lambda, \beta}(\varphi) \rangle + \sum_{q=1}^{N_0} I^{N_{\lambda}}_q(\lambda^{-q/2} g_q) \right)  \right]\right|\\ 
&& \leq \sqrt{  \sum_{q=N_0+1}^\infty q! \| {\lambda^{\frac q2}} f_{q,\beta, \lambda,\varphi} \|^2_{L^2(\nu^q)} } +\sqrt{\sum_{q=1}^{N_0} q! \| { \lambda^{\frac q2}} f_{q,\beta, \lambda,\varphi} - g_q\|^2_{L^2(\nu^q)}  },
\end{eqnarray*}
together with \eqref{e:zuzz}, \eqref{eq:closeness} and \eqref{e:wsmall}, as well as a similar bound (based instead on the isometric property \eqref{e:isom2}) for the quantity
\begin{equation*}
\left| \E[\psi ( W^*_{\xi}(\varphi) ) ] - \E\left[\psi \left( \langle W^*_{\xi}(\varphi) \rangle + \sum_{q=1}^{N_0} I_q^G(g_q) \right) \right]\right|.
\end{equation*}
Using \eqref{eq:convergence-mean} and \eqref{e:clt}, we eventually infer that
\be
\langle V^*_{\lambda, \beta}(\varphi) \rangle + \sum_{q=1}^{N_0} I^{N_{\lambda}}_q(\lambda^{-q/2} g_q)\Longrightarrow \langle W^*_{\xi}(\varphi) \rangle + \sum_{q=1}^{N_0} I_q^G(g_q),
\ee
as $\lambda\to \infty$. In view of this, since $\varepsilon$ is arbitrary, the Portmanteau theorem implies the desired conclusion \eqref{claim}. We are now going to verify that Conditions 1-4 are satisfied in our setting. To this end, recall the notation \eqref{alpha6}, applied below in the case $D = \mathbb{D}$, as well as \eqref{e:accadelta} and \eqref{e:rsign}.

\noindent 1. This point follows from Corollary~\ref{cor:limits}.

\noindent 2. For $*=loop$, using \eqref{chaos_exp2} and \eqref{eq:one_point_function_gauss}, we have
\begin{eqnarray}
q! \| w_{q,\xi}^{\varphi}\|^2_{L^2(\nu^q)}
& = & q! \int_{A^q} \Big| \frac{i^q \xi^q}{q!} \int_{\D} \varphi(z) \langle W^*_{\xi}(z) \rangle \big( \Pi_{k=1}^{q} h_z(x_k) \big) dz \Big|^2 \prod_{k=1}^{q} \nu(dx_k) \\
& \leq & \frac{\xi^{2q}}{q!} \int_{\D} \int_{\D} \varphi(z) \varphi(t) d_{z}^{-2\Delta^*_{\xi}} d_{t}^{-2\Delta^*_{\xi}} \big( \alpha^*_{\D}(z,t) \big)^q dz dt. \label{term2inequality}
\end{eqnarray}
We now use the inequality $\alpha^{loop}_{\D}(z,t) \leq \alpha^{loop}_{B_{z,|z-t|},B_{z,2}}(z)$ and \eqref{alpha5} to obtain
\be \label{eq:intermediate_bound_case1}
q! \| w_{q,\xi}^{\varphi}\|^2_{L^2(\nu^q)}
\leq \int_{\D} \int_{\D} \varphi(z) \varphi(t) d_{z}^{-2\Delta^*_{\xi}} d_{t}^{-2\Delta^*_{\xi}} \frac{1}{q!} \Big( \frac{\xi^2}{5} \log\frac{2}{|z-t|} \Big)^q dz dt,
\ee
which implies, together with Lemma~\ref{finitenessunitdisk},
\be \label{eq:final_bound_case1}
\sum_{q=1}^{\infty} q! \| w_{q,\xi}^{\varphi}\|^2_{L^2(\nu^q)}
\leq \int_{\D} \int_{\D} \varphi(z) \varphi(t) d_{z}^{-2\Delta^*_{\xi}} d_{t}^{-2\Delta^*_{\xi}} \Big[ \Big( \frac{2}{|z-t|} \Big)^{\frac{\xi^2}{5}} - 1 \Big] dz dt < \infty.
\ee
For $*=m$, using \eqref{eq:massive_one_point_function_gauss} leads again to \eqref{eq:intermediate_bound_case1} and \eqref{eq:final_bound_case1},
with the additional insertion under the integral signs of the factor
\be
e^{\frac{\xi^2}{2} \hat\mu_{\D}(\gamma: z \in \bar\gamma, \diam{\gamma} \leq d_z)} e^{\frac{\xi^2}{2} \hat\mu_{\D}(\gamma: t \in \bar\gamma, \diam{\gamma} \leq d_t)},
\ee
which gives again the desired conclusion.

\noindent 3. Using \eqref{chaos_exp1} and \eqref{chaos_exp2}, and the obvious inequality $|a+b|^2 \leq {\teal} 2 |a|^2 + {\teal 2} |b|^2$, we have that
\begin{eqnarray}
&& \Big| { \lambda^{\frac q2}} f^{\varphi}_{q,\lambda,\beta}(x_1,\ldots,x_q) - w^{\varphi}_{q,\xi}(x_1,\ldots,x_q) \Big|^2 \notag \\
&& \quad = \Big| \frac{\lambda^{q/2}}{q!} \int_{\D} \varphi(z) \langle V^*_{\lambda,\beta}(z) \rangle \big( e^{i \beta h_z} - 1 \big)^{\otimes q}(x_1,...,x_q) dz \notag \\
&& \quad \quad \quad - \frac{(i \xi)^q}{q!} \int_{\D} \varphi(z) \langle W^*_{\xi}(z) \rangle h_z^{\otimes q}(x_1,...,x_q) dz \Big|^2 \\
&& \quad = \Big| \frac{1}{q!} \int_{\D} \varphi(z) \langle W^*_{\xi}(z) \rangle \Big[ \prod_{k=1}^q \lambda^{1/2} \big(e^{i \beta h_z(x_k)} - 1 \big) - \prod_{k=1}^q i \xi h_z(x_k) \Big] dz \notag \\
&& \quad \quad \quad + \frac{\lambda^{q/2}}{q!} \int_{\D} \varphi(z) \big( \langle V^*_{\lambda,\beta}(z) \rangle - \langle W^*_{\xi}(z) \rangle \big) \big( e^{i \beta h_z} - 1 \big)^{\otimes q}(x_1,...,x_q) dz \Big|^2 \\
&& \quad \leq 2 \Big| \frac{1}{q!} \int_{\D} \varphi(z) \langle W^*_{\xi}(z) \rangle \Big[ \prod_{k=1}^q \lambda^{1/2} \big(e^{i \beta h_z(x_k)} - 1 \big) - \prod_{k=1}^q i \xi h_z(x_k) \Big] dz \Big|^2 \notag \\
&& \quad \quad \quad + 2 \Big| \frac{\lambda^{q/2}}{q!} \int_{\D} \varphi(z) \big( \langle V^*_{\lambda,\beta}(z) \rangle - \langle W^*_{\xi}(z) \rangle \big) \big( e^{i \beta h_z} - 1 \big)^{\otimes q}(x_1,...,x_q) dz \Big|^2.
\end{eqnarray}
Letting
\be
I_1(x_1,...,x_q) := \int_{\D} \varphi(z) \langle W^*_{\xi}(z) \rangle \Big[ \prod_{k=1}^q \lambda^{1/2} \big(e^{i \beta h_z(x_k)} - 1 \big) - \prod_{k=1}^q i \xi h_z(x_k) \Big] dz
\ee
and
\be
I_2(x_1,...,x_q) := \int_{\D} \varphi(z) \big( \langle V^*_{\lambda,\beta}(z) \rangle - \langle W^*_{\xi}(z) \rangle \big) \big( e^{i \beta h_z} - 1 \big)^{\otimes q}(x_1,...,x_q) dz,
\ee
we have
\begin{eqnarray}
\| \lambda^{\frac q2}  f^{\varphi}_{q,\lambda,\beta} - w^{\varphi}_{q,\xi}\|^2_{L^2(\nu^q)} & \leq & \frac{2}{(q!)^2} \int_{A^q} \big| I_1(x_1,...,x_q) \big|^2 \prod_{k=1}^{q} \nu(dx_k) \notag \\
& + & \quad \frac{2 \lambda^q}{(q!)^2} \int_{A^q} \big| I_2(x_1,...,x_q) \big|^2 \prod_{k=1}^{q} \nu(dx_k). \label{term3inequality}
\end{eqnarray}
For $*=loop$, using Fubini's theorem and the independence between the loops $\gamma_k$ and their signs $\epsilon_k$ under $\nu^q$, as well as \eqref{eq:one_point_function_gauss},
the first term in the right hand side of \eqref{term3inequality} can be bounded as follows:
\begin{eqnarray}
&& \int_{A^q} \big| I_1(x_1,...,x_q) \Big|^2 \prod_{k=1}^{q} \nu(dx_k) \notag \\
&& \quad \leq \int_{ \{-1,1\} ^q } \Big| \prod_{k=1}^q \lambda^{1/2} \big(e^{i \beta \epsilon_k} - 1 \big) - \prod_{k=1}^q i \xi \epsilon_k \Big|^2 S(d\epsilon_1)\cdots S(d\epsilon_q) \times \notag \\
&&  \quad \quad\quad \quad \quad \quad \times \int_{\D} \int_{\D} \varphi(z) \varphi(t) d_{z}^{-2\Delta^{loop}_{\xi}} d_{t}^{-2\Delta^{loop}_{\xi}} \big( \alpha^{loop}_{\D}(z,t) \big)^q dz dt. \label{subterm1}
\end{eqnarray}
For $*=m$, one can use \eqref{eq:massive_one_point_function_gauss} to obtain the same inequality, but with the additional factor
\be \label{additional_factor}
e^{\frac{\xi^2}{2} \hat\mu_{\D}(\gamma: z \in \bar\gamma, \diam{\gamma} \leq d_z)} e^{\frac{\xi^2}{2} \hat\mu_{\D}(\gamma: t \in \bar\gamma, \diam{\gamma} \leq d_t)}
\ee
under the second integral in the right hand side of \eqref{subterm1}.
The second term on the right hand side of \eqref{subterm1} is the same as the double integral in the right hand side of \eqref{term2inequality}, which has already been shown to be finite;
the first term is clearly finite and tends to $0$ as $\lambda \to \infty, \beta \to 0$ with $\lambda\beta^2 \to \xi^2$. Hence, we conclude that
\be
\int_{A^q} \big| I_1(x_1,...,x_q) \Big|^2 \prod_{k=1}^{q} \nu(dx_k) \to 0 \text{ as } \lambda \to \infty, \beta \to 0 \text{ with } \lambda\beta^2 \to \xi^2.
\ee
We now turn to the second summand on the right-hand side of \eqref{term3inequality}.
For $*=loop$, using again Fubini's theorem and the independence between the loops $\gamma_k$ and their signs $\epsilon_k$, as well as \eqref{eq:one_point_function_gauss},
the second term on the right-hand side of \eqref{term3inequality} can be bounded as follows:
\begin{eqnarray}
&& \int_{A^q} \big| I_2(x_1,...,x_q) \big|^2 \prod_{k=1}^{q} \nu(dx_k) \notag \\
&& \quad \leq \lambda^q \int_{\{-1,1\}^q} \Big| \prod_{k=1}^q \big(e^{i \beta \epsilon_k} - 1 \big) \Big|^2 S(d\epsilon_1)\cdots S(d\epsilon_q) \times  \notag \\
&& \quad \quad \times \int_{\D} \int_{\D} \varphi(z) \varphi(t) \left( \frac{e^{-\lambda (1-\cos\beta) \alpha^{loop}_{d_z,\D}(z)}}{d_z^{2\Delta^{loop}_{\lambda,\beta}}} - \frac{e^{-\frac{\xi^2}{2} \alpha^{loop}_{d_z,\D}(z)}}{d_z^{2\Delta^{loop}_{\xi}}} \right) \notag \\
&& \quad \quad \quad  \left( \frac{e^{-\lambda (1-\cos\beta) \alpha^{loop}_{d_t,\D}(t)}}{d_t^{2\Delta^{loop}_{\lambda,\beta}}} - \frac{e^{-\frac{\xi^2}{2} \alpha^{loop}_{d_t,\D}(t)}}{d_t^{2\Delta^{loop}_{\xi}}} \right) \big( \alpha^{loop}_{\D}(z,t) \big)^q dz dt := A\times B. \label{subterm2}
\end{eqnarray}
For $*=m$, one can use \eqref{eq:massive_one_point_function_loop} and \eqref{eq:massive_one_point_function_gauss} to obtain an upper bound of the same type,
with some additional factors that are bounded over $\D$. To conclude, we will now use the fact that
\be \label{e:goo}
\alpha^{m}_{\D}(z,t) \leq \alpha^{loop}_{\D}(z,t) \leq \frac 15 \log \Big( \frac{2}{|z-t|} \Big).
\ee
Indeed, since $\xi^2 < 5$ by assumption, one has that, for $\lambda$ large enough and $\beta$ small enough, the term $A$ defined in \eqref{subterm2} verifies $A\leq \eta^q$, for some $\eta<5$. Exploiting \eqref{e:goo} together with the fact that $x^q < q! e^x$ for every $q\geq 1$ and $x>0$, we conclude that, for $\lambda$ large and $\beta$ small enough,
\begin{eqnarray*}
&&A\times B \leq q! \int_{\D} \int_{\D} \varphi(z) \varphi(t) \left( \frac{e^{-\lambda (1-\cos\beta) \alpha^{loop}_{d_z,\D}(z)}}{d_z^{2\Delta^{loop}_{\lambda,\beta}}} - \frac{e^{-\frac{\xi^2}{2} \alpha^{loop}_{d_z,\D}(z)}}{d_z^{2\Delta^{loop}_{\xi}}} \right) \notag \\
&&\quad\quad\quad\quad\quad\quad\quad\quad \quad\quad\quad\quad\quad\left( \frac{e^{-\lambda (1-\cos\beta) \alpha^{loop}_{d_t,\D}(t)}}{d_t^{2\Delta^{loop}_{\lambda,\beta}}} - \frac{e^{-\frac{\xi^2}{2} \alpha^{loop}_{d_t,\D}(t)}}{d_t^{2\Delta^{loop}_{\xi}}} \right) \left( \frac{2}{|z-t|} \right)^{\eta/5} dz dt
\end{eqnarray*}
Such a relation, together with the dominated convergence theorem and Lemma~\ref{finitenessunitdisk}, yields immediately that
\be
\int_{A^q} \big| I_2(x_1,...,x_q) \big|^2 \prod_{k=1}^{q} \nu(dx_k) \to 0 \text{ as } \lambda \to \infty, \beta \to 0 \text{ with } \lambda\beta^2 \to \xi^2.
\ee

\noindent 4. For $*=loop$, using \eqref{chaos_exp1} and Fubini's theorem, we have that, for every $n\geq 1$,
\begin{eqnarray}
&& \sum_{q=n}^{\infty} q! \| { \lambda^{\frac q2}} f^{\varphi}_{q,\lambda,\beta}\|^2_{L^2(\nu^q)} \notag \\
&& \quad = \sum_{q=1}^{\infty} \frac{\lambda^q}{q!} \int_{A^q} \Big| \int_{\D} \varphi(z) \langle V^*_{\lambda,\beta}(z) \rangle \big( e^{i \beta h_z} - 1 \big)^{\otimes q}(x_1,...,x_q) dz \Big|^2 \prod_{k=1}^{q} \nu(dx_k) \\
&& \quad \leq \sum_{q=n}^{\infty} \frac{(\lambda\beta^2)^q}{q!} \int_{\D} \int_{\D} \varphi(z) \varphi(t) d_{z}^{-2\Delta^{loop}_{\lambda,\beta}} d_{t}^{-2\Delta^{loop}_{\lambda,\beta}} \big( \alpha^{loop}_{\D}(z,t) \big)^q dz dt \\
&& \quad \leq \sum_{q=n}^{\infty} \frac{(\lambda\beta^2)^q}{q!} \int_{\D} \int_{\D} \varphi(z) \varphi(t) \left( \frac{1}{d_{z}}\vee 1\right)^{2\Delta^{loop}_{\lambda,\beta}} \left( \frac{1}{d_{t}}\vee 1\right)^{2\Delta^{loop}_{\lambda,\beta}} \big( \alpha^{loop}_{\D}(z,t) \big)^q dz dt \notag\\
&&  \quad := R^{loop}(\lambda, \beta, n).
\end{eqnarray}
For $*=m$, using \eqref{eq:bound_massive_one_point_function}, we obtain the same upper bound with the additional factor
\be
e^{2 \lambda (1- \cos \beta) \hat\mu_{\D}(\gamma: z \in \bar\gamma, \diam{\gamma} \leq d_z)} e^{2 \lambda (1- \cos \beta) \hat\mu_{\D}(\gamma: t \in \bar\gamma, \diam{\gamma} \leq d_t)}
\ee
under the integral signs; the resulting upper bound is denoted by $R^{m}(\lambda, \beta, n)$. Now, reasoning as above, for $\lambda$ large and $\beta$ small enough one has that $\lambda\beta^2<\eta$, for some $\eta<5$, and also $2\Delta^{loop}_{\lambda,\beta} = \frac{\lambda}{5}(1-\cos \beta)< 1/2$. These considerations imply that, for every $n\geq 1$,
$$
\limsup_{\lambda, \beta} R^{loop}(\lambda, \beta, n)\leq \|\varphi\|_\infty^2 \int_{\D} \int_{\D} \left( \frac{1}{d_{z}}\vee 1\right)^{1/2 } \left( \frac{1}{d_{t}}\vee 1\right)^{1/2} H_n( \eta \alpha^{loop}_{\D}(z,t) )  dz dt,
$$
where $H_n(x) := \sum_{q=n}^{\infty} \frac{x^q}{q! }$, $x>0$. Since $H_n(x)\to 0$, as $n\to \infty$ and $H_n(x)< e^x$, by combining \eqref{e:goo} with Lemma \ref{finitenessunitdisk} and dominated convergence, we deduce that
\be
\lim_{n\to \infty} \limsup_{\lambda, \beta} R^{loop}(\lambda, \beta, n) = 0.
\ee
By a completely analogous argument (details are left to the reader) one can show that
\be
\lim_{n\to \infty} \limsup_{\lambda, \beta} R^{m}(\lambda, \beta, n) = 0,
\ee
and the proof is therefore concluded.
\end{proof}

\appendix
\section{Appendix }

\subsection{Background on Sobolev spaces}\label{a:sobolev}

We follow \cite[Section 5]{Spin_loops_berg_camia_lis}, and refer the reader e.g. to \cite[Chapter II-4]{BJS_Book} for a classical presentation of the material discussed in the present section. We fix a bounded simply connected domain $D\subset \R^2$, and denote by $\mathcal{H}_0^1$ the classical Sobolev Hilbert space associated with $D$, which is obtained as the closure of $C_0^\infty(D)$ (the class of smooth functions on $D$ having compact support) with respect to the norm $\|f\|^2_{\mathcal{H}_0^1} := \int_D | \nabla f(z)|^2\, dz$. There exists an orthonormal basis $\{u_i : i\geq 1\}$ of $L^2(D)$ composed of (normalized) eigenfunctions of the Laplace operator $-\Delta$ on $D$ with Dirichlet boundary condition; one has also that $\{u_i\}$ is an orthogonal basis of $\mathcal{H}_0^1$, and Green's formula implies that $\|u_i \|^2_{\mathcal{H}_0^1} = \lambda_i$, where $\lambda_i$ is the Laplace eigenvalue of $u_i$, with $0\leq \lambda_1\leq \lambda_2\leq \cdots \nearrow \infty$. As a consequence of these facts, each $f\in \mathcal{H}_0^1$ admits a unique orthogonal decomposition $f = \sum_i a_i u_i$, and $\|f\|^2_{\mathcal{H}_0^1} = \sum_i |a_i|^2 \lambda_i$.

Since $C_0^\infty(D) \subset \mathcal{H}_0^1$, each $f\in C_0^\infty(D)$ admits an orthogonal decomposition $f = \sum_i a_i u_i$ in $\mathcal{H}_0^1$. By analogy, one defines $\mathcal{H}_0^\alpha$ to be the closure of $C_0^\infty(D)$ with respect to the norm $\|f\|^2_{\mathcal{H}_0^\alpha} := \sum_i |a_i|^2 \lambda^\alpha_i$. The Sobolev space $\mathcal{H}^{-\alpha}$ is defined as the Hilbert dual of $\mathcal{H}_0^\alpha$, that is, $\mathcal{H}^{-\alpha}$ is the space of continuous linear functionals on $\mathcal{H}_0^\alpha$, endowed with the norm $\|h\|^2_{\mathcal{H}^{- \alpha} } := \sup_{f :  \|f\|_{\mathcal{H}^{\alpha}_0} \leq 1} |h(f)|$. One has that $L^2(D) \subset \mathcal{H}^{-\alpha}$. Also, the action of $h  = \sum_i a_iu_i\in L^2(D)$ on $f\in \mathcal{H}_0^\alpha$ is given by $h(f) = \int_D h(z) f(z)\, dz$, and moreover $\|h\|^2_{\mathcal{H}^{- \alpha} } = \sum_i \lambda^{-\alpha}_i |a_i|^2$.

We emphasize the following inclusions, that are used throughout the paper: for $\alpha>0$,
\begin{eqnarray}
&&C_0^\infty(D) \subset \mathcal{H}_0^\alpha \subset L^2(D) \subset \mathcal{H}^{-\alpha}.\label{e:inc1}
\end{eqnarray}


\begin{remark}{\rm In the previous discussion, we have implicitly used the fact that, if $f\in C_0^\infty(D)$, then
\begin{equation}\label{e:ww}
\sum |a_i|^2 \lambda_i^\alpha <\infty, \quad \forall \alpha>0,
\end{equation}
where $a_i = \langle f, u_i \rangle_{L^2(D)}$ as before. In order to prove \eqref{e:ww}, one can assume without loss of generality that $\alpha\geq 1$ is an integer. For such an $\alpha$ and for every $f\in C_0^\infty(D)$, one has that $(-\Delta)^\alpha f\in C_0^\infty(D)$, and consequently $(-\Delta)^\alpha f = \sum_i \langle (-\Delta)^\alpha f , u_i\rangle_{L^2(D)} u_i$, where the series converges in $L^2(D)$. Integration by parts yields moreover that $\langle (-\Delta)^\alpha f , u_i\rangle_{L^2(D)} = \langle f , (-\Delta)^\alpha u_i\rangle_{L^2(D)} = \lambda_i^\alpha \langle f , u_i\rangle_{L^2(D)}$, from which we deduce that 
\be
 \sum |a_i|^2 \lambda_i^\alpha = \langle (-\Delta)^\alpha f, f\rangle_{L^2(D)} \leq \|(-\Delta)^\alpha f\|_{L^2(D)} \,\cdot  \|f\|_{L^2(D)}<\infty,
\ee
as claimed.}
\end{remark}

%

\subsection{A useful lemma}

The following lemma is used several times in the paper.

\begin{lemma} \label{finitenessunitdisk}
Let $\mathbb D$ be the unit disk of $\mathbb C$, and 
$a,b ,c \in [0,1)$; then
 \begin{eqnarray}\label{e:bonus}
 \iint_{\mathbb D^2} \frac{1}{|z-t|^a} \frac{1}{(1-|z|)^b} \frac{1}{(1-|t|)^c} \ dz dt < \infty \, .
 \end{eqnarray} 
\end{lemma}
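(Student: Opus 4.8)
The plan is to prove the finiteness of the double integral
\begin{equation*}
\iint_{\mathbb D^2} \frac{1}{|z-t|^a}\,\frac{1}{(1-|z|)^b}\,\frac{1}{(1-|t|)^c}\,dz\,dt
\end{equation*}
by first isolating the three sources of potential divergence: the diagonal singularity $|z-t|^{-a}$, and the two boundary singularities $(1-|z|)^{-b}$, $(1-|t|)^{-c}$. Since $a,b,c\in[0,1)$, each of these is locally integrable in two dimensions; the only issue is whether they can conspire. The key observation is that $|z-t|^{-a}$ is singular only on the diagonal $\{z=t\}$, while $(1-|z|)^{-b}$ and $(1-|t|)^{-c}$ are singular only near $\partial\mathbb D$, and these singular sets do intersect near the boundary. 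So the plan is to split the domain according to whether $|z-t|$ is large or small relative to the distance to the boundary.

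First I would perform the $t$-integral for fixed $z$, writing $I(z) := \int_{\mathbb D} |z-t|^{-a}(1-|t|)^{-c}\,dt$ and showing that $I(z)\le C(1-|z|)^{-(a+c-1)_+}$ up to logarithmic corrections, or more simply that $\sup_{z\in\mathbb D} I(z)<\infty$ when $a+c<1$ and $I(z)\le C\,(1-|z|)^{1-a-c}$ (times a log if $a+c=1$) otherwise. To see this, split the $t$-region into $\{|z-t|\le \tfrac12(1-|z|)\}$, where $1-|t|\asymp 1-|z|$ so the integrand is bounded by $C(1-|z|)^{-c}|z-t|^{-a}$ and integrates to $C(1-|z|)^{2-a-c}$ over a disk of radius $\asymp(1-|z|)$; and $\{|z-t|> \tfrac12(1-|z|)\}$, where $|z-t|^{-a}\le C(1-|z|)^{-a}$ when $a\ge 0$... actually here I would instead bound $|z-t|^{-a}$ directly and estimate $\int_{\mathbb D}(1-|t|)^{-c}\,dt<\infty$ (finite since $c<1$), giving $I(z)\le C(1-|z|)^{-a}$; combining the two regions gives $I(z)\le C(1-|z|)^{-a}$ uniformly, which is the crude but sufficient bound. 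Then the remaining integral is $\int_{\mathbb D}(1-|z|)^{-b-a}\,dz$, which is finite provided $a+b<1$. Since we only assume $a,b<1$ separately, this crude argument needs the sharper diagonal estimate: in the region $|z-t|> \tfrac12(1-|z|)$ one has $|z-t|^{-a}\le C(1-|z|)^{-a}$ only as a last resort, but more carefully $\int_{|z-t|>r}|z-t|^{-a}(1-|t|)^{-c}dt$ does not actually need the $(1-|z|)^{-a}$ factor — one bounds $(1-|t|)^{-c}\le (1-|t|)^{-c}$ and uses $\int_{\mathbb D}|z-t|^{-a}dt\le \int_{|w|\le 2}|w|^{-a}dw=C<\infty$, so in fact $I(z)\le C$ plus the near-diagonal contribution $C(1-|z|)^{2-a-c}$, whence $\sup_z I(z)<\infty$. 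Then $\iint = \int_{\mathbb D}(1-|z|)^{-b}I(z)\,dz \le C\int_{\mathbb D}(1-|z|)^{-b}\,dz<\infty$ since $b<1$.

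The main obstacle — really the only subtlety — is making the near-diagonal/far-diagonal split clean enough that one genuinely only uses $a<1$, $b<1$, $c<1$ and never the stronger hypotheses $a+b<1$ or $a+c<1$; the point is that the near-diagonal piece contributes the harmless positive power $(1-|z|)^{2-a-c}$ (which is bounded on $\mathbb D$, since $2-a-c>0$), while the far-diagonal piece is uniformly bounded by $\int_{|w|\le 2}|w|^{-a}\,dw<\infty$, independently of $z$. After that, the outer integral $\int_{\mathbb D}(1-|z|)^{-b}\,dz = 2\pi\int_0^1 (1-\rho)^{-b}\rho\,d\rho<\infty$ is immediate. I would also remark that by symmetry one could equally integrate in $z$ first; this is worth noting because in the applications in the body of the paper the lemma is invoked with $|z-t|$ replaced by $d^{\mathbb D}_{z,w}=|z-w|\wedge(1-|z|)$, and one checks that $(d^{\mathbb D}_{z,w})^{-2\Delta}\le |z-w|^{-2\Delta}+(1-|z|)^{-2\Delta}$, so the general statement \eqref{e:bonus} with $a=2\Delta$, $b=2\Delta$ (and $c=0$, or $c=2\Delta$ for the symmetrized version) covers exactly what is needed provided $2\Delta<1$, i.e. $\Delta<1/2$.
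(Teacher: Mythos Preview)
Your strategy --- reduce to showing $I(z):=\int_{\mathbb D}|z-t|^{-a}(1-|t|)^{-c}\,dt$ is bounded uniformly in $z$, then integrate $(1-|z|)^{-b}$ --- is sound, and the uniform bound $\sup_z I(z)<\infty$ is in fact true. However, your justification of the far-diagonal piece has a genuine gap. You claim that on $\{|z-t|>\tfrac12(1-|z|)\}$ the contribution is ``uniformly bounded by $\int_{|w|\le 2}|w|^{-a}\,dw$'', but this silently drops the factor $(1-|t|)^{-c}$: when $z$ is at distance $\epsilon$ from $\partial\mathbb D$, the far-diagonal region is essentially all of $\mathbb D$, and it contains points $t$ with $1-|t|$ arbitrarily small \emph{and} $|z-t|$ of order $\epsilon$ simultaneously. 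You cannot bound $(1-|t|)^{-c}$ by a constant there, and the line ``one bounds $(1-|t|)^{-c}\le (1-|t|)^{-c}$'' is a tautology, not an estimate. Your earlier crude bound $|z-t|^{-a}\le C(1-|z|)^{-a}$ on that region is correct but, as you yourself note, gives only $I(z)\le C(1-|z|)^{-a}$, which forces $a+b<1$ in the outer integral --- a hypothesis you do not have.

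The gap is fixable, and the fix uses two-dimensionality in an essential way: rotate so that $z$ lies on the positive real axis, write $t=(1-\eta)e^{i\phi}$, and observe that $|z-t|\ge|\mathrm{Im}\,t|=(1-\eta)|\sin\phi|$, so that the angular integral $\int_{-\pi}^{\pi}|z-t|^{-a}\,d\phi\le (1-\eta)^{-a}\int_{-\pi}^{\pi}|\sin\phi|^{-a}\,d\phi$ is finite for $a<1$; then $I(z)\le C\int_0^1(1-\eta)^{1-a}\eta^{-c}\,d\eta<\infty$ uniformly in $z$. This is essentially the paper's proof, which takes one further shortcut: writing both $z=re^{i\theta}$ and $t=se^{i\phi}$ in polar coordinates and using $|z-t|^2=(r-s)^2+2rs(1-\cos(\theta-\phi))\ge rs(1-\cos(\theta-\phi))$, the entire double integral is bounded by the product
\[
B\!\left(2-\tfrac a2,\,1-b\right)\,B\!\left(2-\tfrac a2,\,1-c\right)\int_0^{2\pi}\!\!\int_0^{2\pi}(1-\cos(\theta-\phi))^{-a/2}\,d\theta\,d\phi,
\]
each factor finite for $a,b,c\in[0,1)$. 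This polar factorization sidesteps the near/far split entirely and is considerably cleaner than controlling $I(z)$ pointwise.
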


\begin{proof}
As the integrand is nonnegative, the double integral is well-defined and Tonelli's theorem implies that it is actually equal to an iterated integral, where the order of integration can be interchanged. Writing $z = r e^{i \theta}$ and $t = s e^{i \phi}$, the double integral becomes
\be
\int_0^1 \int_0^1 \int_0^{2\pi} \int_0^{2\pi} \frac{rs d\theta s\phi dr ds}{(r^2+s^2-2rs\cos(\theta-\phi))^{a/2}(1-r)^b(1-s)^c}.
\ee
Since $r^2+s^2-2rs\cos(\theta-\phi) = (r-s)^2+2rs(1-\cos(\theta-\phi)) \geq rs(1-\cos(\theta-\phi))$, this is bounded above by
\begin{align}
& \int_0^1 \int_0^1 \int_0^{2\pi} \int_0^{2\pi} r^{1-a/2} s^{1-a/2} (1-r)^{-b} (1-s)^{-c} (1-\cos(\theta-\phi))^{-a/2} d\theta d\phi dr ds \\
& = B(2-a/2,1-b) B(2-a/2,1-c) \int_0^{2\pi} \int_0^{2\pi} (1-\cos(\theta-\phi))^{-a/2} d\theta d\phi,
\end{align}
where $B$ is the beta function. Letting $u=\theta-\phi$, the last integral is bounded above by
\be
\int_0^{2\pi} \int_{-\phi}^{2\pi-\phi} (1-\cos u)^{-a/2} du d\phi \leq 2\pi \int_0^{2\pi} (1-\cos u)^{-a/2} du,
\ee
where the last integral is easily seen to be finite by expanding the cosine in power series.
\end{proof}

\subsection{Proofs from Section \ref{ss:poissononepoint}}\label{ss:proofponepoint}

The following estimate is needed in the proof of Lemma \ref{MassiveOnePointFunctionEstimate}.
\begin{lemma}\label{concentration_Brownian} For every fixed $b>0$,
\begin{equation} \nonumber
\sup_{0<a<b}\mu^{loop}(\gamma: z\in \overline{\gamma}, a < d(\gamma)\leq b, t_\gamma>T)\leq \frac{b^2}{2T} \, .
\end{equation}
\end{lemma}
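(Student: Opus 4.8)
The plan is to work with the rooted representation \eqref{eq:BLMeasure} of $\mu^{loop}$ and exploit Brownian scaling. By translation invariance of $\mu^{loop}$ we may assume $z=\mathbf{0}$. Since the properties $\{z\in\overline\gamma\}$, $\{a<\diam(\gamma)\le b\}$ and $\{t_\gamma>T\}$ are all invariant under re‑rooting, pushing \eqref{eq:BLMeasure} forward to unrooted loops gives
\[
\mu^{loop}\big(\gamma:\mathbf{0}\in\overline\gamma,\ a<\diam(\gamma)\le b,\ t_\gamma>T\big)
=\int_{T}^{\infty}\frac{1}{2\pi t^{2}}\int_{\R^{2}}\mu^{br}_{w,t}\big(\mathbf{0}\in\overline\gamma,\ a<\diam(\gamma)\le b\big)\,dw\,dt .
\]
Writing a $\mu^{br}_{w,t}$‑loop as $\gamma=\sqrt{t}\,\beta+w$ with $\beta\sim\mu^{br}$ (so that $\overline\gamma=\sqrt{t}\,\overline\beta+w$ and $\diam(\gamma)=\sqrt{t}\,\diam(\beta)$), the inner probability becomes $\mu^{br}\big(-w/\sqrt{t}\in\overline\beta,\ a/\sqrt{t}<\diam(\beta)\le b/\sqrt{t}\big)$.

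Next I would integrate out the root. Substituting $v=-w/\sqrt{t}$, so that $dw=t\,dv$ in $\R^{2}$, and applying Fubini,
\[
\int_{\R^{2}}\mu^{br}\big(v\in\overline\beta,\ \ldots\big)\,dw
= t\,\E_{\mu^{br}}\!\Big[\mathrm{Leb}(\overline\beta)\,\mathbf 1_{\{a/\sqrt{t}<\diam(\beta)\le b/\sqrt{t}\}}\Big],
\]
since $\int_{\R^{2}}\mathbf 1_{\{v\in\overline\beta\}}\,dv=\mathrm{Leb}(\overline\beta)$. The key geometric input is that the hull of any loop satisfies $\overline\beta\subseteq\mathrm{conv}(\beta)$, hence $\diam(\overline\beta)=\diam(\beta)$ and $\overline\beta$ is contained in a disk of radius $\diam(\beta)$; therefore $\mathrm{Leb}(\overline\beta)\le\pi\,\diam(\beta)^{2}$. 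On the event $\{\diam(\beta)\le b/\sqrt{t}\}$ this gives $\mathrm{Leb}(\overline\beta)\le\pi b^{2}/t$, so the displayed quantity is at most $t\cdot(\pi b^{2}/t)=\pi b^{2}$, uniformly in $a$ — the lower diameter constraint is simply dropped, bounding a probability by $1$.

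It then remains to carry out the elementary integral
\[
\mu^{loop}\big(\gamma:\mathbf{0}\in\overline\gamma,\ a<\diam(\gamma)\le b,\ t_\gamma>T\big)\le\int_{T}^{\infty}\frac{1}{2\pi t^{2}}\,\pi b^{2}\,dt
=\frac{b^{2}}{2}\int_{T}^{\infty}\frac{dt}{t^{2}}=\frac{b^{2}}{2T},
\]
and since this bound does not depend on $a$, taking the supremum over $0<a<b$ concludes the proof. I do not anticipate a genuine obstacle: the only points requiring a little care are the bookkeeping in passing from the rooted measure \eqref{eq:BLMeasure} to unrooted loops (and the Jacobian $dw=t\,dv$ of the scaling), together with the elementary convexity argument giving $\mathrm{Leb}(\overline\beta)\le\pi\,\diam(\beta)^{2}$ — for which one notes that the complement of the compact convex set $\mathrm{conv}(\beta)$ is connected and unbounded, hence contained in the unbounded component of $\IC\setminus\beta$, so that $\overline\beta\subseteq\mathrm{conv}(\beta)$.
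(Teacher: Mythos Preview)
Your proof is correct and follows essentially the same route as the paper's: both use the rooted representation \eqref{eq:BLMeasure}, the geometric constraint that a loop of diameter $\le b$ covering $z$ has its root within distance $b$ of $z$ (which you phrase as $\mathrm{Leb}(\overline\beta)\le\pi b^{2}/t$ via Fubini, while the paper rescales by $1/b$ and restricts the root to the unit disk), and then the elementary integral $\int_{T}^{\infty}t^{-2}\,dt=1/T$. The only cosmetic difference is that the paper applies scale invariance of $\mu^{loop}$ upfront, whereas you scale inside the bridge law; both lead to the identical bound $b^{2}/(2T)$.
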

\begin{proof}
By the scale invariance of Brownian motion and the definition of the Brownian loop soup (see Section \ref{ss:loops}), for each $0<a<b$, we have that
\begin{align*}
& \mu^{loop}(\gamma: z\in \overline{\gamma}, a < d(\gamma)\leq b, t_\gamma>T) \\
&\hskip 2cm = \mu^{loop}(\gamma: z\in \overline{\gamma}, \frac{a}{b} < d(\gamma)\leq 1, t_\gamma>\frac{T}{b^2})\\
&\hskip 2cm  \leq  \int\limits_{\frac{T}{b^2}}^{\infty}\int\limits_{B(w,1)}\frac{1}{2\pi t^2}\mu^{br}_{z,t}(\gamma:\frac{a}{b} < d(\gamma)\leq 1)dA(w)dt\\
&\hskip 2cm \leq \int\limits_{\frac{T}{b^2}}^{\infty}\int\limits_{B(z,1)}\frac{1}{2\pi t^2}dA(w)dt = \int\limits_{\frac{T}{b^2}}^{\infty}\frac{1}{2t^2}dt = \frac{b^2}{2T} \, ,
\end{align*}
as desired.
\end{proof}

\medskip

\begin{proof}[Proof of Lemma \ref{MassiveOnePointFunctionEstimate}]

The first inequality in \eqref{MassiveAlphaEstimate1} is a consequence of the definition. Then, using the obvious inclusion
\begin{equation} \notag
\{ \gamma: z \in \bar\gamma, \delta \leq \diam(\gamma) \leq R \} \subseteq
\bigcup_{n=0}^{\lfloor \log_2 \frac{R}{\delta} \rfloor} \Big\{ z \in \bar\gamma, \frac{R}{2^{n+1}} < \diam(\gamma) \leq \frac{R}{2^n} \Big\},
\end{equation}
we have
\begin{align*}
\alpha^{loop}_{\delta,R}(z) - \alpha^{m}_{\delta, R}(z)
&\leq \sum_{n=0}^{\lfloor \log_2{ \frac{R}{\delta}} \rfloor} 
\Big(\alpha^{loop}_{\frac{R}{2^{n+1}},\frac{R}{2^{n}}}(z)
-\alpha^m_{\frac{R}{2^{n+1}},\frac{R}{2^{n}}}(z) \Big)\\
& = \sum_{n=0}^{\lfloor \log_2{ \frac{R}{\delta}} \rfloor} 
(\mu^{loop}-\mu^m)\Big(\gamma: z \in \overline \gamma, 
\frac{R}{2^{n+1}} < d(\gamma) \leq \frac{R}{2^{n}},
t_{\gamma}\leq \frac{R}{2^{n}} \Big) \\
& \quad+ \sum_{n=0}^{\lfloor \log_2{ \frac{R}{\delta}} \rfloor} 
(\mu^{loop}-\mu^m)\Big(\gamma: z \in \overline \gamma, 
\frac{R}{2^{n+1}} < d(\gamma) \leq \frac{R}{2^{n}},
t_{\gamma}> \frac{R}{2^{n}} \Big)\\
&\leq   \sum_{n=0}^{\infty}  
\Big(1-\exp\big(-\frac{\overline m^2 R}{2^n}\big)\Big)
\mu^{loop}\Big(\gamma: z \in \overline \gamma, 
\frac{R}{2^{n+1}} < d(\gamma) \leq \frac{R}{2^{n}}\Big)
 \\
& \quad +\sum_{n=0}^{\infty} 
\mu^{loop}\Big(\gamma: z \in \overline \gamma, 
\frac{R}{2^{n+1}} < d(\gamma) \leq \frac{R}{2^{n}},
t_{\gamma}> \frac{R}{2^{n}} \Big)\\
&\leq \frac15 \log{2} \sum_{n=0}^{\infty} \Big(1-\exp\big(-\frac{\overline m^2 R}{2^n}\big) \Big) + \sum_{n=0}^{\infty}   \frac{R}{2^{n}}
\leq 2 R \Big( \overline m^2 \frac15 \log{2}  + 1 \Big),
\end{align*}
where we used Lemma \ref{concentration_Brownian} with $a=\frac{R}{2^{n+1}}$ and $ b=T=\frac{R}{2^{n}}$, and, in the inequality before the last one, 
the fact that $ \alpha^{loop}_{\delta_1,\delta_2}(z)  = \frac15 \log{ \frac{\delta_2}{\delta_1}}$. As the bound above does not depend on $\delta$,
and since $\alpha^{loop}_{\delta,R}(z) - \alpha^{m}_{\delta, R}(z) \geq 0$ is monotone in $\delta$, we have that
\begin{equation}
\lim\limits_{\delta \to 0} \big(\alpha^{loop}_{\delta,R}(z) - \alpha^{m}_{\delta, R}(z)\big) \leq 2 R \Big( \frac{\bar m}{5} \log 2 + 1 \Big) < \infty .
\end{equation}
Finally, as $D$ is bounded,
\begin{eqnarray}
\lim\limits_{\delta \to 0} \big(\alpha^{loop}_{\delta,D}(z) - \alpha^{m}_{\delta,D}(z)\big) & = & \lim\limits_{\delta \to 0} \big(\alpha^{loop}_{\delta,d_z}(z) - \alpha^{m}_{\delta, d_z}(z)\big) + \alpha^{loop}_{d_z, D}(z) - \alpha^{m}_{ d_z, D}(z) \notag \\
& \leq & c_2( \overline m^2, D) < \infty.
\end{eqnarray}
As a consequence,
\be
\lim\limits_{\delta \to 0} \big(\alpha^{loop}_{\delta,D}(z) - \alpha^{m}_{\delta,D}(z)\big) = \lim\limits_{\delta \to 0} \hat\alpha_{\delta,D}(z) = \hat\alpha_{0,D}(z),
\ee
which concludes the proof. \end{proof}

\medskip

\begin{proof}[Proof of Theorem \ref{t:2p1}]
Following the proof of Lemma A.1 in \cite{camia2016conformal}, scale invariance implies that, for $z \in D$ and $0<\delta<R$, we have
\begin{eqnarray}
\lefteqn{\mu^{*}(\gamma: z \in \bar \gamma, \delta \leq \diam(\gamma) \leq R)\label{e:soul}
- \mu^{*}(\gamma: z \in \bar \gamma,\gamma \not\subset B_{z, \delta}, \gamma \subset B_{z,R})} \\
& = & \mu^{*}(\gamma: z \in \bar \gamma, \diam(\gamma) \geq \delta, \gamma \subset B_{z,\delta})
- \mu^{*}(\gamma: z \in \bar \gamma,\diam(\gamma) \geq R, \gamma \subset B_{z,R}) =0,\notag
\end{eqnarray}
so that the first two terms are equal. Next, setting $\rho=|z_1-z_2|$, we have
\begin{eqnarray}
&&\alpha^*_D(z_1, z_2) - \alpha^*_{|z_1-z_2|,d_{z_1}}(z_1)
= \alpha^*_D(z_1, z_2) - \alpha^*_{\rho,d_{z_1}}(z_1) \label{last-line} \\
&& \quad =  \mu^{*}(\gamma: z_1 \in \bar \gamma,\gamma \not\subset B_{z_1, \rho}, \gamma \subset B_{z_1,d_{z_1}})
+ \mu^{*}(\gamma: z_1 \in \bar \gamma,\gamma \not\subset B_{z_1,d_{z_1}}, \gamma \subset D) \notag \\
&& \hskip 1cm  -\mu^{*}(\gamma: z_1 \in \bar \gamma,\gamma \not\subset B_{z_1, \rho}, 
\gamma \subset D, z_2 \notin \overline \gamma) 
-\alpha^*_{\rho,d_{z_1}}(z_1) \notag \\
&& \quad = \mu^{*}(\gamma: z_1 \in \bar \gamma,\gamma \not\subset B_{z_1,d_{z_1}}, \gamma \subset D)
-\mu^{*}(\gamma: z_1 \in \bar \gamma,\gamma \not\subset B_{z_1, \rho}, 
\gamma \subset D, z_2 \notin \overline \gamma) \notag
\end{eqnarray}
where 
the last equality follows from a direct application of \eqref{e:soul}.
Letting $B_{z_1}(D)$ denote the largest open disc centered at $z_1$ and contained in $D$, with radius $r_{z_1}(D)$, and using the translation,
rotation and scale invariance of $\mu^*$, the last term of the last line of \eqref{last-line} 
can be written as
\begin{eqnarray*}
\mu^{*}(\gamma: z_1 \in \bar \gamma,\gamma \not\subset B_{z_1, \rho}, \gamma \subset D, z_2 \notin \overline \gamma) & = &
\mu^{*}(\gamma: z_1 \in \bar \gamma,\gamma \not\subset B_{z_1, \rho}, \gamma \subset B_{z_1}(D), z_2 \notin \overline \gamma) \\
& + & \mu^{*}(\gamma: z_1 \in \bar \gamma,\gamma \not\subset B_{z_1}(D), \gamma \subset D, z_2 \notin \overline \gamma) \\
& = & \mu^{*}(\gamma: {\bf 0} \in \bar \gamma,\gamma \not\subset B_{{\bf 0}, 1}, \gamma \subset B_{{\bf 0}, r_{z_1}(D)/\rho},
{\bf 1} \notin \overline \gamma) \\
& + & \mu^{*}(\gamma: z_1 \in \bar \gamma,\gamma \not\subset B_{z_1}(D), \gamma \subset D, z_2 \notin \overline \gamma).
\end{eqnarray*}
To show the existence of the limit, note that
\begin{equation} \label{e:limit1}
\lim_{\rho \to 0} \mu^{*}(\gamma: {\bf 0} \in \bar \gamma,\gamma \not\subset B_{{\bf 0}, 1}, \gamma \subset B_{{\bf 0}, r_{z_1}(D)/\rho},
{\bf 1} \notin \overline \gamma) = \alpha_{\neg B_{{\bf 0},1}}({\bf 0}|{\bf 1}),
\end{equation}
where $\alpha_{\neg B_{{\bf 0},1}}({\bf 0}|{\bf 1})<\infty$ because of the thinness of $\mu^*$. Using
again the translation, rotation and scale invariance of $\mu^*$,
\begin{eqnarray}
\mu^{*}(\gamma: z_1 \in \bar \gamma,\gamma \not\subset B_{z_1}(D), \gamma \subset D, z_2 \notin \overline \gamma) & \leq &
\mu^{*}(\gamma: z_1 \in \bar \gamma,\gamma \not\subset B_{z_1}(D), z_2 \notin \overline \gamma) \notag \\
& = & \mu^{*}(\gamma: {\bf 0} \in \bar \gamma,\gamma \not\subset B_{{\bf 0}, r_{z_1}(D)/\rho}, {\bf 1} \notin \overline \gamma) \notag \\
& \stackrel{\rho \to 0}{\longrightarrow} & 0 \label{e:limit2}
\end{eqnarray}
again by the thinness of $\mu^*$.

To prove the continuity of $\Psi^*$ in $z_1 \in D$, note that \eqref{e:limit1} and \eqref{e:limit2} imply that
\begin{eqnarray}
\Psi^*(z_1,D) & := & \lim_{z_2 \to z_1} (\alpha^*_D(z_1, z_2) - \alpha^*_{|z_1-z_2|,d_{z_1}}(z_1)) \nonumber \\
& = & \mu^{*}(\gamma: z_1 \in \bar \gamma,\gamma \not\subset B_{z_1,d_{z_1}}, \gamma \subset D) - \alpha_{\neg B_{{\bf 0},1}}({\bf 0}|{\bf 1}) \label{eq:Psi-alpha}
\end{eqnarray}
and that, for any $z_2 \in D$,
\begin{eqnarray*}
\{ \gamma: z_1 \in \bar \gamma,\gamma \not\subset B_{z_1,d_{z_1}}, \gamma \subset D \} & = & \{ \gamma: z_1 \in \bar \gamma, z_2 \in \bar\gamma, \gamma \not\subset B_{z_1,d_{z_1}}, \gamma \not\subset B_{z_2,d_{z_2}}, \gamma \subset D \} \\
& \cup & \{ \gamma: z_1 \in \bar \gamma, z_2 \not\in \bar\gamma, \gamma \not\subset B_{z_1,d_{z_1}}, \gamma \subset D \} \\
& \cup & \{ \gamma: z_1 \in \bar \gamma, z_2 \in \bar\gamma, \gamma \not\subset B_{z_1,d_{z_1}}, \gamma \subset B_{z_2,d_{z_2}} \}.
\end{eqnarray*}
Therefore,
\begin{eqnarray}
&& \Psi^*(z_1,D) - \Psi^*(z_2,D) \notag \\
&& = \mu^{*}(\gamma: z_1 \in \bar \gamma,\gamma \not\subset B_{z_1,d_{z_1}}, \gamma \subset D) - \mu^{*}(\gamma: z_2 \in \bar \gamma,\gamma \not\subset B_{z_2,d_{z_2}}, \gamma \subset D) \notag \\
&& = \mu^*(\gamma: z_1 \in \bar \gamma, z_2 \not\in \bar\gamma, \gamma \not\subset B_{z_1,d_{z_1}}, \gamma \subset D) + \mu^*(\gamma: z_1,z_2 \in \bar \gamma, \gamma \not\subset B_{z_1,d_{z_1}}, \gamma \subset B_{z_2,d_{z_2}}) \notag \\
&& - \mu^*(\gamma: z_2 \in \bar \gamma, z_1 \not\in \bar\gamma, \gamma \not\subset B_{z_2,d_{z_2}}, \gamma \subset D) - \mu^*(\gamma: z_1,z_2 \in \bar \gamma, \gamma \not\subset B_{z_2,d_{z_2}}, \gamma \subset B_{z_1,d_{z_1}}). \notag
\end{eqnarray}
The first and third terms on the right hand side of the last equality tend to zero as $z_2 \to z_1$ because of the thinness of $\mu^*$, as in \eqref{e:limit2}.
The second and fourth terms can be treated in the same way, and we focus only on the second. Using the elementary fact that
$d_{z_2} \leq d_{z_1} + |z_1-z_2|$ and either \eqref{alpha5} or \eqref{alpha8}, we have that
\begin{eqnarray}
&& \mu(\gamma: z_1,z_2 \in \bar \gamma, \gamma \not\subset B_{z_1,d_{z_1}}, \gamma \subset B_{z_2,d_{z_2}}) \notag \\
&& \quad \leq \mu(\gamma: z_1 \in \bar \gamma, \gamma \not\subset B_{z_1,d_{z_1}}, \gamma \subset B_{z_1,d_{z_1}+2|z_1-z_2|}) \notag \\
&& \quad \leq \pi \log\Big(1+2\frac{|z_1-z_2|}{d_{z_1}}\Big) \longrightarrow 0 \quad \text{ as } z_2 \to z_1.
\end{eqnarray}
This concludes the proof of the theorem.
\end{proof}

\medskip

\begin{proof}[Proof of Lemma \ref{l:brexit}]
We let $\rho = |z_1-z_2|$ and write
\begin{eqnarray}
&&\alpha^m_D(z_1, z_2) - \alpha^{loop}_{\rho,d_{z_1}}(z_1) = \notag \\
&& \quad \alpha^{m}_{D}(z_1,z_2) - \alpha^{loop}_{D}(z_1,z_2) + \alpha^{loop}_{D}(z_1,z_2) - \alpha^{loop}_{\rho,d_{z_1}}(z_1). \label{e:two-terms}
\end{eqnarray}
When $z_2$ tends to $z_1$, the third and fourth terms on the right hand side of \eqref{e:two-terms}, when combined, converge to $\Psi^{loop}(z_1,D)$
by \eqref{ThScaleInvTwoPoint}. To deal with the first two terms, we will use property \eqref{MassiveAlphaEstimate3} of the measure $\hat\mu$ defined in \eqref{e:muhat}.
Letting $A_n = \{ \gamma: z_1 \in \bar\gamma, \gamma \subset D, B_{z_1,1/n} \subset \bar\gamma \}$, we have
\begin{equation*}
\{ \gamma: z_1 \in \bar\gamma, \gamma \subset D \} = \cup_{n=1}^{\infty} A_n \cup \{ \gamma: z_1 \in \partial\bar\gamma, \gamma \subset D \},
\end{equation*}
where $\partial\bar\gamma$ denotes the boundary of the hull of $\gamma$.
Since $\hat\mu(\gamma: z_1 \in \partial\bar\gamma, \gamma \subset D)=0$ and $\{ A_n \}_{n=1}^{\infty}$ is an increasing sequence of sets,
$\hat\mu(\gamma: z_1 \in \bar\gamma, \gamma \subset D) = \lim_{n \to \infty} \hat\mu(A_n)$.

For any $D \ni z_2 \neq z_1$, let $N(z_1,z_2)$ denote the largest integer $n$ such that $z_2 \in B_{z_1,1/n}$; then
$ \{ \gamma: z_1,z_2 \in \bar\gamma, \gamma \subset D \} \supseteq A_{N(z_1,z_2)}$, which implies
\begin{equation*}
\liminf_{z_2 \to z_1} \hat\mu(\gamma: z_1,z_2 \in \bar\gamma, \gamma \subset D) \geq \lim_{z_2 \to z_1} \hat\mu(A_{N(z_1,z_2)})
= \hat\mu(\gamma: z_1 \in \bar\gamma, \gamma \subset D).
\end{equation*}
On the other hand, one trivially has that, for every $D \ni z_2 \neq z_1$,
$\hat\mu(\gamma: z_1,z_2 \in \bar\gamma, \gamma \subset D) \leq \hat\mu(\gamma: z_1 \in \bar\gamma, \gamma \subset D)$,
which implies
\begin{equation}
\limsup_{z_2 \to z_1} \hat\mu(\gamma: z_1,z_2 \in \bar\gamma, \gamma \subset D) \leq \hat\mu(\gamma: z_1 \in \bar\gamma, \gamma \subset D).
\end{equation}
Hence, we conclude that
\begin{eqnarray}
\lim_{z_2 \to z_1} \big( \alpha^{m}_{D}(z_1,z_2) - \alpha^{loop}_{D}(z_1,z_2) \big) & = & - \lim_{z_2 \to z_1} \hat\mu(\gamma: z_1,z_2 \in \bar\gamma, \gamma \subset D) \nonumber \\
& = & - \hat\mu(\gamma: z_1 \in \bar\gamma, \gamma \subset D).
\end{eqnarray}
and
\begin{equation}
\lim_{z_2 \to z_1} \big( \alpha^m_D(z_1, z_2) - \alpha^{loop}_{\rho,d_{z_1}}(z_1) \big)
= \Psi^{loop}(z_1,D) - \hat\mu(\gamma: z_1 \in \bar\gamma, \gamma \subset D).
\end{equation}

To prove the continuity of $\Psi^{m}(z,D)$ in $z$, since we already know that $\Psi^{loop}(z,D)$ is continuous in $z$, it suffices to prove continuity of
$\hat\mu(\gamma: z \in \bar\gamma, \gamma \subset D)$ in $z$.
To that end, let $\hat\alpha_{0,D}(z) := \hat\mu(\gamma: z \in \bar\gamma, \gamma \subset D)$ and
$\hat\alpha_{0,D}(z|w) := \hat\mu(\gamma: z \in \bar\gamma, w \notin \bar\gamma, \gamma \subset D)$,
and note that
\begin{eqnarray}
|\hat\alpha_{0,D}(z_1) - \hat\alpha_{0,D}(z_2)| & = & |\hat\alpha_{0,D}(z_1|z_2) - \hat\alpha_{0,D}(z_2|z_1)| \nonumber \\
& \leq & \hat\alpha_{0,D}(z_1|z_2) + \hat\alpha_{0,D}(z_2|z_1).
\end{eqnarray}
For any $n$ such that $B_{z_1,1/n} \subset D$, let $O_n = \{ \gamma: z_1 \in \bar\gamma, \gamma \subset B_{z_1,1/n} \}$.
Since $\{ O_n \}_{n=1}^{\infty}$ is a decreasing sequence of sets and
$\hat\mu (\cap_{n=1}^{\infty} O_n) = \hat\mu(\gamma: z_1 \in \bar\gamma, \diam(\gamma)=0)=0$,
$\lim_{n \to \infty} \hat\mu(O_n) = 0$.
For any $\varepsilon>0$, choosing first $n$ so large that $\hat\mu(O_n) < \varepsilon/2$ and then $z_2$ sufficiently close to $z_1$ so that, by thinness,
$\mu^{loop} \Big( {\bf 0} \in \bar\gamma, {\bf 1} \notin \bar\gamma, \gamma \not\subset B_{{\bf 0},\frac{1}{n|z_1-z_2|}} \Big) < \varepsilon/2$,
we have that
\begin{eqnarray} \nonumber
\hat\alpha_{0,D}(z_1|z_2) & = & \hat\mu(\gamma: z_1 \in \bar\gamma, z_2 \notin \bar\gamma, \gamma \not\subset B_{z_1,1/n}, \gamma \subset D)
+ \hat\mu(\gamma: z_1 \in \bar\gamma, z_2 \notin \bar\gamma, \gamma \subset B_{z_1,1/n}) \nonumber \\
& \leq & \mu^{loop} \Big( {\bf 0} \in \bar\gamma, {\bf 1} \notin \bar\gamma, \gamma \not\subset B_{{\bf 0},\frac{1}{n|z_1-z_2|}} \Big) + \hat\mu(O_n) < \varepsilon.
\end{eqnarray}
The term $\hat\alpha_{D}(z_2|z_1)$ can be treated in exactly the same way, which concludes the proof.
\end{proof}

\medskip

\begin{proof}[Proof of Lemma \ref{l:cont}]
First of all, we observe that, in view of \eqref{ThScaleInvTwoPoint} and \eqref{ThScaleInvTwoPointMassive}, using \eqref{alpha5}, we have that,
for $* = loop, m$,
\begin{eqnarray}
\lim_{z_2 \to z_1} g^{*}(z_1,z_2) & = & \lim\limits_{z_2 \to z_1} \Big(\alpha_D^*(z_1,z_2) - \frac15 \log^+{\frac{1}{|z_1-z_2|}}\Big) \nonumber \\
& = & \frac15 \log{d_{z_1}} + \Psi^*(z_1, D).
\end{eqnarray}
For the disk model, using \eqref{alpha8}, we have 
\begin{eqnarray}
\lim_{z_2 \to z_1} g^{disk}(z_1,z_2) & = & \lim\limits_{z_2 \to z_1} \Big(\alpha_D^{disk}(z_1,z_2)- \pi \log^+{\frac{1}{|z_1-z_2|}}\Big) \nonumber \\
& = & \pi \log{d_{z_1}} + \Psi^{disk}(z_1, D).
\end{eqnarray}

The continuity of $\alpha_D^{*}(z_1,z_2)$ in its two arguments, for $z_1 \neq z_2$, can be inferred from the following upper bound:
\begin{eqnarray*}
&&| \alpha_D^{*}(z_1, z_2) -  \alpha_D^{*}(w_1, w_2) | \\ 
&& \quad = | \mu^{*}(z_1,z_2,w_1 \in \bar\gamma, w_2 \notin \bar\gamma, \gamma \subset D)
+ \mu^{*}(z_1,z_2,w_2 \in \bar\gamma, w_1 \notin \bar\gamma, \gamma \subset D) \\
&& \quad + \mu^{*}(z_1,z_2 \in \bar\gamma, w_1,w_2 \notin \bar\gamma, \gamma \subset D)
- \mu^{*}(z_1,w_1,w_2 \in \bar\gamma, z_2 \notin \bar\gamma, \gamma \subset D) \\
&& \quad - \mu^{*}(z_2,w_1,w_2 \in \bar\gamma, z_1 \notin \bar\gamma, \gamma \subset D)
- \mu^{*}(w_1,w_2 \in \bar\gamma, z_1,z_2 \notin \bar\gamma, \gamma \subset D) | \\
&& \quad \leq \mu^{*}(z_1,z_2,w_1 \in \bar\gamma, w_2 \notin \bar\gamma, \gamma \subset D)
+ \mu^{*}(z_1,z_2,w_2 \in \bar\gamma, w_1 \notin \bar\gamma, \gamma \subset D) \\
&& \quad + \mu^{*}(z_1,z_2 \in \bar\gamma, w_1,w_2 \notin \bar\gamma, \gamma \subset D)
+ \mu^{*}(z_1,w_1,w_2 \in \bar\gamma, z_2 \notin \bar\gamma, \gamma \subset D) \\
&& \quad + \mu^{*}(z_2,w_1,w_2 \in \bar\gamma, z_1 \notin \bar\gamma, \gamma \subset D)
+ \mu^{*}(w_1,w_2 \in \bar\gamma, z_1,z_2 \notin \bar\gamma, \gamma \subset D).
\end{eqnarray*}
All terms in the upper bound tend to zero as $w_1 \to z_1$ and $w_2 \to z_2$. We only show that this is the case for the first term, since all the others
can be treated analogously. Using the translation, rotation and scale invariance, of $\mu^{loop}$, we have that
\begin{eqnarray}
\mu^{loop}(z_1,z_2,w_1 \in \bar\gamma, w_2 \notin \bar\gamma, \gamma \subset D) & \leq & \mu^{loop}(z_1,z_2 \in \bar\gamma, w_2 \notin \bar\gamma, \gamma \subset D) \\
& \leq & \mu^{loop}(z_2 \in \bar\gamma, w_2 \notin \bar\gamma, \gamma \not\subset B_{z_2,|z_2-z_1|}) \nonumber \\
& = & \mu^{loop} \Big( {\bf 0} \in \bar\gamma, {\bf 1} \notin \bar\gamma, \gamma \not\subset B_{{\bf 0},\frac{|z_2-z_1|}{|z_2-w_2|}} \Big) \stackrel{w_2 \to z_2}{\longrightarrow} 0, \nonumber
\end{eqnarray}
where the limit follows from the thinness of $\mu^{loop}$.
For $\mu^{m}$, it suffices to note that
\begin{equation}
\mu^{m}(z_1,z_2,w_1 \in \bar\gamma, w_2 \notin \bar\gamma, \gamma \subset D) \leq \mu^{loop}(z_1,z_2,w_1 \in \bar\gamma, w_2 \notin \bar\gamma, \gamma \subset D).
\end{equation}
For $\mu^{disk}$, we can use the same argument as for $\mu^{loop}$, given that $\mu^{disk}$ is thin, as show in Lemma \ref{lemma:disk-thin}.
%

It remains to check that $g^*(z,w) \to g^*(z_1,z_1)$ as $z,w \to z_1$. For $*=loop$, using \eqref{alpha5}, we have that
\begin{eqnarray}
&& \Big| g^{loop}(z_1,z_1) - g^{loop}(z,w) \Big| = \Big| \frac15 \log d_{z_1} + \Psi^{loop}(z_1,D) - \alpha^{loop}_D(z,w) + \frac15 \log^+\frac{1}{|z-w|} \Big| \nonumber \\
&& = \Big| \frac15 \log d_{z_1} + \Psi^{loop}(z_1,D) - \big(\alpha^{loop}_D(z,w) - \alpha^{loop}_{|z-w|,d_z}(z) \big) - \alpha^{loop}_{|z-w|,d_z}(z) + \frac15 \log^+\frac{1}{|z-w|} \Big| \nonumber \\
&& \leq \Big| \frac15 \log d_{z_1} - \frac15 \log d_z \Big| + \Big| \Psi^{loop}(z_1,D) - \big(\alpha^{loop}_D(z,w) - \alpha^{loop}_{|z-w|,d_z}(z) \big) \Big| \to 0
\end{eqnarray}
as $z,w \to z_1$ by the continuity of $d_z$ and Theorem \ref{t:2p1}.
For $*=disk$ one can do exactly the same calculation using \eqref{alpha8} instead of \eqref{alpha5}.
For $*=m$, using again \eqref{alpha5}, one has that
\begin{eqnarray*}
&& \Big| g^{m}(z_1,z_1) - g^{m}(z,w) \Big| = \Big| \frac15 \log d_{z_1} + \Psi^{m}(z_1,D) - \alpha^{m}_D(z,w) + \frac15 \log^+\frac{1}{|z-w|} \Big| \\
&& = \Big| \frac15 \log d_{z_1} + \Psi^{m}(z_1,D) - \big(\alpha^{m}_D(z,w) - \alpha^{loop}_{|z-w|,d_z}(z) \big) - \alpha^{loop}_{|z-w|,d_z}(z) + \frac15 \log^+\frac{1}{|z-w|} \Big| \\
&& \leq \Big| \frac15 \log d_{z_1} - \frac15 \log d_z \Big| + \Big| \Psi^{m}(z_1,D) - \big(\alpha^{m}_D(z,w) - \alpha^{loop}_{|z-w|,d_z}(z) \big) \Big| \to 0
\end{eqnarray*}
as $z,w \to z_1$ by the continuity of $d_z$ and Lemma \ref{l:brexit}. With this, the proof of the lemma is concluded.
\end{proof}

%
%

\subsection{One-point functions} \label{sec:one-point-functions}

\begin{lemma} \label{lemma:one-point-functions}
Let $D$ be a bounded, simply-connected domain and, for $z \in D$, let $d_z := \text{dist}(z,\partial D)$. Then, we have the following relations, where $*=loop$ or $disk$,
\begin{eqnarray}
\big\langle V^*_{\lambda,\beta}(z) \big\rangle & = & d_{z}^{-2\Delta^*_{\lambda,\beta}} e^{-\lambda \alpha^*_{d_{z},D}(z) (1-\cos\beta)} \label{eq:one_point_function_loop} \\
\langle V^{m}_{\lambda,\beta}(z) \rangle & = & d_{z}^{-2\Delta^{loop}_{\lambda,\beta}} e^{-\lambda (1 - \cos\beta) \alpha^m_{d_z,D}(z)} e^{\lambda(1-\cos\beta)\hat\mu^{loop}_{D}(\gamma: z \in \bar\gamma, \diam(\gamma) \leq d_z)} \label{eq:massive_one_point_function_loop} \\
\langle W^{*}_{\xi}(z) \rangle & = & d_{z}^{-2\Delta^{*}_{\xi}} e^{-\frac{\xi^2}{2} \alpha^*_{d,D}(z)} \label{eq:one_point_function_gauss} \\
\langle W^{m}_{\xi}(z) \rangle & = & d_{z}^{-2\Delta^{loop}_{\xi}} e^{-\frac{\xi^2}{2} \alpha^m_{d_z,D}(z)} e^{\frac{\xi^2}{2} \hat\mu^{loop}_{D}(\gamma: z \in \bar\gamma, \diam(\gamma) \leq d_z)}. \label{eq:massive_one_point_function_gauss}
\end{eqnarray}
\end{lemma}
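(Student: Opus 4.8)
The plan is to compute each of the four expectations by first working at the level of the cutoff fields $V^{\delta}$ (resp. $W^{\delta}$), using the explicit Poisson (resp. Gaussian) Laplace-type formulas, and then passing to the limit $\delta \to 0$ after inserting the correct renormalization power of $\delta$. For the Poisson layering field, recall from Equation~\eqref{eq:CGK}, which is Equation~(4.2) of \cite{camia2016conformal}, that
\be \nonumber
\big\langle e^{i \beta N_{\lambda}(h^{\delta}_z)} \big\rangle^*_D = e^{-\lambda \alpha^*_{\delta,D}(z)(1-\cos\beta)},
\ee
since $N_\lambda(h^\delta_z)$ is the difference of two independent Poisson random variables each with parameter $\tfrac12 \lambda \alpha^*_{\delta,D}(z)$. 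For the Gaussian layering field, $G(h^\delta_z)$ is centered Gaussian with variance $\alpha^*_{\delta,D}(z)$, so that $\langle e^{i\xi G(h^\delta_z)}\rangle = e^{-\frac{\xi^2}{2}\alpha^*_{\delta,D}(z)}$, which is the content of Equation~\eqref{eq:one_point_function_Gauss_1}. The first step, then, is just to record these two identities.

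Next I would split the disconnection mass at the scale $d_z$: write $\alpha^*_{\delta,D}(z) = \alpha^*_{\delta,d_z}(z) + \alpha^*_{d_z,D}(z)$ for $\delta < d_z$, which is legitimate because (using the scale-invariance identity \eqref{e:soul} from the proof of Theorem~\ref{t:2p1}) the $\mu^*$-mass of loops covering $z$ of diameter in $[\delta,d_z]$ equals that of loops covering $z$, not contained in $B_{z,\delta}$, and contained in $B_{z,d_z}$; since $B_{z,d_z}\subset D$, these loops are exactly counted once in $\alpha^*_{\delta,D}(z)$ before one removes those contained in $B_{z,d_z}$. For $*=loop$, Equation~\eqref{alpha5} gives $\alpha^{loop}_{\delta,d_z}(z) = \tfrac15\log(d_z/\delta)$; for $*=disk$, Equation~\eqref{alpha8} gives $\alpha^{disk}_{\delta,d_z}(z) = \pi\log(d_z/\delta)$. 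Plugging in and using $\Delta^{loop}_{\lambda,\beta} = \tfrac{\lambda}{10}(1-\cos\beta)$ and $\Delta^{disk}_{\lambda,\beta} = \tfrac{\lambda\pi}{2}(1-\cos\beta)$, one gets
\be \nonumber
\delta^{-2\Delta^*_{\lambda,\beta}}\big\langle e^{i\beta N_\lambda(h^\delta_z)}\big\rangle^*_D = \delta^{-2\Delta^*_{\lambda,\beta}} e^{-\lambda(1-\cos\beta)(\alpha^*_{\delta,d_z}(z)+\alpha^*_{d_z,D}(z))} = d_z^{-2\Delta^*_{\lambda,\beta}} e^{-\lambda(1-\cos\beta)\alpha^*_{d_z,D}(z)},
\ee
which is independent of $\delta$ and equals the right-hand side of \eqref{eq:one_point_function_loop}; since $\delta^{-2\Delta^*_{\lambda,\beta}} V^\delta_{\lambda,\beta}(z)$ is bounded and converges to $V^*_{\lambda,\beta}(z)$ (in the sense made precise by Theorem~\ref{ExistenceLayeringField} and the abuse-of-notation convention for one-point functions in the Notation paragraph), dominated convergence gives $\langle V^*_{\lambda,\beta}(z)\rangle$ equal to this expression. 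The Gaussian case \eqref{eq:one_point_function_gauss} is identical word for word, replacing $\lambda(1-\cos\beta)$ by $\xi^2/2$ and $\Delta^*_{\lambda,\beta}$ by $\Delta^*_\xi = \xi^2/20$ (resp. $\pi\xi^2/4$), using \eqref{eq:one_point_function_Gauss_1}.

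For the massive cases \eqref{eq:massive_one_point_function_loop} and \eqref{eq:massive_one_point_function_gauss}, the only extra ingredient is Lemma~\ref{MassiveOnePointFunctionEstimate}: I would write $\alpha^m_{\delta,D}(z) = \alpha^{loop}_{\delta,D}(z) - \hat\alpha_{\delta,D}(z)$ where $\hat\alpha_{\delta,D}(z) = \hat\mu(\gamma: z\in\bar\gamma, \delta\le\diam(\gamma), \gamma\subset D)$, split as $\hat\alpha_{\delta,D}(z) = \hat\mu(\gamma:z\in\bar\gamma,\delta\le\diam(\gamma)\le d_z) + \hat\mu(\gamma:z\in\bar\gamma,\diam(\gamma)> d_z,\gamma\subset D)$; as $\delta\to 0$ the first piece increases to $\hat\mu(\gamma:z\in\bar\gamma,\diam(\gamma)\le d_z)$, which is finite and $\delta$-independent in the limit. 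Thus, using $\Delta^m_{\lambda,\beta} = \Delta^{loop}_{\lambda,\beta}$,
\be \nonumber
\delta^{-2\Delta^m_{\lambda,\beta}}\big\langle e^{i\beta N_\lambda(h^\delta_z)}\big\rangle^m_D = \delta^{-2\Delta^{loop}_{\lambda,\beta}} e^{-\lambda(1-\cos\beta)\alpha^{loop}_{\delta,D}(z)} e^{\lambda(1-\cos\beta)\hat\alpha_{\delta,D}(z)},
\ee
and the first two factors converge, by the $*=loop$ computation above applied to the full domain, to $d_z^{-2\Delta^{loop}_{\lambda,\beta}} e^{-\lambda(1-\cos\beta)\alpha^{loop}_{d_z,D}(z)}$, while the third converges to $e^{\lambda(1-\cos\beta)(\hat\alpha_{0,d_z}(z)+\hat\mu(\gamma:z\in\bar\gamma,\diam(\gamma)>d_z,\gamma\subset D))}$; recombining $-\alpha^{loop}_{d_z,D}(z)$ with $\hat\mu(\gamma:z\in\bar\gamma,\diam(\gamma)>d_z,\gamma\subset D) = \alpha^{loop}_{d_z,D}(z) - \alpha^m_{d_z,D}(z)$ yields exactly the right-hand side of \eqref{eq:massive_one_point_function_loop}. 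The Gaussian massive identity \eqref{eq:massive_one_point_function_gauss} follows in the same way with $\lambda(1-\cos\beta)\mapsto\xi^2/2$. The main obstacle I anticipate is purely bookkeeping: making sure the decomposition $\alpha^*_{\delta,D} = \alpha^*_{\delta,d_z} + \alpha^*_{d_z,D}$ is correctly justified via \eqref{e:soul} and that the massive splitting cleanly separates the $\delta$-dependent divergent part (handled by \eqref{alpha5}/\eqref{alpha8}) from the convergent $\hat\mu$-remainder — there is no analytic difficulty, just the need to keep the five or six loop-families straight and to cite Lemma~\ref{MassiveOnePointFunctionEstimate} at the right moment to guarantee finiteness of the limits.
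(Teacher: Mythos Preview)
Your proposal is correct and follows essentially the same approach as the paper: both compute the cutoff one-point function via the Poisson/Gaussian characteristic-function formulas \eqref{eq:CGK} and \eqref{eq:one_point_function_Gauss_1}, split $\alpha^*_{\delta,D}(z)=\alpha^*_{\delta,d_z}(z)+\alpha^*_{d_z,D}(z)$, and use \eqref{alpha5}/\eqref{alpha8} to cancel the $\delta$-power. The only cosmetic difference is in the massive case, where the paper splits $\alpha^m$ at scale $d_z$ first and then invokes \eqref{MassiveAlphaEstimate2} on the inner piece, while you write $\alpha^m_{\delta,D}=\alpha^{loop}_{\delta,D}-\hat\alpha_{\delta,D}$ first and recombine afterwards; the two computations are trivially equivalent.
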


\begin{proof}
In the notation of this paper, Equation~(4.2) of \cite{camia2016conformal}, when restricted to the case $n=1$, gives
\be \label{eq:CGKbis}
\big\langle V^{\delta}_{\lambda,\beta}(z) \big\rangle = \big\langle e^{i \beta N_{\lambda}(h^{\delta}_z)} \big\rangle = e^{-\lambda (1-\cos\beta) \alpha^{loop}_{\delta,D}(z)}.
\ee
Since the proof of Equation~(4.2) of \cite{camia2016conformal} uses only the Poissonian nature of the field $V^{loop}_{\lambda,\beta}$, analogs of Equation~\eqref{eq:CGKbis}
are also valid for $V^{disk}_{\lambda,\beta}$ and $V^{m}_{\lambda,\beta}$. Using this observation and \eqref{alpha5} and \eqref{alpha8}, we have that, for $*=loop, disk$,
\begin{eqnarray}
\big\langle V^*_{\lambda,\beta}(z) \big\rangle & = & \lim_{\delta \to 0} \delta^{-2\Delta^*_{\lambda,\beta}} \big\langle e^{i \beta N_{\lambda}(h^{\delta}_z)} \big\rangle^*_D \\
& = & \lim_{\delta \to 0} \delta^{-2\Delta^*_{\lambda,\beta}} e^{-\lambda (1-\cos\beta) \big( \alpha^*_{\delta,d_{z}}(z) + \alpha^*_{d_{z},D}(z) \big)} \\
& = & d_{z}^{-2\Delta^*_{\lambda,\beta}} e^{-\lambda (1-\cos\beta) \alpha^*_{d_{z},D}(z)},
\end{eqnarray}
as desired. For $*=m$, using the same argument as above as well as \eqref{MassiveAlphaEstimate2} and the fact that $\Delta^m_{\lambda,\beta}=\Delta^{loop}_{\lambda,\beta}$, we obtain
\begin{eqnarray}
\langle V^{m}_{\lambda,\beta}(z) \rangle
& = & e^{-\lambda (1 - \cos\beta) \alpha^m_{d_z,\D}(z)} \lim_{\delta \to 0} \frac{e^{-\lambda (1 - \cos\beta) \alpha^{m}_{\delta,d_z}(z)}}{\delta^{-2\Delta^{m}_{\lambda,\beta}}} \\
& = & e^{-\lambda (1 - \cos\beta) \alpha^m_{d_z,\D}(z)} d_{z}^{-2\Delta^{loop}_{\lambda,\beta}} e^{\lambda(1-\cos\beta)\hat\mu^{loop}_{\D}(\gamma: z \in \bar\gamma, \diam(\gamma) \leq d_z)}.
\end{eqnarray}

For the Gaussian case, the definition of $h^{\delta}_z$ (see Section \ref{ss:GaussianLayering}) and, for instance, \cite[Example 5.3.6 (i)]{Gio_murad_taqqu}) immediately imply that
\be
\big\langle W^{\delta}_{\xi}(z) \big\rangle^*_D = e^{-\frac{\xi^2}{2} \int_A (h^{\delta}_z(x))^2 \nu(dx)} = e^{-\frac{\xi^2}{2} \alpha^*_{\delta,D}(z)},
\ee
where $*$ can be taken to be equal to $loop, disk$ or $m$. Hence, the same analysis as above leads to the desired results.
\end{proof}

\begin{corollary} \label{cor:limits}
For $*=loop,disk,m$, as $\lambda \to \infty$ and $\beta \to 0$ with $\lambda\beta^2 \to \xi^2$, we have that, for every $z \in D$,
\be \label{limit1}
\big\langle V^*_{\lambda,\beta}(z) \big\rangle \to \langle W^{*}_{\xi}(z) \rangle.
\ee
\end{corollary}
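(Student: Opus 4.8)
The plan is to obtain the convergence directly from the explicit one-point formulas established in Lemma~\ref{lemma:one-point-functions}, which reduce the statement to a short list of elementary limits. By that lemma, each of $\langle V^*_{\lambda,\beta}(z)\rangle$ and $\langle W^*_\xi(z)\rangle$ is a finite product of a power of $d_z$ with one or two exponentials of \emph{deterministic, finite} quantities, so it suffices to show that every factor appearing in $\langle V^*_{\lambda,\beta}(z)\rangle$ converges to the corresponding factor in $\langle W^*_\xi(z)\rangle$.

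First I would record the basic asymptotics of the regime. Since $\lambda\to\infty$ while $\lambda\beta^2\to\xi^2$, necessarily $\beta\to 0$, so $1-\cos\beta=\tfrac{\beta^2}{2}+O(\beta^4)$ and hence
\[
\lambda(1-\cos\beta)=\tfrac{\lambda\beta^2}{2}+\lambda\beta^2\cdot O(\beta^2)\longrightarrow\tfrac{\xi^2}{2}.
\]
Consequently $2\Delta^{loop}_{\lambda,\beta}=\tfrac{\lambda}{5}(1-\cos\beta)\to\tfrac{\xi^2}{10}=2\Delta^{loop}_\xi$, and similarly $2\Delta^{m}_{\lambda,\beta}\to 2\Delta^{m}_\xi$ and $2\Delta^{disk}_{\lambda,\beta}=\lambda\pi(1-\cos\beta)\to\tfrac{\pi\xi^2}{2}=2\Delta^{disk}_\xi$. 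Since $D$ is bounded and $z\in D$, we have $0<d_z<\infty$, so continuity of $t\mapsto d_z^{-t}$ gives $d_z^{-2\Delta^*_{\lambda,\beta}}\to d_z^{-2\Delta^*_\xi}$ in each of the three cases.

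Next I would note that all the loop-measure quantities entering the exponents are fixed finite constants, so that continuity of $x\mapsto e^{-cx}$ applies together with the limit above. The numbers $\alpha^{loop}_{d_z,D}(z)$, $\alpha^{disk}_{d_z,D}(z)$ and $\alpha^{m}_{d_z,D}(z)$ are finite because only finitely much $\mu^*$-mass is carried by loops of diameter at least $d_z>0$ contained in the bounded domain $D$; and $\hat\mu^{loop}_D(\gamma:z\in\bar\gamma,\diam(\gamma)\le d_z)\le 2d_z\bigl(\tfrac{\overline m^2}{5}\log 2+1\bigr)<\infty$ by Lemma~\ref{MassiveOnePointFunctionEstimate}. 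Hence, for $*=loop,disk$,
\[
e^{-\lambda\alpha^*_{d_z,D}(z)(1-\cos\beta)}\longrightarrow e^{-\frac{\xi^2}{2}\alpha^*_{d_z,D}(z)},
\]
and for $*=m$ both $e^{-\lambda(1-\cos\beta)\alpha^m_{d_z,D}(z)}\to e^{-\frac{\xi^2}{2}\alpha^m_{d_z,D}(z)}$ and $e^{\lambda(1-\cos\beta)\hat\mu^{loop}_D(\gamma:z\in\bar\gamma,\diam(\gamma)\le d_z)}\to e^{\frac{\xi^2}{2}\hat\mu^{loop}_D(\gamma:z\in\bar\gamma,\diam(\gamma)\le d_z)}$.

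Finally I would multiply the convergent factors: for $*=loop,disk$ this gives $\langle V^*_{\lambda,\beta}(z)\rangle=d_z^{-2\Delta^*_{\lambda,\beta}}e^{-\lambda\alpha^*_{d_z,D}(z)(1-\cos\beta)}\to d_z^{-2\Delta^*_\xi}e^{-\frac{\xi^2}{2}\alpha^*_{d_z,D}(z)}=\langle W^*_\xi(z)\rangle$, and for $*=m$ the additional exponential factor converges as recorded, giving $\langle V^m_{\lambda,\beta}(z)\rangle\to\langle W^m_\xi(z)\rangle$; this is precisely \eqref{limit1}. There is no real obstacle here: the only two points needing justification are the Taylor estimate for $1-\cos\beta$ (for which $\beta\to 0$ is essential) and the finiteness of the loop-measure constants in the exponents, and both are already available — the latter from Lemma~\ref{MassiveOnePointFunctionEstimate} and from the boundedness of $D$.
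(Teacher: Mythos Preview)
Your proof is correct and follows essentially the same approach as the paper: both invoke Lemma~\ref{lemma:one-point-functions} and then use $\lambda(1-\cos\beta)\to\xi^2/2$ together with $\Delta^*_{\lambda,\beta}\to\Delta^*_\xi$. The paper's proof is a two-line sketch of exactly this argument; you have simply spelled out the details (the Taylor estimate, the finiteness of the exponents, and the factor-by-factor convergence) that the paper leaves implicit.
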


\begin{proof}
The corollary follows immediately from Lemma~\ref{lemma:one-point-functions}, using the fact that $\Delta^{loop}_{\lambda,\beta} \to \Delta^{loop}_{\xi}$
and $\lambda(1 - \cos\beta) \to \xi^2/2$ as $\lambda \to \infty, \beta \to 0$ with $\lambda\beta^2 \to \xi^2$.
\end{proof}

\subsection{Existence and conformal covariance of layering fields} \label{sec:conf-inv}

In this section, we prove the existence and the conformal covariance of the Poisson and Gaussian layering fields.
This can be seen as an extension of a result of \cite{camia2016conformal}, where existence and conformal covariance
are proved for the $n$-point functions of Poisson layering fields. We start with the existence of Poisson layering fields
in bounded domains with a $C^1$ boundary.

\begin{theorem} \label{ExistenceLayeringFieldBoundedDomains}
If $D$ is a bounded simply connected domain with a $C^1$ boundary, then the conlusions of Theorem \ref{ExistenceLayeringField} hold.
\end{theorem}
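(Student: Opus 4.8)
The plan is to show that Theorem \ref{ExistenceLayeringFieldBoundedDomains} follows by a direct application of the abstract convergence result Theorem \ref{convergence} to the Poisson layering model of Definition \ref{d:plm}. First I would verify that the family $\mathbb{V}_{\lambda,\beta}=\{V^\delta_{\lambda,\beta}\}$ is a small parameter family of $\nu$ controlled complex random fields in the sense of Definition \ref{def:spf}: this is immediate from the construction in Section \ref{ss:PoissonLayering}, taking $\zeta=\beta$, $X=N_\lambda$ the Poisson measure with control $\lambda\nu$, $\nu=\mu^*_D\otimes S$, and $h^\delta_z(x)=\epsilon\,\mathbf{1}_{A_{\delta,D}(z)}(\gamma)$, and noting that $\nu(|h_z|)=\nu(|h_z|^2)=\mu^*_D(\gamma:z\in\bar\gamma,\gamma\subset D)=\infty$ by \eqref{alpha11}. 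Condition \eqref{c:1}, i.e.\ that $V^\delta\in L^2(D,dz)$ almost surely, is trivial since $|V^\delta(z)|\equiv 1$ and $D$ is bounded.

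The heart of the proof is to verify the two-point hypotheses \eqref{two_point_function_exists}, \eqref{two_point_function}, \eqref{2.3} of Theorem \ref{convergence} with $\Delta=\Delta^*_{\lambda,\beta}$ (which is assumed $<1/2$ in the statement). The key input is the explicit two-point correlation formula from \cite[Theorem 4.1 / Eq.~(4.2)]{camia2016conformal}, which, adapted to the present notation and valid for all three models by the Poissonian computation, gives
\be \nonumber
\langle V^\delta(z)\overline{V^{\delta'}(w)}\rangle^*_D = \exp\Big( -\lambda(1-\cos\beta)\big(\alpha^*_{\delta,D}(z|w)+\alpha^*_{\delta',D}(w|z)\big) - \lambda(1-\cos 2\beta)\,\tfrac12\,\alpha^*_{\delta\vee\delta',D}(z,w) \Big)
\ee
(or an equivalent expression; the precise exponents are computed there). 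From this one reads off $\tau^\delta_{z,w}$ as $\lambda(1-\cos\beta)\alpha^*_{\delta,D}(z|w)$, which is nonnegative. For the existence of the renormalized limit \eqref{two_point_function_exists} one splits $\alpha^*_{\delta,D}(z|w)$ using scale invariance into a divergent radial part $\alpha^*_{\delta,|z-w|\wedge d_z}(z)$, which by \eqref{alpha5} (resp.\ \eqref{alpha8} for the disk) equals $\tfrac15\log\frac{|z-w|\wedge d_z}{\delta}$ (resp.\ $\pi\log(\cdots)$) and is exactly cancelled by the factor $(\delta\delta')^{-2\Delta}$, and a convergent remainder handled by Theorem \ref{t:2p1}, Lemma \ref{l:brexit} and Lemma \ref{l:cont} for $*=loop,m$, and by the analogous disk estimates; this yields a finite pointwise limit $\phi(z,w)$. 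For the domination bound \eqref{2.3}, again by \eqref{alpha5}/\eqref{alpha8} and monotonicity one gets $e^{-\tau^\delta_{z,w}}=\delta^{2\Delta}(\cdots)^{-2\Delta}\le c(D)(d_{z,w}/\delta)^{-2\Delta}$ for $\delta<d_{z,w}$; for the massive case one uses Lemma \ref{MassiveOnePointFunctionEstimate} to compare $\alpha^m$ with $\alpha^{loop}$ up to a bounded multiplicative correction, picking up the constant $C^m=e^{2\lambda(1-\cos\beta)\hat\mu(\cdots)}$ exactly as in the proof of Theorem \ref{p:chaos1}.

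With hypotheses \eqref{two_point_function_exists}--\eqref{2.3} verified and $\Delta=\Delta^*_{\lambda,\beta}\in(0,1/2)$, Theorem \ref{convergence} applies verbatim: for every $\alpha>3/2$ the renormalized field $\delta^{-2\Delta}V^\delta$ converges in second mean in $\mathcal{H}^{-\alpha}$ to a limit $V^*_{\lambda,\beta}$, measurable with respect to $\sigma(N_\lambda)$. This is precisely the conclusion of Theorem \ref{ExistenceLayeringField} in the smooth bounded case, hence of Theorem \ref{ExistenceLayeringFieldBoundedDomains}. I expect the main obstacle to be bookkeeping the two-point function formula in a form that cleanly exposes the correct $\tau^\delta_{z,w}$ and the cancellation of the $\delta$-powers uniformly in $z,w$ — in particular making sure the ``mixed'' term involving $\cos 2\beta$ and $\alpha^*_{\delta\vee\delta',D}(z,w)$ is bounded above by $1$ (it is, since that term has a nonnegative exponent as $\alpha^*\ge 0$ and $1-\cos 2\beta\ge 0$) so that it can simply be dropped in the upper bound \eqref{two_point_function}; and, for $*=m$, threading through the bounded corrections from Lemma \ref{MassiveOnePointFunctionEstimate} without disturbing the power-law domination. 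Once this is in place the rest is a mechanical invocation of the abstract theorem and the loop-measure estimates of Section \ref{s:plf}.
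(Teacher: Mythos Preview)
Your approach is correct and essentially the same as the paper's: apply Theorem \ref{convergence} after computing the two-point correlation via the Poisson structure, identifying $\tau^\delta_{z,w}=\lambda(1-\cos\beta)\,\alpha^*_{\delta,D}(z|w)$, and using \eqref{alpha5}/\eqref{alpha8} (together with Lemma \ref{MassiveOnePointFunctionEstimate} for $*=m$) to extract the $\delta$-power. One small correction: since you are pairing $V^\delta(z)$ with $\overline{V^{\delta'}(w)}$, loops covering both points with diameter $\geq\delta\vee\delta'$ contribute zero (the opposite signs cancel), so the only surviving ``mixed'' factor is $e^{-\lambda(1-\cos\beta)\alpha^*_{\delta',\delta,D}(z,w)}$ rather than a $(1-\cos2\beta)$ term, and this factor equals $1$ once $\delta<|z-w|$; the paper exploits this directly and does not need Theorem \ref{t:2p1} or Lemmas \ref{l:brexit}, \ref{l:cont}, since the remainder $\alpha^*_{d_{z,w},D}(z|w)$ is manifestly finite.
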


\begin{proof}
By an application of Theorem \ref{convergence}, it suffices to check conditions \eqref{two_point_function_exists}, \eqref{two_point_function}, and \eqref{2.3}.
This can be done using elementary properties of Poisson point processes and arguments analogous to those in the proofs of Theorem~4.1
of \cite{camia2016conformal} and Proposition~5.3 of \cite{Spin_loops_berg_camia_lis}. (Note, however, that the notation in those papers is different.
For instance, what we would denote $\alpha^{loop}_{\delta,D}(z|w)$ in this paper would be denoted $\alpha_{\delta,D}(z)$ in \cite{camia2016conformal}.)
Using the definition of $V_{\lambda, \beta}^{\delta}$, we have that
\begin{eqnarray}
&& 0 \leq \big\langle V_{\lambda, \beta}^{\delta}(z) \overline{V_{\lambda, \beta}^{\delta'}(w)} \big\rangle
= \big\langle e^{i \beta N_{\lambda}(h^{\delta}_z) - i \beta N_{\lambda}(h^{\delta'}_w)} \big\rangle \\
&& \quad = e^{-\lambda \alpha^{*}_{\delta,D}(z|w)(1-\cos\beta)} e^{-\lambda \alpha^{*}_{\delta',D}(w|z)(1-\cos\beta)}
e^{-\lambda \alpha^{*}_{\delta',\delta}(z,w)(1-\cos\beta)} \\
&& \quad \leq e^{-(\lambda \alpha^{*}_{\delta,D}(z|w)(1-\cos\beta) + \lambda \alpha^{*}_{\delta',D}(w|z)(1-\cos\beta))},
\end{eqnarray}
which verifies condition \eqref{two_point_function} with $\tau^{\delta}_{z,w} = \lambda \alpha^{*}_{\delta,D}(z|w)(1-\cos\beta)$.
Moreover, if $\delta' \leq \delta < \min(d_{z,w},d_{w,z})$, noting that $\alpha^{*}_{\delta',\delta}(z,w)=0$ when $\delta<|z-w|$, we can write
\begin{eqnarray*}
&& \big\langle V_{\lambda, \beta}^{\delta}(z) \overline{V_{\lambda, \beta}^{\delta'}(w)} \big\rangle
= e^{-\lambda \alpha^{*}_{\delta,D}(z|w)(1-\cos\beta)} e^{-\lambda \alpha^{*}_{\delta',D}(w|z)(1-\cos\beta)}
e^{-\lambda \alpha^{*}_{\delta',\delta}(z,w)(1-\cos\beta)} \\
&& \quad = e^{-\lambda \alpha^{*}_{\delta,d_{z,w}}(z)(1-\cos\beta)} e^{-\lambda \alpha^{*}_{d_{z,w},D}(z|w)(1-\cos\beta)} 
e^{-\lambda \alpha^{*}_{\delta',d_{w,z}}(w)(1-\cos\beta)} e^{-\lambda \alpha^{*}_{d_{w,z},D}(w|z)(1-\cos\beta)}.
\end{eqnarray*}

For $*=loop, disk$, conditions \eqref{two_point_function_exists} and \eqref{2.3} now follow easily from \eqref{alpha5} and \eqref{alpha8}. For $*=loop$, \eqref{alpha5} gives
\begin{equation} \label{eq:limit-loop} 
\lim_{\delta \to 0} \frac{e^{- \lambda \alpha^{loop}_{\delta,d_{z,w}}(z)(1-\cos\beta) }}{\delta^{2\Delta^{loop}_{\lambda,\beta}}} = \Big( \frac{1}{d_{z,w}} \Big)^{\frac{\lambda}{5}(1-\cos\beta)},
\end{equation}
which shows that
\be \notag
\lim_{\delta,\delta' \to 0} (\delta\delta')^{-2\Delta^{loop}_{\lambda,\beta}} \big\langle V_{\lambda, \beta}^{\delta}(z) \overline{V_{\lambda, \beta}^{\delta'}(w)} \big\rangle
= \Big( \frac{1}{d_{z,w}d_{w,z}} \Big)^{\frac{\lambda}{5}(1-\cos\beta)} e^{-\lambda (1-\cos\beta) \big(\alpha^{loop}_{d_{z,w},D}(z|w) + \alpha^{loop}_{d_{w,z},D}(w|z)\big)},
\ee
and verifies \eqref{two_point_function_exists}, and, for $\delta<d_{z,w}$,
\be \notag
e^{- \lambda \alpha^{loop}_{\delta,D}(z|w)(1-\cos\beta)} = e^{-\lambda \alpha^{loop}_{\delta,d_{z,w}}(z)(1-\cos\beta)} e^{-\lambda \alpha^{loop}_{d_{z,w},D}(z|w)(1-\cos\beta)} \leq \Big( \frac{d_{z,w}}{\delta} \Big)^{-\frac{\lambda}{5}(1-\cos\beta)},
\ee
which verifies \eqref{2.3}.
For $*=disk$, \eqref{alpha8} gives analogous results with the quantity $\frac{\lambda}{5}(1-\cos\beta)$ replaced by $\lambda\pi(1-\cos\beta)$.
For $*=m$, using \eqref{MassiveAlphaEstimate2} and the notation introduced right after Lemma \ref{MassiveOnePointFunctionEstimate}, we have that
\begin{eqnarray} \label{eq:limit-loop-mass}
\lim_{\delta \to 0} \frac{e^{- \lambda \alpha^{m}_{\delta,d_{z,w}}(z)(1-\cos\beta) }}{\delta^{2\Delta^{m}_{\lambda,\beta}}} 
& = & \lim_{\delta \to 0} \frac{e^{-\lambda \alpha^{loop}_{\delta,d_{z,w}}(z)(1-\cos\beta)}}{\delta^{\frac{\lambda}{5}(1-\cos\beta)}} \,
e^{ \lambda(1-\cos\beta) \big( \alpha^{loop}_{\delta,d_{z,w}}(z) - \alpha^{m}_{\delta,d_{z,w}}(z) \big) } \notag \\
& = & \Big( \frac{1}{d_{z,w}} \Big)^{\frac{\lambda}{5}(1-\cos\beta)} e^{\lambda (1-\cos\beta) \hat\alpha_{0,d_{z,w}}(z)}
\end{eqnarray}
and
\begin{eqnarray}
e^{- \lambda \alpha^{m}_{\delta,d_{z,w}}(z)(1-\cos\beta) }
& = & \Big( \frac{d_{z,w}}{\delta} \Big)^{-\frac{\lambda}{5}(1-\cos\beta)} e^{ \lambda(1-\cos\beta) \big( \alpha^{loop}_{\delta,d_{z,w}}(z) - \alpha^{m}_{\delta,d_{z,w}}(z) \big) } \notag \\
& \leq & c(D) \Big( \frac{d_{z,w}}{\delta} \Big)^{-\frac{\lambda}{5}(1-\cos\beta)},
\end{eqnarray}
where $c(D) = e^{ \lambda(1-\cos\beta) \sup_{u \in D}\hat\alpha_{0,D}(u) }$ and
the last inequality follows from the fact that $\alpha^{loop}_{\delta,d_{z,w}}(z) - \alpha^{m}_{\delta,d_{z,w}}(z)$ is increasing as $\delta \downarrow 0$ and therefore
\be \nonumber
\alpha^{loop}_{\delta,d_{z,w}}(z) - \alpha^{m}_{\delta,d_{z,w}}(z) \leq \lim_{\delta \to 0} \big( \alpha^{loop}_{\delta,d_{z,w}}(z) - \alpha^{m}_{\delta,d_{z,w}}(z) \big)
= \hat\alpha_{0,d_{z,w}}(z) \leq \sup_{u \in D} \hat\alpha_{0,D}(u) < \infty,
\ee
where the last inequality follows from \eqref{MassiveAlphaEstimate3}.
\end{proof}

The next result concerns the existence of Gaussian layering fields in bounded domains with a $C^1$ boundary.

\begin{theorem} \label{ExistenceGaussianLayeringFieldBoundedDomains}
If $D$ is a bounded simply connected domain with a $C^1$ boundary, then the conclusions of Theorem \ref{ExistenceGaussianLayeringField} hold.
\end{theorem}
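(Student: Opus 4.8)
The plan is to mirror the structure already used for the Poisson case in Theorem~\ref{ExistenceLayeringFieldBoundedDomains}, applying the general convergence result Theorem~\ref{convergence} to the Gaussian layering model and then identifying the limit as a deterministically tilted imaginary GMC. First I would note that the cutoff field $z\mapsto W^\delta_\xi(z)=e^{i\xi G(h^\delta_z)}$ is a small parameter family of $\nu$-controlled complex random fields in the sense of Definition~\ref{def:spf} (with $\zeta=\xi$, $X=G$), and that condition \eqref{c:1} holds trivially since $|W^\delta_\xi(z)|=1$ for every $\delta$. The core of the argument is the verification of hypotheses \eqref{two_point_function_exists}, \eqref{two_point_function} and \eqref{2.3} with $\Delta=\Delta^*_\xi$. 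For the two-point function one computes, using the Gaussian isometry and the definition $h^\delta_z(x)=\epsilon\mathbf{1}_{A_{\delta,D}(z)}(\gamma)$,
\begin{equation}
\langle W^\delta_\xi(z)\overline{W^{\delta'}_\xi(w)}\rangle
= e^{-\frac{\xi^2}{2}\big(\alpha^*_{\delta,D}(z)+\alpha^*_{\delta',D}(w)\big)+\xi^2\alpha^*_{\delta',\delta,D}(z,w)}.
\end{equation}
Splitting $\alpha^*_{\delta,D}(z)=\alpha^*_{\delta,d_{z,w}}(z)+\alpha^*_{d_{z,w},D}(z|w)+\alpha^*_{\delta',\delta}(z,w)$ exactly as in the Poisson proof (and using that $\alpha^*_{\delta',\delta}(z,w)=0$ for $\delta<|z-w|$), the dependence on the small loops factorizes; then \eqref{alpha5} for $*=loop$, \eqref{alpha8} for $*=disk$, and \eqref{MassiveAlphaEstimate2} for $*=m$ give the exact asymptotics $\delta^{-2\Delta^*_\xi}e^{-\frac{\xi^2}{2}\alpha^*_{\delta,d_{z,w}}(z)}\to d_{z,w}^{-2\Delta^*_\xi}$ (times a bounded $\hat\alpha$-correction when $*=m$), which yields \eqref{two_point_function_exists} and the bound \eqref{2.3} with $e^{-\tau^\delta_{z,w}}=e^{-\frac{\xi^2}{2}\alpha^*_{\delta,D}(z|w)}$ and a finite constant $c(D)$; here the hypothesis $\Delta^*_\xi<1/2$ is what allows the integrability in Theorem~\ref{convergence}. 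Invoking Theorem~\ref{convergence} then gives convergence of $\delta^{-2\Delta^*_\xi}W^\delta_{\xi,D}$ in $\mathcal{H}^{-\alpha}$ for $\alpha>3/2$ to a limit $W^*_{\xi,D}$ measurable with respect to $G$.

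Next I would establish the Radon--Nikodym identity \eqref{eq:rdderivative}. The idea is to recognize that $\delta^{-2\Delta^*_\xi}W^\delta_{\xi,D}(z)= e^{i\xi G(h^\delta_z)+\frac{\xi^2}{2}\alpha^*_{\delta,D}(z)}\cdot \delta^{-2\Delta^*_\xi}e^{-\frac{\xi^2}{2}\alpha^*_{\delta,D}(z)}$, where the first factor is precisely the GMC approximant $M^{*,\delta}_{\xi,D}(z)$ associated with the Gaussian field ${\mathcal G}^\delta$ with covariance $\alpha^*_{\delta,D}(z,w)$ (which converges to ${\mathcal G}^*$ with covariance kernel $K^*_D(z,w)=\alpha^*_D(z,w)$, as shown in \eqref{eq:cov-conv} and the subsequent discussion), and the second factor is the deterministic prefactor. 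For $*=loop$, \eqref{alpha5} gives $\delta^{-2\Delta^{loop}_\xi}e^{-\frac{\xi^2}{2}\alpha^{loop}_{\delta,D}(z)}= d_z^{-2\Delta^{loop}_\xi}e^{-\frac{\xi^2}{2}\alpha^{loop}_{d_z,D}(z)}= e^{-\frac{\xi^2}{2}\big(\frac15\log d_z+\alpha^{loop}_{d_z,D}(z)\big)}=e^{-\frac{\xi^2}{2}\Theta^{loop}_D(z)}$, and analogously for $*=disk$ via \eqref{alpha8}, and for $*=m$ via \eqref{MassiveAlphaEstimate2} (introducing the $\hat\alpha_{0,D}(z)$ correction). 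Since this prefactor is continuous in $z$ (by Lemma~\ref{l:cont} and the continuity of $d_z$) and bounded away from $0$ and $\infty$ on compacts, multiplying the converging GMC approximants by it preserves convergence in $\mathcal{H}^{-\alpha}$, so the limit is exactly $M^*_{\xi,D}$ tilted by $e^{-\frac{\xi^2}{2}\Theta^*_D}$; uniqueness of the limit in $\mathcal{H}^{-\alpha}$ then gives \eqref{eq:rdderivative}. One also needs to check that $K^*_D(z,w)=\alpha^*_D(z,w)$ has the form $\eta\log^+\frac{1}{|z-w|}+g(z,w)$ required in the definition of imaginary GMC, which is precisely the content of Lemma~\ref{l:cont}.

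The main obstacle I anticipate is the same subtlety that appears in the Poisson case for $*=m$: the measure $\mu^m$ is not scale invariant, so one cannot directly apply the clean factorization \eqref{alpha5}, and instead must control $\alpha^{loop}_{\delta,R}(z)-\alpha^m_{\delta,R}(z)$ uniformly in $\delta$ via Lemma~\ref{MassiveOnePointFunctionEstimate} to get a bounded correction term; this requires care in splitting the loop contributions into those contained in small disks versus those escaping, and in verifying that the constant $c(D)$ in \eqref{2.3} is finite, which follows from \eqref{MassiveAlphaEstimate3} and the continuity of $\hat\alpha_{0,D}$ on the bounded domain $D$. A secondary point requiring attention is justifying that the GMC approximants $M^{*,\delta}_{\xi,D}$ converge to a well-defined object $M^*_{\xi,D}$ in $\mathcal{H}^{-\alpha}$ in the regime $\Delta^*_\xi<1/2$ (equivalently $\xi^2<10$ for $*=loop,m$); this can be done by an argument fully analogous to the proof of Theorem~\ref{convergence}, using the covariance convergence \eqref{eq:cov-conv} together with the integrability bound $\int_D\int_D (d_{z,w}d_{w,z})^{-2\Delta}\,dz\,dw<\infty$ established there, since the second moment of $M^{*,\delta}_{\xi,D}(z)\overline{M^{*,\delta'}_{\xi,D}(w)}$ is bounded by $e^{\xi^2\alpha^*_D(z,w)}\le C|z-w|^{-\xi^2/5}$ (using \eqref{e:goo}), which is integrable on $D\times D$ exactly when $\xi^2/5<2$, i.e. $\Delta^*_\xi<1/2$. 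Everything else is routine bookkeeping mirroring Theorem~\ref{ExistenceLayeringFieldBoundedDomains}.
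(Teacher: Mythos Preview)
Your plan for the first half---verifying \eqref{two_point_function_exists}, \eqref{two_point_function}, \eqref{2.3} via the Gaussian two-point computation and then invoking Theorem~\ref{convergence}---matches the paper exactly (the paper in fact takes the slightly simpler choice $\tau^\delta_{z,w}=\tfrac{\xi^2}{2}\alpha^*_{\delta,D}(z)$ rather than your $\tfrac{\xi^2}{2}\alpha^*_{\delta,D}(z|w)$, but either works).

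Where you diverge from the paper is in the identification of the limit with a tilted imaginary GMC. The paper does \emph{not} argue directly that the approximants $M^{*,\delta}_{\xi,D}$ form a Cauchy sequence in $\mathcal{H}^{-\alpha}$; instead it writes $\delta^{-2\Delta^*_\xi}W^\delta_{\xi}(z)=e^{-\frac{\xi^2}{2}\Theta^*_D(z)}\,e^{i(\xi/\sqrt{5})X^\delta(z)+\frac{\xi^2}{10}\mathbb{E}[X^\delta(z)^2]}$ with $X^\delta=\sqrt{5}\,G(h^\delta_\cdot)$, and then appeals to Theorem~1.1 of Junnila--Saksman--Webb \cite{GMC_imaginary}. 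That external result requires checking that $X^\delta$ is a \emph{standard approximation} of the limiting log-correlated field (Definition~2.7 there): the paper verifies their conditions (i)--(iii) using \eqref{alpha5}, Lemma~\ref{l:cont}, and the behaviour of $\Psi^{loop}(z,D)$ near $\partial D$ via \eqref{eq:Psi-alpha}. Your self-contained Cauchy argument for $M^{*,\delta}_{\xi,D}$ is a legitimate alternative and reuses the machinery of Theorem~\ref{convergence}; its advantage is that it keeps everything internal to the paper, while the paper's route is shorter but imports nontrivial GMC technology. One point in your approach that deserves a sentence of justification is the passage from ``$M^{*,\delta}\to M^*$ in $\mathcal{H}^{-\alpha}$'' to ``$e^{-\frac{\xi^2}{2}\Theta^*_D}M^{*,\delta}\to e^{-\frac{\xi^2}{2}\Theta^*_D}M^*$ in $\mathcal{H}^{-\alpha}$'': since $\Theta^*_D$ is only continuous (not $C^\infty$), $e^{-\frac{\xi^2}{2}\Theta^*_D}\varphi$ is not a priori an admissible test function, so you should either approximate it by smooth functions or, more simply, run the Cauchy argument directly for $\delta^{-2\Delta^*_\xi}W^\delta_{\xi}$ (which you already did) and for $M^{*,\delta}$ separately, and deduce the product relation by comparing the explicit two-point limits.
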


\begin{proof}
By Theorem~\ref{convergence}, it suffices to check \eqref{two_point_function_exists}, \eqref{two_point_function}, and \eqref{2.3}.
Using the fact that
\be \label{eq:covariance}
\text{Cov}(G(h^{\delta}_z)G(h^{\delta}_w)) = \alpha^{*}_{\delta,D}(z,w), 
\ee
which follows from the definition of the Gaussian process $G$ (see \eqref{Gauss} and the discussion right after), we have that
\begin{eqnarray}
\langle W_{\xi}^{\delta}(z) \overline{W_{\xi}^{\delta'}(w)} \rangle^*_D & = & \Big\langle e^{i \xi \left[ G(A_{\delta,D}(z)) - G(A_{\delta',D}(w)) \right]} \Big\rangle^*_D \\
& = & e^{-\frac{\xi^2}{2} \text{Var}\big( G(A_{\delta,D}(z)) - G(A_{\delta',D}(w)) \big)} \\
& = & e^{-\frac{\xi^2}{2} \big( \alpha^{*}_{\delta,D}(z) + \alpha^{*}_{\delta',D}(w) + 2 \alpha^{*}_{\delta,D}(z,w) \big)} \\
& \leq & e^{-\frac{\xi^2}{2} \big( \alpha^{*}_{\delta,D}(z) + \alpha^{*}_{\delta',D}(w) \big)},
\end{eqnarray}
which verifies condition \eqref{two_point_function} with $\tau^{\delta}_{z,w} = \frac{\xi^2}{2} \alpha^{*}_{\delta,D}(z)$.
Conditions \eqref{two_point_function_exists} and \eqref{2.3} can now be checked as in the proof of Theorem \ref{ExistenceLayeringField}.

To prove the last statement, we will use Theorem 1.1 of \cite{GMC_imaginary}. To that end, we write
\be \label{eq:normalized}
\delta^{-2\Delta_\xi^*} W_{\xi}^\delta(z) = \delta^{-2\Delta_\xi^*}  e^{i \xi G(h_z^\delta)} = e^{i \xi G(h_z^\delta) - 2 \Delta^{*}_{\xi} \log\delta}.
\ee
For $*=loop$, using \eqref{alpha5}, we have that
\be \label{eq:variance}
\E(G(h^{\delta}_{z})^2) = \alpha^{loop}_{\delta,D}(z) = \alpha^{loop}_{\delta,d_{z}}(z) + \alpha^{loop}_{d_{z},D}(z)
= \frac15 \log\frac{d_{z}}{\delta} + \alpha^{loop}_{d_{z},D}(z).
\ee
Since $\Delta^{loop}_{\xi} = \xi^2/20$, using \eqref{eq:normalized}, we can write
\be
\delta^{-2\Delta_\xi^{loop}} W_{\xi}^\delta(z) = e^{i \xi G(h_z^\delta) - \frac{\xi^2}{10} \log\delta}
\ee
while, using \eqref{eq:variance}, we have that
\be
e^{i \xi G(h_z^\delta) + \frac{\xi^2}{2} \E(G(h^{\delta}_{z})^2)} = e^{\frac{\xi^2}{2} \big( \frac15\log d_{z} + \alpha^{loop}_{d_{z},D}(z) \big)} e^{i \xi G(h_z^\delta) - \frac{\xi^2}{10} \log\delta}.
\ee
Therefore,
\begin{eqnarray}
\delta^{-2\Delta_\xi^{loop}} W_{\xi}^\delta(z) & = & e^{-\frac{\xi^2}{2} \big( \frac15\log d_{z} + \alpha^{loop}_{d_{z},D}(z) \big)}
e^{i \xi G(h_z^\delta) + \frac{\xi^2}{2} \E(G(h^{\delta}_{z})^2)} \\
& =& e^{-\frac{\xi^2}{2} \big( \frac15\log d_{z} + \alpha^{loop}_{d_{z},D}(z) \big)} e^{i \frac{\xi}{\sqrt 5} X^{\delta}(z) + \frac{\xi^2}{10} \E(X^{\delta}(z)^2)},
\end{eqnarray}
where $X^{\delta}(z) := \sqrt{5} G(h^{\delta}_{z})$.
It is a consequence of \eqref{eq:cov-conv}, as discussed right after \eqref{eq:cov-conv}, that the field ${\mathcal G}^{\delta} = \{ G(h^{\delta}_{z}) \}_{z \in D}$
converges to ${\mathcal G}^{*} = \{ G(h^{0}_{z}) \}_{z \in D}$, as $\delta \to 0$, in second mean in the Sobolev space $\mathcal{H}^{-\alpha}(D)$ for every $\alpha>3/2$.
This implies that $X^{\delta}$ converges to a Gaussian field $X$ with covariance
\be
K^{X}(z,w) = \log^{+}\frac{1}{|z-w|} + 5 \, g^{loop}(z,w),
\ee
where $g^{loop}$ is bounded, integrable and continuous, as a consequence of Lemma \ref{l:cont}.
In order to use Theorem 1.1. of \cite{GMC_imaginary}, we need to check that $X^{\delta}$ is a \emph{standard approximation} of $X$ in the sense of 
Definition 2.7 of \cite{GMC_imaginary}. Condition (i) in that definition is clearly satisfied since
\be \nonumber
\lim_{\delta,\delta' \to 0} \E(X^{\delta}(z) X^{\delta'}(w)) = 5 \lim_{\delta,\delta' \to 0} \alpha^{loop}_{\max{(\delta,\delta')},D}(z,w) = \log^{+}\frac{1}{|z-w|} + 5 \, g^{loop}(z,w).
\ee
Condition (ii), in our setting, can be written as follows.
There exists a decreasing function $c(\delta)$ such that $c(\delta)>0$ for each $\delta>0$, $\lim_{\delta \to 0} c(\delta) = 0$ and, for every compact $K \subset D$,
\begin{eqnarray} \label{eq:condition2}
&& \sup_{1 \geq \delta>0} \sup_{z,w \in K} \Big| \E(X^{\delta}(z) X^{\delta}(w)) - \log\frac{1}{\max(c(\delta),|z-w|)} \Big| \nonumber \\
&& \quad = \sup_{1 \geq \delta>0} \sup_{z,w \in K} \Big| 5 \, \alpha^{loop}_{\delta,D}(z,w) - \log\frac{1}{\max(c(\delta),|z-w|)} \Big| < \infty.
\end{eqnarray}
To verify this condition, we take $c(\delta)=\delta$ and define the function
\begin{equation} \label{functionF}
F_K(z,w;\delta) := \Big| 5 \, \alpha^{loop}_{\delta,D}(z,w) - \log\frac{1}{\max(\delta,|z-w|)} \Big|
\end{equation}
for all $z,w \in K$ and all $1 \geq \delta > 0$, as well as for $\delta=0$, provided that $z \neq w$.
We observe that $F_K$ is bounded for $\delta>0$ and that, when $\delta=0$, it can be extended continuously by setting, for each $z \in K$,
\begin{eqnarray}
F_K(z,z;0) & := & \lim_{\delta \to 0} F_K(z,z;\delta) = \lim_{\delta \to 0} \Big| 5 \, \alpha^{loop}_{\delta,D}(z) - \log\frac{1}{\delta} \Big| \notag \\
& = & \lim_{\delta \to 0} \Big| \log\frac{d_{z}}{\delta} + 5 \, \mu^{loop}_{D}(\gamma: z \in \bar\gamma, \diam(\gamma) > d_{z}) - \log\frac{1}{\delta} \Big| \notag \\
& = & \log d_{z} + 5 \, \mu^{loop}_{D}(\gamma: z \in \bar\gamma, \diam(\gamma) > d_{z}).
\end{eqnarray}
It is now clear that
\begin{eqnarray} \notag
&& \sup_{1 \geq \delta>0} \sup_{z,w \in K} \Big| 5 \, \alpha^{loop}_{\delta,D}(z,w) - \log\frac{1}{\max(c(\delta),|z-w|)} \Big| \\
&& = \sup_{z,w \in K, 0 \leq \delta \leq 1} F_{K}(z,w;\delta) < \infty.
\end{eqnarray}
Condition (iii), in our setting, can be written as follows:
\begin{eqnarray} \label{eq:condition3}
&& \sup_{1 \geq \delta>0} \sup_{z,w \in D} \Big( \E(X^{\delta}(z) X^{\delta}(w)) - \log\frac{1}{|z-w|} \Big) \notag \\
&& = \sup_{1 \geq \delta>0} \sup_{z,w \in D} \Big( 5 \, \alpha^{loop}_{\delta,D}(z,w) - \log\frac{1}{|z-w|} \Big) < \infty.
\end{eqnarray}
To verify this condition, we define the function
\begin{equation} \label{functionG}
H(z,w;\delta) := \Big( 5 \, \alpha^{loop}_{\delta,D}(z,w) - \log\frac{1}{|z-w|} \Big)
\end{equation}
for all $0 \leq \delta \leq 1$ and all $z,w \in D$ provided $z \neq w$.
We note that, for all $\delta>0$ and every $z \in D$, $\lim_{w \to z} H(z,w;\delta) = -\infty$.
For $\delta=0$, in view of \eqref{ThScaleInvTwoPoint} and \eqref{ThScaleInvTwoPointMassive}, using \eqref{alpha5},
we can extend $H$ continuously by setting, for every $z \in D$,
\begin{eqnarray}
H(z,z;0) & := & \lim_{w \to z} H(z,w;0) = \lim_{w \to z} \Big( 5 \, \alpha^{loop}_{D}(z,w) - \log\frac{1}{|z-w|} \Big) \notag \\
& = & \log d_{z} + 5 \, \Psi^{loop}(z,D).
\end{eqnarray}
Using \eqref{eq:Psi-alpha} in the appendix, we see that
\be
\lim_{z \to \partial D} \Psi^{loop}(z,D) = - \alpha_{\neg B_{{\bf 0},1}}({\bf 0}|{\bf 1}) < \infty.
\ee
Therefore
\be
\sup_{1 \geq \delta>0} \sup_{z,w \in D} \Big( 5 \, \alpha^{loop}_{\delta,D}(z,w) - \log\frac{1}{|z-w|} \Big)
= \sup_{z,w \in D, 0 \leq \delta \leq 1} H(z,w;\delta) < \infty.
\ee
To conclude the proof for $*=loop$, we note that, in order to apply Theorem 1.1 of \cite{GMC_imaginary}, we also need the condition $\xi/\sqrt 5<\sqrt 2$,
which is equivalent to $\Delta^{loop}_{\xi}<1/2$. Applying Theorem 1.1 of \cite{GMC_imaginary}, we can now conclude that
\be
e^{i \xi G(h_z^\delta) + \frac{\xi^2}{2} \E(G(h^{\delta}_{z})^2)} \stackrel{\delta \to 0}{\longrightarrow} M^{loop}_{\xi} = e^{i \frac{\xi}{\sqrt 5} X} = e^{i \xi G(h^{0})},
\ee
where the convergence is in probability in the Sobolev space $\mathcal{H}^{-\alpha}$ for any $\alpha>1$.

The proof for $*=disk$ is exactly analogous, using $\Delta^{disc}_{\xi} = \pi\xi^2/4$, and \eqref{alpha8} in place of \eqref{alpha5}.

For $*=m$, using \eqref{eq:variance}, we can write
\be \label{eq:variance-massive}
\E(G(h^{\delta}_{z})^2) = \alpha^{m}_{\delta,D}(z) = -\frac15\log\delta + \frac15\log d_{z} + \alpha^{loop}_{\delta,d_{z}}(z)
- (\alpha^{loop}_{\delta,D}(z) - \alpha^{m}_{\delta,D}(z)).
\ee
The rest of the proof is analogous to the proof of the case $*=loop$, and the relation $\Theta^{m}_{D}(z) = \Theta^{loop}_{D}(z) - \hat\alpha_{0,D}(z)$
is a consequence of \eqref{eq:variance-massive} and of $\lim_{\delta \to 0}(\alpha^{loop}_{\delta,D}(z) - \alpha^{m}_{\delta,D}(z)) = \hat\alpha_{0,D}(z)$.
\end{proof}

In the rest of this section, we show the existence of layering fields for general domains $D$ conformally equivalent to the unit disk $\D$ and prove their
conformal covariance. Let $f: {\mathbb D} \to D$ be a conformal map.
In view of our use of the notation $U_D(\varphi) = \int_{D} U_D(z) \varphi(z) dz$, the conformal covariance proved below can be (formally) expressed
in the following way:
\be \label{eq:conformal_covariance_loop}
V^{loop}_{\lambda,\beta,D}(z) dz \stackrel{d}{=} |f'(w)|^{2 - 2 \Delta^{loop}_{\lambda,\beta}} V^{loop}_{\lambda,\beta,{\mathbb D}}(w) dw,
\ee
\be
V^{\tilde m}_{\lambda,\beta,D}(z) dz \stackrel{d}{=} |f'(w)|^{2 - 2 \Delta^{loop}_{\lambda,\beta}} V^{m}_{\lambda,\beta,{\mathbb D}}(w) dw,
\ee
\be \label{eq:conformal_covariance_loop_gauss}
W^{loop}_{\xi,D}(z) dz \stackrel{d}{=} |f'(w)|^{2 - 2 \Delta^{loop}_{\xi}} W^{loop}_{\xi,{\mathbb D}}(w) dw,
\ee
\be
W^{\tilde m}_{\xi,D}(z) dz \stackrel{d}{=} |f'(w)|^{2 - 2 \Delta^{loop}_{\xi}} W^{m}_{\xi,{\mathbb D}}(w) dw,
\ee
where $\stackrel{d}{=}$ denotes equality in distribution, $w \in \D$, $m:\D \to {\mathbb R}^+$ is a mass function on $\D$, $z=f(w) \in D$, and $\tilde m(z) = |f'(w)|^{-1} m(w)$.

\medskip

\begin{remark} \label{remark:conf_inv} {\rm
For the disk model, an analog of \eqref{eq:conformal_covariance_loop} and of Theorem \ref{thm:conformal_covariance} below hold when $D=\D$ and $f$ is a M\"obius transformation.
The proof is the same as that of Theorem \ref{thm:conformal_covariance}.
}
\end{remark}

The following statement is the main result of the section.

\begin{theorem}\label{thm:conformal_covariance}
Let $f: {\mathbb D} \to D$ be a conformal map from the unit disc $\mathbb D$ to a domain $D$ conformally equivalent to $\D$.
Let $m: \D \to {\mathbb R}^+$ be a mass function on $\D$ and define a mass function on $D$ via $\tilde m(f(w)) = |f'(w)|^{-1} m (w)$ for each $z=f(w) \in D$.
Then, for every $\varphi \in C^{\infty}_{0}(D)$, $U_D(\varphi)$ exists as a limit in distribution of
$U^{\delta}_D(\varphi)$ for $U_D = V^{loop}_{\lambda,\beta,D}, V^{m}_{\lambda,\beta,D}, W^{loop}_{\xi,D}$, and $W^{m}_{\xi,D}$. Moreover,
\be
\int_{D} V^{loop}_{\lambda,\beta,{D}}(z) \varphi(z) dz
\stackrel{d}{=} \int_{\D} |f'(w)|^{2 - 2 \Delta^{loop}_{\lambda,\beta}} V^{loop}_{\lambda,\beta,\D}(w) \varphi \circ f(w) dw,
\ee
\be
\int_{D} V^{\tilde m}_{\lambda,\beta,{D}}(z) \varphi(z) dz
\stackrel{d}{=} \int_{\D} |f'(w)|^{2 - 2 \Delta^{loop}_{\lambda,\beta}} V^m_{\lambda,\beta,\D}(w) \varphi \circ f(w) dw,
\ee
\be
\int_{D} W^{loop}_{\xi,{D}}(z) \varphi(z) dz
\stackrel{d}{=} \int_{\D} |f'(w)|^{2 - 2 \Delta^{loop}_{\xi}} W^{loop}_{\xi,\D}(w) \varphi \circ f(w) dw,
\ee
\be
\int_{D} W^{\tilde m}_{\xi,{D}}(z) \varphi(z) dz
\stackrel{d}{=} \int_{\D} |f'(w)|^{2 - 2 \Delta^{loop}_{\xi}} W^m_{\xi,\D}(w) \varphi \circ f(w) dw.
\ee
\end{theorem}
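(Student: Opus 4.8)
\emph{Strategy.} The proof pushes everything forward to the unit disc $\D$ via $f$, where the limiting fields are already known to exist (Theorems \ref{ExistenceLayeringFieldBoundedDomains} and \ref{ExistenceGaussianLayeringFieldBoundedDomains}, as $\D$ has a smooth boundary), and matches the transported cutoff fields to these known limits through the Wiener--It\^o chaos expansion. I spell out the Poisson case $*=loop$; the massive and Gaussian cases follow by the same argument with the modifications indicated at the end. \textit{Step 1 (conformal transport of the cutoff fields).} For $\delta>0$ set $\hat A_\delta(z):=\{\gamma\subset\D:\ z\in\bar\gamma,\ \diam(f\circ\gamma)>\delta\}$ and let $\hat V^\delta_{\lambda,\beta,\D}$ be the Poisson layering field on $\D$ built, exactly as in Section \ref{ss:PoissonLayering}, from the exploding kernel family $\hat h^\delta_z(x)=\epsilon\,\mathbf 1_{\hat A_\delta(z)}(\gamma)$ and the \emph{same} Poisson measure $N_\lambda$ of control $\mu^{loop}_\D\otimes S$; as $\delta\to0$ one has $\hat h^\delta_z\to h_z=\epsilon\,\mathbf 1_{A_{0,\D}(z)}$ $\nu$-a.e., so the family is indeed exploding. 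Coupling $\BLS_D$ with $f(\BLS_\D)$ — legitimate since $\mu^{loop}_D\circ f=\mu^{loop}_\D$ — a loop $f\circ\gamma$ covers $f(w)$ with diameter $>\delta$ iff $\gamma$ covers $w$ and $\diam(f\circ\gamma)>\delta$, so $V^\delta_{\lambda,\beta,D}(f(w))=\hat V^\delta_{\lambda,\beta,\D}(w)$ identically; the pathwise change of variables $z=f(w)$, $dz=|f'(w)|^2\,dw$ then gives, for each $\delta>0$ and $\varphi\in C^\infty_0(D)$,
\begin{equation*}
\delta^{-2\Delta^{loop}_{\lambda,\beta}}\int_D V^\delta_{\lambda,\beta,D}(z)\,\varphi(z)\,dz \ \stackrel{d}{=}\ \delta^{-2\Delta^{loop}_{\lambda,\beta}}\,\hat V^\delta_{\lambda,\beta,\D}(\psi), \qquad \psi:=|f'|^2\,(\varphi\circ f),
\end{equation*}
where $\psi\in C^\infty_0(\D)$ has support $K':=f^{-1}(\mathrm{supp}\,\varphi)\Subset\D$, because $f$ is a homeomorphism and $f'$ is holomorphic and non-vanishing on $\D$.

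\textit{Step 2 (limit of the transported cutoff field).} By \eqref{eq:CGK} and conformal invariance, $\langle\hat V^\delta_{\lambda,\beta,\D}(z)\rangle=e^{-\lambda(1-\cos\beta)\alpha^{loop}_{\delta,D}(f(z))}$; for $\delta<d_{f(z)}$ the additivity $\alpha^{loop}_{\delta,D}(f(z))=\alpha^{loop}_{\delta,d_{f(z)}}(f(z))+\alpha^{loop}_{d_{f(z)},D}(f(z))=\tfrac15\log\tfrac1\delta+\Theta^{loop}_D(f(z))$ together with $\lambda(1-\cos\beta)=10\,\Delta^{loop}_{\lambda,\beta}$ gives the \emph{exact} identity $\delta^{-2\Delta^{loop}_{\lambda,\beta}}\langle\hat V^\delta_{\lambda,\beta,\D}(z)\rangle=e^{-\lambda(1-\cos\beta)\Theta^{loop}_D(f(z))}$. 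The two-point estimate in the proof of Theorem \ref{ExistenceLayeringFieldBoundedDomains} — which uses only \eqref{alpha5} and scale invariance, not the regularity of $\partial D$ — applies verbatim with $f(z),f(w)$ in place of $z,w$, producing the pointwise limit of $(\delta\delta')^{-2\Delta^{loop}_{\lambda,\beta}}\langle\hat V^\delta_\D(z)\overline{\hat V^{\delta'}_\D(w)}\rangle$ for $z\neq w$ and a domination of this quantity by $C\big(\min(d_{f(z)},|f(z)-f(w)|)\,\min(d_{f(w)},|f(z)-f(w)|)\big)^{-2\Delta^{loop}_{\lambda,\beta}}$. For $z,w\in K'$, where $|f'|$, $|f'|^{-1}$ and $d_{f(\cdot)}^{-1}$ are bounded and $|f(z)-f(w)|\asymp|z-w|$, this dominating function is integrable on $K'\times K'$ because $\Delta^{loop}_{\lambda,\beta}<\tfrac12$; dominated convergence then shows that $\delta^{-2\Delta^{loop}_{\lambda,\beta}}\hat V^\delta_{\lambda,\beta,\D}(\psi)$ is Cauchy, hence convergent, in $L^2(\mathbb P)$. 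Moreover the proof of Theorem \ref{p:chaos1} carries over without change (same Poisson measure, $\hat h^\delta_z\to h_z$, one-point factor controlled as above), so this $L^2$-limit admits a chaos expansion with kernels $\frac{1}{q!}\int_\D\psi(z)\,e^{-\lambda(1-\cos\beta)\Theta^{loop}_D(f(z))}\,[e^{i\beta h_z}-1]^{\otimes q}(x_1,\dots,x_q)\,dz$.

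\textit{Step 3 (identification of the limit and conclusion).} By Theorem \ref{p:chaos1} and $\langle V^{loop}_{\lambda,\beta,\D}(z)\rangle=e^{-\lambda(1-\cos\beta)\Theta^{loop}_\D(z)}$ (Lemma \ref{lemma:one-point-functions}), the random variable $V^{loop}_{\lambda,\beta,\D}(\tilde\psi)$, with $\tilde\psi:=|f'|^{\,2-2\Delta^{loop}_{\lambda,\beta}}(\varphi\circ f)\in C^\infty_0(\D)$, has chaos kernels $\frac{1}{q!}\int_\D\tilde\psi(z)\,e^{-\lambda(1-\cos\beta)\Theta^{loop}_\D(z)}\,[e^{i\beta h_z}-1]^{\otimes q}\,dz$, and these coincide with the kernels of Step 2 precisely when $\psi\,e^{-\lambda(1-\cos\beta)\,\Theta^{loop}_D\circ f}=\tilde\psi\,e^{-\lambda(1-\cos\beta)\,\Theta^{loop}_\D}$, i.e.\ when
\begin{equation*}
\Theta^{loop}_D(f(z))=\tfrac15\log|f'(z)|+\Theta^{loop}_\D(z),\qquad z\in\D.
\end{equation*}
This is the conformal covariance of the one-point function $\langle V^{loop}_{\lambda,\beta,\cdot}\rangle$; it follows from the conformal covariance of the $n$-point functions of Poisson layering fields proved in \cite{camia2016conformal}, or directly from the Koebe distortion theorem (for small loops $\gamma\subset B_{z,r}$ covering $z$ one has $\diam(f\circ\gamma)=|f'(z)|\diam(\gamma)(1+O(r))$ uniformly, whence $\alpha^{loop}_{\delta,D}(f(z))=\tfrac15\log\tfrac1\delta+\tfrac15\log|f'(z)|+\Theta^{loop}_\D(z)+O(r)$ as $\delta\to0$, and then $r\to0$). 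Granting it, $\delta^{-2\Delta^{loop}_{\lambda,\beta}}\hat V^\delta_{\lambda,\beta,\D}(\psi)\to V^{loop}_{\lambda,\beta,\D}(\tilde\psi)$ in $L^2(\mathbb P)$, which with Step 1 yields both the existence of $V^{loop}_{\lambda,\beta,D}(\varphi)$ as the distributional limit of the renormalized cutoff functionals and the identity $\int_D V^{loop}_{\lambda,\beta,D}(z)\varphi(z)\,dz\stackrel{d}{=}\int_\D|f'(w)|^{2-2\Delta^{loop}_{\lambda,\beta}}V^{loop}_{\lambda,\beta,\D}(w)\,\varphi\circ f(w)\,dw$. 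For $*=\tilde m$ one uses in addition the time-change identity $R_{\tilde m}(f\circ\gamma)=\int_0^{t_\gamma}m^2(\gamma(u))\,du=R_m(\gamma)$, valid exactly because $\tilde m\circ f=|f'|^{-1}m$, which gives $\mu^{\tilde m}_D\circ f=\mu^m_\D$ and $\hat\mu^{\tilde m}_D\circ f=\hat\mu^m_\D$, hence $\Theta^{\tilde m}_D(f(z))=\tfrac15\log|f'(z)|+\Theta^m_\D(z)$; for the Gaussian fields one repeats Steps 1--3 with a Gaussian measure of control $\mu^*_D$ (which pulls back to control $\mu^*_\D$), with Theorem \ref{p:chaos2} in place of Theorem \ref{p:chaos1} and $\xi$ in place of $\beta$. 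The one genuinely non-formal point in the whole argument is the one-point conformal covariance displayed above; everything else is conformal invariance of the underlying loop soup together with transcriptions of estimates already carried out on $\D$.
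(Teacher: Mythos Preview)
Your proof is correct, but it takes a genuinely different route from the paper. The paper argues directly at the level of the random variables: it splits $N^{\delta}_{\lambda,D}(z)$ into the contribution from loops contained in $B_{z,\delta}$ (which by scale and translation invariance is independent of $\delta$ and $z$) and the contribution from loops not contained in $B_{z,\delta}$. Conformal invariance sends the latter to $N^{f^{-1}(B_{z,\delta})}_{\lambda,\D}(w)$, and Lemma~4.2 of \cite{camia2016conformal} says this differs from $N^{B_{w,s\delta}}_{\lambda,\D}(w)$ (with $s=|f'(w)|^{-1}$) by terms whose intensity vanishes as $\delta\to0$. Reassembling, one obtains $N^{\delta}_{\lambda,D}(f(w))\stackrel{d}{\approx}N^{s\delta}_{\lambda,\D}(w)$, and the change of variables $z=f(w)$, $\delta\mapsto s\delta$ produces the factor $|f'(w)|^{2-2\Delta}$ directly inside the integral, after which Theorem~\ref{ExistenceLayeringFieldBoundedDomains} furnishes the limit. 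The Gaussian case is handled the same way, with $\mu^{loop}(A_{\delta,D}(z))$ in place of $N^{\delta}_{\lambda,D}(z)$.

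By contrast, you transport to $\D$ with the distorted cutoff $\{\diam(f\circ\gamma)>\delta\}$, prove $L^{2}$ convergence of the transported functional from scratch, and then identify the limit by matching Wiener--It\^o chaos kernels, which reduces everything to the one-point identity $\Theta^{loop}_D(f(z))=\tfrac15\log|f'(z)|+\Theta^{loop}_\D(z)$. This works, and it is pleasant that the whole conformal covariance is packaged into a single scalar identity. But note that proving that identity rigorously requires exactly the same Koebe/distortion input (your ``$\diam(f\circ\gamma)=|f'(z)|\diam(\gamma)(1+O(r))$'' is precisely what Lemma~4.2 of \cite{camia2016conformal} makes precise), so you are not avoiding the key technical step --- you are relocating it. The paper's argument is shorter and avoids the chaos machinery altogether; your argument is heavier but has the virtue of isolating the analytic content (the one-point covariance) from the probabilistic content (existence and chaos structure on $\D$).
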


\begin{proof}
In this proof, $*=loop$ or $m$ and we will use the notation $d_{z,D}$ instead of $d_z$ to denote $\dist(z,\partial D)$ to avoid confusion.
We first deal with the Poisson layering fields $V^{loop}_{\lambda,\beta,{D}}$ and $V^{m}_{\lambda,\beta,{D}}$.
Let $N^{B_{z,\delta}}_{\lambda,D}(z)$ denote the random variable corresponding to $\sum_{\gamma} \epsilon_{\gamma}$ where the sum is over loops $\gamma$ from a
Brownian loop soup winding around $z$, contained in $D$, but not completely contained in $B_{z,\delta}$, where each $\epsilon_{\gamma}$ is $1$ or $-1$ with equal probability.
Then, we can write
\be \label{eq:splitting}
N^{\delta}_{\lambda,D}(z) \stackrel{d}{=} N^{\delta}_{\lambda,{B_{z,\delta}}}(z) + N^{B_{z,\delta}}_{\lambda,D}(z),
\ee
where $N^{\delta}_{\lambda,{B_{z,\delta}}}(z) \stackrel{d}{=} N^{1}_{\lambda,{B_{0,1}}}(0)$ is independent of $\delta$ and $z$ by the scale and translation invariance of the Brownian loop soup. (Note that in \eqref{eq:splitting} and later in this proof, the various $N$ variables are independent.)

Assume that $z \in D$ is such that $d_{z,D} > \delta$ and $d_{f^{-1}(z),\D} > |(f^{-1})'(z)| \delta$, and let $w=f^{-1}(z)$, $s=|(f^{-1})'(z)|=1/|f'(w)|$;
then, for $*=loop$, the conformal invariance of the Brownian loop soup implies that
\begin{eqnarray}
N^{B_{z,\delta}}_{\lambda,D}(z) & \stackrel{d}{=} & N^{f^{-1}(B_{z,\delta})}_{\lambda,\D}(w) \\
& \stackrel{d}{=} & N^{B_{w,s\delta}}_{\lambda,\D}(w) + N^{f^{-1}(B_{z,\delta})}_{\lambda,B_{w,s\delta}}(w) - N^{B_{w,s\delta}}_{\lambda,f^{-1}(B_{z,\delta})}(w).
\end{eqnarray}
By an application of Lemma~4.2 of \cite{camia2016conformal}, the intensity measures of the Poisson random variables (i.e., the number of loops from the Brownian loop soup)
corresponding to $N^{f^{-1}(B_{z,\delta})}_{\lambda,B_{w,s\delta}}(w)$ and $N^{B_{w,s\delta}}_{\lambda,f^{-1}(B_{z,\delta})}(w)$ tend to zero as $\delta \to 0$.
Moreover,
\begin{eqnarray}
N^{\delta}_{\lambda,{B_{z,\delta}}}(z) + N^{B_{w,s\delta}}_{\lambda,\D}(w) & \stackrel{d}{=} & N^{s\delta}_{\lambda,{B_{w,s\delta}}}(w) + N^{B_{w,s\delta}}_{\lambda,\D}(w) \\
& \stackrel{d}{=} & N^{s\delta}_{\lambda,\D}(w).
\end{eqnarray}

For $*=m$, the equations above still hold with the following stipulation: if the loop soup in $\mathbb D$ has mass function $m(z)$, then the loop soup in $D=f(\D)$ has mass
$\tilde m(z) = |f'(w)|^{-1} m(w)$. This follows from the conformal covariance of the massive loop soup, as discussed in Section~2.3 of \cite{camia_notes}
(see, in particular, Equation~2.13 there).

Now consider a test function $\varphi \in C^{\infty}_{0}(D)$. The considerations above apply to all $z \in \mathrm{supp}(\varphi)$ provided that
$\delta < \mathrm{dist}(\mathrm{supp}(\varphi), \partial D)$. Hence, we have that
\begin{eqnarray}
\int_{D} V^{*}_{\lambda,\beta,D}(z) \varphi(z) dz & := & \lim_{\delta \to 0} \int_{D} \frac{e^{i \beta N^{\delta}_{\lambda,D}(z)}}{\delta^{2\Delta^*_{\lambda,\beta}}} \varphi(z) \, dz \\
& \stackrel{d}{=} & \lim_{\delta \to 0} \int_{\D} \frac{e^{i \beta N^{s\delta}_{\lambda,\D}(w)}}{\delta^{2\Delta^*_{\lambda,\beta}}} \varphi \circ f(w) \, df(w) \\
& \stackrel{d}{=} & \lim_{\delta \to 0} \int_{\D} \frac{e^{i \beta N^{s\delta}_{\lambda,\D}(w)}}{(s\delta)^{2\Delta^*_{\lambda,\beta}}} s^{2\Delta^*_{\lambda,\beta} - 2} \varphi \circ f(w) \, dw \label{limit_exists} \\
& \stackrel{d}{=} & \int_{\D} V^{*}_{\lambda,\beta,\D}(w) |f'(w)|^{2 - 2\Delta^*_{\lambda,\beta}} \varphi \circ f(w) \, dw,
\end{eqnarray}
where the limits are in distribution and the existence of the limit in \eqref{limit_exists} is guaranteed by Theorem \ref{ExistenceLayeringFieldBoundedDomains}.

The proof for the Gaussian layering fields $W^{loop}_{\xi,D}$ and $W^{m}_{\xi,D}$ is similar; we sketch the key argument for completeness.
Assume again that $z \in D$ is such that $d_{z,D} > \delta$ and $d_{f^{-1}(z),\D} > |(f^{-1})'(z)| \delta$, let $w=f^{-1}(z)$, $s=|(f^{-1})'(z)|=1/|f'(w)|$,
and recall the notation 
\be
A_{\delta,V}(z) = \{ \gamma: z \in \overline \gamma, \delta \leq \diam(\gamma), \gamma \subseteq V \})
\ee
and
\be
A_{U,V}(z) =  \{ \gamma: z \in \overline \gamma, \gamma \not \subseteq U, \gamma \subseteq V \}.
\ee
With this notation, we can write $A_{\delta,D}(z)$ and $A_{s\delta,\D}(w)$ as a disjoint unions, as follows:
\be
A_{\delta,D}(z) = A_{\delta,B_{z,\delta}}(z) \cup A_{B_{z,\delta},D}(z).
\ee
and
\be
A_{s\delta,\D}(w) = A_{w,B_{w,s\delta}}(w) \cup A_{B_{w,s\delta},\D}(w).
\ee
The conformal invariance of $\mu^{loop}$ implies that
\be
\mu^{loop}(A_{\delta,B_{z,\delta}}(z)) = \mu^{loop}(A_{s\delta,B_{w,s\delta}}(w)) = \mu^{loop}(A_{1,B_{{\bf 0},1}}({\bf 0}))
\ee
and
\be
\mu^{loop}(A_{B_{z,\delta},D}(z)) = \mu^{loop}(A_{f^{-1}(B_{z,\delta}),\D}(w)),
\ee
so that
\be
\mu^{loop}(A_{\delta,D}(z)) = \mu^{loop}(A_{1,B_{{\bf 0},1}}({\bf 0})) + \mu^{loop}(A_{f^{-1}(B_{z,\delta}),\D}(w)).
\ee
Comparing this with
\be
\mu^{loop}(A_{s\delta,\D}(w)) = \mu^{loop}(A_{1,B_{{\bf 0},1}}({\bf 0})) + \mu^{loop}(A_{B_{w,s\delta},\D}(w)),
\ee
we see that, as $\delta \to 0$,
\be \notag
| \mu^{loop}(A_{\delta,D}(z)) - \mu^{loop}(A_{s\delta,\D}(w)) | = | \mu^{loop}(A_{f^{-1}(B_{z,\delta}),\D}(w)) - \mu^{loop}(A_{B_{w,s\delta},\D}(w)) | \to 0
\ee
by Lemma~4.2 of \cite{camia2016conformal}.

Now consider a test function $\varphi \in C^{\infty}_{0}(D)$. The considerations above apply to all $z \in \mathrm{supp}(\varphi)$ provided that
$\delta < \mathrm{dist}(\mathrm{supp}(\varphi), \partial D)$. Hence, we have that
\begin{eqnarray}
\int_{D} W^*_{\xi,D}(z) \varphi(z) dz & := & \lim_{\delta \to 0} \int_D \frac{e^{i \xi G(h^{\delta}_z)}}{\delta^{2\Delta^*_{\xi}}} \varphi(z) \, dz \\
& = & \lim_{\delta \to 0} \int_{D} \frac{e^{i \xi G(A_{\delta,D}(z))}}{\delta^{2\Delta^*_{\xi}}} \varphi(z) \, dz \\
& \stackrel{d}{=} & \lim_{\delta \to 0} \int_{\D} \frac{e^{i \xi G(A_{s\delta,\D}(w))}}{\delta^{2\Delta^*_{\lambda,\beta}}} \varphi \circ f(w) \, df(w) \\
& \stackrel{d}{=} & \lim_{\delta \to 0} \int_{\D} \frac{e^{i \xi G(A_{s\delta,\D}(w))}}{(s\delta)^{2\Delta^*_{\lambda,\beta}}} s^{2\Delta^*_{\xi} - 2} \varphi \circ f(w) \, dw \label{limit_exists_gauss} \\
& \stackrel{d}{=} & \int_{\D} W^{*}_{\xi,\D}(w) |f'(w)|^{2 - 2\Delta^*_{\lambda,\beta}} \varphi \circ f(w) \, dw,
\end{eqnarray}
where the limits are in distribution and the existence of the limit in \eqref{limit_exists_gauss} is guaranteed by Theorem \ref{ExistenceGaussianLayeringFieldBoundedDomains}.
\end{proof}

\subsection{Tightness and uniqueness of layering fields} \label{sec:tightness-uniqueness}

In this section, we prove two results. Lemma \ref{lemma:tightness} shows tightness of the Poisson layering field $V^*_{\lambda,\beta}$ as $\lambda \to \infty, \beta \to 0$ with $\lambda\beta^2 \to \xi^2$
when $\xi^2$ is contained in an appropriate range. Lemma \ref{l:fidi} then establishes that finite dimensional distributions are enough to characterize the limit. For bounded simply connected domains
with a $C^1$ boundary, these two results allow us to lift the convergence in Theorem \ref{p:cil} to convergence in distribution in the metric space $\mathcal{H}^{-\alpha}$ endowed
with the distance induced by the Sobolev norm, as stated in Theorem \ref{MainTheorem}.

\begin{lemma} \label{lemma:tightness}
Take $\lambda>0$ and $\beta \in [0,2\pi)$ and let $*=loop,m$. If $D$ is a bounded simply connected domain with a $C^1$ boundary,
as $\lambda \to \infty, \beta \to 0$ with $\lambda\beta^2 \to \xi^2<5$, $V^*_{\lambda,\beta}$ is tight in the Sobolev space $\mathcal{H}^{-\alpha}$ for any $\alpha>3/2$.
The same conclusion holds for $*=disk$ if $\xi^2<1/\pi$.
\end{lemma}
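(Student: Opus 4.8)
The plan is to deduce tightness from a uniform second‑moment bound in a slightly smaller negative Sobolev space, reusing almost verbatim the estimates from the proofs of Theorems \ref{convergence} and \ref{ExistenceLayeringFieldBoundedDomains}. Fix $\alpha>3/2$ and pick $\alpha'$ with $3/2<\alpha'<\alpha$. By the eigenbasis description of negative Sobolev norms recalled in Appendix \ref{a:sobolev}, the inclusion $\mathcal{H}^{-\alpha'}\hookrightarrow\mathcal{H}^{-\alpha}$ is a compact (indeed diagonal, with singular values $\lambda_i^{\alpha'-\alpha}\to0$) operator, so closed balls of $\mathcal{H}^{-\alpha'}$ are relatively compact in $\mathcal{H}^{-\alpha}$. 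Since $\lambda\beta^2\to\xi^2<5$ (resp. $<1/\pi$ for $*=disk$), one may restrict attention to the tail of the family on which $\lambda\beta^2\le\xi^2_0$ for a fixed $\xi^2_0\in(\xi^2,5)$ (resp. $(\xi^2,1/\pi)$); this is all that tightness requires, and on this tail $\Delta^*_{\lambda,\beta}<1/4$, so $V^*_{\lambda,\beta}$ is well defined by Theorem \ref{ExistenceLayeringField}. By Chebyshev's inequality it then suffices to prove
\[
\sup\; \E\big\|V^{*}_{\lambda,\beta}\big\|^2_{\mathcal{H}^{-\alpha'}}<\infty ,
\]
the supremum being over this tail: for each $\varepsilon>0$ one finds $R$ with $\sup\E\|V^{*}_{\lambda,\beta}\|^2_{\mathcal{H}^{-\alpha'}}/R^2<\varepsilon$, and the closure in $\mathcal{H}^{-\alpha}$ of $\{h:\|h\|_{\mathcal{H}^{-\alpha'}}\le R\}$ is a compact set carrying mass $\ge1-\varepsilon$ for every member of the family.

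For the moment bound I would argue as in the Cauchy‑sequence estimate \eqref{Caychy_seq} from the proof of Theorem \ref{convergence}, but applied to a single cutoff field. Writing $\widetilde{V}^{\delta}:=\delta^{-2\Delta^{*}_{\lambda,\beta}}V^{*,\delta}_{\lambda,\beta}$ and using the Dirichlet‑Laplacian eigenbasis $\{f_i\}$ with $\|f_i\|_{L^\infty(D)}\le c\lambda_i^{1/4}$ and Weyl's law, nonnegativity of the two‑point function (condition \eqref{two_point_function}) gives
\[
\E\big\|\widetilde{V}^{\delta}\big\|^2_{\mathcal{H}^{-\alpha'}}\le\Big(\sum_i\frac{c^2}{\lambda_i^{\alpha'-1/2}}\Big)\int_D\int_D\big\langle\widetilde{V}^{\delta}(z)\overline{\widetilde{V}^{\delta}(w)}\big\rangle\,dz\,dw ,
\]
where the first factor is finite because $\alpha'>3/2$ and is independent of $\lambda,\beta$. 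By \eqref{two_point_function} and \eqref{2.3}, together with the case $\delta\ge d_{z,w}$ handled in the proof of Theorem \ref{convergence}, the integrand is bounded, uniformly in $\delta$, by $c(D)^2(d_{z,w}d_{w,z})^{-2\Delta^{*}_{\lambda,\beta}}$, where — as computed in the proof of Theorem \ref{ExistenceLayeringFieldBoundedDomains} — $c(D)=1$ for $*=loop,disk$ and $c(D)=e^{\lambda(1-\cos\beta)\sup_{u\in D}\hat\alpha_{0,D}(u)}$ for $*=m$; in the massive case this prefactor stays bounded over the considered tail, since $\lambda(1-\cos\beta)\le\tfrac{1}{2}\lambda\beta^2\le\tfrac{1}{2}\xi^2_0$ and $\sup_{u\in D}\hat\alpha_{0,D}(u)<\infty$ by \eqref{MassiveAlphaEstimate3}. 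Letting $\delta\to0$ and invoking the $L^2$‑convergence $\widetilde{V}^{\delta}\to V^{*}_{\lambda,\beta}$ in $\mathcal{H}^{-\alpha'}$ from Theorem \ref{ExistenceLayeringField}, one gets
\[
\E\big\|V^{*}_{\lambda,\beta}\big\|^2_{\mathcal{H}^{-\alpha'}}\le c(D)^2\Big(\sum_i\frac{c^2}{\lambda_i^{\alpha'-1/2}}\Big)\int_D\int_D\big(d_{z,w}d_{w,z}\big)^{-2\Delta^{*}_{\lambda,\beta}}\,dz\,dw .
\]

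It remains to bound the last integral uniformly. From $1-\cos\beta\le\beta^2/2$ one gets $2\Delta^{loop}_{\lambda,\beta}=\tfrac{\lambda}{5}(1-\cos\beta)\le\tfrac{1}{10}\lambda\beta^2\le\tfrac{1}{10}\xi^2_0=:\eta<1/2$ (the exponent is the same for $*=m$; for $*=disk$, $2\Delta^{disk}_{\lambda,\beta}\le\tfrac{\pi}{2}\lambda\beta^2\le\tfrac{\pi}{2}\xi^2_0<1/2$), so along the whole tail the exponent lies in a fixed compact subinterval $[0,\eta]\subset[0,1/2)$. Transferring to the unit disk by a conformal map $f:\D\to D$ via the Koebe quarter theorem (legitimate because $\|f'\|_{L^\infty(\D)}<\infty$ when $\partial D$ is $C^1$, exactly as in the proof of Theorem \ref{convergence}), using $(d^{\D}_{z,w})^{-a}\le|z-w|^{-a}+(1-|z|)^{-a}$, the monotonicity of $a\mapsto(d^{\D}_{z,w})^{-a}$ (valid since $d^{\D}_{z,w}\le1$), and Lemma \ref{finitenessunitdisk}, one obtains $\int_D\int_D(d_{z,w}d_{w,z})^{-a}\,dz\,dw\le C(D,\eta)<\infty$ for all $a\in[0,\eta]$; for $*=disk$ no conformal map is needed since $D=\D$. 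Combining the three displays yields $\sup\E\|V^{*}_{\lambda,\beta}\|^2_{\mathcal{H}^{-\alpha'}}<\infty$, and tightness in $\mathcal{H}^{-\alpha}$ follows.

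The only genuinely new point — and the one I expect to need the most care — is the uniform‑in‑$(\lambda,\beta)$ control of the two‑point function along the limiting regime: for $*=loop,disk$ it falls out of the exact formulas with constant $1$, but for $*=m$ one must absorb the exponential correction factors using the finiteness of $\hat\mu$ over bounded domains, and one must verify that $2\Delta^{*}_{\lambda,\beta}$ stays bounded away from $1/2$ over the entire tail rather than merely in the limit — this is precisely where the hypothesis $\xi^2<5$ (resp. $\xi^2<1/\pi$) enters, through $1-\cos\beta\le\beta^2/2$. Everything else is a straightforward repackaging of estimates already in place.
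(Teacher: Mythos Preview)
Your proof is correct and follows essentially the same route as the paper: bound $\E\|V^*_{\lambda,\beta}\|^2$ in a negative Sobolev space via the eigenbasis expansion, control the two-point function by $c(D)(d_{z,w}d_{w,z})^{-2\Delta^*_{\lambda,\beta}}$ using the estimates from the proof of Theorem \ref{ExistenceLayeringFieldBoundedDomains}, check that the integral stays uniformly bounded along the tail because $2\Delta^*_{\lambda,\beta}$ remains strictly below $1/2$, and conclude tightness via Chebyshev. Your version is in fact more careful than the paper's in one respect: you explicitly introduce an intermediate index $\alpha'\in(3/2,\alpha)$ and invoke the compact embedding $\mathcal{H}^{-\alpha'}\hookrightarrow\mathcal{H}^{-\alpha}$ to turn a second-moment bound into tightness, whereas the paper's proof simply bounds $\E\|V^*_{\lambda,\beta}\|^2_{\mathcal{H}^{-\alpha}}$ and says ``tightness follows from a standard application of Chebyshev's inequality,'' leaving this compactness step implicit.
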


\begin{proof}
Using the same notation as in the second half of the proof of Theorem \ref{convergence} and a similar argument, we obtain the bound
\begin{equation}
\langle\|V^*_{\lambda,\beta}\|_{\mathcal{H}^{-\alpha}}^2\rangle_D 
\leq \left(\int_{D}\int_{D}\big|\big\langle V^*_{\lambda,\beta}(z) \overline{V^*_{\lambda,\beta}(w)} \big\rangle_D \big| dz dw \right)\sum_{i}\frac{c^2}{\lambda_i^{\alpha-\frac{1}{2}}}. 
\end{equation}
Arguments analogous to those used in the proof of Theorem \ref{ExistenceLayeringFieldBoundedDomains} show that
\begin{equation}
\big|\big\langle V^*_{\lambda,\beta}(z) \overline{V^*_{\lambda,\beta}(w)} \big\rangle_D \big| \leq c(D) \Big( \frac{1}{d_{z,w}d_{w,z}} \Big)^{2\Delta^*_{\lambda,\beta}},
\end{equation}
for some constant $c(D)<\infty$ that depends only on the domain $D$.
Hence, expanding $\Delta^*_{\lambda,\beta}$ to second order in $\beta$, the argument used to prove \eqref{convergent_integral} shows that
$\langle\|V^*_{\lambda,\beta}\|_{\mathcal{H}^{-\alpha}}^2\rangle_D$ stays bounded for any $\alpha>3/2$ as $\lambda \to \infty, \beta \to 0$,
provided that $\lambda\beta^2 \to \xi^2<5$ when $*=loop,m$ and $\lambda\beta^2 \to \xi^2<1/\pi$ when $*=disk$.
Tightness follows from a standard application of Chebyshev's inequality.
\end{proof}

\begin{lemma}\label{l:fidi} Fix a bounded simply connected domain $D\subset \R^2$, as well as $\alpha >0$. Assume that $V$ and $W$ are two random elements with values in $\mathcal{H}^{-\alpha}$ and suppose that, for every $\varphi\in C^\infty_0(D)$, the random variables $V(\varphi)$ and $W(\varphi)$ have the same distribution. Then, $\mathbb{P}_V = \mathbb{P}_W$, where $\mathbb{P}_V$ and $\mathbb{P}_W$ indicate the probability masures induced by $V$ and $W$, respectively, on $(\mathcal{H}^{-\alpha}, \mathscr B)$, where $\mathscr B$ is the Borel $\sigma$-field associated with the norm $\|\cdot \|_{\mathcal{H}^{-\alpha}}$.
\end{lemma}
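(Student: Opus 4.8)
The plan is to reduce the statement to the standard fact that a Borel probability measure on a separable Hilbert space is determined by the joint laws of finitely many continuous linear functionals, and then to identify those functionals with integration against test functions. The one point that requires care is the identification of the Borel $\sigma$-field $\mathscr{B}$ with the cylindrical $\sigma$-field generated by the evaluation maps $\pi_\varphi: h\mapsto h(\varphi)$, $\varphi\in C^\infty_0(D)$; this rests on the separability of $\mathcal{H}^{-\alpha}$ together with the density of $C^\infty_0(D)$ in $\mathcal{H}^\alpha_0$ recalled in Section~\ref{a:sobolev}. Everything else is routine measure theory.

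First I would fix the functional-analytic setup. Recall from Section~\ref{a:sobolev} that $\mathcal{H}^{-\alpha}$ is a separable Hilbert space, that $\{\lambda_i^{\alpha/2}u_i\}_i$ is an orthonormal basis of it, and that the dual pairing of $h=\sum_i a_i u_i\in\mathcal{H}^{-\alpha}$ with $g=\sum_i b_i u_i\in\mathcal{H}^\alpha_0$ is $h(g)=\sum_i a_i b_i$; in particular each $\pi_\varphi$ with $\varphi\in C^\infty_0(D)\subset\mathcal{H}^\alpha_0$ is a continuous linear functional on $\mathcal{H}^{-\alpha}$, hence $\mathscr{B}$-measurable. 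Since each $u_i$ lies in $\mathcal{H}^\alpha_0$ and $C^\infty_0(D)$ is dense in $\mathcal{H}^\alpha_0$ by definition, one can choose $\varphi_{i,n}\in C^\infty_0(D)$ with $\varphi_{i,n}\to u_i$ in $\mathcal{H}^\alpha_0$, and continuity of $h$ gives $h(u_i)=\lim_n\pi_{\varphi_{i,n}}(h)$; thus the coordinate maps $h\mapsto a_i=h(u_i)$ are measurable with respect to $\sigma(\{\pi_\varphi:\varphi\in C^\infty_0(D)\})$. Because $\mathcal{H}^{-\alpha}$ is separable, the countable family of coordinate maps $\{h\mapsto a_i\}_i$ generates $\mathscr{B}$ (the norm $\|h\|_{\mathcal{H}^{-\alpha}}^2=\sum_i\lambda_i^{-\alpha}|a_i|^2$ is measurable in these coordinates, and open balls around a countable dense set generate $\mathscr{B}$). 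Hence $\mathscr{B}=\sigma(\{\pi_\varphi:\varphi\in C^\infty_0(D)\})$.

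Next I would introduce the $\pi$-system $\mathcal{C}$ of cylinder sets $\{h\in\mathcal{H}^{-\alpha}:(h(\varphi_1),\dots,h(\varphi_m))\in B\}$, $m\geq1$, $\varphi_1,\dots,\varphi_m\in C^\infty_0(D)$, $B\in\mathcal{B}(\mathbb{C}^m)$; it is stable under finite intersections (concatenate the test functions) and, by the previous step, $\sigma(\mathcal{C})=\mathscr{B}$. It therefore suffices to show that $\mathbb{P}_V$ and $\mathbb{P}_W$ agree on $\mathcal{C}$, i.e.\ that $(V(\varphi_1),\dots,V(\varphi_m))\stackrel{d}{=}(W(\varphi_1),\dots,W(\varphi_m))$ for every finite family of test functions; the case $m=1$ is precisely the hypothesis, and the general case follows from the $\mathbb{C}$-linearity of $\varphi\mapsto V(\varphi)$ and the Cram\'er--Wold device exactly as in Definition~\ref{d:fdd}: for arbitrary coefficients $c_1,\dots,c_m$ one has $\sum_j c_j V(\varphi_j)=V(\sum_j c_j\varphi_j)\stackrel{d}{=}W(\sum_j c_j\varphi_j)=\sum_j c_j W(\varphi_j)$ by applying the hypothesis to the single test function $\sum_j c_j\varphi_j$, which pins down all one-dimensional projections, hence the characteristic functions, hence the joint laws on $\mathbb{C}^m$. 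Finally, since $\mathbb{P}_V$ and $\mathbb{P}_W$ are probability measures that coincide on the generating $\pi$-system $\mathcal{C}$, Dynkin's $\pi$--$\lambda$ theorem (the class $\{A\in\mathscr{B}:\mathbb{P}_V(A)=\mathbb{P}_W(A)\}$ is a $\lambda$-system containing $\mathcal{C}$) yields $\mathbb{P}_V=\mathbb{P}_W$ on all of $\mathscr{B}$, completing the proof. As noted above, the only non-formal ingredient is the measurable identification $\mathscr{B}=\sigma(\{\pi_\varphi\})$; the rest is standard.
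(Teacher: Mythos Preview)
Your proof is correct and follows a somewhat different route from the paper's. Both arguments begin the same way: linearity plus Cram\'er--Wold upgrades the hypothesis from single test functions to finite families, and density of $C^\infty_0(D)$ in $\mathcal{H}^\alpha_0$ extends this to the eigenfunctions $u_i$. From there the paper works more concretely: it defines truncated seminorms $\|h\|^{(M)}_{\mathcal{H}^{-\alpha}} := \sup_{\|f\|_{\mathcal{H}^\alpha_0}\le 1} |h(f^{(M)})|$ depending only on $(h(u_1),\dots,h(u_M))$, shows $\|h\|^{(M)}_{\mathcal{H}^{-\alpha}} \nearrow \|h\|_{\mathcal{H}^{-\alpha}}$, and uses this to deduce that the vectors $(\|V-h_1\|_{\mathcal{H}^{-\alpha}},\dots,\|V-h_d\|_{\mathcal{H}^{-\alpha}})$ and $(\|W-h_1\|_{\mathcal{H}^{-\alpha}},\dots,\|W-h_d\|_{\mathcal{H}^{-\alpha}})$ have the same law; the conclusion then comes from the fact that finite intersections of open balls generate $\mathscr B$, together with a monotone class argument. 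Your argument instead appeals once to the abstract identification $\mathscr B = \sigma(\{\pi_\varphi : \varphi\in C^\infty_0(D)\})$, valid because $\mathcal{H}^{-\alpha}$ is separable, and then finishes with the $\pi$--$\lambda$ theorem on cylinder sets. Your route is shorter and invokes a standard measure-theoretic fact; the paper's route is more self-contained in that it builds the needed measurability of the norm from scratch rather than quoting the Borel $=$ cylindrical identification.
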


\begin{proof} Throughout the proof, we use the symbol $\stackrel{d}{=}$ to denote equality in distribution between random elements. Since $V$ and $W$ are random continuous linear functionals on $\mathcal{H}_0^\alpha$, the assumptions in the statement imply immediately that, for every $\varphi_1,...,\varphi_m\in C^\infty_0(D)$, $(V( \varphi_1) ,...,V(\varphi_m)) \stackrel{d}{=} (W( \varphi_1) ,...,W(\varphi_m))$. Exploiting the fact that (by definition) $C_0^{\infty}$ is dense in $\mathcal{H}_0^\alpha$, one deduces that, if $f_1,...,f_m \in \mathcal{H}_0^\alpha$, then there exist sequences $\{\varphi_i^n \in C_0^{\infty} : n\geq 1\}$, $i=1,...,m$, such that $\|  \varphi_i^n - f_i\|_{\mathcal{H}_0^\alpha} \to 0$, as $n\to \infty$, yielding that, $\P$-a.s., $V(\varphi_i^n ) \to V(f_i)$ and $W(\varphi_i^n ) \to W(f_i)$ for $i=1,...,m$. Since almost sure convergence preserves equality in distribution (e.g., by dominated convergence) one infers that $(V( f_1) ,...,V(f_m)) \stackrel{d}{=} (W( f_1) ,...,W(f_m))$ for all $f_1,...,f_m \in \mathcal{H}_0^\alpha$. Now, for $M = 1, 2,...$ and $f = \sum_{i=1}^\infty a_iu_i \in \mathcal{H}_0^\alpha$, set $f^{(M)} := \sum_{i=1}^M a_i u_i$, and define
$$
\| h\|^{(M)}_{\mathcal{H}^{-\alpha}} := \sup_{\| f\|_{\mathcal{H}_0^\alpha} \leq 1}| h(f^{(M)})|, \quad h\in \mathcal{H}^{-\alpha}.
$$
One has that $\| h\|^{(M)}_{\mathcal{H}^{-\alpha}}\leq \| h\|^{(M+1)}_{\mathcal{H}^{-\alpha}}\leq \| h\|_{\mathcal{H}^{-\alpha}}$, which shows that $\lim_{M\to \infty}  \| h\|^{(M)}_{\mathcal{H}^{-\alpha}}$ exists and is bounded above by $\|h\|_{\mathcal{H}^{-\alpha}}$. Moreover, for every $f\in \mathcal{H}_0^\alpha$, $\| f^{(M)}- f\|_{\mathcal{H}^\alpha}\to 0$, as $M\to \infty$ and consequently $h(f^{(M)}) \to h(f)$ for every $h\in \mathcal{H}^{-\alpha}$. This yields that, for every $f\in \mathcal{H}_0^\alpha$ such that $\|f\|_{\mathcal{H}_0^\alpha}\leq 1$ and every $h\in \mathcal{H}^{-\alpha}$, $| h(f)|  = \lim_{M\to \infty} |h(f^{(M)})| \leq \lim_{M\to \infty}  \| h\|^{(M)}_{\mathcal{H}^{-\alpha}}$, which shows that $\|h\|_{\mathcal{H}^{-\alpha}} \leq \lim_{M\to \infty}  \| h\|^{(M)}_{\mathcal{H}^{-\alpha}}$ and leads to the conclusion that $\|h\|_{\mathcal{H}^{-\alpha}} = \lim_{M\to \infty}  \| h\|^{(M)}_{\mathcal{H}^{-\alpha}}$. To conclude the proof, fix $h_1,...,h_d \in \mathcal{H}^{-\alpha}$, as well as $\gamma_1,...,\gamma_d\in \R$: the previous discussion implies that, $\P$-a.s.,
$$
\sum_{i=1}^d \gamma_i \| V - h_i\|^{(M)}_{\mathcal{H}^{-\alpha}}  \longrightarrow \sum_{i=1}^d \gamma_i \| V - h_i\|_{\mathcal{H}^{-\alpha}} \quad \text{as } M\to\infty.
$$
Since each of the random variables $\sum_{i=1}^d \gamma_i \| V - h_i\|^{(M)}_{\mathcal{H}^{-\alpha}}$ only depends on the vector $(V(u_1),...,V(u_M))\stackrel{d}{=}(W(u_1),...,W(u_M))$ (where equality in distribution follows from the first part of the proof), we deduce that $\sum_{i=1}^d \gamma_i \| V - h_i\|^{(M)}_{\mathcal{H}^{-\alpha}}\stackrel{d}{=}\sum_{i=1}^d \gamma_i \| W - h_i\|^{(M)}_{\mathcal{H}^{-\alpha}}$ for every $M$, which in turn implies that $$\big( \| V - h_1\|_{\mathcal{H}^{-\alpha}},...,  \| V - h_d\|_{\mathcal{H}^{-\alpha}}\big) \stackrel{d}{=} \big( \| W - h_1\|_{\mathcal{H}^{-\alpha}},...,  \| W - h_d\|_{\mathcal{H}^{-\alpha}}\big).$$ The conclusion follows from the fact that $\mathscr{B}$ is generated by finite intersections of sets of the type
$$
\big\{ g \in \mathcal{H}^{-\alpha} : \| g-h\|_{\mathcal{H}^{-\alpha}} < t\big\}, \quad h\in \mathcal{H}^{-\alpha}, \,  t>0,
$$
combined with a monotone class argument.
\end{proof}


\begin{thebibliography}{10}

\bibitem{aizenman} M.~Aizenman. Geometric Analysis of $\phi^4$ Fields and Ising Models. Parts I and II,
\emph{Comm. Math. Phys.} { 86}: 1--48, 1982.
	
	\bibitem{conformal_weldings}
	K. Astala, P. Jones, A. Kupiainen, and E. Saksman.
	\newblock  Random conformal weldings.
	\newblock {\em Acta Math.} 207 , no. 2: 203--254, 2011.
	
	\bibitem{BPZ_NP}
	A.A. Belavin, A.M. Polyakov, and A.B. Zamolodchikov.
	\newblock Infinite conformal symmetry in two-dimensional quantum field theory.
	\newblock {\em Nuclear Phys. B}, 241(2):333--380, 1984.
	
	\bibitem{BPZ_JSP}
	A.A. Belavin, A.M. Polyakov, and A.B. Zamolodchikov.
	\newblock Infinite conformal symmetry of critical fluctuations in two
	dimensions.
	\newblock {\em J. Statist. Phys.}, 34(5-6):763--774, 1984.
	
	\bibitem{GMC_berestycki}
	N. Berestycki.
	\newblock An elementary approach to {G}aussian multiplicative chaos.
	\newblock {\em Electron. Commun. Probab.}, 22:Paper No. 27, 12, 2017.
	
	\bibitem{GMC_GUE}
	N. Berestycki, C. Webb, and M.~D. Wong.
	\newblock Random {H}ermitian matrices and {G}aussian multiplicative chaos.
	\newblock {\em Probab. Theory Related Fields}, 172(1-2):103--189, 2018.
	
	\bibitem{BJS_Book}
         L. Bers, F. John, and M. Schechter. {\it Partial differential equations}. Lectures in Applied Mathematics 3A, American Mathematical Society, 1964.

	 \bibitem{growth}
	  A. Borodin and P. Ferrari
	  \newblock  Anisotropic growth of random surfaces in 2+1 dimensions.
	  \newblock {\em Comm.Math. Phys}. 325, no. 2: 603–-684, 2014.

	\bibitem{erik_federico}
	E.I. Broman and F. Camia.
	\newblock Universal behavior of connectivity properties in fractal percolation
	models.
	\newblock {\em Electron. J. Probab.}, 15:1394--1414, 2010.

\bibitem{bfso} D.C.~Brydges, J.~Fr\"ohlich, A.D.~Sokal,
The random-walk representation of classical spin systems and correlation inequalities. II. The skeleton inequalities,
\emph{Comm. Math. Phys.} {\bf 91}: 117--139 1983.

\bibitem{bfsp} D.C.~Brydges, J.~Fr\"ohlich, T.~Spencer,
The random walk representation of classical spin systems and correlation inequalities,
\emph{Comm. Math. Phys.}, { 83}: 123--150, 1982.
	
	\bibitem{Spin_loops_berg_camia_lis}
	T. van~de Brug, F. Camia, and M. Lis.
	\newblock Spin systems from loop soups.
	\newblock {\em Electron. J. Probab.}, 23:17 pp., 2018.
	
	\bibitem{camia_notes}
	F. Camia.
	\newblock Scaling Limits, Brownian Loops and Conformal Fields.
	\newblock In {\em Advances in Disordered Systems, Random Processes and Some Applications}: 205--269. Cambridge Univ. Press, Cambridge, 2017.

	\bibitem{CFGL2019}
	F. Camia, V.F. Fiot, A. Gandolfi, and M. Kleban.
	\newblock Exact Correlation Functions in the Brownian Loop Soup.
	\newblock {\em Preprint arXiv:1912.00973}, 2019.
	

	\bibitem{camia2016conformal}
	F. Camia, A. Gandolfi, and M. Kleban.
	\newblock Conformal correlation functions in the {B}rownian loop soup.
	\newblock {\em Nuclear Physics B}, 902:483--507, 2016.

	\bibitem{CGN2015}
	F. Camia, C. Garban, and C.M. Newman.
	\newblock Planar Ising magnetization field I. Uniqueness of the critical scaling limit.
	\newblock {\em Ann. Probab.} {\bf 43}, 528--571, 2015.

	\bibitem{CGN2016}
	F. Camia, C. Garban, and C.M. Newman.
	\newblock Planar Ising magnetization field II. Properties of the critical and near-critical scaling limits.
	\newblock {\em Ann. Inst. H. Poincar\'e Probab. Statist.} {\bf 52}, 146--161, 2016.
	
	\bibitem{disorder}
	D. Carpentier and P. Le Doussal
	\newblock  Glass  transition  of  a  particle  in  a  random  potential,  front selection in nonlinear RG and entropic phenomena in Liouville and Sinh-Gordon models. 
	\newblock {\emph Phys.Rev. E} 63:026110, 2001.
	
	\bibitem{LQG_Riemann}
	 F. David, A. Kupiainen, R. Rhodes, and V. Vargas
	 \newblock  Liouville Quantum Gravity on the Riemann sphere.
	 \newblock {\emph Commun. Math. Phys.} 342 (3): 869--907, 2016.
	
	\bibitem{CFT}
	P. Di~Francesco, P. Mathieu, and D. S\'{e}n\'{e}chal.
	\newblock {\em Conformal field theory}.
	\newblock Graduate Texts in Contemporary Physics. Springer-Verlag, New York,
	1997.
	
\bibitem{dynkin1} E.B. Dynkin, Markov processes as a tool in field theory,
\emph{J. Funct. Anal.} { 50}: 167--187, 1983.

\bibitem{dynkin2} E.B. Dynkin. Gaussian and non-Gaussian random fields associated with Markov processes,
\emph{J.  Funct. Anal.} { 55}: 344--376, 1984.

	\bibitem{duplantier_sheffield}
	B. Duplantier and S. Sheffield.
	\newblock Liouville quantum gravity and KPZ.
	\newblock {\em Inventiones Mathematicae}, 185(2):333--393,  2011.
	
\bibitem{ffs-book} R.~Fern\'andez, J.~Fr\"ohlich, A.D.~Sokal,
\emph{Random Walks, Critical Phenomena, and Triviality in Quantum Field Theory},
Springer-Verlag 1992. 

	\bibitem{FK}
	B. Freivogel and M. Kleban.
	\newblock A conformal field theory for eternal inflation ?
	\newblock {\em J. High Energy Phys.}, (12):019, 31, 2009.

	\bibitem{GaCa06}
	A. Gamsa and J. Cardy.
	\newblock Correlation functions of twist operators applied to single self-avoiding loops.
	\newblock {\em J. Phys. A}, (39):12983, 2006.
	
	
	\bibitem{grieser}
	D. Grieser.
	\newblock Uniform bounds for eigenfunctions of the Laplacian on manifolds with
	boundary.
	\newblock {\em Communications in Partial Differential Equations},
	27(7-8):1283--1299, 2002.

	\bibitem{HaWaZi19}
	Y. Han, Y. Wang, and M. Zinsmeister.
	\newblock On The Brownian Loop Measure.
	\newblock {\em J. Stat. Phys.} (175):987--1005, 2019.
	
	\bibitem{henkel}
	M. Henkel.
	\newblock {\em Conformal invariance and critical phenomena}.
	\newblock Texts and Monographs in Physics. Springer-Verlag, Berlin, 1999.
	
	\bibitem{GMC_imaginary}
	J. Junnila, E. Saksman, and C. Webb.
	\newblock Imaginary multiplicative chaos: Moments, regularity and connections
	to the Ising model.
	\newblock {\em Preprint arXiv:1806.02118}, 2018.
	
	\bibitem{kahane}
	J.P. Kahane.
	\newblock Sur le chaos multiplicatif.
	\newblock {\em Ann. Sci. Math. Qu\'ebec}, 9(2):105--150, 1985.
	
	\bibitem{DOZZ}	
	A. Kupiainen, R. Rhodes, and V. Vargas
	\newblock  Integrability of Liouville theory:  proof of the DOZZ Formula.
	\newblock {\em Preprint arXiv:1707.08785.} 2017.
	
	
	\bibitem{ComplexGMC}
	H. Lacoin, R. Rhodes, and V. Vargas.
	\newblock Complex {G}aussian multiplicative chaos.
	\newblock {\em Comm.  Math. Phys.}, 337(2):569--632, 2015.
	
	\bibitem{GMC_CUE_counting}
	G. Lambert, D. Ostrovsky, and N. Simm.
	\newblock Subcritical multiplicative chaos for regularized counting statistics
	from random matrix theory.
	\newblock {\em Comm.  Math. Phys.}, 360(1):1--54,  2018.
	
	\bibitem{Last}
	G. Last.
	\newblock Stochastic analysis for {P}oisson processes.
	\newblock In {\em Stochastic {A}nalysis for {P}oisson {P}oint {P}rocesses}, volume~7
	of {\em Bocconi \& Springer Ser.}: 1--36. 
	Springer International Publishing, 2016.
	
	\bibitem{Last_Penrose_lectures}
	G. Last and M. Penrose.
	\newblock {\em Lectures on the {P}oisson process}, volume~7 of {\em Institute
		of Mathematical Statistics Textbooks}.
	\newblock Cambridge University Press, Cambridge, 2018.
	
	\bibitem{Last_Penrose_PTRF}
	G. Last and M. Penrose.
	\newblock Poisson process {F}ock space representation, chaos expansion and
	covariance inequalities.
	\newblock {\em Probab. Theory Related Fields}, 150(3-4):663--690, 2011.
	
	\bibitem{Lawler}
	G.F. Lawler.
	\newblock {\em Conformally invariant processes in the plane}, volume 114 of
	{\em Mathematical Surveys and Monographs}.
	\newblock American Mathematical Society, Providence, RI, 2005.
	
	\bibitem{Lawler-Werner}
	G.F. Lawler and W. Werner.
	\newblock The {B}rownian loop soup.
	\newblock {\em Probab. Theory and Related Fields}, 128(4):565--588, 2004.
	
\bibitem{lejan1} Y. Le Jan, Markov loops and renormalization, \emph{Ann. Probab.} { 38}:
1280--1319, 2010.

\bibitem{lejan2} Y. Le Jan. \emph{Markov paths, loops and fields}, in Lecture Notes in Mathematics,
volume 2026, Ecole d'Et\'e de Probabilit\'e de St. Flour, Springer, Berlin, 2012.

		\bibitem{lejan}
		Y. Le Jan.
		\newblock Brownian winding fields.
		\newblock {\em arXiv preprint arXiv:1811.02737}, 2018.
		
		\bibitem{mandelbrot}
			B. Mandelbrot. 
	\newblock Intermittent turbulence in self-similar cascades: divergence of high moments and dimension of the carrier
	\newblock {\em Journal of Fluid Mechanics} 62.2:331--358, 1974.

\bibitem{MS16}
J. Miller and S. Sheffield, Imaginary geometry I: interacting SLEs.
\newblock {\em Probab. Theory and Related Fields} { 164}. no. 3-4, 553--705, 2016.

\bibitem{NW11}
S. Nacu and W. Werner. Random soups, carpets and fractal dimensions.
\newblock \emph{Journal of the London Mathematical Society} { 83}, Issue 3: 789--809, 2011.
	
	\bibitem{GMC_CUE_L1}
	M. Nikula, E. Saksman, and C. Webb.
	\newblock Multiplicative chaos and the characteristic polynomial of the cue:
	the {$L^1$}-phase.
	\newblock {\em Preprint arXiv:1806.01831}, 2018.
	
	\bibitem{Gio_ivan}
	I. Nourdin and G. Peccati.
	\newblock {\em Normal approximations with {M}alliavin calculus: From Stein's method to universality.}, volume 192 of
	{\em Cambridge Tracts in Mathematics}.
	\newblock Cambridge University Press, Cambridge, 2012.
	 
	
	\bibitem{Gio_murad_taqqu}
	G. Peccati and M.S. Taqqu.
	\newblock {\em Wiener chaos: moments, cumulants and diagrams: A survey with computer implementation, (supplementary material
		available online).}, Volume~1 of {\em
		Bocconi \& Springer Series}.
	\newblock Springer, Milan; Bocconi University Press, Milan, 2011.
	
	
	\bibitem{Polyakov}
	A.M. Polyakov.
	\newblock {Conformal symmetry of critical fluctuations}.
	\newblock {\em JETP Lett.}, 12:381--383, 1970.
	
	\bibitem{pommerenke}
	Ch. Pommerenke.
	\newblock {\em Boundary behaviour of conformal maps}, Volume 299 of {\em
		Grundlehren der Mathematischen Wissenschaften [Fundamental Principles of
		Mathematical Sciences]}.
	\newblock Springer-Verlag, Berlin, 1992.
	
	\bibitem{GMC_LQG}
	R. Rhodes and V. Vargas.
	\newblock Gaussian multiplicative chaos and {L}iouville quantum gravity.
	\newblock In {\em Stochastic processes and random matrices}, pages 548--577.
	Oxford Univ. Press, Oxford, 2017.
	
	\bibitem{GMC_revisit}
	R. Robert and V. Vargas.
	\newblock Gaussian multiplicative chaos revisited.
	\newblock {\em Ann. Probab.}, 38(2):605--631, 2010.
	
	\bibitem{GMC_zeta}
	E. Saksman and C. Webb.
	\newblock Multiplicative chaos measures for a random model of the Riemann zeta
	function.
	\newblock {\em arXiv preprint arXiv:1604.08378}, 2016.
	
	\bibitem{GMC_shamov}
	A. Shamov.
	\newblock On {G}aussian multiplicative chaos.
	\newblock {\em J. Funct. Anal.}, 270(9):3224--3261, 2016.

\bibitem{sheffield-cle} S. Sheffield, Exploration trees and conformal loop ensembles,
\emph{Duke Math. J.} { 147}: 79--129, 2009. 
	
	\bibitem{sheffield}
	S. Sheffield
	  \newblock Conformal weldings of random surfaces:  SLE and the quantum gravity zipper.
	 \newblock   {\em Ann.Probab.} 44(5), 3474–3545: 2016.

\bibitem{sw} S. Sheffield and W. Werner, Conformal Loop Ensembles: the Markovian characterization
and the loop-soup construction, \emph{Ann. Math.} { 176}: 1827-1917, 2012.
	
	\bibitem{stroock_chaos}
	D.W. Stroock.
	\newblock Homogeneous chaos revisited.
	\newblock In {\em S\'{e}minaire de {P}robabilit\'{e}s, {XXI}}, volume 1247 of
	{\em Lecture Notes in Math.}, pages 1--7. Springer, Berlin, 1987.

	\bibitem{S}
	D. Surgailis.
	\newblock Zones of attraction of self-similar multiple integrals.
	\newblock {\em Lithuanian Math. Journal}, 22(3):327--340, 1982.

\bibitem{symanzik} K.~Symanzik. Euclidean quantum field theory.
In \emph{Local quantum theory}, Proceedings of the International School of Physics ``Enrico Fermi,'' course 45 (R.~Jost editor), pp. 152--223,
Academic Press, New York, 1969.

\bibitem{sznitman-notes} A.S. Sznitman. \emph{Topics in Occupation Times and Gaussian
Free Field}, Z\"urich Lectures in Advanced Mathematics, European Mathematical Society Publishing House, Z\"urich, 2012.

	\bibitem{GMC_CUE_char_poly}
	C. Webb.
	\newblock The characteristic polynomial of a random unitary matrix and
	{G}aussian multiplicative chaos---the {$L^2$}-phase.
	\newblock {\em Electron. J. Probab.}, 20:no. 104, 21, 2015.

\bibitem{werner1} W. Werner. SLEs as boundaries of clusters of Brownian loops,
\emph{C. R. Acad. Sci.--Ser. I--Math.} { 337}: 481--486, 2003.

\bibitem{werner2} W.~Werner.
Some recent aspects of random conformally invariant systems,
in \emph{Les Houches Scool Proceedings: Session LXXXII, Mathematical Statistical Physics}
(A. Bovier, F. Dunlop, A. van Enter, J. Dalibard editors), pp. 57--98, Elsevier, 2006. 
	
	\bibitem{werner}
	W. Werner.
	\newblock The conformally invariant measure on self-avoiding loops.
	\newblock {\em J. Amer. Math. Soc.}, 21(1):137--169, 2008.
	
	\bibitem{weyl}
	H. Weyl.
	\newblock {\"U}ber die asymptotische Verteilung der Eigenwerte.
	\newblock {\em Nachrichten von der Gesellschaft der Wissenschaften zu
		G{\"o}ttingen, Mathematisch-Physikalische Klasse}, 1911:110--117, 1911.
	
\end{thebibliography}
\end{document}